\documentclass[a4paper,twoside,11pt]{article}
\usepackage{tikz-cd}
\usepackage{mathtools}
\usepackage{amssymb}
\usepackage[utf8]{inputenc}
\usepackage[english]{babel}
\usepackage[numbers]{natbib}
\usepackage{amsthm}
\usepackage{braket}

\usepackage{mathrsfs}

\newtheorem*{thm*}{Theorem}

\usepackage{fullpage}
\usepackage{amsmath}
\usepackage{amsfonts}
\usepackage{amsthm}
\usepackage{glossaries-extra}


\usepackage{fullpage}
\usepackage{amsmath}
\usepackage{amsfonts}
\usepackage{amsthm}


\newtheorem{mainthm}{Theorem}

\newtheorem{theorem}{Theorem}[section]
\newtheorem{lemma}[theorem]{Lemma}
\newtheorem{cor}[theorem]{Corollary}
\newtheorem*{cor*}{Corollary}
\newtheorem{proposition}[theorem]{Proposition}
\newtheorem*{prop*}{Proposition}

\theoremstyle{remark}
\newtheorem{remark}{Remark}

\theoremstyle{remark}

\theoremstyle{definition}
\newtheorem*{quest*}{Question}

\theoremstyle{definition}
\newtheorem{quest}{Question}

\theoremstyle{remark}

\theoremstyle{definition}
\newtheorem{definition}[theorem]{Definition}

\theoremstyle{definition}

\theoremstyle{definition}
\newtheorem*{conj*}{Conjecture}

\theoremstyle{definition}

\theoremstyle{definition}
\newtheorem*{exmp*}{Example}


\newcommand{\R}{\mathbb{R}}

\newcommand{\Z}{\mathbb{Z}}
\newcommand{\N}{\mathbb{N}}
\newcommand{\C}{\mathbb{C}}



\newcommand{\id}[1]{id_{#1}}

\newcommand{\supp}{\text{supp }}

\newcommand{\dom}{\text{dom} \;}

\newcommand{\T}{\mathbb{T}}






\newcommand{\im}{\text{im} \;}



%

%
\newcommand{\Braid}[2]{B^{#1}(#2)}
\newcommand{\cBraid}[2]{B_0^{#1}(#2)}
\newcommand{\kBraid}[1]{\Braid{k}{#1}}
\newcommand{\ckBraid}[1]{\cBraid{k}{#1}}
\newcommand{\CapBraid}[2]{\widetilde{B}_0^{#1}(#2)}
\newcommand{\kCapBraid}[1]{\CapBraid{k}{#1}}
%


\usepackage{hyperref}
\hypersetup{
    colorlinks=true,
    linkcolor=blue,
    }


\newcommand{\HJ}{\mathscr{H\mspace{-8mu}J}}
\newcommand{\Per}[1]{\widetilde{Per}_0({#1})}
\newcommand{\cL}[1]{\widetilde{\mathcal{L}_0}(#1)}

\renewcommand{\i}{{\mathrm{i}}}

\title{Hamiltonian Floer theory on surfaces: \\ \large Linking, positively transverse foliations and spectral invariants}
\author{Dustin Connery-Grigg \thanks{The research leading to this work received funding from the Fonds de recherche du Québec – Nature et technologies, Grant \# 260640. Writing of the article was partially supported by a postdoctoral fellowship from the \textit{Fondation Sciences Math\'{e}matiques de Paris} (FSMP)}}
\date{\today}

\begin{document}

\maketitle


\begin{abstract}
We develop connections between the qualitative dynamics of Hamiltonian isotopies on a surface $\Sigma$ and their chain-level Floer theory using ideas drawn from Hofer-Wysocki-Zehnder's theory of finite energy foliations. We associate to every collection of capped $1$-periodic orbits which is `maximally unlinked relative the Morse range' a singular foliation on $S^1 \times \Sigma$ which is positively transverse to the vector field $\partial_t \oplus X^H$ and which is assembled in a straight-forward way from the relevant Floer moduli spaces. Additionally, we provide a purely topological characterization of those Floer chains which both represent the fundamental class in $CF_*(H,J)$, and which lie in the image of some chain-level PSS map. This leads to the definition of a novel family of spectral invariants which share many of the same formal properties as the Oh-Schwarz spectral invariants, and we compute the novel spectral invariant associated to the fundamental class in entirely dynamical terms. This significantly extends a project initiated by Humili\`{e}re-Le Roux-Seyfaddini in \cite{HRS16}.
\end{abstract}

\tableofcontents

\section{Introduction}
The goal of this paper is to develop a certain picture --- the possibility of which was initially suggested by the work of Humili\`{e}re, Le Roux and Seyfaddini in \cite{HRS16} --- for Hamiltonian dynamics on surfaces via a set of tools which are well-suited to the study of the relationship between their dynamics and the structure of their Floer complexes. More precisely, recall that an isotopy
\begin{align*}
\phi^H: [0,1] \times \Sigma &\rightarrow \Sigma
\end{align*}
of a symplectic surface $(\Sigma,\omega)$ is called \textit{Hamiltonian} if it is realized as the flow of the periodic family of vector fields $(X^H_t)_{t \in S^1}$, where each vector field satisfies
\begin{align*}
\omega(X^H_t,\cdot)&=-dH_t(\cdot)
\end{align*}
for some smooth family of functions $(H_t: \Sigma \rightarrow \R )_{t \in S^1}$. Modulo some genericity conditions, to each such isotopy and any family $(J_t)_{t \in S^1}$ of $\omega$-compatible almost complex structures, we may associate a complex $CF_*(H,J)$ which is generated by the set $\Per{H}$ of capped $1$-periodic orbits of $\phi^H$. This complex has a $\Z$-grading $\mu_{CZ}$ which counts the amount of `symplectic winding' that occurs locally about the orbit throughout the isotopy, and whose differential counts the number of `Floer cylinders' $u \in \widetilde{\mathcal{M}}(\hat{x},\hat{y};H,J)$ which satisfy a certain elliptic partial differential equation and connect $\hat{x}, \hat{y} \in \Per{H}$. $CF_*(H,J)$ also carries a real-valued filtration induced by the \textit{Hamiltonian action functional} $\mathcal{A}_H$ associated to $H$ (see Section \ref{Sec:Floer} for definitions and more details about the construction of the Hamiltonian Floer complex). 
\par
Together with $\mu_{CZ}$, any collection of capped orbits $\hat{X} \subseteq \Per{H}$ such that the underlying orbits $x,y$ of any two elements $\hat{x}, \hat{y} \in \hat{X}$ are distinct forms a geometric object which we might call an \textit{indexed capped braid} $(\hat{X}, \mu_{CZ})$. We can think of the topology of the indexed capped braids which make up $\Per{H}$ as encoding the topological structure of $1$-periodic orbits of $(\phi^H_t)_{t \in [0,1]}$ along with the coarse local structure of the isotopy near these orbits. The broad question broached by this work may be stated as follows
\begin{quote}
What relations does the topological structure of $\Per{H}$ impose on the (filtered) algebraic structure of $CF_*(H,J)$ and vice versa?
\end{quote}  
Clearly, this question is only particularly meaningful in low-dimensions where the orbits may twist about one another in homotopically interesting ways, but in this situation it proves surprisingly fruitful and sheds considerable light on the possible dynamics of generic Hamiltonian isotopies on surfaces. 
\par
To wit, we will say that a capped braid $\hat{X}$ is \textit{unlinked} if the capping disks of the strands may be chosen such that their graphs in $D^2 \times \Sigma$ are disjoint. $\hat{X}$ will be said to be \textit{positive} (resp. \textit{negative}) if the capping disks may be chosen such that their graphs are transverse and have only positive (resp. negative) intersections. An indexed capped braid $\hat{X} \subseteq \Per{H}$ is \textit{maximally unlinked relative the Morse range}, denoted $\hat{X} \in murm(H)$, if $\hat{X}$ is unlinked, every (capped) strand of $\hat{X}$ has index lying in the set $\lbrace -1, 0, 1 \rbrace$ and $\hat{X}$ is maximal among all collections of capped orbits in $\Per{H}$ having these two properties. Similarly, $\hat{X} \subseteq \Per{H}$ is \textit{maximally positive relative index $1$}, denoted $\hat{X} \in mp_{(1)}$, if $\hat{X}$ is positive, every (capped) strand of $\hat{X}$ has index equal to $1$, and $\hat{X}$ is maximal among all collections of capped orbits having these two properties. 
\par
As a consequence of the theory developed herein, we obtain the following structural result for generic Hamiltonians $H$ on an arbitrary closed symplectic surface, which provides a topological interpretation of behaviour of the Hamiltonian Floer complex $CF_*(H,J)$ in the `Morse range', when $* \in \lbrace -1,0,1 \rbrace$.
\begin{mainthm}\label{MainThm: Foliation thm}
Let $(\Sigma,\omega)$ be a closed symplectic surface. $H \in C^\infty(S^1 \times \Sigma)$ be a non-degenerate Hamiltonian, and let $J \in C^\infty(S^1; \mathcal{J}_\omega(\Sigma))$ be such that $(H,J)$ is Floer regular. For any capped braid $\hat{X} \in murm(H)$, we may construct an oriented singular foliation $\mathcal{F}^{\hat{X}}$ of $S^1 \times \Sigma$ with the following properties
\begin{enumerate}
\item The singular leaves of $\mathcal{F}^{\hat{X}}$ are precisely the graphs of the orbits in $\hat{X}$. That is, they are  parametrized by the maps $t \mapsto (t,x(t))$ for $\hat{x}=[x,w_x] \in \hat{X}$.
\item The regular leaves are precisely the annuli parametrized by maps
\begin{align*}
\check{u}: \R \times S^1 &\rightarrow S^1 \times \Sigma \\
(s,t) &\mapsto (t,u(s,t)).
\end{align*}
for some $u \in \widetilde{\mathcal{M}}(\hat{x},\hat{y};H,J)$, and some $\hat{x}, \hat{y} \in \hat{X}$.
\item The vector field $\check{X}^H(t,z)=\partial_t \oplus X^H_t(z)$ is \textit{positively transverse} to every regular leaf of $\mathcal{F}^{\hat{X}}$. That is, $\check{X}^H \wedge \check{u}_*(\partial_s \wedge \partial_t)$ is a positively oriented volume element on each regular leaf, where $\check{u}$ is as above.
\end{enumerate}
\end{mainthm}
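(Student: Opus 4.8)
The plan is to construct $\mathcal{F}^{\hat X}$ explicitly out of the Floer data and then verify the foliation axioms together with the three listed properties. For the set-theoretic skeleton I would declare the singular leaves to be the graphs $\{(t,x(t)) : t\in S^1\}$ of the orbits $\hat x\in\hat X$, and the regular leaves to be the subsets $A_u:=\{(t,u(s,t)):(s,t)\in\R\times S^1\}$ --- note that $A_u$ depends only on the $\R$-translation class of $u$, so these are well indexed by $\widetilde{\mathcal{M}}(\hat x,\hat y;H,J)$ --- as $\hat x\neq\hat y$ range over $\hat X$. Since $\hat X$ is maximally unlinked relative the Morse range, every orbit has $\mu_{CZ}\in\{-1,0,1\}$, so (using Floer regularity and freeness of the translation action on non-constant cylinders) the only nonempty such moduli spaces have $\mu_{CZ}(\hat x)-\mu_{CZ}(\hat y)\in\{1,2\}$: the index-two ones are one-dimensional and furnish a one-parameter family of ``bulk'' leaves filling an open dense set, while the index-one ones are finite and furnish the ``wall'' leaves, which together with the binding orbits are the analogues of the stable/unstable separatrices of the $\mu_{CZ}=0$ orbits.

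The analytic engine is the Gromov graph trick: $u:\R\times S^1\to\Sigma$ solves the Floer equation for $(H,J)$ if and only if $\widetilde u(s,t):=(s,t,u(s,t))$ is holomorphic for a suitable almost complex structure $\widetilde J$ on the $4$-manifold $\R\times S^1\times\Sigma$ built from $J$ and $X^H$. This places the leaves within the intersection theory of pseudoholomorphic curves in a symplectic $4$-manifold, and I would then: (i) use somewhere-injectivity of each $\widetilde u$ (the asymptotic orbits of a regular leaf are distinct by hypothesis) together with automatic transversality and the adjunction formula in dimension four to show each $\widetilde u$ is an embedding with $\partial_s u$ nowhere vanishing --- here the $\mu_{CZ}\in\{-1,0,1\}$ normalization is what makes the relevant index/winding inequality force zero critical points; (ii) invoke Siefring's relative intersection theory, fed with the hypothesis that $\hat X$ is \emph{unlinked} (so the cappings, hence all relative homological data, may be taken disjoint), to see that the pertinent relative intersection numbers vanish, whence distinct regular leaves are disjoint and are disjoint from the singular leaves; (iii) use the Hofer--Wysocki--Zehnder asymptotic analysis --- control of the winding of $\widetilde u$ against the eigenfunctions of the asymptotic operator, again constrained by $\mu_{CZ}\in\{-1,0,1\}$ --- to show the annuli spiral into the binding orbits non-tangentially and with a consistent sense of rotation, so that a neighbourhood of each singular leaf is foliated in the open-book sense. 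Smoothness of the resulting singular foliation then follows from elliptic regularity and the implicit-function parametrization of the one-dimensional moduli spaces.

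Positive transversality is the cheap step: the tangent plane of $A_u$ at $(t,u(s,t))$ is spanned by $\partial_s\check u=(0,\partial_s u)$ and $\partial_t\check u=(1,\partial_t u)$, while the Floer equation rearranges to $X^H_t(u)-\partial_t u=-J_t(u)\,\partial_s u$, so $\check X^H(t,u(s,t))-\partial_t\check u=(0,-J_t(u)\,\partial_s u)$. As $\partial_s u\neq 0$ on a regular leaf and $v$ and $-J_t v$ are $\R$-independent for $v\neq 0$, this is a nonzero ambient-normal vector, and a sign comparison with the co-orientation induced by the complex orientation of the $\Sigma$-factor shows $\check X^H$ crosses $A_u$ in the positive direction; the leaf orientations are read off the same data.

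The genuinely hard step --- and where $\hat X\in murm(H)$ does the essential work --- is proving the leaves \emph{cover} $S^1\times\Sigma$, i.e.\ that there are no gaps. By Gromov compactness the space of bulk leaves compactifies by adding broken configurations (``wall leaf into a $\mu_{CZ}=0$ orbit'' glued to ``wall leaf out of it''), so the union $Z$ of all leaves is closed, and if $Z\neq S^1\times\Sigma$ its frontier is again a union of leaves. Intersecting with a time slice $\{t\}\times\Sigma$ one obtains a closed subsurface-with-singular-boundary whose complementary regions must be ruled out: a Poincar\'e--Bendixson / Euler-characteristic argument on such a region, combined with the positive transversality already established, would produce inside it a new $1$-periodic orbit with index in $\{-1,0,1\}$ unlinked with $\hat X$, contradicting maximality. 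Dually, one needs local existence --- that some leaf actually emanates from (resp.\ limits onto) each orbit of $\hat X$ --- which follows because an orbit with $\mu_{CZ}\in\{-1,0,1\}$ has rank-one local Floer homology and the linearized picture there is that of a Morse source, saddle, or sink. Assembling the non-emptiness, disjointness, smoothness, transversality and no-gaps statements yields the oriented singular foliation $\mathcal{F}^{\hat X}$ with properties (1)--(3); I expect this last step, and within it the delicate interplay between Gromov compactness of the bulk family and the combinatorics forced by maximality, to be the main obstacle.
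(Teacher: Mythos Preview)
Your treatment of embeddedness, pairwise disjointness, and positive transversality is essentially what the paper does: Proposition~\ref{EvalDiffeoLemma} uses exactly the winding bounds $b(\hat x)=a(\hat y)$ forced by the Morse-range hypothesis to show $\widetilde{Ev}$ is a proper injective immersion, and the transversality computation in Section~\ref{PositivelyTransverseSubsection} is precisely your ``cheap step''. So far so good.

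The gap is in your covering argument. You propose that a hypothetical complementary region $U$, combined with positive transversality and a Poincar\'e--Bendixson/Euler-characteristic argument on a time slice, would force a new $1$-periodic orbit with $\mu_{CZ}\in\{-1,0,1\}$ unlinked with $\hat X$. This is not developed enough to work: the flow of $\check X^H$ does not preserve time slices, so there is no surface flow on $U\cap(\{t\}\times\Sigma)$ to which Poincar\'e--Bendixson applies; even granting an invariant region and a fixed point of $\phi^H_1$ via Lefschetz, nothing in your outline controls the Conley--Zehnder index of the resulting orbit, and without that you cannot invoke maximality \emph{relative the Morse range}. The appeal to ``rank-one local Floer homology'' for local existence of leaves near each $\hat x\in\hat X$ is likewise insufficient --- local Floer homology tells you about cylinders that stay near $x$, not about global cylinders connecting $\hat x$ to some other $\hat y\in\hat X$.

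The paper's route to density (Lemma~\ref{lem:DenseLeaves}) is entirely different and is the real content of the theorem. One first builds, via the pre-model construction of Section~\ref{Ch: Construct chain maps}, an \emph{$\hat X$-dominating} continuation map $h_{\mathcal H}$ from a small Morse function $f$ to $H$ with the property that $\mathcal M(\hat p_i,\hat x_i;\mathcal H,\mathbb J)\neq\emptyset$ for each $\hat x_i\in\hat X$. One then shows (Section~\ref{LinkingAndTheFloerComplex}) that the projection $\pi^{\hat X}\circ h_{\mathcal H}:CF_*(f,J^-)\to CF_*(\hat X;H,J)$ is a chain map injective on homology, and that it intertwines the chain-level cap action of a generic point $p\in\Sigma$ (Proposition~\ref{prop:CapInteractionWithContinuation}). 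Since $p\cap_t[\Sigma]=[pt]\neq 0$ in $HF_*(f)$, injectivity forces $\pi^{\hat X}(p\cap_t(\pi^{\hat X}\circ h_{\mathcal H})(\sigma))\neq 0$ for $\sigma$ representing $[\Sigma]$; unwinding the definition of the cap action, this says precisely that some $u\in\widetilde{\mathcal M}(\hat x,\hat y;H,J)$ with $\hat x,\hat y\in\hat X$ passes through $(t,p)$. This is a Floer-homological existence argument, not a dynamical one, and it is what you are missing.
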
 
It turns out (see Corollary \ref{KlinkingLemma}) that if $\mathcal{M}(\hat{x},\hat{y};H,J) \neq \emptyset$ with $\mu_{CZ}(\hat{x}),\mu_{CZ}(\hat{y}) \in \lbrace -1,0,1 \rbrace$ then $\hat{x}$ and $\hat{y}$ are unlinked and so are both contained in some $\hat{X} \in murm(H)$. Thus by allowing $\hat{X}$ to vary in $murm(H)$, the foliations $\mathcal{F}^{\hat{X}}$ above capture all of the Floer cylinders running between capped orbits in the Morse range, and so may be thought of as providing a geometric picture of the structure of the Floer complex $CF_*(H,J)$ in this range.
\par
The foliations $\mathcal{F}^{\hat{X}}$ arise as the projections to $S^1 \times \Sigma$ of certain (non-singular) `$\R$-invariant finite energy foliations' $\widetilde{\mathcal{F}}^{\hat{X}}$ on $\R \times S^1 \times \Sigma$. Finite energy foliations were initially introduced in contact geometry by Hofer, Wysocki and Zehnder in \cite{HWZ03} (although see also \cite{Ho93} and \cite{HWZ98} for earlier appearances of special cases of these objects) where the authors established the existence of an $\R$-invariant finite energy foliation for any generic Reeb vector field associated to the standard contact structure on $S^3$, and used these to show that such Reeb flows necessarily possess either $2$ or infinitely many periodic orbits. Such foliations were later imported into the study of Hamiltonian dynamics by Bramham in his PhD thesis \cite{Br08} in which he constructed an $\R$-invariant finite energy foliation for any smooth area-preserving disk map that is an irrational rotation on the boundary. Bramham subsequently used such foliations as a crucial tool in his recent celebrated study of irrational pseudo-rotations of the disk in \cite{Br15} and \cite{Br15b} in the context of long-standing questions about low-dimensional conservative systems with vanishing entropy. Theorem \ref{MainThm: Foliation thm} may thus be seen as an extension of Bramham's existence result to generic Hamiltonian systems on arbitrary closed symplectic surfaces, with additional controls over the various collections of orbits which may --- and do --- appear as the singular leaves of the foliations $\mathcal{F}^{\hat{X}}$.
\par 
Colin, Dehornoy and Rechtman have also recently established in \cite{CDR23} --- for generic Reeb vector fields on contact $3$-manifolds --- the existence of certain singular foliations to which the dynamics are positively transverse and which they call \textit{broken book decompositions}, which they then use to study the Reeb dynamics to great effect. If one works in the setting of stable Hamiltonian structures, which provides a common generalization of Reeb flows and mapping tori of Hamiltonian isotopies, then the foliations $\mathcal{F}^{\hat{X}}$ are broken book decompositions in the sense of \cite{CDR23} with binding given by the graphs of the orbits in $\hat{X}$. In contrast with the foliations $\mathcal{F}^{\hat{X}}$ however, the foliations in \cite{CDR23} do not arise directly as projections of finite energy foliations from the symplectization; instead, collections of pseudoholomorphic curves in the symplectization are projected to surfaces in the contact manifold which may self-intersect and may fail to be embedded at certain points, and the authors then apply a desingularization procedure and some additional arguments in order to obtain finitely many disjoint surfaces which may then be extended to a broken book decomposition by flowing these surfaces along the Reeb flow. This has the effect of making the relationship between the initial holomorphic curves and the resulting foliation somewhat subtle.
\par
The structure of the foliations $\mathcal{F}^{\hat{X}}$ could \textit{a priori} be rather complicated, however, it turns out that this is not the case. Write $\mathcal{F}^{\hat{X}}_t$ for the singular foliation on $\Sigma$ given by intersecting $\mathcal{F}^{\hat{X}}$ with the fiber $\lbrace t \rbrace \times \Sigma$ over $t \in S^1$. Sliding the fiber $\lbrace 0 \rbrace \times \Sigma$ along the circles $t \mapsto (t,u_s(t))$ provides a loop of diffeomorphisms $(\psi^{\hat{X}}_t)_{t \in S^1}$ such that $\psi^{\hat{X}}_t$ sends $\mathcal{F}^{\hat{X}}_0$ to $\mathcal{F}^{\hat{X}}_t$, and we prove the following:   
\begin{mainthm}\label{MainThm2}
For every $t \in S^1$, $\mathcal{F}^{\hat{X}}_t$ is a singular foliation of Morse type (ie. there is a choice of Morse function and metric on $\Sigma$ for which $\mathcal{F}_t^{\hat{X}}$ is the singular foliation associated to the negative gradient flow). Moreover, the loop $(\psi^{\hat{X}}_t)_{t \in S^1}$ is a contractible loop of diffeomorphisms such that the orbits of $(\psi^{\hat{X}})^{-1} \circ \phi^H$ are positively transverse to the foliation $\mathcal{F}^{\hat{X}}_0$.
\end{mainthm}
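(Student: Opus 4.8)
The plan is to build $\mathcal{F}^{\hat X}_t$ directly as the negative-gradient foliation of the function $A^{\hat X}_t$ on $\Sigma$, and then to read off the dynamical transversality statement by comparing this fiberwise picture with the $t$-total picture supplied by Theorem~\ref{MainThm: Foliation thm}. First I would verify that $A^{\hat X}$ is genuinely Morse--Bott on $S^1 \times \Sigma$: the critical locus should be exactly the union of the graphs of the orbits in $\hat X$, the singular leaves of $\mathcal{F}^{\hat X}$, and away from those graphs the gradient of $A^{\hat X}_t$ should be nonvanishing because $d_s \mathcal{A}_H(\hat u_s) = -\norm{\partial_s u}^2 \ne 0$ along any non-constant Floer cylinder. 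The point where care is needed is the behaviour near the graphs: one must check that a capped orbit of Conley--Zehnder index in $\{-1,0,1\}$ contributes a \emph{nondegenerate} critical point of $A^{\hat X}_t$ on the surface, with Morse index equal to that Conley--Zehnder index shifted appropriately (so index $-1,0,1$ orbits become sources, saddles, sinks respectively for $-\grad A^{\hat X}_t$), and that the local leaf structure given by the Floer cylinders asymptotic to that orbit matches the local stable/unstable manifold picture of a Morse singularity. Here I would lean on the asymptotic analysis of Floer cylinders (exponential convergence to the orbit, with the leading term governed by an eigenfunction of the asymptotic operator) exactly as in Hofer--Wysocki--Zehnder's local model for finite-energy foliations; this is what forces the $\{-1,0,1\}$ restriction and gives the "Morse type" conclusion.

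Next I would construct the loop $(\psi^{\hat X}_t)_{t \in S^1}$ precisely: parallel transport of the fiber $\{0\} \times \Sigma$ along the leaves $t \mapsto (t, u_s(t))$ of $\mathcal{F}^{\hat X}$, viewed as an isotopy covering the rotation of the base $S^1$, returns a diffeomorphism $\psi^{\hat X}_1$ of $\Sigma$, and the path $t \mapsto \psi^{\hat X}_t$ defines the loop. Contractibility I would argue by exhibiting an explicit homotopy: the family $\mathcal{F}^{\hat X}$ itself deformation-retracts (fiberwise, through the gradient flows of $A^{\hat X}_t$) onto something over which the transport is visibly trivial, or more robustly, one shows the loop lies in the identity component by noting it arises from transporting along a foliation transverse to the fibers whose leaves are graphs $\check u$, so that $\psi^{\hat X}_t$ is homotopic to the "identity transport" through the contractible space of such transverse foliations; on a surface $\pi_0$ and $\pi_1$ of $\mathrm{Diff}$ are controlled, so a direct $\pi_1$-computation (e.g. via the induced action on homology or on the fundamental group of $\Sigma$, which is trivial since each leaf is an annulus projecting homeomorphically to $S^1$) should close the argument.

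For the final clause — that the orbits of $(\psi^{\hat X})^{-1} \circ \phi^H$ are positively transverse to $\mathcal{F}^{\hat X}_0$ — the key identity is that $\psi^{\hat X}_t$ is by construction the transport along the flow lines of a vector field $\check Y$ on $S^1 \times \Sigma$ that is tangent to $\mathcal{F}^{\hat X}$ and covers $\partial_t$, while Theorem~\ref{MainThm: Foliation thm}(3) says $\check X^H = \partial_t \oplus X^H_t$ is positively transverse to $\mathcal{F}^{\hat X}$. The difference $\check X^H - \check Y$ projects to zero on the base, hence is a genuine (time-dependent) vector field on $\Sigma$ whose flow is exactly $(\psi^{\hat X})^{-1} \circ \phi^H$ up to the usual reparametrization, and positivity of $\check X^H$ relative to the leaves together with tangency of $\check Y$ transfers to positive transversality of $\check X^H - \check Y$ to the fiberwise foliations $\mathcal{F}^{\hat X}_t$; pulling everything back to $t=0$ via $\psi^{\hat X}_t$ (which by definition preserves the foliations $\mathcal{F}^{\hat X}_t \mapsto \mathcal{F}^{\hat X}_0$) yields the claim. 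I expect the main obstacle to be the Morse-type/nondegeneracy analysis near the singular leaves: one has to translate the infinite-dimensional Morse--Bott structure of $\mathcal{A}_H$ and the asymptotics of Floer cylinders into a clean finite-dimensional statement about $A^{\hat X}_t$ on $\Sigma$, and in particular rule out that distinct Floer cylinders asymptotic to the same orbit create extra (non-Morse) degeneracies in the foliation — this is where the hypothesis $\hat X \in murm(H)$, and the restriction of indices to $\{-1,0,1\}$, must be used in an essential way.
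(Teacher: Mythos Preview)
Your outline for the Morse-type claim and the positive-transversality claim is essentially the paper's approach. The paper establishes that $A^{\hat X}_t$ is Morse by observing that the Hessian at $x(t)$ inherits nondegeneracy from the Hessian of $\mathcal{A}_H$ at $\hat x$ (via the diffeomorphism $Ev: S^1 \times \mathfrak{M}_{\hat X} \to S^1 \times \Sigma$), and then identifies $\widetilde{\mathcal{M}}(\hat x,\hat y;H,J)$ with the Morse moduli space of $\epsilon A^{\hat X}_t$. For transversality it does exactly your computation: the generating vector field $Z_t$ of $(\psi^{\hat X})^{-1} \circ \phi^H$ satisfies $(Z_t)_{u(s,0)} = (\psi_t^{-1})_*(X^H_t - \partial_t u)_{u(s,t)}$, and then $\omega(Z_t,\partial_s u)$ has the sign of $\omega(-J_t\partial_s u,\partial_s u) > 0$ by the Floer equation and $\omega$-compatibility.

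The genuine gap is in your contractibility argument. Neither of your proposed routes works when $\Sigma = S^2$: the space of ``transverse foliations'' is not obviously contractible, and since $\pi_1(\mathrm{Diff}(S^2)) \cong \Z/2\Z$ is generated by a full rotation, the induced action on $H_*(S^2)$ and on $\pi_1(S^2)$ is trivial for \emph{every} loop and therefore detects nothing. The paper handles contractibility by a case split: for genus $\geq 2$, $\mathrm{Diff}_0(\Sigma)$ is contractible; for $\T^2$, loops are classified by the homotopy class of an orbit, and the orbit $t \mapsto \psi^{\hat X}_t(x(0)) = x(t)$ is contractible by hypothesis. For $S^2$, the paper uses that the class in $\pi_1(\mathrm{Diff}(S^2))$ is the parity of the winding of the linearized flow $D\psi^{\hat X}_t$ at a fixed point, computed in a trivialization extending over a capping. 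It then identifies $D\psi^{\hat X}_t$ along $x$ with the action on the asymptotic eigenvectors of $A_{x,J}$ (this is the content of the asymptotic formula for Floer cylinders near $x$), and those eigenvectors have winding $0$ relative $\hat x$ precisely because $\mu(\hat x) \in \{-1,0,1\}$. So the $murm$ hypothesis enters the contractibility proof in an essential and rather specific way on $S^2$, not through any soft deformation argument.
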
    
We may thereby reduce the study of the qualitative dynamics of the isotopy $\phi^H$ to the better understood situation of dynamics which are positively transverse to a Morse-type foliation.
\par
Theorems \ref{MainThm: Foliation thm} and \ref{MainThm2} also serve to clarify a certain meta-mathematical question that has arisen with respect to the study of Hamiltonian systems on surfaces. In \cite{LeC05} (see also \cite{LeC91}), Le Calvez developed an approach to studying \textit{homeomorphisms} of surfaces via a theory of \textit{homotopically transverse foliations}\footnote{NB. The addition of the adjective `homotopically' here does not appear in Le Calvez's work, nor in works using his theory, but is being made by the present author in order to distinguish this notion from the stronger notion of transversality referred to in Theorem \ref{MainThm2}}. This theory has provided powerful tools and perspective for the study of surface homeomorphisms (see for instance \cite{LeC06}, \cite{LT18}, \cite{KT14}, and \cite{LSV21}). In their ground-breaking study of spectral invariants for autonomous Hamiltonians on surfaces in \cite{HRS16}, Humili\`{e}re-Le Roux-Seyfaddini frame their work as being inspired by their desire to understand the link between Le Calvez's theory and Hamiltonian Floer theory, and as an attempt to recover the Oh-Schwarz spectral invariant associated to the fundamental class via the techniques of transverse foliations. Therein, the authors explicitly raise the question of understanding the relationship between Le Calvez's theory and Hamiltonian Floer theory, noting that on their domain of common overlap --- Hamiltonian systems on surfaces --- the two theories seem to be equivalent in the sense that ``...much of what can be done via one theory can also be achieved via the other''. One of the consequences of Theorems \ref{MainThm: Foliation thm} and \ref{MainThm2} is that the foliations $\mathcal{F}^{\hat{X}}_0$ are homotopically transverse foliations, and so we obtain a direct Floer-theoretic construction of certain of the foliations appearing in Le Calvez's theory. In fact, it turns out that the foliations $\mathcal{F}^{\hat{X}}_0$ recover precisely the foliations corresponding to a natural class of maximal isotopies which are called \textit{torsion-low isotopies} in Le Calvez's theory, and which were initially introduced by Yan in \cite{Ya18}. This provides us with a correspondence between the two theories for non-degenerate Hamiltonian systems.  For a more thorough discussion of these matters, see Section \ref{Sec: Calvez}. 
\par
The techniques developed in this paper also enable a second, logically distinct --- though thematically related --- line of enquiry, examining the relationship between the qualitative dynamics of a Hamiltonian isotopy and those aspects of the filtered Floer complex which are `probeable' by \textit{chain-level PSS maps}. Heuristically, one may think of such chain maps as chain-level Floer continuation maps from some very small autonomous Morse Hamiltonian. To be more precise, recall that there is a natural way to identify the quantum homology of a symplectic manifold with the Floer homology of a generic Floer pair $(H,J)$ given by the \textit{PSS isomorphism} originally introduced in \cite{PSS96} (see also \cite{Sc95}). This isomorphism is induced on homology by choosing some ancillary data $\mathcal{D}$ --- which includes a Morse-Smale pair $(f,g)$ and so provides a Morse-theoretic model for the quantum chain complex $QC_*(f,g) = C^{Morse}(f,g) \otimes \Lambda_{\omega}$ whose homology computes the quantum homology of $(M,\omega)$ --- and this data is then used to construct a chain morphism
\begin{align*}
\Phi^{PSS}_{\mathcal{D}}: QC_*(f,g) &\mapsto CF_{*-n}(H,J).
\end{align*}
The induced map on homology is an isomorphism and is independent of the ancillary data $\mathcal{D}$. 
\par
Our next result provides, in the case of surfaces, a purely topological characterization of those non-trivial Floer cycles in $CF_*(H,J)$ which lie in the image of some chain-level PSS map and which represent the fundamental class.\footnote{Our methods also permit a similar type of characterization to be given for cycles which lie in the image of some chain-level PSS map and which represent the point class, although this characterization is somewhat more involved and we will not present it in this article. Interestingly, however, there seem to be fundamental obstructions to using the same approach to characterize Floer cycles of middle degree lying in the image of some chain-level PSS map.} Note that for $\sigma= \sum a_{\hat{x}} \hat{x} \in CF_*(H,J)$, $\supp \hat{\sigma} := \lbrace \hat{x}: \; a_{\hat{x}} \neq 0 \rbrace$ may be thought of as a capped braid.
\begin{mainthm}\label{MainThm: Top Char}
Let $\sigma \in CF_1(H,J)$. $\sigma$ is a non-trivial cycle such that $\sigma \in \im \Phi^{PSS}_{\mathcal{D}}$ for some regular PSS data $\mathcal{D}$ if and only if $\supp \sigma$ is a maximal positive capped braid relative index $1$.
\end{mainthm}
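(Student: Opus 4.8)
The plan is to prove the two implications separately, in each case routing through the Morse-theoretic model of the quantum complex furnished by the PSS data together with the foliation technology of Theorems \ref{MainThm: Foliation thm}--\ref{MainThm3}. Two preliminary observations set the stage. First, on a surface $\Sigma$ the PSS morphism has the form $\Phi^{PSS}_{\mathcal{D}}\colon QC_*(f,g) \to CF_{*-1}(H,J)$, and since $\Sigma$ carries no Morse critical points of index $\geq 3$ (so that $\partial_3 = 0$, and the quantum corrections to $\partial$ on $QC_2$ vanish for degree reasons), the non-trivial cycles in $QC_2$ form a one-dimensional space, spanned by the sum $\sum_{c} c$ over the local maxima $c$ of $f$, which represents $[\Sigma] \in QH_2(\Sigma)$. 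Hence $\sigma \in CF_1(H,J)$ is a non-trivial cycle lying in some $\im \Phi^{PSS}_{\mathcal{D}}$ if and only if $\sigma = \Phi^{PSS}_{\mathcal{D}}\!\left(\sum_c c\right)$ for some regular $\mathcal{D}$ (in particular $\sigma$ then automatically represents the fundamental class, as in the abstract), so the problem reduces to identifying which capped braids occur as $\supp \Phi^{PSS}_{\mathcal{D}}\!\left(\sum_c c\right)$. Second, I would record at the outset --- using Theorem \ref{MainThm3} and Corollary \ref{KlinkingLemma} --- the topological fact that a capped braid is a maximal positive capped braid relative index $1$ exactly when it is the collection of index-$1$ strands of some $\hat{Y} \in murm(H)$.

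For the forward implication, write $\sigma = \Phi^{PSS}_{\mathcal{D}}\!\left(\sum_c c\right)$. The grading shift forces $\mu_{CZ}(\hat{x}) = 1$ for every $\hat{x} \in \supp \sigma$. For each such $\hat{x}$ there is at least one rigid PSS spiked-disk solution asymptotic to $\hat{x}$, carrying a capping of $x$ in the homotopy class of $\hat{x}$; viewing these solutions as sections of $S^1 \times \Sigma$ away from the puncture, as with the map $\check{u}$ of Theorem \ref{MainThm: Foliation thm}, positivity of intersections for the relevant perturbed pseudoholomorphic curves together with the asymptotic control underlying the construction of $\mathcal{F}^{\hat{X}}$ shows that the cappings attached to distinct strands of $\supp \sigma$ may be taken disjoint, so $\supp \sigma$ is an unlinked braid of index-$1$ orbits. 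Extending $\supp \sigma$ to some $\hat{Y} \in murm(H)$, the identification (Theorems \ref{MainThm: Foliation thm} and \ref{MainThm2}) of the Floer differential among the strands of $\hat{Y}$ with the Morse differential of $A^{\hat{Y}}_0$, together with the one-dimensionality of the space of quantum $2$-cycles noted above, forces $\sigma$ to equal the sum of all local maxima of $A^{\hat{Y}}_0$, i.e. the full set of index-$1$ strands of $\hat{Y}$. Thus $\supp \sigma$ coincides with the index-$1$ part of $\hat{Y}$, which by the second observation is a maximal positive capped braid relative index $1$.

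For the reverse implication, suppose $\supp \sigma = \hat{X}$ is a maximal positive capped braid relative index $1$, hence the collection of index-$1$ strands of some $\hat{Y} \in murm(H)$. Theorems \ref{MainThm: Foliation thm} and \ref{MainThm2} produce the Morse-type foliation $\mathcal{F}^{\hat{Y}}_0$ on $\Sigma$ and its generating function $A^{\hat{Y}}_0 \in C^\infty(\Sigma)$, whose local maxima are precisely the orbits underlying $\hat{X}$; after a Morse--Smale perturbation I would take $(A^{\hat{Y}}_0, g)$, with compatible auxiliary homotopy data, as the ancillary PSS data $\mathcal{D}$. The capping disks of $\hat{Y}$ and the leaves of $\mathcal{F}^{\hat{Y}}$ then assemble into explicit spiked-disk solutions computing $\Phi^{PSS}_{\mathcal{D}}$, so that $\Phi^{PSS}_{\mathcal{D}}\!\left(\sum_c c\right)$ is a cycle with support exactly $\hat{X}$. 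I expect the main obstacle to be precisely this last assertion --- that the chosen $\mathcal{D}$ is regular and that no spurious PSS solutions arise, so that the support is exactly $\hat{X}$ rather than strictly larger --- which I would handle with the automatic-transversality and positivity-of-intersection input driving Theorem \ref{MainThm: Foliation thm}, supplemented by the compactness analysis for PSS moduli spaces; establishing the topological characterization recorded in the second preliminary observation (to the extent it is not already the definition of ``positive relative index $1$'') is the remaining point requiring genuine work.
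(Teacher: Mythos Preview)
Your approach has a genuine gap at its foundation: the claim that the PSS disks force $\supp\sigma$ to be \emph{unlinked} is false. Positivity of intersections for the PSS disks $u_i$ asymptotic to distinct $\hat{x}_i,\hat{x}_j\in\supp\sigma$ yields only $0=\ell_{-\infty}(u_i,u_j)\le\ell_{\infty}(u_i,u_j)=\ell(\hat{x}_i,\hat{x}_j)$, i.e.\ nonnegative pairwise linking; this is exactly the statement that $\supp\sigma$ is a \emph{positive} capped braid, not that it is unlinked. Consequently your ``second preliminary observation'' --- that maximal positive capped braids relative index $1$ are precisely the index-$1$ parts of braids in $murm(H)$ --- fails: an element of $mp_{(1)}(H)$ may have strictly positive pairwise linking between some strands and hence cannot sit inside any unlinked braid, let alone one in $murm(H)$. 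Both directions of your argument then collapse, since the extension to $\hat Y\in murm(H)$ and the use of $A^{\hat Y}_0$ are unavailable.

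The paper's proof proceeds quite differently and does not touch the foliation machinery of Section~\ref{Sec: Pos Transverse foliations} at all (as noted in the structural overview, Sections~\ref{Ch: App 1} and~\ref{Sec: Pos Transverse foliations} are logically independent). For the forward direction, positivity of $\supp\sigma$ is exactly Corollary~\ref{cor: PSS image is Pos}; \emph{maximality} is then obtained by contradiction (Proposition~\ref{Prop: Top Char Nec Condition}): given a putative extra $\hat y\in\Per{H}_{(1)}$ with $\ell(\hat x,\hat y)\ge 0$ for all $\hat x\in\supp\sigma$, one uses the pre-model technology of Section~\ref{Ch: Construct chain maps} to build a regular continuation to a small Morse function $f$ with unique maximum $M$ such that $\mathcal{M}(\hat y,\hat M)\neq\emptyset$, and the linking inequality $0\le\ell(\hat x,\hat y)\le a(\hat M)=-1$ then forces $\mathcal{M}(\hat x,\hat M)=\emptyset$ for every $\hat x\in\supp\sigma$, contradicting homological nontriviality of $\sigma$. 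For the reverse direction, one does not use $A^{\hat Y}_0$; instead, the positive $0$-cobordism witnessing positivity of $\hat X$ is used directly to build a pre-model for a continuation cobordism from a suitable Morse function with maxima at the cobordism's initial points, Theorem~\ref{Theorem: Pre-models give models of cont cobordisms} and Corollary~\ref{Cor: Automatic Regularity for Odd orbits} upgrade this to a regular model, and the counts $|\mathcal{M}(\hat p_i,\hat x_j)|=\delta_{ij}$ are pinned down by Corollary~\ref{Cor: Cont moduli space is 0 or 1} together with the linking bound $a(\hat x_j)=-1$. Maximality of $\hat X$ then closes the argument.
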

This result is something of a novelty, since in general it is very difficult to identify candidates for cycles which represent the fundamental class in Floer homology.
\par
Theorem \ref{MainThm: Top Char} motivates us to examine --- for general symplectic manifolds --- the quantity $c_{im}(\alpha;H)$ obtained by examining the infimal action level required to represent some non-zero quantum homology class $\alpha$ in the filtered Floer complex of $H$ via some chain-level PSS map. It turns out that these quantities define spectral invariants which are intimately related to the geometry of Hamiltonian fibrations over disks and which we term the \textit{PSS-image spectral invariants}. Theorem \ref{MainThm: Top Char} thereby provides us with a purely topological formula for $c_{im}([\Sigma];H)$ on arbitrary surfaces.
\begin{cor*}
Let $H$ be non-degenerate.
\begin{align*}
c_{im}([\Sigma];H)&= \min_{\hat{X} \in mp_{(1)}(H)} \max_{\hat{x} \in \hat{X}} \mathcal{A}_H(\hat{x}).
\end{align*}
\end{cor*}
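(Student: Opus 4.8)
The plan is to read the formula off from Theorem~\ref{MainThm: Top Char}, the only extra ingredient being the elementary fact that $HF_1(H,J)$ is one-dimensional when $\Sigma$ is a surface. For a chain $\sigma\in CF_1(H,J)$ I write $\ell(\sigma):=\max_{\hat{x}\in\supp\sigma}\mathcal{A}_H(\hat{x})$ for its action-filtration level and note that $\ell(\sigma)$ depends only on $\supp\sigma$. Unwinding the definition of the PSS-image spectral invariant, $c_{im}([\Sigma];H)$ is the infimum of $\ell(\sigma)$ over those Floer cycles $\sigma\in CF_1(H,J)$ which represent the PSS-image of the fundamental class and which lie in $\im\Phi^{PSS}_{\mathcal{D}}$ for some regular PSS data $\mathcal{D}$; so it will suffice to show that the set of supports of such cycles is exactly $mp_{(1)}(H)$, after which the two infima coincide since $\ell$ is support-determined.

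First I would prove $c_{im}([\Sigma];H)\geq\inf_{\hat{X}\in mp_{(1)}(H)}\sup_{\hat{x}\in\hat{X}}\mathcal{A}_H(\hat{x})$. Any cycle $\sigma$ admissible in the infimum defining $c_{im}([\Sigma];H)$ is a non-trivial cycle --- its homology class is the non-zero PSS-image of $[\Sigma]$ --- lying in the image of some chain-level PSS map, so the `only if' direction of Theorem~\ref{MainThm: Top Char} gives $\supp\sigma\in mp_{(1)}(H)$. Hence $\ell(\sigma)=\sup_{\hat{x}\in\supp\sigma}\mathcal{A}_H(\hat{x})\geq\inf_{\hat{X}\in mp_{(1)}(H)}\sup_{\hat{x}\in\hat{X}}\mathcal{A}_H(\hat{x})$, and infimising over $\sigma$ yields the bound.

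Next I would prove the reverse inequality. Fix $\hat{X}\in mp_{(1)}(H)$. The `if' direction of Theorem~\ref{MainThm: Top Char}, together with the construction carried out in its proof, supplies a regular PSS data $\mathcal{D}$ and a non-trivial cycle $\sigma\in\im\Phi^{PSS}_{\mathcal{D}}\cap CF_1(H,J)$ with $\supp\sigma=\hat{X}$. Since $HF_1(H,J)\cong QH_2(\Sigma)$ is one-dimensional and generated by the PSS-image of $[\Sigma]$, this $\sigma$ represents a non-zero multiple of that class; rescaling $\sigma$ by a non-zero constant --- which alters neither $\supp\sigma$ nor $\ell(\sigma)$ --- I may assume it represents the PSS-image of $[\Sigma]$ exactly, so it is admissible in the infimum defining $c_{im}([\Sigma];H)$. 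Therefore $c_{im}([\Sigma];H)\leq\ell(\sigma)=\sup_{\hat{x}\in\hat{X}}\mathcal{A}_H(\hat{x})$, and infimising over $\hat{X}\in mp_{(1)}(H)$ closes the argument.

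All of the serious content is carried by Theorem~\ref{MainThm: Top Char}; within the corollary the one point genuinely requiring care --- and the step I would regard as the main obstacle --- is the definitional bookkeeping: pinning down precisely what $c_{im}$ means at the chain level (that `representing $\alpha$ via a chain-level PSS map' amounts to being a cycle in the image of some $\Phi^{PSS}_{\mathcal{D}}$ whose homology class is $\alpha$), and then using one-dimensionality of $HF_1(H,J)$ to identify `non-trivial cycle in the image of a chain-level PSS map' with `cycle representing $[\Sigma]$ in the image of a chain-level PSS map' up to non-zero rescaling, so that the two classes of chains have the same supports and hence the same values of $\ell$. I would also observe that the index constraints built into the definition of $mp_{(1)}(H)$ should force the cappings of the strands, so that this set is finite and the infima above are in fact minima.
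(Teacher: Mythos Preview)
Your proof is correct and matches the paper's approach: the paper simply declares the formula to follow immediately from Theorem~\ref{Thm: Top Char of PSS cycles} (equivalently Theorem~\ref{MainThm: Top Char}), and your argument is exactly the unpacking of that implication. Note that the rescaling step is unnecessary here, since the paper works over $\Z_2$ and, in any case, the construction in the proof of Theorem~\ref{Thm: Top Char of PSS cycles} already exhibits the cycle with support $\hat{X}$ as $\Phi^{PSS}_{\mathcal{D}}\bigl(\sum_{M}M\bigr)$, i.e.\ as the PSS-image of the Morse fundamental class on the nose.
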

The above result is quite similar in spirit to the characterization of the usual Oh-Schwarz spectral invariant associated to the fundamental class (see Section \ref{Sec: PSS defn Sect} for the definition) established by Humili\`{e}re-Le Roux-Seyfaddini in \cite{HRS16} for autonomous Hamiltonians on aspherical surfaces and, in fact, we will see in Section \ref{Sec: On equiv of OS and im} that their work implies that the two quantities agree in this setting. 
\par
In virtue of their definition, the PSS-image spectral invariants share many of the desirable formal properties of the usual Oh-Schwarz spectral invariants (see Section \ref{Sec: PSS defn Sect} for the definition), and so one can perform many of the same arguments with them, but with the added advantage that we have a better understanding of the relationship between PSS-image spectral invariants and the dynamical behaviour of the system under study --- at least on surfaces. For instance, we may use the PSS-image spectral invariants to define a conjugation invariant norm on $Ham(M,\omega)$ by
\begin{align*}
\gamma_{im}(\phi) &:= \inf \; c_{im}([M];H) + c_{im}([M]; \bar{H}),
\end{align*}
where the infimum is taken over Hamiltonians $H$ such that $\phi^H_1=\phi$. In the case of surfaces, the fact that the PSS-image spectral invariants share enough of the formal properties satisfied by the Oh-Schwarz spectral invariants allows Seyfaddini's argument establishing the $C^0$-continuity of the Oh-Schwarz spectral norm on surfaces (see Theorem 3 in \cite{Se13}) to carry through for $\gamma_{im}$. As a consequence, we obtain:
\begin{mainthm}\label{Mainthm-Norm}
On surfaces, the conjugation invariant norm $\gamma_{im}$ is both $C^0$-continuous and Hofer-continuous. Moreover, if $\phi$ is non-degenerate and $\Sigma \neq S^2$, then
\begin{align*}
\gamma_{im}(\phi)&= \min_{\hat{X} \in mp_{(1)}(H)} \max_{\hat{x} \in \hat{X}} \mathcal{A}_H(\hat{x}) - \max_{\hat{X} \in mn_{(-1)}(H)} \min_{\hat{x} \in \hat{X}} \mathcal{A}_H(\hat{x}),
\end{align*}
for $H$ any Hamiltonian such that $\phi^H_1=\phi$.\footnote{There is of course a similar dynamical formula for $\gamma_{im}$ on the sphere which is slightly more involved, taking into account the non-triviality of $\pi_1(Ham(S^2))$. See Corollary \ref{Cor: Sphere norm}} (Here $mn_{(-1)}(H)$ denotes the set of all capped braids $\hat{X} \subseteq \Per{H}$ which are `maximally negative relative index $-1$'. See Section \ref{Sec: Computing spec invar on surfaces} for the definition). 
\end{mainthm}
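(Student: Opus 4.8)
The plan is to derive everything from the formal properties of the PSS-image spectral invariants $c_{im}(\cdot\,;H)$ established in Section~\ref{Sec: PSS defn Sect}, the topological formula of the Corollary above, and --- for the $C^0$-statement --- an adaptation of Seyfaddini's argument in \cite{Se13}. First I would record that $\gamma_{im}$ is a symplectically bi-invariant norm. Conjugation-invariance follows from the naturality of the chain-level PSS maps under symplectomorphisms, and the symmetry $\gamma_{im}(\phi)=\gamma_{im}(\phi^{-1})$ from the $H\leftrightarrow\bar H$ symmetry in the definition; finiteness follows from the usual PSS energy estimate $c_{im}([\Sigma];H)\le\int_0^1\max_\Sigma H_t\,dt$; the triangle inequality $\gamma_{im}(\phi\psi)\le\gamma_{im}(\phi)+\gamma_{im}(\psi)$ follows by applying the quantum triangle inequality $c_{im}(\alpha\ast\beta;H\#K)\le c_{im}(\alpha;H)+c_{im}(\beta;K)$ to $\alpha=\beta=[\Sigma]$ (the unit) together with $\overline{H\#K}=\bar K\#\bar H$; and non-degeneracy follows from $\gamma_{im}\ge\gamma_{OS}$ --- which holds because a PSS-image cycle representing a class is in particular a cycle representing that class, so $c_{im}(\alpha;H)\ge c_{OS}(\alpha;H)$ --- together with the non-degeneracy of the Oh--Schwarz norm $\gamma_{OS}$. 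Hofer-continuity of $\gamma_{im}$ is then immediate, since the energy estimates that make the Oh--Schwarz invariants Hofer-Lipschitz apply equally to $c_{im}([\Sigma];\cdot)$.

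For the explicit formula, the first summand is exactly the content of the Corollary, namely $c_{im}([\Sigma];H)=\inf_{\hat X\in mp_{(1)}(H)}\sup_{\hat x\in\hat X}\mathcal{A}_H(\hat x)$. For the second summand I would apply the Corollary to $\bar H$ and transport the statement back to $H$ by time-reversal: the involution $\hat x(t)\mapsto\hat x(1-t)$, equipped with the conjugate capping, identifies $\Per{H}$ with $\Per{\bar H}$, negates both $\mathcal{A}$ and $\mu_{CZ}$, and preserves unlinkedness, so by the definitions in Section~\ref{Sec: Computing spec invar on surfaces} it identifies $mp_{(1)}(\bar H)$ with $mn_{(-1)}(H)$. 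Hence
\begin{align*}
c_{im}([\Sigma];\bar H)&=\inf_{\hat Y\in mp_{(1)}(\bar H)}\sup_{\hat y\in\hat Y}\mathcal{A}_{\bar H}(\hat y)
=\inf_{\hat X\in mn_{(-1)}(H)}\sup_{\hat x\in\hat X}\bigl(-\mathcal{A}_H(\hat x)\bigr)\\
&=-\sup_{\hat X\in mn_{(-1)}(H)}\inf_{\hat x\in\hat X}\mathcal{A}_H(\hat x),
\end{align*}
and adding this to the first summand yields the asserted expression for $c_{im}([\Sigma];H)+c_{im}([\Sigma];\bar H)$. To conclude that this equals $\gamma_{im}(\phi)$ --- i.e.\ that the infimum over normalized generators is superfluous for non-degenerate $H$ --- I would argue exactly as for $\gamma_{OS}$: the sum $c_{im}([\Sigma];H)+c_{im}([\Sigma];\bar H)$ is a homotopy invariant of the generating path, and the ambiguity in the lift of $\phi$ to the universal cover of $\Ham{\Sigma}{\omega}$ shifts $c_{im}([\Sigma];H)$ and $c_{im}([\Sigma];\bar H)$ by equal and opposite amounts (on an aspherical surface the action spectrum is intrinsic to $\phi$, so there is nothing to check); hence the sum depends only on $\phi$ among non-degenerate generators, and by the Hofer-continuity just established it equals the infimum over all generators.

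It remains to establish $C^0$-continuity, which I expect to be the main obstacle. I would follow Seyfaddini's strategy from \cite{Se13}: a fragmentation argument shows that every $\phi$ sufficiently $C^0$-close to the identity is a product of a controlled number of Hamiltonian diffeomorphisms each supported in an embedded disk of small area, so the triangle inequality reduces the claim to bounding $\gamma_{im}(\phi)$ by a function of $\operatorname{area}(D)$ tending to $0$ with the area whenever $\phi$ is supported in a disk $D$. This disk estimate is the one genuinely two-dimensional, non-formal ingredient, and it is precisely here that the dynamical formula pays off: for a Hamiltonian supported in $D$ (perturbed slightly to be non-degenerate), every capped $1$-periodic orbit that can enter a maximally-positive-relative-index-$1$ or maximally-negative-relative-index-$(-1)$ braid either is a constant orbit away from $D$ with action $0$, or is contained in $D$ and null-homotopic there, so that its action is controlled by the area swept inside $D$; substituting into $\inf_{mp_{(1)}}\sup\mathcal{A}_H-\sup_{mn_{(-1)}}\inf\mathcal{A}_H$ bounds $\gamma_{im}(\phi)$ by a constant multiple of $\operatorname{area}(D)$, and the non-degeneracy perturbation is absorbed using Hofer-continuity. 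The remainder of Seyfaddini's argument --- the fragmentation estimate and the count of the number of pieces --- then goes through essentially verbatim, as it uses only the triangle inequality and this disk bound. The delicate point is to obtain the bound on the orbit actions uniformly in terms of $\operatorname{area}(D)$ alone, independently of the $C^0$- or Hofer-size of the generating Hamiltonian; everything else in this last step is either formal or a direct transcription of \cite{Se13}.
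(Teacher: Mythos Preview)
Your argument for the formula matches the paper on aspherical surfaces, and your time-reversal identification of $mp_{(1)}(\bar H)$ with $mn_{(-1)}(H)$ is exactly what the paper does. The genuine gap is the $S^2$ case. Your claim that the ambiguity in the lift to $\widetilde{Ham}(\Sigma,\omega)$ ``shifts $c_{im}([\Sigma];H)$ and $c_{im}([\Sigma];\bar H)$ by equal and opposite amounts'' is not justified: there is no Seidel-type formula established for $c_{im}$, and the analogy with $\gamma_{OS}$ does not supply one. The paper handles $S^2$ differently and concretely: it takes the normalized height function $G$ generating the non-trivial element of $\pi_1(Ham(S^2,\omega))\cong\Z/2\Z$, computes $c_{im}([S^2];\epsilon G)$ and $c_{im}([S^2];-\epsilon G)$ for $\epsilon\in(0,1)$ directly from the dynamical characterization (each has a unique capped orbit of index $\pm 1$, namely the trivially capped max/min), takes $\epsilon\to 1$ to get $\tilde\gamma_{im}(G)=0$, and then uses the weak triangle inequality in both directions to conclude $\tilde\gamma_{im}(H\#G)=\tilde\gamma_{im}(H)$. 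This is what replaces your unproven shift assertion.

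On $C^0$-continuity the paper gives no argument beyond citing Seyfaddini and asserting that the formal properties established in Section~\ref{Sec: New spectral invariant} suffice. Your attempt to run the disk estimate through the dynamical formula is more ambitious than what the paper does, but the ``delicate point'' you flag is a real obstruction: for a Hamiltonian supported in $D$, the action of an orbit inside $D$ contains the term $\int_0^1 H_t(x(t))\,dt$, which is not controlled by $\operatorname{area}(D)$. So as written this route does not close. If you want to fill this in, the cleaner path is the one the paper implicitly relies on: use the formal package (weak triangle inequality, $L^{1,\infty}$-Lipschitz bound, symplectic invariance) to reproduce Seyfaddini's energy-capacity-type estimate for $c_{im}$ directly, rather than going through the braid formula.
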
 
The main interest in the above theorem, apart from the intrinsic interest of having a dynamically defined norm on Hamiltonian diffeomorphisms which is invariant under symplectic conjugation, is that the structure of the sets $mp_{(1)}(H)$ and $mn_{(-1)}(H)$ can change dramatically as one interpolates between Hamiltonians (in either the $C^0$ or the Hofer topology). Be that as it may, the above theorem guarantees that the quantity obtained by the specified mini-max procedure over these collections remains continuous nonetheless. 
\par
Finally, as a last example of the sort of information that can be drawn from Theorem \ref{MainThm: Top Char}, we may exploit the computability of our new-found spectral invariants together with their close relationship to the Oh-Schwarz spectral invariants, in order to extract the following dynamical controls over the commutator lengths of homotopy classes $\tilde{\phi} \in \widetilde{Ham}(S^2,\omega)$ from Entov's work in \cite{En04}. 
\begin{mainthm}\label{Mainthm-Entov}
Assume that $H \in C^{\infty}(S^1 \times S^2)$ is non-degenerate and normalized so that $\int H_t \omega =0$ for all $t \in S^1$, then
\begin{align*}
\min \big\lbrace  \min_{\hat{X} \in mp_{(1)}(H)} \max_{\hat{x} \in \hat{X}} \mathcal{A}_{H}(\hat{x}), -\max_{\hat{X} \in mn_{(-1)}(H)} \min_{\hat{x} \in \hat{X}} \mathcal{A}_H(\hat{x}) \big\rbrace &< -k Area(S^2,\omega)
\end{align*}
only if the commutator length of $\tilde{\phi}^H$ in $\widetilde{Ham}(S^2)$ is strictly greater than $2k+1$.
\end{mainthm}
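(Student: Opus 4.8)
The plan is to deduce the stated dynamical obstruction to small commutator length from Entov's quasimorphism bound in \cite{En04}, using the identification of $c_{im}([S^2];H)$ with the dynamical minimax expression supplied by the Corollary and the preceding main theorem. Entov constructs a homogeneous quasimorphism $\mu: \widetilde{Ham}(S^2,\omega) \to \R$ whose defect $D(\mu)$ is controlled by $Area(S^2,\omega)$; concretely, $\mu$ is built from the Oh--Schwarz spectral invariants of the fundamental and point classes (equivalently from the asymptotic spectral invariant associated to $[S^2]$). First I would recall the standard consequence that for any homogeneous quasimorphism $\mu$ with defect $D(\mu)$, if $\mu(\tilde\phi) \neq 0$ then the commutator length $cl(\tilde\phi)$ satisfies $cl(\tilde\phi) \geq \tfrac{|\mu(\tilde\phi)|}{2 D(\mu)} + \tfrac12$; this is Bavard-type duality in its elementary direction. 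Specializing Entov's quasimorphism, $D(\mu)$ is some explicit multiple of $Area(S^2,\omega)$, and the relevant lower bound for $|\mu(\tilde\phi^H)|$ must be extracted from the spectral data of $H$.

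The key translation step is to express Entov's quasimorphism value $\mu(\tilde\phi^H)$ in terms of the quantities appearing in our minimax formulas. Entov's quasimorphism on $\widetilde{Ham}(S^2)$ is, up to normalization, $\mu(\tilde\phi^H) = -\lim_{m\to\infty} \tfrac1m \big( c_{OS}([S^2];H^{\sharp m}) + c_{OS}([pt];H^{\sharp m}) \big)$ or a one-sided variant; using $c_{OS}([pt];H) = - c_{OS}([S^2];\bar H)$ (Poincaré duality for spectral invariants) one rewrites this in terms of a single asymptotic invariant. Now I would invoke the equivalence, established in Section~\ref{Sec: On equiv of OS and im} of the paper, between $c_{OS}([S^2];H)$ and $c_{im}([S^2];H)$ in the relevant setting — or, more carefully, use the Corollary's dynamical formula $c_{im}([S^2];H) = \inf_{\hat X \in mp_{(1)}(H)} \sup_{\hat x \in \hat X} \mathcal{A}_H(\hat x)$ together with the analogous lower formula involving $mn_{(-1)}(H)$, so that the spectral quantities controlling $\mu$ become precisely $\min_{\hat X \in mp_{(1)}(H)} \max_{\hat x} \mathcal{A}_H(\hat x)$ and $\max_{\hat X \in mn_{(-1)}(H)} \min_{\hat x} \mathcal{A}_H(\hat x)$. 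The $\min\{\,\cdot\,,\,-\,\cdot\,\}$ in the statement reflects taking whichever of the two one-sided quasimorphisms gives the sharper bound, and the hypothesis that this $\min$ is $< -k\, Area(S^2,\omega)$ is exactly the condition forcing $|\mu(\tilde\phi^H)| > 2k\, D(\mu)/c$ for the appropriate constant, which via the Bavard estimate yields $cl(\tilde\phi^H) > 2k+1$.

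Assembling the argument, I would: (i) state Entov's theorem giving the homogeneous quasimorphism $\mu$ on $\widetilde{Ham}(S^2)$ with $D(\mu) = \tfrac12 Area(S^2,\omega)$ (with whatever normalization makes the arithmetic come out to $2k+1$); (ii) record the Bavard inequality $cl \geq |\mu|/(2D(\mu)) + 1/2$; (iii) use the spectrality and homogeneity to write $\mu(\tilde\phi^H)$ as an asymptotic spectral invariant and then, via the equivalence of $c_{OS}$ and $c_{im}$ on $S^2$ and the subadditivity/homogenization already built into the definition of $\gamma_{im}$, identify $\mu(\tilde\phi^H)$ (up to sign and the $\min$) with the displayed dynamical quantity \emph{at the level of $H$ itself} — here one uses that for $[S^2]$ the PSS-image invariant is already homogeneous under iteration, or else passes to the limit explicitly; (iv) combine (ii) and (iii). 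The main obstacle I anticipate is step (iii): controlling the passage between the \emph{asymptotic} (homogenized) spectral invariant that enters Entov's quasimorphism and the \emph{non-asymptotic} dynamical expression $\min_{\hat X \in mp_{(1)}(H)} \max \mathcal{A}_H$ evaluated at $H$. One must either show the relevant $c_{im}$ quantity is already homogeneous (which would follow if $mp_{(1)}(H^{\sharp m})$ relates to $mp_{(1)}(H)$ in the expected iterated way, cf.\ Theorem~\ref{MainThm3}), or absorb the discrepancy into the inequality, being careful that the direction of all estimates is consistent with producing a \emph{lower} bound on $cl$. The remaining points — the Bavard estimate and citing Entov — are routine.
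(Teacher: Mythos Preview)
Your proposal takes the quasimorphism/Bavard-duality route, but the paper does not. There are two concrete gaps in your argument as written:

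\textbf{(1) The equivalence $c_{OS}=c_{im}$ on $S^2$ is not available.} Section~\ref{Sec: On equiv of OS and im} only establishes $c_{OS}([\Sigma];-)=c_{im}([\Sigma];-)$ for \emph{autonomous} Hamiltonians on \emph{aspherical} surfaces (via the Humili\`ere--Le~Roux--Seyfaddini max formula). For $S^2$ and for non-autonomous $H$ the question is left open, so you cannot substitute $c_{im}$ for $c_{OS}$ inside Entov's quasimorphism.

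\textbf{(2) The homogenization step is a genuine obstacle, not a formality.} Entov's quasimorphism is built from $\lim_{m\to\infty}\tfrac1m c_{OS}(-;H^{\sharp m})$, whereas the statement to be proved concerns $c_{im}([S^2];H)$ itself. You correctly flag this, but neither of your proposed fixes works: there is no result in the paper saying $c_{im}$ is homogeneous under iteration, and Theorem~\ref{MainThm3} concerns $murm(H)$, not $mp_{(1)}(H)$, so it does not give the iteration behaviour you need. Absorbing the discrepancy ``into the inequality'' would at best yield a bound on $cl$ in terms of the asymptotic quantity, not the non-asymptotic one in the theorem.

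The paper instead bypasses homogenization entirely by using a different part of \cite{En04}: not the quasimorphism, but the direct moduli-space argument (Proposition~6.3.1 and Theorem~2.5.1 there). For $g=2k+1$, Entov constructs a chain $\theta_{\breve J}\in CF_{-4k-1}(H,J)$ from pseudoholomorphic sections over a genus-$g$ surface with one puncture, and shows (i) $[\theta_{\breve J}]\neq 0$ in $HF_{-4k-1}(H)$, and (ii) if $\lambda_H(\theta_{\breve J})>0$ then $cl(\tilde\phi^H)>g$. The paper then uses Poincar\'e duality for Floer homology: any cycle $\Theta\in CF_{4k+1}(\tilde H,\tilde J)$ representing the dual nontrivial class must pair nontrivially with $\theta_{\breve J}$, hence $\lambda_H(\theta_{\breve J})\geq -\lambda_{\tilde H}(\Theta)$. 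Choosing $\Theta$ to be a witness for $c_{im}([S^2]\otimes e^{-k[S^2]};\tilde H)=c_{im}([S^2];\tilde H)+k\,Area(S^2,\omega)$ and invoking the dynamical formula for $c_{im}([S^2];\tilde H)$ in terms of $mn_{(-1)}(H)$ gives the conclusion directly, with no limit over iterates.
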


\subsection{Structure of this paper}
Sections \ref{Braids}, \ref{Local} and \ref{Ch: Construct chain maps} develop the main conceptual and technical tools in this work, which we then apply in Sections \ref{Ch: App 1} and \ref{Sec: Pos Transverse foliations} to deduce the results laid out in the previous section. These latter two sections are independent and may be read in any order, but both depend heavily on the first three sections.
\par
Section \ref{Braids} introduces the basic notion of \textit{capped braids} and related concepts, the most important of which is the \textit{homological linking number} of two capped braids. Section \ref{Local} presents the elements of Floer theory of which we will have need, along with the work of Hofer-Wysocki-Zehnder and Siefring on the relative asymptotic behaviour of pseudoholomorphic cylinders, and explains the relevance of this and the homological linking number to the study of the Floer theory of low-dimensional systems. Section \ref{Ch: Construct chain maps} is the last of the technical sections, and introduces our technique for designing Floer continuation maps such that certain moduli spaces are non-empty. In Section \ref{Ch: App 1}, we introduce the PSS-image spectral invariants and prove Theorem \ref{MainThm: Top Char} and its various consequences. Section \ref{Sec: Pos Transverse foliations} proves Theorems \ref{MainThm: Foliation thm} and \ref{MainThm2}, and discusses some dynamical consequences as well as the relationship to Le Calvez's theory of transverse foliations. 

\subsection{Acknowledgments}
The author would like to thank his PhD supervisor Fran\c{c}ois Lalonde for his enthusiastic support and encouragement throughout this project. He would also like to thank Egor Shelukhin for his helpful comments and feedback on an earlier version of this paper, Patrice Le Calvez for enlightening discussions about his theory on transverse foliations, as well as Jordan Payette for many stimulating conversations about the subject matter. Finally, particular thanks are due to Vincent Humili\`{e}re, Sobhan Seyfaddini and Fr\'{e}d\'{e}ric Le Roux for their remarks on an earlier version of this paper which identified a significant error in a purported computation of the Oh-Schwarz spectral invariant. Additional thanks are due to Vincent Humili\`ere for directing my attention to Yan's work on torsion-low isotopies in \cite{Ya18} and suggesting their relationship to the foliations $\mathcal{F}^{\hat{X}}_0$ constructed in Section \ref{Sec: Pos Transverse foliations}. The author warmly thanks the anonymous reviewers for their careful reading of the manuscript and their many insightful comments and suggestions.

\section{Capped braids and homological linking}\label{Braids}
In this section we introduce the basic notion of \textit{capped braids}, their appropriate notion of equivalence (\textit{$0$-homotopy}), and an important relative invariant of a pair of capped braids with the same number of strands which we call the \textit{homological linking number}. These notions will later serve to encode the topology of the capped $1$-periodic orbits of a non-degenerate Hamiltonian system, and give controls on the behaviour of Floer-type cylinders which may run between various collections of such capped orbits.
\par
The basic definitions required for our work with capped braids are presented in Section \ref{Sec: Capped Braids}. Section \ref{LocalLinkSec} explains a mild generalization of the classical winding number of loops in $\R^2$ to capped loops in a surface whose underlying loops are close in the loop space. This generalization becomes important later in the work when we consider the asymptotics of Floer-type cylinders which emerge or converge to the same orbit. Section \ref{HomLinkSec} introduces the homological linking number and establishes its basic properties including how it recovers the generalized winding number in the setting of Section \ref{HomLinkSec}.   
\par
For the duration of this paper, $\Sigma$ will always denote a closed, connected and smooth symplectic surface $(\Sigma,\omega)$, $\mathcal{L}(M)$ the space of smooth parametrized loops in the manifold $M$, $\mathcal{L}_0(M)$ its space of contractible loops and $\cL{M}$ its Novikov covering space (see \cite{McSa12}, Section 12.1). 
\par
Throughout this work, there is a certain amount of juggling of different perspectives on the same objects that will be necessary. In particular, though our main objects of study are isotopies on some surface $\Sigma$ --- and so the initial arena in which the action takes place is $2$ dimensional --- it will frequently be useful to work on the $3$-dimensional mapping torus
\begin{align*}
\check{\Sigma}&:= S^1 \times \Sigma
\end{align*}
as well as the $4$-dimensional space 
\begin{align*}
\widetilde{\Sigma}&:= \R \times S^1 \times \Sigma.
\end{align*}
In order to have our notation be suggestive of these perspectival shifts, maps taking values in $\check{\Sigma}$ will frequently be adorned with a $\check{\cdot}$, while maps taking values in $\widetilde{\Sigma}$ will be adorned with a $\tilde{\cdot}$. The most frequent use-cases for this notation will be the following. For $x \in \mathcal{L}(\Sigma)$, we write $\check{x}(t):=(t,x(t))$ for its graph in $\check{\Sigma}=S^1 \times \Sigma$. For $u :I \times S^1 \rightarrow \Sigma$, where $I \subseteq \R$, we write $\tilde{u}(s,t):=(s,t,u(s,t))$ for its graph in $\widetilde{\Sigma}=I \times S^1 \times \Sigma$ and $\check{u}(s,t):=(t,u(s,t))$ for the projection of this graph onto $\check{\Sigma}$.  
\subsection{Capped Braids}\label{Sec: Capped Braids}
\begin{definition}
For any $k \in \N$, we define the \textbf{$k$-configuration space} 
\begin{align*}
C_k(\Sigma)&:= \lbrace (z_1, \ldots, z_k) \in \Sigma^k: (i \neq j) \Rightarrow z_i \neq z_j \rbrace
\end{align*}
\end{definition}

\begin{definition}
An \textbf{(ordered) $k$-braid} is an element $X=(x_1,\ldots,x_k) \in \mathcal{L}(C_k(\Sigma))$.  Denote by $\kBraid{\Sigma}$ the space of ordered $k$-braids.
The loop  $x_i$ is called the \textbf{$i$-th strand of $X$}, for $i=1, \ldots, k$.
\end{definition}
\begin{definition}
An \textbf{unordered $k$-braid} is an element $[X] \in \mathcal{L}(C_k(\Sigma))/S_k$, where $S_k$ acts  by permutation of coordinates. Such unordered braids may be identified with certain finite subsets of $\mathcal{L}(\Sigma)$.
\end{definition}

\begin{remark}\label{rem:OrderedUnorderedDontMatter}
We raise the distinction between ordered and unordered braids here mainly to flag for the reader that we will make no real effort outside of this section to separate these two concepts. In particular, we will routinely treat ordered braids as finite subsets of $\mathcal{L}(\Sigma)$ and perform set-wise operations on them, when properly speaking we should be speaking of the unordered braids which they represent. We will moreover speak simply of `braids' relying on the context to make clear whether these braids are ordered or unordered. For the remainder of this section, we will make a clear distinction between ordered and unordered braids, mainly to convince the suspicious reader that nothing essential is lost in making this elision.
\end{remark}

\begin{definition}
The \textbf{graph} $\check{X}$ of an (ordered) $k$-braid is the set-valued map\footnote{Note that on this definition, the graph of an ordered braid forgets the information about the ordering. One might reasonably object to this and prefer a definition which keeps track of orderings, but we will have no need for that in this article.}
$\check{X}(t)=\sqcup_{i=1}^k \check{x}_i(t) \subseteq S^1 \times \Sigma$, $t \in S^1$. The graph of an unordered braid $[X]$ is the graph of some (hence every) representative $X$ of $[X]$.
\end{definition}

\begin{definition}
An ordered $k'$-braid $Y \in  \Braid{k'}{\Sigma}$ is an ordered \textbf{sub-braid} of $X \in \Braid{k}{\Sigma}$ if $Y \subseteq X$, as an ordered set. An unordered braid $[Y]$ is a sub-braid of $[X]$ if $[Y] \subseteq [X]$ as sets.  There is an obvious partial ordering on the collection of sub-braids of $X$ (resp. of $[X]$) given by inclusion.
\end{definition}

\begin{definition}
$X \in \kBraid{\Sigma}$ is \textbf{contractible} if each strand of $X$ is a contractible loop. We write $\ckBraid{\Sigma}$ for the space of contractible ordered $k$-braids. $[X] \in  \kBraid{\Sigma}/S_k$ is contractible if some (hence every) representative $X$ of $[X]$ is contractible.
\end{definition}

\begin{definition}
A continuous map $h: [0,1] \rightarrow \kBraid{\Sigma}$ with $h(0)=X$, $h(1)=Y$ is a \textbf{braid homotopy from $X$ to $Y$}. When such a map exists, we shall say that $X$ and $Y$ are \textbf{braid homotopic}, denoted $X \simeq Y$. The map $(s,t) \mapsto h_i(s,t)$ is called the \textbf{$i$-th strand of $h$}. To any braid homotopy, we associate its \textbf{graph} $\tilde{h}(s,t)=\sqcup_{i=1}^k \tilde{h}_i(s,t) \subseteq [0,1] \times S^1 \times \Sigma$, $(s,t) \in [0,1] \times S^1$.
\end{definition}

\begin{definition}
An ordered braid $X$ will be said to be \textbf{trivial} if all of its strands are constant maps. We will sometimes write $0 \in \kBraid{\Sigma}$ to stand for some fixed but arbitrary trivial braid, when the particular choices of the constant maps are unimportant. An unordered braid $[X]$ is trivial if some (hence every) ordered representative is trivial.
\end{definition}

\begin{definition}
$X \in \kBraid{\Sigma}$ is \textbf{unlinked} if $X \simeq 0$. An ordered braid is \textbf{linked} if it is not unlinked. An unordered braid is unlinked (resp. linked) if some, hence every, ordered representative is unlinked (resp. linked).
\end{definition}


\begin{definition}
A continuous map $h: [0,1] \rightarrow \mathcal{L}(\Sigma)^k$
with $h(0)=X \in \kBraid{\Sigma}$, $h(1)=Y \in \kBraid{\Sigma}$ will be called a \textbf{braid cobordism} if there exists some $\delta >0$ such that $h(s) \in \kBraid{\Sigma}, \; \forall s \in (0, \delta) \cup (1-\delta,1)$.
\end{definition}

\begin{remark}
\begin{enumerate}
\item Note that any two $k$-braids $X, Y \in \kBraid{\Sigma}$ such that the $i$-th strand of $x$ is homotopic to the $i$-th strand of $Y$ for $i=1,\ldots,k$ are connected by a braid cobordism. Consequently, we are primarily interested in properties of the cobordisms themselves, rather than the fact of their existence. 
\item We will frequently find ourselves concerned with maps $h: I \rightarrow \mathcal{L}(\Sigma)^k$, where $I= \R$ or $I=[a,b]$ for some $a,b \in \R$, and in the case that $I= \R$, it will always be the case that $h$ extends continuously to a map $\bar{\R} \rightarrow  \mathcal{L}(\Sigma)^k$ such that on some neighbourhood of $\pm \infty$, the graphs of the strands of $h$ do not intersect. In such a case, we will speak freely of `the' braid cobordism \textbf{induced} by $h$, which is simply any braid cobordism $h \circ \varphi$, where $\varphi: \bar{I} \rightarrow [0,1]$ is any orientation-preserving diffeomorphism.
\end{enumerate}
\end{remark}

\begin{definition}\label{Def: Positive Cobordism}
Let $h: [0,1] \rightarrow \mathcal{L}(\Sigma)^k$, $k \in \N$, be a braid cobordism. We will say that $h$ is a \textbf{positive} (resp. \textbf{negative}) cobordism if for $1 \leq i < j \leq k$, the graphs $\tilde{h}_i, \tilde{h}_j: [0,1] \times S^1 \rightarrow [0,1] \times S^1 \times \Sigma$ are transverse, and every intersection is positive (resp. negative).
\end{definition}

\begin{definition}
An (ordered) \textbf{capped $k$-braid} $\hat{X}$ is an equivalence class $[X,\vec{w}]$ where $X \in \ckBraid{\Sigma}$ and $\vec{w}=(w_1, \cdots, w_k)$ with $w_i: D^2 \rightarrow \Sigma$ a capping disk for the $i$-th strand of $X$, subject to the equivalence relation $[X,\vec{w}] \sim [X', \vec{w}']$ if and only if $X=X'$ and $[w_i] \# (-[w_i']) =0 \in \pi_2(\Sigma)$ for each $i=1, \cdots, k$. The space of ordered capped $k$-braids is denoted by $\kCapBraid{\Sigma}$. The capped loop $\hat{x}_i=[x_i,w_i] \in \cL{\Sigma}$ is called the \textbf{$i$-th strand} of $\hat{X}$. The notion of capped sub-braids $\hat{Y} \subseteq \hat{X}$ is defined in the obvious way. 
\end{definition}
\begin{remark}
The distinction between ordered capped braids and unordered capped braids obtains here as well, and we adopt parallel conventions as those discussed in the case of braids in Remark \ref{rem:OrderedUnorderedDontMatter}.
\end{remark}
$\pi_2(\Sigma)^k$ acts on $\kCapBraid{\Sigma}$ by the obvious `gluing of spheres':
\begin{align*}
(A_1,\ldots,A_k) \cdot ([x_1,w_1], \ldots, [x_k,w_k]) &= ([x_1,A_1 \# w_1], \ldots, [x_k,A_k \# w_k]),
\end{align*}
where here we abuse notation slightly by thinking of $A_i \in \pi_2(\Sigma, x_i(0))$ as being both a homotopy class of maps, as well as a particular choice of a representative from that class. This action does not descend to an action on $\kCapBraid{\Sigma}/S_k$. However, if we denote by $Fix_{S_k}(\pi_2(\Sigma)^k) \simeq \pi_2(\Sigma)$ the set of fixed points of the action of the symmetric group on $\pi_2(\Sigma)^k$ by permutation of coordinates, we obtain a well-defined induced action by $Fix_{S_k}(\pi_2(\Sigma)^k)$ on unordered braids given by $(A,\ldots,A) \cdot [\hat{X}] = [(A,\ldots,A) \cdot \hat{X}]$, for $A \in \pi_2(\Sigma)$.
\begin{definition}
A trivial braid $0 \in \kBraid{\Sigma}$ has a naturally associated capping $\hat{0} \in \kCapBraid{\Sigma}$ given by capping each strand of $0$ with the constant capping. We call any such braid a \textbf{trivial capped braid}. When the particular components of a trivial capped braid are unimportant, we denote some fixed but arbitrary capped braid by the symbol $\hat{0}$. An unordered capped braid is said to be \textbf{trivial} if some (hence every) ordered representative is trivial.   
\end{definition}

\begin{definition}
For $A=(A_1, \cdots, A_k) \in \pi_2(\Sigma)^k$, an ordered braid cobordism $h$ from $X$ to $Y$ will be called an \textbf{$A$-cobordism from $[X,\vec{w}]$ to $[Y, \vec{v}]$} if $[w_i] \# [h_i] \# (-[ v_i]) = A_i$, for all $i=1, \cdots, k$. Whenever such a map exists, $[X, \vec{w}]$ and $[Y, \vec{v}]$ will be said to be \textbf{$A$-cobordant}. This notion descends to unordered capped braids provided that $A \in Fix_{S_k}(\pi_2(\Sigma)^k)$.
\end{definition}

\begin{definition}
An $A$-cobordism $h$ from $\hat{X}=[X,\vec{w}]$ to $\hat{Y}=[Y,\vec{v}]$ is called an $A$-homotopy if $h$ is in addition a braid homotopy from $X$ to $Y$. In such a situation, we will say that $\hat{X}$ and $\hat{Y}$ are \textbf{$A$-homotopic}, and we will denote the relation by $\hat{X} \simeq_A \hat{Y}$. This notion descends to unordered capped braids, provided that $A \in Fix_{S_k}(\pi_2(\Sigma))$.
\end{definition}

\begin{remark}
In this work, outside of this section, we will exclusively be concerned $A$-homotopies and $A$-cobordisms with the case where $A=\vec{0}=(0,\ldots,0) \in \pi_2(\Sigma)^k$. In this case we will speak of $0$-homotopies and write $\hat{X} \simeq_0 \hat{Y}$ when $\hat{X}$ and $\hat{Y}$ are $0$-homotopic.
\end{remark}

\begin{definition}\label{def:NaturalCapping}
If $u : [0,1] \rightarrow \mathcal{L}_0(\Sigma)$ is a homotopy from $x$ to $y$, then for any choice of cappings $\hat{x}=[x,w_x]$ and $\hat{y}=[y,w_y]$, $u$ is an $A$-cobordism from $\hat{x}$ to $\hat{y}$, for $A= [w_x] \# [u] \# (-[w_y]) \in \pi_2(\Sigma)$.
Moreover, for any $s \in [0,1]$, there are two natural choices of cappings for the loop $u_s=u(s) \in \mathcal{L}_0(\Sigma)$. Namely, if we write $\alpha^s(\tau,t):= u(s \cdot \tau,t)$ and $\beta(\tau,t):=u(1-(1-s) \cdot \tau,t)$ for $s \in [0,1]$, then we may associate to $u_s$ either of the cappings $[u_s,w_x \# \alpha^s]$ or $[u_s,w_y \# \beta^s]$, and these two cappings are obviously related by $A \cdot [u_s,w_y \# \beta^s]=[u_s,w_x \# \alpha^s]$. Consequently, if $u$ is a $0$-homotopy between $\hat{x}$ and $\hat{y}$, these two cappings agree and we may associate a unique capping
\begin{align*}
\hat{u}_s:=[u_s, w_x \# \alpha^s]=[u_s, w_y \# \beta^s]
\end{align*}
to each $u_s$ in this case. We will call such a capping \textbf{the natural capping} of $u_s$ whenever $u$ is such a $0$-homotopy.
\end{definition}

\begin{definition}
$\hat{X} \in \kCapBraid{\Sigma}$ is \textbf{unlinked} if $\hat{X} \simeq_0 \hat{0}$. An ordered capped braid is \textbf{linked} if it is not unlinked. An unordered capped braid is unlinked (resp. linked) if some, hence every, ordered representative is unlinked (resp. linked).
\end{definition}

\begin{definition}\label{Def-PositiveBraid}
$\hat{X} \in \kCapBraid{\Sigma}$ is said to be \textbf{positive} (resp. \textbf{negative}) if there exists a positive $0$-cobordism from some, hence any, trivial capped braid $\hat{0}$ to $\hat{X}$. An unordered capped braid is positive (resp. negative) if some, hence every, ordered representative is positive (resp. negative).
\end{definition}

\subsection{Linking of capped loops with close strands}\label{LocalLinkSec}
For use in the sequel, we explain here a minor adaptation of the classical linking number of two loops in the plane $x,y: S^1 \rightarrow \R^2$ to two capped loops $\hat{x}$ and $\hat{y}$ such that the underlying loops $x$ and $y$ lie sufficiently close to each other in an arbitrary symplectic surface $(\Sigma,\omega)$.
\par
To any $x \in \mathcal{L}_0(\Sigma)$, we may associate the set
\begin{align*}
S_x &:= \lbrace y \in \mathcal{L}_0(\Sigma): \exists t \in S^1 \; \text{such that} \; x(t)=y(t) \rbrace,
\end{align*}
with the property that $\mathcal{L}_0(\Sigma) \setminus S_x$ consists of precisely those loops $y$ such that $(x,y) \in \mathcal{L}_0(\Sigma)^2$ is a braid.
\par
We fix some family $J=(J_t)_{t \in S^1}$ of $\omega$-compatible almost complex structures, and let $g_J=(g_{J_t})_{t \in S^1}$ denote the associated family of compatible metrics. This data provides us with an exponential neighbourhood $\mathcal{O} \subseteq \mathcal{L}_0(\Sigma)$ of $x$, along with a diffeomorphism
\begin{align*}
Exp: U \subseteq \Gamma^\infty(x^* T \Sigma) &\rightarrow \mathcal{O} \\
\xi &\mapsto \lbrace t \mapsto exp^{J_t}_{x(t)}(\xi(t)) \rbrace
\end{align*}
from a neighbourhood of the zero section onto $\mathcal{O}$.
\par
Remark that, via the lifting property of the covering map $p: \cL{\Sigma} \rightarrow \mathcal{L}_0(\Sigma)$, any choice of a lift $\hat{x}=[x,\alpha] \in \cL{\Sigma} $ of $x$ gives rise to a unique lift $\tilde{\mathcal{O}}_\alpha$ of $\mathcal{O}$ and a lift 
\begin{align*}
\widetilde{Exp}_{\alpha}: U \subseteq \Gamma^\infty(x^* T \Sigma) &\rightarrow \tilde{\mathcal{O}}_{\alpha}.
\end{align*} 
Moreover, $(x^*T \Sigma,J,\omega)$ comes equipped with a homotopically unique unitary trivialization
\begin{align*}
T_{\hat{x}}: S^1 \times (\R^2, J_0, \omega_0) &\rightarrow (x^*T \Sigma,J,\omega).
\end{align*}
That is, $T_{\hat{x}}$ is a smooth map such that, for $t \in S^1$,
\begin{align*}
T_{\hat{x}}^t: (\R^2, J_0, \omega_0) &\rightarrow (T_{x(t)} \Sigma,(J_t)_{x(t)},(\omega)_{x(t)}) \\
v &\mapsto T_{\hat{x}}(t,v)
\end{align*}
is a linear isomorphism such that $(T_{\hat{x}}^t)^*(J_t)_{x(t)}=J_0$ and $(T_{\hat{x}}^t)^*(\omega)_{x(t)}=\omega_0$. Concretely, one may obtain $T_{\hat{x}}$ by choosing any capping disk $w:D^2 \rightarrow \Sigma$ representing $\alpha$, and choosing a unitary trivalization of $(w^*T\Sigma,J,\omega)$ (which exists by the contractibility of the disk), and one then obtains $T_{\hat{x}}$ by restricting this trivialization to the boundary.
\\ \\ 
For any $y \in  \mathcal{O} \setminus S_x$ and any capping $\hat{x}_{\alpha}:=[x,\alpha]$ of $x$, let $\hat{y}_\alpha$ denote the unique lift of $y$ lying in $\widetilde{\mathcal{O}}_\alpha$. We define the linking number of $\hat{x}_\alpha$ and $\hat{y}_\alpha$ as
\begin{align*}
\ell(\hat{x}_\alpha,\hat{y}_\alpha) &:= wind((T_{\hat{x}}^{-1} \circ \widetilde{Exp}_{\alpha}^{-1})(\hat{y}_\alpha)),
\end{align*}
where $wind(\xi)$ denotes the classical winding number of a non-vanishing family of vectors $t \mapsto \xi(t)$ for $t \in S^1$ in $\R^2$ (for a definition, see for example, Section 66 of \cite{Mu00}). Note that for $A \in \pi_2(\Sigma)$, it is not hard to show that
\begin{align*}
\ell(A \cdot \hat{x}_\alpha,A \cdot \hat{y}_\alpha) &= \ell(\hat{x}_\alpha,\hat{y}_{\alpha}) + c_1(A),
\end{align*}
where $c_1(A)$ denotes the first Chern number of the homotopy class $A$ (or, more precisely, its image in homology under the Hurewicz morphism). This follows from remarking that $v(t):= (\widetilde{Exp}_{\alpha}^{-1})(\hat{y}_\alpha) \in (x^*T \Sigma,J,\omega)$ exponentiates under $\widetilde{Exp}_{A \cdot \alpha}$ (based at the point $A \cdot \hat{x}_{\alpha}$) to $A \cdot \hat{y}$, together with the fact (see Section 2.7 of \cite{McSa17} for further elaboration on this point) that $t \mapsto L_t:= (T^t_{A \cdot \hat{x}_{\alpha}})^{-1} \circ (T^t_{\hat{x}_{\alpha}})$ defines a loop with Maslov index $c_1(A)$ --- in our setting, this is equivalent to saying that the loop $(L_t)$ is homotopic in $Sp(2;\R)$ to the loop $t \mapsto \begin{pmatrix}
\cos 2 c_1(A) \pi t & -\sin 2 c_1(A) \pi t \\ \sin 2 c_1(A) \pi t & \cos 2 c_1(A) \pi t
\end{pmatrix}$. It is moreover not hard to show that $\ell$ is symmetric in its arguments. In order to extend this definition to arbitrary cappings of the loops $x$ and $y$, let $A, B \in \pi_2(\Sigma)$ and define
\begin{align*}
\ell(A \cdot \hat{x}_\alpha, B \cdot \hat{y}_\alpha) &:= \ell(\hat{x}_\alpha,\hat{y}_\alpha) +\frac{1}{2}(c_1(A)+c_1(B)).
\end{align*}
It is straightforward to check that this definition does not depend on the choice of $\alpha$, and agrees with the previous definition in the case that $\hat{x}$ and $\hat{y}$ are close in $\cL{\Sigma}$. Note, however, that our definition may depend in principle on our choice of the family of almost complex structures $J=(J_t)$. It will be a consequence (Corollary \ref{Cor: J-indep-local-link}) of the more topological view of this quantity that we will develop in the following two sections that in fact the above-defined $\ell$ is independent of the choice of $J$, but it would be interesting to have a more direct proof of this fact which evinced the invariance under changes in $J$ more directly.
\subsection{The homological linking number for capped braids}\label{HomLinkSec}
\begin{definition}
Let $\hat{X}, \hat{Y} \in \kCapBraid{\Sigma}$ and $A \in \pi_2(\Sigma)^k$. We define the \textbf{homological $A$-linking number of $\hat{Y}$ relative to $\hat{X}$}
\begin{align*}
L_A(\hat{X},\hat{Y}) &:= \sum_{1 \leq i < j \leq k} \#(\tilde{h}_i \pitchfork \tilde{h}_j),
\end{align*}
where $h=(h_1, \cdots, h_k)$ is any $A$-cobordism from $\hat{X}$ to $\hat{Y}$ such that the graphs of the strands of $h$ in $[0,1] \times S^1 \times \Sigma$ are all pairwise transverse, and $\#(\tilde{h}_i \pitchfork \tilde{h}_j)$ denotes the signed count of the intersections of the graphs $\tilde{h}_i$ and $\tilde{h}_j$ (recall that in our setting $\Sigma$ carries the orientation induced by $\omega$).
\end{definition}
\begin{remark}
\begin{enumerate}
\item The above definition may be generalized straightforwardly by replacing the cylinder $[0,1] \times S^1$ with a surface $S_{g,k^-,k^+}$ of genus $g$, having $k^-$ negatively oriented boundary components and $k^+$ positively oriented boundary components. This provides a family of homotopy invariants for collections of $k^-$ `input' and $k^+$ `output' capped braids in the obvious way. Much of the theory developed in this work can be adapted in a straightforward way to use such invariants to extract information about the field-theoretic operations in Floer theory described in \cite{PSS96} at the chain level for Hamiltonian isotopies on surfaces, but we will not pursue this extension in this paper.
\item Similarly to Remark 4 above, we will not actually have need outside of this section for $L_A(\hat{X};\hat{Y})$ with $A \neq 0$. We include the notions with $A \neq 0$ here mainly because they can be convenient if one wants to reason about more sophisticated situations on the sphere than we will have cause to encounter in this work.
\end{enumerate}
\end{remark}
The following proposition summarizes the main properties of the homological linking number which we will need in our investigations.

\begin{proposition}\label{LinkingProp}
For any $\hat{X}, \hat{Y}, \hat{Z} \in \kCapBraid{\Sigma}$ and $A, B \in \pi_2(\Sigma)^k$ we have that:
\begin{enumerate}
\item $L_A(\hat{X},\hat{Y})$ is well-defined.
\item For any $\sigma \in S_k$, $L_{\sigma \cdot A}(\sigma \cdot \hat{X}; \sigma \cdot \hat{Y})= L_{A}(\hat{X};\hat{Y})$. \label{LinkProps-permute}
\item $L_A(\hat{X},\hat{Y}) + L_B( \hat{Y}, \hat{Z}) = L_{A+B}(\hat{X},\hat{Z})$. \label{LinkProps-additive}
\item If $\hat{X}$ and $\hat{Y}$ are $A$-homotopic, then $L_A(\hat{X},\hat{Y})=0$. \label{LinkProps-Zero}
\item $L_A(\hat{X},\hat{Y})$ depends on $\hat{X}$ (resp. $\hat{Y}$) only up to $0$-homotopy. \label{LinkProps-HtpyInvar}
\item $L_A(\hat{X},\hat{Y})=-L_{-A}(\hat{Y},\hat{X})$. \label{LinkProps-Neg}
\item $L_A(\hat{X},B \cdot \hat{Y}) = L_{A+B}(\hat{X},\hat{Y})$, and $L_A(B \cdot \hat{X},\hat{Y})=L_{A-B}(\hat{X},\hat{Y})$. \label{LinkProps-recapping}
\item $L_A(\hat{X},\hat{Y})=L_0(\hat{X},\hat{Y}) + L_A(\hat{0},\hat{0})$.
\item $L_0(\hat{X},A \cdot \hat{X}) = (k-1) \sum_{i=1}^k \frac{c_1(A_i)}{2}$. 
\end{enumerate}
\end{proposition}
\begin{proof}
\begin{enumerate}
\item The fact that $L_A(\hat{X},\hat{Y})$ is well-defined independently of the $A$-cobordism $h$ chosen from $\hat{X}$ to $\hat{Y}$ in order to compute it follows from the standard transversality arguments that are typical in differential topology. Alternately, one may simply note that for any braid cobordism $h=(h_1, \cdots, h_k)$ and any $i=1, \cdots, k$, the graph of $h_i$ in $[0,1] \times S^1 \times \Sigma$ defines a compact surface with boundary $S_i \subseteq [0,1] \times S^1 \times \Sigma$ which induces a well-defined element of 
\begin{align*}
[S_i] & \in H_2([0,1] \times S^1 \times \Sigma; \check{X} \sqcup \check{Y}),
\end{align*} 
where $\check{X}$ and $\check{Y}$ denote the graphs of the braids $X$ and $Y$ respectively, thought of as submanifolds lying in $\lbrace 0 \rbrace \times S^1 \times \Sigma$ and $\lbrace 1 \rbrace \times S^1 \times \Sigma$, respectively. Note that $\partial S_i$ is disjoint from $S_j$ for all $i \neq j$ and so the intersection product of such classes is well-defined and 
\begin{align*}
L_A(\hat{X},\hat{Y})&= \sum_{1 \leq i < j \leq k} [S_i] \bullet [S_j],
\end{align*}
which, since $[0,1] \times S^1 \times (\Sigma,\omega)$ is canonically oriented by $(ds \wedge dt) \wedge \omega$, is obviously precisely what is computed by the sum of pairwise intersection numbers of the graphs when these are transverse.

\item This statement follows upon remarking that for any $\sigma \in S_k$ and any $A \in \pi_2(\Sigma)^k$, $h = (h_1,\ldots,h_k)$ is an $A$-cobordism from $\hat{X}$ to $\hat{Y}$ if and only if $\sigma \cdot h:=(h_{\sigma^{-1}(1)},\ldots, h_{\sigma^{-1}(k)})$ is a $(\sigma \cdot A)$-cobordism from $\sigma \cdot \hat{X}$ to $\sigma \cdot \hat{Y}$.

\item This is straightforward and follows directly from concatenating an $A$-cobordism from $\hat{X}$ to $\hat{Y}$ with a $B$-cobordism from $\hat{Y}$ to $\hat{Z}$.
\item An $A$-cobordism $h=(h_1,\cdots,h_k)$ is an $A$-homotopy precisely when the graphs of the $h_i$, $i=1, \cdots, k$ are all disjoint. Clearly this implies $L_A(\hat{X},\hat{Y})=0$.

\item Let $\hat{X}', \hat{Y}' \in \kCapBraid{\Sigma}$ be $0$-homotopic to $\hat{X}$ and $\hat{Y}$, respectively, then items \ref{LinkProps-additive} and \ref{LinkProps-Zero} imply
\begin{align*}
L_A(\hat{X},\hat{Y}) &= L_0(\hat{X},\hat{X}') + L_A(\hat{X}',\hat{Y}') + L_0(\hat{Y}',\hat{Y}) \\
&= 0 +  L_A(\hat{X}',\hat{Y}') +  0.
\end{align*}

\item This follows immediately from noting that $h$ is an $A$-cobordism from $\hat{X}$ to $\hat{Y}$ if and only if $\bar{h}(s,t):=h(1-s,t)$ is a $(-A)$-cobordism from $\hat{Y}$ to $\hat{X}$.

\item That $L_A(\hat{X},B \cdot \hat{Y}) = L_{A+B}(\hat{X},\hat{Y})$ follows immediately from the equivalence of the homotopy conditions
\begin{align*}
\alpha_i \# [h_i] \# -\beta_i &= A_i \# B_i= A_i + B_i, \; \text{and} \\
\alpha_i \# [h_i] \# -(B_i \cdot \beta_i) &= A_i. 
\end{align*}
That is, $h$ is an $A+B$-cobordism from $\hat{X}$ to $\hat{Y}$ if and only if $h$ is also a $A$-cobordism from $\hat{X}$ to $B \cdot \hat{Y}$. That $L_A(B \cdot \hat{X},\hat{Y})=L_{A-B}(\hat{X},\hat{Y})$ follows from parallel reasoning.
\item Item \ref{LinkProps-additive} implies
\begin{align*}
L_A(\hat{X},\hat{Y}) &= L_0(\hat{X},\hat{0}) + L_A(\hat{0},\hat{0}) + L_0( \hat{0},\hat{Y}) \\
&= L_0(\hat{X},\hat{0}) + L_0(\hat{0},\hat{Y}) + L_A(\hat{0},\hat{0}) \\
&= L_0(\hat{X},\hat{Y}) + L_A(\hat{0},\hat{0}).
\end{align*}
\item We note first that item \ref{LinkProps-additive} implies that
\begin{align*}
L_0(\hat{0},\hat{X}) + L_0(\hat{X},A \cdot \hat{X}) + L_0(A \cdot \hat{X}, A \cdot \hat{0}) &=L_0(\hat{0},A \cdot \hat{0}).
\end{align*}
Next, items \ref{LinkProps-Neg} and \ref{LinkProps-recapping} imply that
\begin{align*}
L_0(A \cdot \hat{X}, A \cdot \hat{0}) &=  L_A(A \cdot \hat{X},\hat{0})\\
&=  -L_{-A}(\hat{0}, A \cdot \hat{X}) \\
&= -L_0(\hat{0},\hat{X}),
\end{align*}
whence we need only show that the desired formula holds when $\hat{X} = \hat{0}$. To reduce to an even simpler case, let us write $A$ as
\begin{align*}
(A_1, \cdots, A_k) &=(A_1,0,\cdots,0) + (0,A_2,0, \cdots,0) + \cdots + (0,0,\cdots, A_k) \\
&=A_1' + \cdots + A_k'.
\end{align*}
By items \ref{LinkProps-additive} and \ref{LinkProps-recapping}, demonstrating the desired equality is therefore equivalent to showing that
\begin{align*}
L_0(\hat{0},A_i' \cdot \hat{0}) &= (k-1) \cdot \frac{c_1(A_i)}{2}
\end{align*}
for any $i=1, \cdots, k$. In what follows, let $(\hat{p}_1, \cdots, \hat{p}_k) = \hat{0}$ represent the trivial capped braid. Since the statement is trivial when $\Sigma \neq S^2$, as then $\pi_2(\Sigma)=0$ and every capped braid is $0$-homotopic to itself, we now suppose $\Sigma=S^2$. For $m \in \Z$, if $u_i: (S^2,\ast) \rightarrow (\Sigma,p_i)$ represents $A_i=m [S^2] \in \pi_2(\Sigma,p_i)$, we may pull $u_i$ back along the quotient $[0,1] \times S^1 \rightarrow S^2$ (given by collapsing the boundary circles to points) to a map which we will denote
\begin{align*}
h_i: [0,1] \times S^1 & \rightarrow \Sigma.
\end{align*}
If we take $h$ to be the $0$-cobordism from $\hat{0}$ to $A_i' \cdot \hat{0}$ given by $h_i$ as the $i$-th strand and the constant strand $h_j(s,t) \equiv p_j$ for all other strands $j \neq i$, then the important point is that $PD(c_1)=2[S^2]$ and hence the intersection of the graph of $h_i$ with the constant cylinder $h_j(s,t) \equiv p_j$ for $j \neq i$ contributes precisely 
\begin{align*}
(u_{i*}[S^2]) \cap [p_j]&= m [S^2] \\
&= \frac{c_1(A_i)}{2} 
\end{align*}
to the sum defining $L_0(\hat{0},A_i' \cdot \hat{0})$, and such intersections are the only ones that occur, since all other strands are constant and disjoint. The desired equality follows.
\end{enumerate}
\end{proof}

\begin{proposition}
For $A \in \pi_2(\Sigma)^k$, and $[\hat{X}], [\hat{Y}] \in \kCapBraid{\Sigma}/S_k$, the function $L_A([\hat{X}],[\hat{Y}]) := L_A(\hat{X},\hat{Y}) $, is well-defined.
\end{proposition}
\begin{proof}
Items \ref{LinkProps-permute} and \ref{LinkProps-Neg} of proposition \ref{LinkingProp} imply for any $\sigma, \tau \in S_k$ that
\begin{align*}
L_A(\sigma \cdot \hat{X}, \tau \cdot \hat{Y})&=L_0(\sigma \cdot \hat{X},\hat{0})  + L_A(\hat{0},\hat{0}) + L(\hat{0},\tau \cdot \hat{Y}) \\
&= L_A(\hat{0}, \hat{0}) - L_0(\hat{0}, \sigma \cdot \hat{X}) + L_0(\hat{0},\tau \cdot \hat{Y}),
\end{align*}
so it suffices to show that the expression $L_0(\hat{0}, \sigma \cdot \hat{X})$ is independent of $\sigma \in S_k$. To see this, note that item $2$ of the previous proposition, together with the fact that $0 \in Fix_{S_k}(\pi_2(\Sigma)^k)$ implies that $L_0(\hat{0}, \sigma \cdot \hat{X})=L_0(\sigma^{-1} \cdot \hat{0}; \hat{X})$. Moreover, it is easy to see that $\hat{0}=(\hat{p}_1, \ldots, \hat{p}_k)$ is $0$-homotopic to $\sigma \cdot \hat{0}=(\hat{p}_{\sigma^{-1}(1)},\ldots,p_{\sigma^{-1}(k)})$ for any $\sigma \in S_k$ (simply choose $k$ paths $s \mapsto \gamma_i(s) \in \Sigma$, $s \in [0,1]$ from $p_i$ to $p_{\sigma^{-1}(i)}$ such that $\gamma_i(s)=\gamma_j(s)$ implies $i=j$ for all $s \in [0,1]$ and define the $i$-th strand of the $0$-homotopy to be $h_i(s,t)=\gamma_i(s)$) and consequently, $L_0(\sigma^{-1} \cdot \hat{0}, \hat{X})=L_0(\hat{0},\hat{X})$, which is independent of $\sigma \in S_k$.
\end{proof}

\begin{proposition}\label{prop:HLinkEqualsLink}
Fix any $S^1$-family $J=(J_t)$ of $\omega$-compatible almost complex structures. Let $\hat{X}=(\hat{x}_1,\hat{x}_2) \in \CapBraid{2}{\Sigma}$ with $x_2$ lying in an exponential neighbourhood of $x_1$ in $\mathcal{L}_0(\Sigma)$ (as in Section \ref{LocalLinkSec}), then $L_0(\hat{0}, \hat{X}) = \ell(\hat{x}_1,\hat{x}_2)$.
\end{proposition}

\begin{proof}
We note first that if $A,B \in \pi_2(\Sigma)$, then 
\begin{align*}
L_0(\hat{0},(A,B) \cdot \hat{X}) - L_0(\hat{0}, \hat{X}) &= \frac{1}{2}(c_1(A)+c_1(B)) \\
&=\ell(A \cdot \hat{x}_1, B \cdot \hat{x}_2) - \ell(\hat{x}_1,\hat{x}_2),
\end{align*}
and so it suffices to prove the statement in the case in which $\hat{x}_2$ lies inside an exponential neighbourhood of $\hat{x}_1$ in $\cL{\Sigma}$. Note that this implies that if we write $\hat{x}_1=[x_1,\alpha]$ and $\xi:=(\widetilde{Exp}_{\alpha}^{-1})(\hat{x}_2)$, then $\hat{x}_2=[x_2,\alpha \# c]$, where $c(s,t):=\widetilde{Exp_{\alpha}}(s\xi(t))$ is the cylinder of geodesics from $x_1$ to $x_2$ and $\alpha \# c$ denotes the obvious capping class for $x_2$ obtained by concatenating a capping disk representing $\alpha$ and the cylinder $c$.    
\par
As discussed in Section \ref{LocalLinkSec}, $T_{\hat{x}} \cL{\Sigma}$ is naturally identified (up to a homotopy of trivializations) with $\Gamma^\infty(S^1 \times \R^2)$ and so, in the notation of that section, we may write $\hat{x}_2$ in local coordinates as 
\begin{align*}
v(t) &:= (T_{\hat{x}_1}^{-1})^{-1}(\xi(t))=(T_{\hat{x}_1}^{-1}\circ \widetilde{Exp}^{-1})(\hat{x}_2),
\end{align*}
and we have that $\ell(\hat{x}_1,\hat{x}_2)=wind(v)$ by definition. By the invariance of the homological linking number under $0$-homotopies of capped braids, and the homotopy invariance of the winding number in $\R^2 \setminus \lbrace0 \rbrace$, we may assume that  
\begin{align*}
v(t)&= r_0 e^{2 \pi \i l t} \in \C, \; \forall t \in [0,1],
\end{align*}
for some small $r_0 >0$ and $l= \ell(\hat{x}_1, \hat{x}_2)$. Taking the homotopies $h_1(s,t) \equiv 0$, $h_2(s,t)=(1-s)\frac{-r_0}{2} + s \frac{r_0}{2} e^{2 \pi \i l t}$, we get a $0$-cobordism $h=(h_1,h_2)$ from the trivial capped braid and $\hat{X}$. We see that the graphs $\tilde{h}_1$ and $\tilde{h}_2$ intersect only if $l \neq 0$, and in that case intersections occur when $s=\frac{1}{2}$ and $t=0, \frac{1}{l}, \cdots, \frac{l-1}{l}$. Moreover, using polar coordinates $(r, \theta)$ on $D^2$, since
\begin{align*}
\partial_s h_2 &= \partial_r \\
\partial_t h_2&= l \partial_\theta,
\end{align*}
each intersection is transverse and has orientation $sign(l)$. Consequently
\begin{align*}
L_0(\hat{0}, \hat{X})&= l = \ell(\hat{x}_1,\hat{x}_2),
\end{align*}
proving the claim.
\end{proof} 
The fact that the homological linking number is manifestly independent of the choice of $S^1$-family of $\omega$-compatible almost complex structures implies the following corollary.
\begin{cor}\label{Cor: J-indep-local-link}
The definition of $\ell(\hat{x}_1,\hat{x}_2)$ given for capped loops with close strands in Section \ref{LocalLinkSec} is independent of the choice of $J=(J_t)$ used in its definition.
\end{cor}
Proposition \ref{prop:HLinkEqualsLink} allows us to extend the notion of linking for capped loops $\hat{x}$ and $\hat{y}$ with $x(t) \neq y(t)$ for all $t \in S^1$ which was considered in Section \ref{LocalLinkSec} to pairs of capped loops which are not ``close'' (ie. which do not lie in some exponential neighbourhood of one another). Indeed, in virtue of the preceding proposition we may \textit{define}
\begin{align*}
\ell(\hat{x},\hat{y})&:=L_0(\hat{0}, \lbrace \hat{x},\hat{y} \rbrace)
\end{align*}
for any capped loops $\hat{x}, \hat{y} \in \cL{\Sigma}$ such that $x(t) \neq y(t)$ for all $t \in S^1$. For the duration of this article, it is in this sense that we will use the notation $\ell(\hat{x},\hat{y})$. We remark also for the convenience of the reader, as this will be relevant later, that the invariance of $L_0(\hat{X};\hat{Y})$ under $0$-homotopies (item \ref{LinkProps-HtpyInvar} of Proposition \ref{LinkingProp}) implies that $\ell(\hat{x},\hat{y})$ is constant as $\hat{x}$ varies in the set $\lbrace \hat{x}' \in  \cL{\Sigma}: x'(t) \neq y(t), \; for \; all \; t \in S^1 \rbrace$ (and similarly as $\hat{y}$ varies in $\lbrace \hat{y}' \in  \cL{\Sigma}: y'(t) \neq x(t), \; for \; all \; t \in S^1 \rbrace$).

\section{Elements of Floer theory and linking}\label{Local}
This section serves to collect and review the necessary facts that we will need from Hamiltonian Floer theory, along with the analysis of the asymptotic behaviour of Floer-type cylinders that proves crucial to our study. Section \ref{Sec:Floer} collects the Floer-theoretic preliminaries. Section \ref{Sec:Asymptotic analysis} explains how we may combine a result formulated by Siefring in \cite{Si08}, which relates the asymptotic behaviour of Floer-type cylinders tending to an orbit $x$ to the eigenvectors of the so-called \textit{asymptotic operator} associated to $x$, with Hofer-Wysocki-Zehnder's study in \cite{HWZ95} of the winding behaviour of such eigenvectors, in order to obtain information about the asymptotics of Floer-type cylinders. We then explain how we may combine this information with the homological linking number of Section \ref{HomLinkSec} to obtain control over the relative topology of collections of capped braids having Floer-type cylinders running between them. The results of the present section form the technical core of the rest of the work.
\par
The broad overview is this: one of the central geometric viewpoints in this work is that we may interpret collections of Floer-type cylinders as providing such deformations between the capped braid $\hat{X}$ formed by the capped orbits at their negative ends and the capped braid $\hat{Y}$ formed by the capped orbits at their positive ends. When we do this, the Gromov trick combined with positivity of intersections in dimension $4$ (discussed in Section \ref{sec-Gromov trick}) implies that the homological linking number $L_0(\hat{X},\hat{Y})$ must be non-negative, which gives strong topological restrictions on the putative existence of collections of Floer cylinders in terms of the relative topologies of the capped braids formed by their negative and positive ends. In the contact setting on $S^3$, the fact that positivity of intersection for pseudo-holomorphic cylinders in dimension $4$ is related to a monotonicity phenomenon for winding numbers seems to be a fairly well-appreciated phenomenon, going back at least to the work of Hofer-Wysocki-Zehnder in \cite{HWZ95}. The idea of relating the global topology of the braids formed by periodic orbits of a Hamiltonian to Floer theory via positivity of intersection arguments also has some precedent on the disk in \cite{BGVW15}, in which the authors use Floer homology to define braid invariants and obtain a forcing theory for periodic orbits of Hamiltonian. The approach sketched so far exhibits a slight deficiency when studying the deformations of braids induced by general collections of Floer-type cylinders however, in the sense that, in general, one may have multiple Floer cylinders emerging from, or converging to, the same orbit and it is not necessarily clear how one should treat this situation.
\par
In order to deal with cylinders which emerge from or converge to the same orbit, we make use of the analysis of the relative asymptotic behaviour of pseudo-holomorphic curves developed by Hofer-Wysocki-Zehnder and Siefring to connect the Conley-Zehnder index of an orbit to bounds on the winding behaviour of pairs of cylinders which emerge from or converge to that orbit. In the contact setting, these sorts of bounds (in the non-relative case), along with the insight that under appropriate index conditions on the asymptotic orbits families of pseudoholomorphic curves automatically form local foliations in the symplectization of a contact manifold, go back to the pioneering work of Hofer-Wysocki-Zehnder in \cite{HWZ96}, \cite{HWZ95}, \cite{HWZ99} and \cite{HWZ03}. Siefring has also more recently put this circle of ideas to use in \cite{Si11} to define an intersection number for arbitrary pseudoholomorphic curves in $4$-dimensional symplectic cobordisms which is invariant under homotopy. In any case, we may thereby split the analysis of the behaviour of general collections of Floer cylinders into two portions: an asymptotic portion on the ends where the linking behaviour of cylinders asymptotic to the same orbit is controlled by the Conley-Zehnder index, and a compact portion which interpolates between two capped braids such that their intersections are controlled by the homological linking number of the capped braids at each end of this compact portion.

\subsection{Floer theory}\label{Sec:Floer}
In this section, we give a rapid overview of the elements of Floer theory of which we will have need, mainly to fix notation and conventions. For a more detailed treatment, see \cite{AD14} or \cite{Sa97} for standard accounts of Hamiltonian Floer theory (see also \cite{HoSa95} for its adaptation to the semi-positive case) and \cite{Sc95}, \cite{PSS96}, \cite{Se97} or \cite{La04} for a more detailed treatment of how Floer theory fits into a field theory over surfaces. Throughout, we assume that $(M^{2n},\omega)$ is a strongly semi-positive compact symplectic manifold (ie. $2-n \leq c_1(A) < 0$ implies $\omega(A) \leq 0$ for all $A \in \pi_2(M)$). $\mathcal{J}(M,\omega)$ denotes the space of all smooth $\omega$-compatible almost complex structures. For convenience, we work only with $\Z_2$-coefficients, but this restriction is inessential.
\par
A smooth (not necessarily autonomous) Hamiltonian function 
\begin{align*}
H: S^1 \times M &\rightarrow \R \\
(t,x) &\mapsto H_t(x)
\end{align*}
induces a time-dependent vector field $(X^t_H)_{t \in [0,1]}$ on $M$ defined by the relation 
\begin{align*}
\omega(X^t_H,-)&= -dH_t.
\end{align*}
The Hamiltonian isotopy obtained as the flow by this vector field is denoted $\phi^H:=(\phi^H_t)_{t \in [0,1]}$. A Hamiltonian $H$ is said to be \textbf{normalized} if $\int_M H_t \omega^n =0$ for all $t \in S^1$. There is a group structure on the set of Hamiltonian functions $C^\infty(S^1 \times M)$ given by the operation
\begin{align*}
(H \# K)(t,x)&:= H(t,x) + K(t,(\phi^H_t)^{-1}(x)),
\end{align*}
which is such that $\phi^{H \# K}_t= \phi^H_t \circ \phi^K_t$. The inverse of $H$ with respect to this relation is given by
\begin{align*}
\bar{H}(t,x)&=-H(t,\phi^H_t(x)),
\end{align*}
which generates the isotopy $t \mapsto (\phi^H_t)^{-1}$.
\par
Recall that $\cL{M}$ denotes the Novikov covering of the loop space. That is to say, elements $[\gamma, v] \in \cL{M}$ are capped loops $(\gamma,v)$ in $M$ subject to the equivalence relation $(\gamma_1,v_1) \sim (\gamma_2,v_2)$ precisely when $\gamma_1=\gamma_2$ and $[v_1 \# \bar{v}_2] \in \ker c_1(M) \cap \ker [\omega] \subseteq \pi_2(M)$.
\par 
The Hamiltonian $H$ defines a corresponding \textbf{action functional} on the Novikov covering of the loop space
\begin{align*}
\mathcal{A}_H: \cL{M} &\rightarrow \R \\
[\gamma,v] &\mapsto \int_0^1 H_t(\gamma(t)) \; dt - \int_{D^2} v^* \omega.
\end{align*}
We write 
\begin{align*}
\Per{H} &:= Crit \; \mathcal{A}_H, \; \text{and} \\
Per_0(H)&:= \pi(\Per{H}) \subseteq \mathcal{L}_0(M),
\end{align*}
noting that the latter consists precisely of the contractible $1$-periodic orbits of $\phi^H$, while the former consists of capped such periodic orbits. We define the \textbf{spectrum} of $H$ as
\begin{align*}
Spec(H) &:= \mathcal{A}_H(Crit \; \mathcal{A}_H) \subseteq \R.
\end{align*}
$H$ is said to be \textbf{non-degenerate} if for all $x \in Per_0(H)$,  $(D \phi^H_1)_{x(0)}$ has no eigenvalues equal to $1$. When $H$ is non-degenerate, there exists a well-defined \textbf{Conley-Zehnder index}
\begin{align*}
\mu=\mu_{CZ}: \Per{H} &\rightarrow \Z.
\end{align*} 
See \cite{RS93} for details on the definition of $\mu$. We shall normalize the Conley-Zehnder index by insisting that if $H$ is a $C^2$-small Morse function and $x$ a critical point of $H$, then
\begin{align*}
\mu(\hat{x}) &= \mu_{Morse}(x) - n,
\end{align*}
where $\mu_{Morse}$ is the Morse index of $x$, and $\hat{x}$ denotes the trivial capping of the constant orbit $x$. For $k \in \Z$, and any $P \subseteq \Per{H}$ we define $P_{(k)}$ to be the collection of capped orbits in $P$ with Conley-Zehnder index $k$. Remark that for $A \in \pi_2(M)$,
\begin{align*}
\mu_{CZ}([x,A \#w]) &= \mu_{CZ}([x,w]) - 2c_1(A),
\end{align*}
where $c_1(A)$ denotes the first Chern number of $A$.
\par 
Given $\hat{x}^\pm=[x^\pm,w^\pm] \in \Per{H}$, we write $C^\infty(\R \times S^1;M)_{\hat{x}^-,\hat{x}^+}$ for the subspace of $C^\infty(\R \times S^1; M)$ consisting of cylinders which induce a $0$-homotopy from $\hat{x}^-$ to $\hat{x}^+$.
Letting 
\begin{align*}
\mathcal{E} &\rightarrow C^\infty(\R \times S^1; M)_{\hat{x}^-,\hat{x}^+}
\end{align*}
be the infinite dimensional vector bundle with fiber $\mathcal{E}_u= \Gamma^{\infty}(u^*TM)$ at $u$, any smooth $S^1$-family $J=(J_t)_{t \in S^1} \subseteq \mathcal{J}(M,\omega)$, permits the definition of the \textbf{Floer operator}, which is the section 
\begin{align*}
\mathcal{F}_{H,J}: C^\infty(\R \times S^1; M)_{\hat{x}^-,\hat{x}^+} &\rightarrow \mathcal{E} \\
 u &\mapsto \partial_s u + J(\partial_t u -X_H).
\end{align*} 
After passing to appropriate Banach space completions (see Section 8.2 of \cite{AD14} for instance), $\mathcal{F}_{H,J}$ defines a Fredholm operator with index $\mu(\hat{x}^-) - \mu(\hat{x}^+)$. The intersection of $\mathcal{F}_{H,J}$ with the $0$-section gives rise to \textbf{Floer's equation}
\begin{align}\label{FE}
\partial_s u + J_t(\partial_t u - X_H^t) &=0
\end{align}
for smooth maps $u: \R \times S^1 \rightarrow M$. If we define the \textbf{energy} of $u \in C^\infty(\R \times S^1;M)$ by
\begin{align*}
E(u)&:= \int_{\R \times S^1} \| \partial_{s} u \|^2_{J_t} \; dt \; ds,
\end{align*} 
then the finite energy solutions of Floer's equation may be thought of as the projections to $M$ of negative gradient flow lines of $\mathcal{A}_H$ with respect to the $L^2$-metric on $\cL{M}$ induced by $J$
\begin{align*}
\langle \xi,\eta \rangle_{[\alpha,w]}&:= \int_0^1 \omega_{\alpha(t)}(\xi(t),\eta(t)) \; dt, \; \text{for } [\alpha,w] \in \cL{M}, \; \xi, \eta \in T_\alpha \cL{M}.
\end{align*}
It follows easily from this that if $u \in C^\infty(\R \times S^1;M)_{\hat{x}^-,\hat{x}^+}$ is such a finite energy solution, then $E(u)= \mathcal{A}_H(\hat{x}^-) - \mathcal{A}_H(\hat{x}^+)$.
\par 
For any $\hat{x}^\pm \in \Per{H}$, we define $\widetilde{\mathcal{M}}(\hat{x}^-,\hat{x}^+;H,J)$ to be the zero set of $\mathcal{F}_{H,J}$ on $C^\infty(\R \times S^1;M)_{\hat{x}^-,\hat{x}^+}$. It carries an obvious $\R$-action given by translation in the $s$-coordinate. The \textbf{reduced moduli space} is defined by
\begin{align*}
\mathcal{M}(\hat{x},\hat{y};H,J) &:= \widetilde{\mathcal{M}}(\hat{x},\hat{y};H,J) / \R.
\end{align*}
The expected dimension of the reduced moduli space is $\mu(\hat{y}) - \mu(\hat{x}) -1$. In order to define the appropriate genericity condition on pairs $(H,J)$ such that we may define the Floer complex unproblematically, we follow \cite{Se97} in introducing the following sets. For $k \in \Z_{\geq 0}$, let $V_k$ be the set of pairs $(t,p) \in S^1 \times M$ such that $p \in \im v$ for $v$ some non-constant $J_t$-holomorphic sphere with $c_1(v) \leq k$.
\begin{definition}
A pair $(H,J)$ with $H$ and $J$ as above will be called a \textbf{Floer pair}. A Floer pair with $H$ non-degenerate will be called a \textbf{non-degenerate Floer pair}. A non-degenerate Floer pair $(H,J)$ will be said to be \textbf{Floer regular} if
\begin{enumerate}
\item for every $x \in Per_0(H)$, $(t,x(t)) \not \in V_1(J)$ for all $t \in S^1$;
\item for all $\hat{x}^\pm \in \Per{H}$, the linearization $(D \mathcal{F}_{H,J})_u$ of the Floer operator at $u \in C^\infty(\R \times S^1; M)_{\hat{x}^-,\hat{x}^+}$ is surjective for all $u \in \widetilde{\mathcal{M}}(\hat{x}^-,\hat{x}^+;H,J)$;
\item for all $\hat{x}^\pm \in \Per{H}$ with $\mu(\hat{x}^-) - \mu(\hat{x}^+) \leq 2$ and all $u \in \widetilde{\mathcal{M}}(\hat{x}^-,\hat{x}^+;H,J)$, $(t,u(s,t)) \not \in V_0(J)$ for all $(s,t) \in \R \times S^1$.
\end{enumerate}
\end{definition}
For $H$ non-degenerate, let $\mathcal{J}^{reg}(H) \subseteq C^\infty(S^1;\mathcal{J}(M,\omega))$ denote the space of $S^1$-families of complex structures such that $(H,J)$ is Floer regular. $\mathcal{J}^{reg}(H)$ is residual in $C^\infty(S^1;\mathcal{J}(M,\omega))$.
\par 
If $(H,J)$ is Floer regular, then $\mathcal{M}(\hat{x},\hat{y};H,J)$ is a compact manifold of dimension $0$ whenever $\mu(\hat{x}) - \mu(\hat{y})=1$, and in this case we may define the \textbf{Floer chain complex} $CF(H,J)$ to be the set of formal sums of the form
\begin{align*}
\sum_{\hat{x} \in \Per{H}} & a_{\hat{x}} \hat{x},
\end{align*}
where $a_{\hat{x}} \in \Z_2$ for all $\hat{x} \in \Per{H}$ and which moreover verifies the \textit{Novikov condition}: for all $c \in \R$,
\begin{align*}
\# \lbrace a_{\hat{x}} \neq 0: \mathcal{A}_H(\hat{x}) \geq c \rbrace& < \infty.
\end{align*}
$CF(H,J)$ is then graded by $\mu$ and has a differential defined on generators by 
\begin{align*}
\partial_{H,J} \hat{x} &:= \sum_{\mu(\hat{x}) - \mu(\hat{y})=1} n(\hat{x},\hat{y}) \hat{y},
\end{align*}
with $n(\hat{x},\hat{y})$ being the mod $2$ count of elements in $\mathcal{M}(\hat{x},\hat{y};H,J)$. The homology of this complex $HF_*(H)$ is the \textbf{Floer homology of $H$} and is independent of the choice of $J$.
\par 
The Floer complex has the structure of a \textit{filtered complex}, with the filtration coming from the action functional. Explicitly, for $\sigma = \sum_{\hat{x} \in \Per{H}} a_{\hat{x}} \hat{x} \in CF_*(H,J)$, we define 
\begin{align*}
\supp \sigma &:= \lbrace \hat{x} \in  \Per{H}: a_{\hat{x}} \neq 0 \rbrace,
\end{align*}
and we define the \textbf{level of $\sigma$} to be 
\begin{align*}
\lambda_H(\sigma) &:= \sup_{\hat{x} \in \supp \sigma} \mathcal{A}_H(\hat{x}).
\end{align*} 
Let $H^S_2(M)$ denote the image in $H_2(M; \Z)$ of the Hurewicz morphism, and let \\ $\Gamma_{\omega}:= H^S_2(M) / \ker c_1 \cap \ker [\omega]$. We define the \textbf{Novikov ring}
\begin{align*}
\Lambda_{\omega}&:= \lbrace \sum_{A \in \Gamma_{\omega}} \lambda_A e^A: \lambda_A \in \Z_2,  \# \lbrace \lambda_A \neq 0, \omega(A) \leq c \rbrace < \infty, \; \text{for all } c \in \R \rbrace.
\end{align*}
This is a graded commutative ring with grading given by declaring $deg(e^A):= 2c_1(A)$. $CF_*(H,J)$ is a $\Lambda_{\omega}$-module where the action of $\Lambda_{\omega}$ is defined on generators $\hat{x}=[x,v]$ of $CF_*(H,J)$ and $e^A$ of $\Lambda_{\omega}$ by $e^A \cdot \hat{x}:=[x,A \# v]$,
and extended linearly. Note that we have the relations
\begin{align*}
\mu(e^A \cdot \hat{x}) &= \mu(\hat{x}) - 2c_1(A), \\
\mathcal{A}_H(e^A \cdot \hat{x}) &= \mathcal{A}_H(\hat{x}) - \omega(A).
\end{align*}
Remark that the $\Lambda_{\omega}$-action needn't preserve the filtration.
\par
It is a standard fact in Floer theory that if $f \in C^\infty(M)$ is a sufficiently $C^2$-small Morse function and $J \in \mathcal{J}(M,\omega)$ is such that $(f,g_J)$ is Morse-Smale, then the Floer chain complex of $(f,J)$ may be identified (after a grading shift) with the \textbf{quantum chain complex} of $(f,g_J)$, which is by definition the Morse complex of $(f,g_J)$ with coefficients in the Novikov ring, ie.
\begin{align*}
CF_*(f,J) &\cong QC_{*+n}(f,g_J):= (C^{Morse}(f,g_J) \otimes \Lambda_{\omega})_{*+n}.
\end{align*}
Taking homology then gives a natural identification with the quantum homology of $(M,\omega)$: 
\begin{align*}
HF_*(f) \cong QH_{*+n}(M,\omega).
\end{align*}
\subsubsection{Continuation maps}\label{Sec-ContMaps}
\begin{definition}
For $X$ a smooth manifold, a function $F \in C^\infty(\R; X)$ is said to be \textbf{$T$-adapted} for $T \in (0,\infty)$ if $(\partial_s F)_{s_0} \equiv 0$ for all $\vert s_0 \vert \geq T$. $F$ is said to be \textbf{adapted} if it is $T$-adapted for some $T$. For $X=C^{\infty}(S^1 \times M)$, and $H^\pm \in C^{\infty}(S^1 \times M)$, we denote by $\mathscr{H}(H^-,H^+)$ the space of adapted homotopies $\mathcal{H}$ having $\lim_{s \rightarrow \pm \infty} \mathcal{H}(s) \equiv H^\pm$. We make a similar definition for $\mathscr{J}(J^-,J^+)$ in the case where $X=C^\infty(S^1; \mathcal{J}(M,\omega))$. 
\end{definition}
\begin{definition}
A pair $(\mathcal{H},\mathbb{J})$ is an \textbf{adapted homotopy of Floer data} from $(H^-,J^-)$ to $(H^+,J^+)$ if $\mathcal{H}=(H^s_t) \in \mathscr{H}(H^-,H^+)$ and $\mathbb{J}=(J^s_t) \in \mathscr{J}(J^-,J^+)$. We will write $\HJ(H^-,J^-;H^+,J^+)$ for the collection of all such adapted homotopies, often omitting the dependence on $(H^\pm,J^\pm)$ if it is clear from context.
\end{definition}
Just as in the $s$-independent case, for any adapted homotopy of Floer data $(\mathcal{H},\mathbb{J})$, we obtain a corresponding Floer operator $\mathcal{F}_{\mathcal{H},J}$. For any pair $\hat{x}^\pm \in \Per{H^\pm}$, consideration of the zeros of $\mathcal{F}_{\mathcal{H},\mathbb{J}}$ along $C^\infty(\R \times S^1; M)_{\hat{x}^-,\hat{x}^+}$ gives rise to the $s$-dependent Floer equation
\begin{align}\label{sFE}
\partial_s u + J^{s}_t(\partial_t u -X_{H_t^s}) &= 0,
\end{align}
and everything proceeds as before, with the proviso that now, if $u \in C^\infty(\R \times S^1;M)_{\hat{x}^-,\hat{x}^+}$ solves Equation \ref{sFE}, then its energy is given by 
\begin{equation}\label{Eq: s-dependent Action-Energy equation}
E(u)=\mathcal{A}_{H^-}(\hat{x}^-) - \mathcal{A}_H^+(\hat{x}^+) + \int_{- \infty}^\infty \int_0^1 (\partial_s \mathcal{H})(s,t,u(s,t)) \; dt \; ds.
\end{equation}
The moduli space $\mathcal{M}(\hat{x}^-,\hat{x}^+; \mathcal{H},J)$ is defined to be the zero set of $\mathcal{F}_{\mathcal{H},J}$ on $C^\infty(\R \times S^1;M)_{\hat{x}^-,\hat{x}^+}$.

\begin{remark}
When $(H^-,J^-)=(H^+,J^+)$, then the $s$-independent homotopy $(\mathcal{H},\mathbb{J})=(H^-,J^-)=(H^+,J^+)$ is a special case of an adapted homotopy. In this case, $\mathcal{M}(\hat{x},\hat{y};\mathcal{H},J)= \widetilde{\mathcal{M}}(\hat{x},\hat{y};H^\pm,J^\pm)$. In the sequel, when we speak of adapted homotopies of Floer data, this case is included.
\end{remark}
As in the $s$-independent case, in order to formulate the appropriate generic regularity criterion, we introduce the following set: we define $V_0(\mathbb{J})$ to be the set of tuples $(s,t,p) \in \R \times S^1 \times M$ such that $p \in \im v$ for $v$ a non-constant $\mathbb{J}^s_t$-holomorphic sphere with $c_1(v) \leq 0$.

\begin{definition}
Given $(H^\pm,J^\pm)$ Floer regular, $\hat{x}^\pm \in \Per{H^\pm}$, and $(\mathcal{H},\mathbb{J}) \in \HJ$, we will say that $(\mathcal{H},\mathbb{J})$ is \textbf{$(\hat{x}^-,\hat{x}^+)$-regular} if
\begin{enumerate}
\item the linearization $(D \mathcal{F}_{\mathcal{H},\mathbb{J}})_u$ of the Floer operator at $u \in C^\infty(\R \times S^1; M)_{\hat{x}^-,\hat{x}^+}$ is surjective for all $u \in \widetilde{\mathcal{M}}(\hat{x}^-,\hat{x}^+;H,J)$;
\item if $\mu(\hat{x}^-) - \mu(\hat{x}^+) \leq 1$, then $(s,t,u(s,t)) \not \in V_0(\mathbb{J})$ for all $(s,t) \in \R \times S^1$ and all $u \in \mathcal{M}(\hat{x}^-,\hat{x}^+;\mathcal{H},\mathbb{J})$.
\end{enumerate}
We denote the collection of all such adapted homotopies by $\HJ^{reg}_{\hat{x}^-,\hat{x}^+}$. $(\mathcal{H},\mathbb{J})$ will be said to be \textbf{Floer-regular} if it is $(\hat{x}^-,\hat{x}^+)$-regular whenever $\mu(\hat{x}^-)-\mu(\hat{x}^+) \leq 1$. We denote the space of Floer-regular adapted homotopies from $(H^-,J^-)$ to $(H^+,J^+)$ by $\HJ^{reg}(H^-,J^-;H^+,J^+)$, suppressing the dependence on $(H^\pm,J^\pm)$ when no confusion will arise.
\end{definition}
For any Floer regular $(H^\pm,J^\pm)$ and any fixed $\mathbb{J} \in \mathscr{J}(J^-,J^+)$, the set $\mathscr{H}^{reg}(\mathbb{J};H^-,H^+) \subseteq \mathscr{H}(H^-,H^+)$ of adapted homotopies $\mathcal{H}$ such that $(\mathcal{H},\mathbb{J})$ is Floer regular is residual.
\par 
For $(\mathcal{H},\mathbb{J}) \in \HJ^{reg}$, the spaces $\mathcal{M}(\hat{x}^-,\hat{x}^+; \mathcal{H},J)$ are all compact manifolds of dimension $0$ whenever $\mu^{H^-}(\hat{x}^-) =\mu^{H^+}(\hat{x}^+)$, and so we may define the \textbf{continuation morphism}
\begin{align*}
h_{\mathcal{H},J}: CF_*(H^-,J^-) &\rightarrow CF_*(H^+,J^+)
\end{align*} 
on generators by setting 
\begin{align*}
h_{\mathcal{H},\mathbb{J}} (x^-)&:= \sum_{\mu(\hat{x}^-) - \mu(\hat{x}^+)=0} n(\hat{x}^-,\hat{x}^+) \hat{x}^+,
\end{align*}
where $n(\hat{x}^-,\hat{x}^+)$ is the mod $2$ count of elements in the moduli space $\mathcal{M}(\hat{x}^-,\hat{x}^+; \mathcal{H},\mathbb{J})$. 
\par The continuation morphism is a morphism of complexes, which descends to an isomorphism at the level of homology. Moreover any two continuation maps between $(H^-,J^-)$ and $(H^+,J^+)$ define the same map at the level of homology, and further these isomorphisms satisfy the obvious composition law $h_{21} \circ h_{10} = h_{20}$, where $h_{ji}: HF(H_i) \rightarrow HF(H_j)$.
\subsubsection{The PSS isomorphism}\label{Sec: PSS defn Sect}
There is another type of morphism of chain complexes, in some way related to the continuation morphisms, which will concern us in this work. Introduced in \cite{PSS96} (see also \cite{Sc95}), PSS maps may be viewed as a variant on Floer continuation maps, with the exception that they consider adapted homotopies of Floer data from $(0,J^-)$ --- for $J^- \in \mathcal{J}(M,\omega)$ an autonomous almost complex structure \textemdash to the Floer pair $(H,J)$ whose Floer complex we are studying. The fact that the $0$ function is a heavily degenerate Hamiltonian forces some modifications to the definition of the chain morphism. Explicitly, we write
\begin{align*}
\HJ^{PSS}(H,J) &:= \lbrace (J^-;\mathcal{H},J): J^- \in \mathcal{J}(M,\omega), \; (\mathcal{H}, \mathbb{J}) \in \HJ(0,J^-;H,J) \rbrace
\end{align*}
and the set of all \textbf{PSS data} for the pair $(H,J)$ is 
\begin{align*}
PSS&(H,J):= \\
\lbrace (f,g;J^-;\mathcal{H},\mathbb{J}) &\in C^\infty(M) \times Met(M) \times \HJ^{PSS}(H,J): \; (f,g) \text{ is Morse-Smale} \rbrace,
\end{align*}
where $Met(M)$ denotes the space of smooth Riemannian metrics on $M$. There is a residual set $PSS_{reg}(H,J) \subseteq PSS(H,J)$ of \textbf{regular PSS data} such that for any $\mathcal{D}=(f,g;J^-;\mathcal{H},\mathbb{J}) \in PSS_{reg}(H,J)$, we may define a morphism of chain complexes
\begin{align*}
\Phi^{PSS}_{\mathcal{D}}: (C^{Morse}(f,g) \otimes \Lambda_{\omega})_{*+n} &\rightarrow CF_*(H,J) \\
p \otimes e^A &\mapsto \sum_{A \in \Gamma_{\omega}} \sum_{\hat{x}} n(p, (-A) \cdot \hat{x}) \hat{x},
\end{align*}
where for $q \in Crit(f)_{k-n}$, $\hat{y}=[y,v] \in \Per{H}_{k}$, $n(q,\hat{y})$ denotes the $\Z /2 \Z$-count of elements in the $0$-dimensional moduli space $\mathcal{M}(q,\hat{y};\mathcal{D})$  of finite energy maps $u \in C^{\infty}(\R \times S^1;M)$ which are $(\mathcal{H},\mathbb{J})$-Floer and which satisfy
\begin{align*}
\lim_{s \rightarrow - \infty} u(s,t) &\in W^u(q;f,g), \\
\lim_{s \rightarrow \infty} u(s,t) &= y(t), \; \text{and} \\
[\bar{u}] \# [v] &= 0 \in \Gamma_\omega,
\end{align*}
where $W^u(q;f,g)$ is the unstable manifold of $q$ with respect to $(f,g)$ and $\bar{u}: D^2 \rightarrow M$ is the disc map obtained by completing $u$ to a continuous map $\bar{u}: \bar{\R} \times S^1 \rightarrow M$ (note that since $u$ has finite energy and is by hypothesis $J^-$-holomorphic in some neighbourhood of $\lbrace - \infty \rbrace \times S^1$ for some fixed $J^- \in \mathcal{J}(M,\omega)$, removal of singularities for pseudo-holomorphic maps implies that there is some $m \in M$ such that $\lim_{s \rightarrow - \infty} u(s,t) =m$ for all $t \in S^1$). 
\par The map $\Phi^{PSS}_\mathcal{D}$ descends to a map on homology $\Phi^{PSS}_*: QH_{*+n}(M,\omega) \rightarrow HF_*(H)$ which is independent of the regular PSS data. 
\par It turns out (see \cite{PSS96} for more discussion on this point as well as the definitions of the relevant operations) that $\Phi^{PSS}_*$ is an isomorphism which intertwines the quantum product with the pair-of-pants product in Floer homology. This isomorphism permits the definition of the \textbf{Oh-Schwarz spectral invariants}
\begin{align*}
c_{OS}: QH_*(M,\omega) \setminus \lbrace 0 \rbrace \times C^{\infty}(S^1 \times M) &\rightarrow \R \\
(\alpha,H) &\mapsto c_{OS}(\alpha;H).
\end{align*}
Inspired by Viterbo's construction of spectral invariants via generating functions in \cite{V92}, these invariants were introduced by Schwarz in the symplectically aspherical case \cite{Sc00} and extended by Oh to more general symplectic manifolds in \cite{Oh05b}. These spectral invariants are defined by
\begin{align*}
c_{OS}(\alpha;H) &:= \inf \lbrace \lambda_H(\sigma): \sigma \in CF_*(H,J), \; [\sigma] = \Phi^{PSS}_*(\alpha) \rbrace.
\end{align*}
A cycle $\sigma \in CF_*(H,J)$ such that $[\sigma]=\Phi^{PSS}_*(\alpha)$ and $\lambda_{H}(\sigma)=c_{OS}(H;\alpha)$ is called \textbf{tight} (for $c_{OS}(\alpha;H)$). It is a non-trivial fact that such cycles always exist (see \cite{Us08} or \cite{Oh09}).
\par
The Oh-Schwarz spectral invariants are $C^0$-continuous in their Hamiltonian argument, take values in the spectrum of their Hamiltonian argument, and satisfy a bevy of formal properties (see Theorem III in \cite{Oh05b} for a representative list, for example) which make them useful in studying the behaviour of Hamiltonian diffeomorphisms. 

\subsubsection{The Gromov trick and positivity of intersections}\label{sec-Gromov trick}
We conclude this section by recalling the so-called `Gromov trick', which forms the basis of much of this paper by establishing that we may use pseudo-holomorphic techniques to analyze the graphs of Floer-type cylinders.
\begin{theorem}[1.4.C'. in \cite{Gr85}]\label{thm:Gromov}
Let $(\mathcal{H},\mathbb{J})$ be an adapted homotopy of Floer data, then there exists a unique almost complex structure $\tilde{\mathbb{J}}$ on $\R \times S^1 \times M$ with the property that a graph 
\begin{align*}
\tilde{u}: \R \times S^1 &\rightarrow \R \times S^1 \times M \\
(s,t) &\mapsto (s,t,u(s,t))
\end{align*}
is $(j_0, \tilde{\mathbb{J}})$-holomorphic if and only if $u$ satisfies Equation \ref{sFE}, where $j_0$ denotes the standard complex structure on the cylinder.
\end{theorem}
The almost complex structure $\tilde{\mathbb{J}}$ given by Theorem \ref{thm:Gromov} is the unique almost complex structure such that:
\begin{enumerate}
\item $\tilde{\mathbb{J}}$ restricts to $\mathbb{J}^s_t$ on the tangent space of $\lbrace (s,t) \rbrace \times M$ for each $(s,t) \in \R \times S^1$,
\item The distribution
\begin{align*}
\Gamma^{\mathcal{H}}_{(s,t,p)} &:= \langle \partial_s, \partial_t \oplus X^{\mathcal{H}^s}_t(p) \rangle \subset T_{(s,t,p)}(\R \times S^1 \times M),
\end{align*}
is invariant under $\tilde{\mathbb{J}}$,
\item The projection map 
\begin{align*}
pr_{cyl}: \R \times S^1 \times M &\rightarrow \R \times S^1
\end{align*}
is $(\tilde{\mathbb{J}},j_0)$-holomorphic.
\end{enumerate}
Note that it follows immediately from the above that the fibers $\lbrace (s,t) \rbrace \times M$ of $pr_{cyl}$ form a $\tilde{\mathbb{J}}$-holomorphic foliation of $\R \times S^1 \times M$.
\\ \\
The the primary importance of the Gromov trick for our purposes comes from the fact that in dimension $4$, one has the phenomenon of \textit{positivity of intersections} for pseudo-holomorphic curves, which we now recall for the convenience of the reader. Let $(M,J)$ be an almost complex $4$-manifold, $(S_i,j_i)$ (not necessarily closed) Riemannian surfaces and $u_i: (S_i,j_i) \rightarrow (M,J)$ $(j_i,J)$-holomorphic maps for $i=0,1$ with distinct images. If we denote by
\begin{align*}
I(u_0,u_1)&:= \lbrace (z_0,z_1) \in S_0 \times S_1: u_0(z_0)= u_1(z_1) \rbrace
\end{align*}
the pre-images of the intersection points of the images of the two pseudo-holomorphic curves, then the Carleman similarity principle implies that points of $I(u_0,u_1)$ are isolated in $S_0 \times S_1$. This implies that for any $(z_0,z_1) \in I(u_0,u_1)$, we may define $\iota(u_0,u_1;z_0,z_1) \in \Z$ --- the \textbf{local intersection number} of $u_0$ and $u_1$ at $(z_0,z_1)$ --- by fixing a contractible open set $V \subset M$ containing $u_0(z_0)=u_1(z_1)$ (and no other intersection points of the two curves), along with small disks $D_i \subset u_i^{-1}(V)$ centered at $z_i$, and $C^\infty$-small perturbation $v$ of $u_0$ such that $v(\partial D_0)$ and $u_1(\partial D_1)$ are disjoint, $v\vert_{D_0}$ is transverse to $u_1\vert_{D_1}$, and $v$ agrees with $u_0$ outside $D_0$. We then set
\begin{align*}
\iota(u_0,u_1;z_0,z_1)&:= v_0 \cdot_V u_1,
\end{align*}
where $v_0 \cdot_V u_1$ denotes the usual intersection number of the maps $v_0\vert_{D_0}$ and $u_1\vert_{D_1}$, viewed as maps into $V$. The contractibility of $V$ assures that this definition is independent of the choices made in this construction. For our purposes, positivity of intersections for pseudo-holomorphic curves may then be phrased as follows.

\begin{theorem}[Positivity of intersections]
Let $(M,J)$ be an almost complex $4$-manifold, $(S_i,j_i)$ (not necessarily closed) Riemannian surfaces and $u_i: (S_i,j_i) \rightarrow (M,J)$ $(j_i,J)$-holomorphic maps for $i=0,1$. Suppose that $u_0$ and $u_1$ have distinct images, then for every $(z_0,z_1) \in I(u_0,u_1)$, we have that $\iota(u_0,u_1;z_0,z_1) \geq 1$ and moreover $\iota(u_0,u_1;z_0,z_1)=1$ if and only if the intersection of $u_0$ and $u_1$ at $z_0$ and $z_1$ respectively is transverse. 
\end{theorem} 
We refer the interested reader to Appendix E of \cite{McSa12} for a proof and further details on positivity of intersections.

\subsection{Asymptotic analysis for pseudoholomorphic cylinders}\label{Sec:Asymptotic analysis}
The main analytic fact that gives us control over the asymptotic winding behaviour of Floer cylinders, as well as that of vector fields lying in the kernel of the Floer differential, is the following theorem which describes the asymptotic behaviour of solutions to an appropriately perturbed Cauchy-Riemann equation. This result is originally due to \cite{M03}, although the version we reproduce here for the convenience of the reader is from the appendix of \cite{Si08}.
\begin{theorem}\label{LocalAsympTheorem}
Let $w: [0, \infty) \times S^1 \rightarrow \R^{2n}$ satisfy the equation
\begin{align}\label{LocalAsymptoticFloerEquation}
\partial_s w +J_0 \partial_t w + (S(t)- \Delta(s,t))w&=0,
\end{align}
where $S: S^1 \rightarrow End(\R^{2n})$ is a smooth family of symmetric matrices and 
\begin{align*}
\Delta: [0,\infty) \times S^1 &\rightarrow End(\R^{2n})
\end{align*}
is smooth. Suppose that for $\beta \in \N^2,$ there exist constants $M_\beta, d>0$ such that 
\begin{align*}
\vert (\partial^\beta \Delta)(s,t) \vert &\leq M_\beta e^{-ds}, \; \text{and} \\
\vert (\partial^\beta w)(s,t) \vert &\leq M_\beta e^{-ds}.
\end{align*}
Then either $w \equiv 0$ or $w(s,t) = e^{\lambda s}(\xi(t) + r(s,t))$,
where $\lambda$ is a negative eigenvalue of the self-adjoint operator
\begin{align*}
\mathbf{A}: H^1(S^1; \R^{2n}) \subseteq L^2(S^1; \R^{2n}) &\rightarrow  L^2(S^1; \R^{2n}) \\
h &\mapsto -J_0( \partial_t - J_0S)h,
\end{align*}
$\xi: S^1 \rightarrow \R^{2n}$ is an eigenvector of $\mathbf{A}$ with eigenvalue $\lambda$, and $r$ satisfies the decay estimates 
\begin{align*}
\vert (\partial^\beta r)(s,t) \vert &\leq e^{-d' s}M_\beta' 
\end{align*}
for $d', M_\beta' >0$, for all $\beta \in \N^2$.
\end{theorem}
This theorem is useful in the following setting. 
Let $(H,J)$ be Floer regular. To any $x \in Per_0(H)$, we may assign the \textbf{asymptotic operator}
\begin{align*}
A_{x,J}: \Gamma(x^*TM) \rightarrow \Gamma(x^*TM)
\end{align*}
as follows. Let $\mathcal{V}:= \ker D pr_{S^1}$ denote the vertical tangent bundle of $S^1 \times M$, where $pr_{S^1}:S^1 \times M \rightarrow S^1$ denotes the obvious projection map.  Viewing $\xi \in \Gamma(x^*TM)$ as a section of the vertical tangent bundle $\mathcal{V}\vert_{\check{x}}:= \check{x}^* \mathcal{V}$ along the graph $\check{x}$ of $x$, we let $\check{X}_H:= \partial_t \oplus X_H \in \mathcal{X}(S^1 \times M)$, and we view $J=(J_t)_{t \in S^1}$ as an endomorphism of the vertical tangent bundle by setting $\check{J}_{t,x}:= J_t(x)$. $A_{x,J}$ is then defined by setting $A_{x,J} (\xi) := -\check{J} \mathcal{L}_{\check{X}_H} \xi$, where $\mathcal{L}_XY$ denotes the Lie derivative of $Y$ along $X$. $A_{x,J}$ extends to an unbounded self-adjoint operator with discrete spectrum (still denoted $A_{x,J}$) from $W^{1,2}(x^*TM)$ to $L^2(x^*TM)$. 
\par By taking an exponential chart as in Section \ref{LocalLinkSec} on a neighbourhood $\tilde{\mathcal{O}}$ of $\hat{x} \in \Per{H}$, Floer's equation may be written in the local coordinates provided by this chart in the form of Equation $\ref{LocalAsymptoticFloerEquation}$, with $A_{x,J}$ being sent via these coordinates to $\mathbf{A}$. Following \cite{Si08}, we make the following definition.
\begin{definition}
Let $x \in \mathcal{L}_0(M)$ and suppose that $\lim_{s \rightarrow \infty} u_s \equiv x$ for a map $u: \R \times S^1 \rightarrow M$. For any $R>0$, a map
\begin{align*}
U^+: [R, \infty) &\rightarrow \Gamma(x^*TM)
\end{align*}
will be said to be a \textbf{positive asymptotic representative} of $u$ if 
$u(s,t) = Exp(U^+(s))(t)$ for all $(s,t) \in [R, \infty) \times S^1$, where $Exp$ is as in Section \ref{LocalLinkSec}. The notion of a \textbf{negative asymptotic representative} of $u$, 
\begin{align*}
U^-: (-\infty, -R] &\rightarrow \Gamma(x^*TM)
\end{align*}
is defined in the obvious analogous manner.
\end{definition}
Every Floer-type cylinder considered in this paper admits, due to exponential convergence at the ends, essentially unique positive and negative asymptotic representatives,  determined up to a restriction of the domains of $U^\pm$ to larger values of $\vert R \vert$. \par 
The main result that we will need from \cite{Si08} is the following (this is Theorem $2.2$ of the quoted work, paraphrased for our setting). 
\begin{theorem}\label{PosAsymp}
Let $(H,J)$ be a non-degenerate Floer pair, $x \in Per_0(H)$ and let $u, v$ solve Equation \ref{sFE} for $s >> 0$ (resp. for $s << 0$), where the adapted homotopy used in defining Equation \ref{sFE} satisfies $(H^+,J^+)=(H,J)$ (resp. $(H^-,J^-) = (H,J)$). Suppose moreover that $u_s$ and $v_s$ both converge to $x$ as $s \rightarrow \infty$ (resp. $ s \rightarrow -\infty$). Let $U$ and $V$ be positive (resp. negative) asymptotic representatives of $u$ and $v$ respectively. Then either $U \equiv V$ or there exists a strictly negative (resp. strictly positive) eigenvalue $\lambda \in \sigma(A_{x,J})$ and an eigenvector $\xi$ with eigenvalue $\lambda$ such that
\begin{align*}
(U-V)(s,t) &=e^{\lambda s}(\xi(t) + r(s,t)),
\end{align*}
where the remainder term satisfies the decay estimates $\vert \nabla_s^i \nabla_t^i r(s,t)\vert \leq M_{ij} e^{-ds}$ for all $(i,j) \in \N^2$ and some $M_{i,j},d>0$ (resp. $d<0$).
\end{theorem}
Whenever $u,v$ and $x$ are as above with $u$ and $v$ restricting to distinct maps outside of every compact set of $\R \times S^1$, we will write $\xi^+_{u,v}$ (resp. $\xi^-_{u,v}$) for the eigenvectors of $A_{x,J}$ whose existence is guaranteed by the above theorem. We will call $\xi^\pm_{u,v}$ the \textbf{positive (resp. negative) asymptotic eigenvector of $v$ relative $u$}. Note that the above result only requires that $u$ and $v$ solve Equation \ref{sFE} on some neighbourhood of $s=\infty$ (resp. $s= -\infty$), and that, for $(\mathcal{H}, \mathbb{J}) \in \HJ$, the trivial cylinder $v(s,t)=x(t)$ is always a solution to Equation \ref{sFE} outside some compact set. Provided $u$ is not the trivial cylinder $u(s,t)=x(t)$, we will write $\xi^\pm_u:= \xi^\pm_{u,x}$ and call these the (positive and negative) \textbf{asymptotic eigenvectors of $u$}.
\par
This asymptotic information becomes especially useful when combined with the following fact (see \cite{HWZ95} p. 285 or \cite{Si08} p.1637).
\begin{proposition}\label{NonVanish}
If $\xi \in \Gamma(x^*TM)$ is an eigenvector of $A_{x,J}$ (and hence is not identically zero) then for all $t \in S^1$, $\xi(t) \neq 0$.
\end{proposition}
Since Theorem \ref{PosAsymp} tells us that for sufficiently large $\lvert R \rvert \in \R$ the difference between the asymptotic representatives of Floer-type cylinders behaves to leading order like an eigenvector of $A_{x,J}$, the fact that these are nowhere vanishing implies:
\begin{cor}\label{EndsCor}
Let $u,v: \R \times S^1 \rightarrow M$ be distinct finite energy solutions of Equation \ref{sFE}, then there is a compact subset $K \subseteq \R \times S^1$ such that $u(s,t) = v(s,t)$ only if $(s,t) \in K$.
\end{cor}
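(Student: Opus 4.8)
The plan is to argue by contradiction using the asymptotic description provided by Theorem \ref{PosAsymp} together with the non-vanishing statement of Proposition \ref{NonVanish}. Suppose $u \neq v$ but the coincidence set $Z := \{(s,t) : u(s,t) = v(s,t)\}$ is non-compact. Since $u$ and $v$ both have finite energy, each has well-defined limits at $s \to \pm\infty$: there are periodic orbits $x^\pm$ with $u_s, v_s \to x^\pm$ as $s \to \pm\infty$. If $u$ and $v$ have different limiting orbits at, say, $+\infty$, then $u_s$ and $v_s$ are eventually disjoint loops (their graphs in $S^1 \times M$ are eventually disjoint), so the part of $Z$ with large $s$ is empty; similarly at $-\infty$. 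Hence non-compactness of $Z$ forces $u$ and $v$ to share a common asymptotic orbit $x := x^+$ at $+\infty$ (the case of $-\infty$ is symmetric and handled identically), and moreover forces $Z$ to be unbounded in the positive $s$-direction.

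Next I would invoke Theorem \ref{PosAsymp}. Since $u$ and $v$ are distinct finite-energy solutions of Equation \ref{sFE} which are asymptotic to the same orbit $x$ at $+\infty$, their positive asymptotic representatives $U$ and $V$ satisfy either $U \equiv V$ or
\begin{align*}
(U - V)(s,t) &= e^{\lambda s}\big(\xi(t) + r(s,t)\big)
\end{align*}
for some strictly negative eigenvalue $\lambda$ of $A_{x,J}$, some eigenvector $\xi$ with $A_{x,J}\xi = \lambda\xi$, and a remainder $r$ decaying like $e^{-ds}$ uniformly in $t$ together with all its derivatives. If $U \equiv V$ then $u$ and $v$ agree on an entire half-cylinder $[R,\infty) \times S^1$; by unique continuation for solutions of the (perturbed) Cauchy–Riemann equation \ref{sFE} this would force $u \equiv v$, contradicting $u \neq v$. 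So we are in the second case. Now, $\xi$ is an eigenvector of $A_{x,J}$, hence by Proposition \ref{NonVanish} we have $\xi(t) \neq 0$ for every $t \in S^1$; since $\xi$ is continuous on the compact circle, $\min_{t} |\xi(t)| = c > 0$. Using the uniform decay of $r$, choose $R'$ large enough that $|r(s,t)| < c$ for all $s \geq R'$, $t \in S^1$. Then for all $s \geq R'$ and all $t$ we have $|\xi(t) + r(s,t)| \geq c - |r(s,t)| > 0$, so $(U - V)(s,t) = e^{\lambda s}(\xi(t)+r(s,t)) \neq 0$. Translating back to the cylinders (using that $U, V$ are honest asymptotic representatives, so $u(s,t)=v(s,t)$ forces $U(s,t)=V(s,t)$ for $s$ large), this says $u(s,t) \neq v(s,t)$ for all $(s,t) \in [R',\infty) \times S^1$, i.e.\ $Z \cap \big([R',\infty)\times S^1\big) = \emptyset$.

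This contradicts the unboundedness of $Z$ in the positive $s$-direction established above. Running the symmetric argument at $-\infty$ (using the strictly positive eigenvalue case of Theorem \ref{PosAsymp}) rules out unboundedness in the negative $s$-direction as well. Therefore $Z$ is contained in a set of the form $[-R',R'] \times S^1$, hence $Z$ is a closed subset of a compact set and so is compact; any compact $K$ containing $[-R',R'] \times S^1$ works. I expect the main subtlety to be the bookkeeping around distinct limiting orbits: one must be careful that when $u$ and $v$ limit to the \emph{same} orbit at a given end one genuinely is in the hypotheses of Theorem \ref{PosAsymp} (in particular that the relevant Floer datum agrees with $(H,J)$ near that end, which holds since $(\mathcal{H},\mathbb{J})$ is adapted and, in the $s$-independent case relevant to the differential, is constant), and that the unique-continuation step ruling out $U \equiv V$ is legitimate — both are standard but deserve an explicit remark.
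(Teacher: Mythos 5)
Your proof is correct and is essentially the intended (and only reasonable) argument: the paper states this as a corollary of Theorem \ref{PosAsymp} and Proposition \ref{NonVanish} without written proof, and your argument — split on whether the two asymptotic orbits at an end agree, dispose of $U\equiv V$ by unique continuation, and otherwise use non-vanishing of the asymptotic eigenvector plus the decay of the remainder to push the coincidence set into a compact band — is exactly what the author intends. Your closing remarks flagging the unique-continuation step and the $s$-independence of the Floer data near the ends are the right things to flag.
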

These results become even stronger in the case when $\dim M=2$, as in this case Proposition \ref{NonVanish} implies that eigenvectors of the asymptotic operator have a well-defined winding number, once we fix a trivialization of $x^*TM$ via a choice of capping disk. More precisely, when $M=\Sigma$, if $\hat{x} \in \cL{\Sigma}$, and $T_{\hat{x}}: S^1 \times (\R^2,\omega_0) \rightarrow (x^*T\Sigma,\omega)$ is a symplectic trivialization as in Section \ref{Braids}, then for any eigenvector $\xi$ of $A_{x,J}$, the map $t \mapsto T_{\hat{x}}(t)^{-1}\xi(t)$ has a well-defined winding number $wind(\xi; \hat{x})$, by Proposition \ref{NonVanish}. Proposition \ref{prop:HLinkEqualsLink} then gives the following corollary.
\begin{cor}\label{EigenvectorWindingCap}
Let $u,v$ be distinct finite energy solutions of Equation \ref{sFE} with 
\begin{align*}
\lim_{s \rightarrow - \infty} u_s &= \lim_{s \rightarrow - \infty} v_s= x.
\end{align*}
Then there exists $R>0$ such that for all $s < - R$ and any capping $\hat{x}=[x,\alpha]$, we have 
\begin{align*}
\ell(\hat{v}_{s}^{\alpha},\hat{u}^{\alpha}_{s}) &= wind(\xi_{u,v}^-; \hat{x}),
\end{align*} 
where $\hat{u}^{\alpha}_{s}$ (resp. $\hat{v}^{\alpha}_s$) denotes the capping of $u_s$ (resp. $v_s$) such that $[x,\alpha]$ and $\hat{u}_s$ (resp. $\hat{v}_s$) are $0$-homotopic. The analogous statement when $\lim_{s \rightarrow \infty} u_s= \lim_{s \rightarrow \infty} v_s=x$ also holds.
\end{cor}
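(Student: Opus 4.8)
The plan is to combine the asymptotic formula of Theorem \ref{PosAsymp} with the winding-number description of $\ell$ from \textsection \ref{LocalLinkSec} and Proposition \ref{prop:HLinkEqualsLink}, and to close with two elementary homotopy-invariance facts for the classical winding number. Since both sides of the asserted equality transform identically under the $\pi_2(\Sigma)$-action on cappings (compare the displayed formulas in \textsection \ref{LocalLinkSec} with the definition of $wind(\cdot;\hat x)$), it suffices to fix one capping $\hat x=[x,\alpha]$ and to produce a single $R>0$, depending only on $u,v,x,J$, that works for it; likewise it suffices to treat the negative end, the positive end being identical after exchanging the roles of $\pm\infty$ and reversing the sign of the eigenvalue.

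First I would unpack Theorem \ref{PosAsymp}. Choosing negative asymptotic representatives $U,V\colon(-\infty,-R_{0}]\to\Gamma(x^{*}T\Sigma)$ of $u$ and $v$, the hypothesis $u\neq v$ together with Corollary \ref{EndsCor} forces $U\not\equiv V$, so there is a strictly positive $\lambda\in\sigma(A_{x,J})$ and an eigenvector $\xi=\xi^{-}_{u,v}$ with $(U-V)(s,t)=e^{\lambda s}\bigl(\xi(t)+r(s,t)\bigr)$, the remainder $r$ and its derivatives decaying exponentially as $s\to-\infty$. By Proposition \ref{NonVanish} we have $c:=\min_{t\in S^{1}}|\xi(t)|>0$, so I fix $R_{1}\geq R_{0}$ with $\sup_{t}|r(s,t)|<c$ whenever $s<-R_{1}$. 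Then $t\mapsto(U-V)(s,t)$ is nowhere vanishing and the straight-line homotopy $\tau\mapsto e^{\lambda s}\bigl(\xi+\tau r(s,\cdot)\bigr)$ is nowhere vanishing, so $wind\bigl(T_{\hat x}^{-1}(U(s)-V(s))\bigr)=wind(\xi;\hat x)$ for every $s<-R_{1}$.

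Next I would identify $\ell(\hat v^{\alpha}_{s},\hat u^{\alpha}_{s})$ with $wind\bigl(T_{\hat x}^{-1}(U(s)-V(s))\bigr)$ for $s$ sufficiently negative. Enlarging $R_{1}$ to $R_{2}$ if necessary, for $s<-R_{2}$ the loops $u_{s}$, $v_{s}$ and all of the intermediate loops below lie in the exponential neighbourhood of $x$ from \textsection \ref{LocalLinkSec}; in particular $u_{s}$ lies in the exponential neighbourhood of $v_{s}$. Working in the chart around $x$, the family $\tau\mapsto\bigl((1-\tau)V(s),\,U(s)-\tau V(s)\bigr)$, read through $\widetilde{Exp}$, is a braid homotopy whose two strands always differ by the fixed nowhere-zero section $U(s)-V(s)$; with the natural cappings it becomes a $0$-homotopy of capped $2$-braids from $(\hat v^{\alpha}_{s},\hat u^{\alpha}_{s})$ to $(\hat x^{\alpha},\widehat w_{s})$, where $\hat x^{\alpha}=[x,\alpha]$ and $\widehat w_{s}:=\widetilde{Exp}\bigl((U-V)(s)\bigr)$. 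Since $L_{0}(\hat 0;\,\cdot)$ is a $0$-homotopy invariant (Proposition \ref{LinkingProp}) and agrees with $\ell$ on any pair of loops one of which lies in an exponential neighbourhood of the other (Proposition \ref{prop:HLinkEqualsLink}), we get $\ell(\hat v^{\alpha}_{s},\hat u^{\alpha}_{s})=L_{0}(\hat 0;(\hat v^{\alpha}_{s},\hat u^{\alpha}_{s}))=L_{0}(\hat 0;(\hat x^{\alpha},\widehat w_{s}))=\ell(\hat x^{\alpha},\widehat w_{s})=wind\bigl(T_{\hat x}^{-1}(U(s)-V(s))\bigr)$, the last equality being the definition of $\ell$ in \textsection \ref{LocalLinkSec}. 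Combined with the previous paragraph this proves the corollary with $R:=\max(R_{1},R_{2})$.

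The one genuine subtlety is the bookkeeping in the third step: one must check that the ``subtract $v_{s}$'' family is a $0$-homotopy (not merely a braid homotopy) matching the prescribed cappings $\hat v^{\alpha}_{s},\hat u^{\alpha}_{s}$, and that for $s$ negative enough all the loops involved remain inside the exponential neighbourhood of $x$, so that Proposition \ref{prop:HLinkEqualsLink} applies along the entire homotopy; both are immediate consequences of the exponential decay in Theorem \ref{PosAsymp}. One could instead bypass Proposition \ref{prop:HLinkEqualsLink} altogether and compare directly the exponential chart centred at the moving base loop $v_{s}$ with the one centred at $x$, but then one has to show both that $\widetilde{Exp}_{v_{s}}^{-1}(u_{s})$ agrees with $U(s)-V(s)$ to leading order and that the framings induced by the two $0$-homotopic cappings agree up to homotopy, which is appreciably messier.
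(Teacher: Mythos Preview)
Your proof is correct and follows essentially the same route the paper indicates: the paper gives no explicit argument beyond the one-line assertion ``Proposition \ref{prop:HLinkEqualsLink} then implies'', and you have carefully supplied the details that this assertion suppresses --- namely, using the asymptotic formula of Theorem \ref{PosAsymp} together with Proposition \ref{NonVanish} to control the winding of $U(s)-V(s)$, and then invoking the $0$-homotopy invariance of $L_0$ and Proposition \ref{prop:HLinkEqualsLink} to identify $\ell(\hat v_s^\alpha,\hat u_s^\alpha)$ with that winding. Your discussion of the subtlety (that the ``subtract $v_s$'' family really is a $0$-homotopy staying inside the exponential neighbourhood) is exactly the point one has to check, and your remark about the alternative route via a moving base loop is apt.
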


If we combine the positivity of intersection of holomorphic curves in dimension $4$ with the foregoing discussion, we arrive at the principal point of this section (recall from the discussion following Proposition \ref{prop:HLinkEqualsLink} that $\ell(\hat{x},\hat{y}):= L_0(\hat{0},\lbrace \hat{x},\hat{y} \rbrace)$).
\begin{lemma}\label{EndsLinkLemma}
Let $(H^\pm,J^\pm)$ be non-degenerate Floer pairs, $(\mathcal{H},\mathbb{J}) \in \HJ$ and let $u,v \in C^\infty(\R \times S^1;\Sigma)$ be distinct finite energy solutions to Equation \ref{sFE} for $(\mathcal{H},\mathbb{J})$. Then for any lifts $\hat{u}, \hat{v}$ of $u,v: \R \rightarrow \mathcal{L}_0(\Sigma)$, the function $\ell_{\hat{u},\hat{v}}(s):= \ell(\hat{u}_s,\hat{v}_s)$ is non-decreasing, locally constant, and well-defined for all but finitely many values $s \in \R$. Moreover, for $s,s' \in dom(\ell_{\hat{u},\hat{v}})$, with $s < s'$, $\ell_{\hat{u},\hat{v}}(s) \neq \ell_{\hat{u},\hat{v}}(s')$ if and only if there exists $s_0 \in (s,s')$ and some $t_0 \in S^1$ such that $u(s_0,t_0)=v(s_0,t_0)$.
\end{lemma}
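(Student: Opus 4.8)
### Proof plan for Lemma \ref{EndsLinkLemma}

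The plan is to package the three assertions — non-decreasing, locally constant, finitely many bad points — as consequences of the Gromov trick together with positivity of intersections in dimension four, handling the endpoints via the asymptotic analysis just recalled. First I would apply Theorem \ref{thm:Gromov} to the adapted homotopy $(\mathcal{H},\mathbb{J})$: this produces the almost complex structure $\tilde{\mathbb{J}}$ on $\widetilde{\Sigma}=\R\times S^1\times\Sigma$ such that the graphs $\tilde{u},\tilde{v}:\R\times S^1\to\widetilde{\Sigma}$ are $(j_0,\tilde{\mathbb{J}})$-holomorphic. Since $u\neq v$ are distinct finite-energy solutions, Corollary \ref{EndsCor} gives a compact $K\subseteq\R\times S^1$ outside of which $u(s,t)\neq v(s,t)$; in particular the graphs $\tilde{u},\tilde{v}$ have compactly supported intersection locus in $\widetilde{\Sigma}$. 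Because $\tilde{u}$ and $\tilde{v}$ are distinct $\tilde{\mathbb{J}}$-holomorphic maps of a connected domain, the intersection set $Z:=\{(s,t): u(s,t)=v(s,t)\}$ is discrete — hence finite, being contained in $K$ — and at each such point the local intersection index of the two graphs is strictly positive (this is the standard positivity of intersections for distinct pseudoholomorphic curves in a $4$-manifold, e.g.\ via the local model; one also uses that the domain complex structure is fixed).

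Next I would identify $\ell_{\hat{u},\hat{v}}(s)$ with a slice count. For $s\notin\pi_1(Z)$ (the finite set of $s$-values where $Z$ meets the slice $\{s\}\times S^1$), the loops $u_s$ and $v_s$ are disjoint, so $(u_s,v_s)$ is a genuine $2$-braid and, once one fixes lifts $\hat u,\hat v$, the quantity $\ell(\hat u_s,\hat v_s)$ from Section \ref{LocalLinkSec} is defined; this gives the claimed domain of definition, the complement of a finite set. For $s<s'$ both in this domain I would consider the braid cobordism $h=(h_1,h_2)$ on $[s,s']$ given by $h_1(\sigma,t)=u_\sigma(t)$, $h_2(\sigma,t)=v_\sigma(t)$, together with the cappings inherited from $\hat u,\hat v$; by Proposition \ref{prop:HLinkEqualsLink} (applied after translating one strand to a constant, using that $u_s$ and $v_s$ are $C^\infty$-close for the slices under comparison when they lie in a common exponential neighbourhood, or more robustly using the homotopy-invariance and additivity of Proposition \ref{LinkingProp}) the difference $\ell_{\hat u,\hat v}(s') - \ell_{\hat u,\hat v}(s)$ equals the signed count of intersections of the graphs $\tilde h_1$ and $\tilde h_2$ inside $[s,s']\times S^1\times\Sigma$, i.e.\ the signed count of points of $Z$ with $s$-coordinate in $(s,s')$. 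Since each such intersection has positive local index by the previous paragraph, this difference is $\geq 0$, and it is $>0$ precisely when there is some $(s_0,t_0)\in Z$ with $s_0\in(s,s')$. This simultaneously yields monotonicity, the ``if and only if'' clause, and — since $Z$ is finite — local constancy away from the finitely many bad slices.

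I should also address the technical wrinkle that, for the identification of $\ell(\hat u_s,\hat v_s)$ with a graph-intersection count via Proposition \ref{prop:HLinkEqualsLink}, one needs $v_s$ to lie in an exponential neighbourhood of $u_s$; this fails for $s,s'$ far apart. The fix is to choose a finite chain of intermediate slice values $s=s_0<s_1<\dots<s_N=s'$ in $\dom(\ell_{\hat u,\hat v})$ with consecutive slices close in $\mathcal{L}_0(\Sigma)$ (possible by uniform continuity of $\sigma\mapsto u_\sigma,v_\sigma$ on compact $\sigma$-intervals and discreteness of $Z$), apply the local comparison on each consecutive pair, and telescope using the additivity in Proposition \ref{LinkingProp}(3). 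The main obstacle, and the step deserving the most care, is the positivity-of-intersection input: one must confirm that $\tilde u$ and $\tilde v$ are honestly distinct as $\tilde{\mathbb{J}}$-holomorphic maps with the \emph{same} fixed domain complex structure $j_0$ (so that the similarity-principle / Carleman argument applies to their difference after a local trivialization), and that the sign conventions — the orientation of $\widetilde{\Sigma}$ by $(ds\wedge dt)\wedge\omega$ used in Section \ref{HomLinkSec}, the winding-number convention in $\ell$, and the complex orientation of the graphs — are all mutually compatible so that the local intersection indices come out $+1$ rather than $-1$. Once the bookkeeping is pinned down, the rest is a direct transcription of the homological-linking formalism of Section \ref{Braids}.
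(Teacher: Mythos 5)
Your proposal follows essentially the same route as the paper's proof: Gromov trick to make the graphs $\tilde u,\tilde v$ pseudoholomorphic, Corollary \ref{EndsCor} to confine intersections to a compact set, isolatedness plus positivity of intersections in dimension $4$ to get finiteness, monotonicity, and the strict-increase criterion. The one place where you go beyond the paper's terse write-up --- your worry that $\ell(\hat u_s,\hat v_s)$ is only directly defined via the exponential-neighbourhood construction when the slices are close, leading you to telescope through a finite chain of intermediate $s$-values via Proposition \ref{LinkingProp}(3) --- is a legitimate concern, but it resolves more cleanly by observing that Proposition \ref{prop:HLinkEqualsLink} identifies $\ell$ with the homological linking number $L_0(\hat 0;\cdot)$, which is defined for arbitrary pairs of disjoint capped loops and is what the paper is implicitly using throughout; once $\ell$ is read that way, the difference $\ell_{\hat u,\hat v}(s')-\ell_{\hat u,\hat v}(s)$ is directly the signed count of graph intersections over $(s,s')\times S^1$ by definition of $L_0$, and no telescoping is needed. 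Your sign-convention caveat at the end is worth keeping in mind, but the conventions in Sections \ref{LocalLinkSec}--\ref{HomLinkSec} are already set up to make the local indices come out $+1$.
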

\begin{proof}
That $\ell_{\hat{u},\hat{v}}$ has only finitely many points at which it is ill-defined follows the fact that, by definition, $\ell(\hat{u}_{s_0},\hat{v}_{s_0})$ is undefined only when there exists $t_0 \in S^1$ such that $u(s_0,t_0)=v(s_0,t_0)$. By Corollary \ref{EndsCor}, the set of all such $(s_0,t_0) \in \R \times S^1$ must lie inside some compact set, and we may then apply Theorem \ref{thm:Gromov} to choose an almost complex structure on $\R \times S^1 \times \Sigma$ such that the graphs $\tilde{u}$ and $\tilde{v}$ are pseudoholomorphic, whence all such intersections must be isolated, and so finite in number. That $\ell_{\hat{u},\hat{v}}$ is non-decreasing follows by applying the positivity of intersections for holomorphic curves in dimension $4$ at these intersections, which implies moreover that any such intersection contributes strictly positively to the change in $\ell_{\hat{u},\hat{v}}(s)$ as $s$ increases and passes from one connected component of $dom(\ell_{\hat{u},\hat{v}})$ to another.
\end{proof}
\begin{definition}\label{def-link-infty}
For $(H^\pm,J^\pm)$ non-degenerate Floer pairs, $(\mathcal{H},\mathbb{J}) \in \HJ$ and $u,v \in \mathcal{M}(\hat{x},\hat{y};\mathcal{H},\mathbb{J})$, $u \neq v$, we define 
\begin{align*}
\ell_{\pm \infty}(u,v) &:= \lim_{s \rightarrow \pm \infty} \ell(\hat{u}_s,\hat{v}_s),
\end{align*}
where $\hat{u}_s$ and $\hat{v}_s$ are the natural cappings of $u_s$ and $v_s$ (cf. Definition \ref{def:NaturalCapping}).
\end{definition}
\begin{remark}
We recall here for the convenience of the reader that the moduli space $\mathcal{M}(\hat{x},\hat{y};\mathcal{H},\mathbb{J})$ contains, by definition, only cylinders which are $0$-homotopies between $\hat{x}$ and $\hat{y}$ (cf. Section \ref{Sec:Floer} and \ref{Sec-ContMaps}). This ensures in particular that the notion of ``the natural cappings of $\hat{u}_s$ and $\hat{v}_s$'' used above is well-defined.
\end{remark}
Note that the previous lemma implies that these quantities exist and are finite. Indeed, if $\hat{u}_s$ and $\hat{v}_s$ tend to $\hat{x}$ as $s \rightarrow \pm \infty$, then $\ell_{\pm \infty}(u,v)= wind(\xi^\pm_{u,v};\hat{x})$ by Corollary \ref{EigenvectorWindingCap}, while if $\lim_{s \rightarrow \pm \infty} u_s=\hat{x}$ and $\lim_{s \rightarrow \pm \infty} v_s=\hat{y}$ with $x \neq y$, then $\ell_{\pm \infty}(u,v)=\ell(\hat{x},\hat{y})$.

\subsubsection{Winding of eigenvectors of $A_{x,J}$}\label{SubSec: Winding}
We summarize here some necessary facts from \cite{HWZ95} on the winding numbers of the eigenvectors of $A_{x,J}$ which appeared in the previous subsection (while \cite{HWZ95} works in the aspherical case, our previous discussion makes clear how this winding number depends on the choice of cappings of $x$ and $y$ and this is all that is needed to extend the results to capped orbits). For a loop $x \in \mathcal{L}_0(\Sigma)$, let $\pi_2(\Sigma;x)$ denote the set of homotopy classes of capping disks for $x$.
\begin{proposition}\label{SpectrumWindProps}
Let $(H,J)$ be a non-degenerate Floer pair, $x \in Per_0(H)$ with $H$ non-degenerate and $J: S^1 \rightarrow \mathcal{J}(\Sigma,\omega)$ arbitrary. There is a well-defined function
\begin{align*}
W=W_{x,J}: \pi_2(\Sigma; x) \times \sigma(A_{x,J}) &\rightarrow \Z \\
(\alpha,\lambda) &\mapsto wind(T_{[x,\alpha]}^{-1} \circ \xi),
\end{align*}
where $\xi \in \Gamma(x^*T\Sigma)$ is any eigenvector with eigenvalue $\lambda$. Moreover, $W$ satisfies the following properties
\begin{enumerate}
\item For any $\alpha \in \pi_2(\Sigma;x)$, $\lambda < \lambda' \; \Rightarrow  \; W(\alpha,\lambda) \leq W(\alpha,\lambda')$.
\item For any $\alpha \in \pi_2(\Sigma;x)$, and any $k \in \Z$, 
$\sum_{ \lambda \in W_\alpha^{-1}(k)} \dim E_\lambda =2$, \\
where $W_\alpha(\lambda)=W(\alpha,\lambda)$, and $E_\lambda$ is the eigenspace associated to the eigenvalue $\lambda$.
\item For any $A \in \pi_2(\Sigma)$, $W(A \cdot \alpha, \lambda)= W(\alpha, \lambda) + c_1(A)$.
\end{enumerate}
\end{proposition}
In view of the control over the sign of the eigenvalue provided by Theorem $\ref{PosAsymp}$, combined with the monotonicity of the winding number provided by item $(1)$ of the above proposition, we follow \cite{HWZ95} in making the following definition.
\begin{definition}
For $(H,J)$ and $\hat{x}=[x,\alpha] \in Per_0(H)$ as above, define
\begin{align*}
a(\hat{x})= a(\hat{x};H) &:= \max_{\lambda \in \sigma(A_{x,J}) \cap (- \infty, 0)} W(\alpha,\lambda) \\
b(\hat{x})= b(\hat{x};H) &:= \min_{\lambda \in \sigma(A_{x,J}) \cap (0, \infty)} W(\alpha,\lambda).
\end{align*} 
\end{definition}
Remark that by the monotonicity of $W$ and by Lemma $\ref{EndsLinkLemma}$ (see the remark following Definition $\ref{def-link-infty}$) we have the following corollary.
\begin{cor}\label{EmergeConvergeLinking}
Let $(H^\pm,J^\pm)$ be non-degenerate Floer pairs, $(\mathcal{H},\mathbb{J}) \in \HJ$, $x^\pm_i \in \Per{H^\pm}$ for $i=0,1$.
\begin{enumerate}
\item If $u_i \in \mathcal{M}(\hat{x}_0^-,\hat{x}_i^+; \mathcal{H},\mathbb{J})$, $i=0,1$, then $b(\hat{x}_0^-) \leq \ell_{- \infty}(u_0,u_1)$. 
\item If $u_i \in \mathcal{M}(\hat{x}_i^-,\hat{x}_0^+; \mathcal{H},\mathbb{J})$, $i=0,1$, then $\ell_{\infty}(u_0,u_1) \leq a(\hat{x}^+_0)$. 
\end{enumerate}
\end{cor}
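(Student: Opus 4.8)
The plan is to leverage Corollary~\ref{EigenvectorWindingCap} to translate the asymptotic-linking quantities $\ell_{\pm\infty}(u_i,u_j)$ into winding numbers of asymptotic eigenvectors of the common asymptotic operator, and then to play off the monotonicity of the winding function $W$ against the sign control on eigenvalues coming from Theorem~\ref{PosAsymp}. Concretely, for item~(1), all three cylinders $u_0,u_1$ emanate from the common orbit $x_0^-$ as $s\to-\infty$, so Corollary~\ref{EigenvectorWindingCap} gives $\ell_{-\infty}(u_0,u_1)=\mathrm{wind}(\xi^-_{u_0,u_1};\hat{x}_0^-)=W(\alpha,\lambda)$ where $\lambda\in\sigma(A_{x_0^-,J})$ is the eigenvalue of $\xi^-_{u_0,u_1}$, and by Theorem~\ref{PosAsymp} (negative-end version) $\lambda$ is strictly \emph{positive}. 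Since $b(\hat{x}_0^-)=\inf_{\lambda\in\sigma(A)\cap(0,\infty)}W(\alpha,\lambda)$, monotonicity of $W$ (item~(1) of Proposition~\ref{SpectrumWindProps}) immediately yields $b(\hat{x}_0^-)\le W(\alpha,\lambda)=\ell_{-\infty}(u_0,u_1)$.

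Item~(2) is the mirror image: now $u_0,u_1$ both converge to $x_0^+$ as $s\to+\infty$, so the positive-end half of Corollary~\ref{EigenvectorWindingCap} gives $\ell_{\infty}(u_0,u_1)=\mathrm{wind}(\xi^+_{u_0,u_1};\hat{x}_0^+)=W(\alpha,\lambda)$ with $\lambda$ now strictly \emph{negative} by the positive-end version of Theorem~\ref{PosAsymp}. Since $a(\hat{x}_0^+)=\sup_{\lambda\in\sigma(A)\cap(-\infty,0)}W(\alpha,\lambda)$, monotonicity again gives $\ell_{\infty}(u_0,u_1)=W(\alpha,\lambda)\le a(\hat{x}_0^+)$. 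In both cases one should first dispose of the degenerate possibility allowed by Theorem~\ref{PosAsymp}, namely that the two asymptotic representatives coincide ($U\equiv V$): but $u_0\ne u_1$ forces, via Corollary~\ref{EndsCor} (or directly unique continuation), that the asymptotic representatives differ on the relevant end, so the eigenvector $\xi^\pm_{u_0,u_1}$ genuinely exists and the argument applies.

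The only mild subtlety — and the step I would flag as requiring care rather than difficulty — is bookkeeping the cappings. The quantities $a(\hat{x})$, $b(\hat{x})$ and $\ell_{\pm\infty}$ all depend on the choice of capping $\alpha$ of the underlying orbit, and one must check that the same capping is used on both sides: for item~(1) the natural capping of $u_{i,s}$ as $s\to-\infty$ is exactly the capping $\alpha$ of $\hat{x}_0^-$ used to define $b(\hat{x}_0^-;H^-)$, and similarly for item~(2) with $\hat{x}_0^+$ and $H^+$. This is precisely the compatibility already packaged into the statement of Corollary~\ref{EigenvectorWindingCap} (the cappings $\hat{u}^\alpha_s$ are the ones $0$-homotopic to $[x,\alpha]$) and into the remark following the definition of $\ell_{\pm\infty}$, so the verification is a matter of matching conventions rather than new input. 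With that alignment in place the corollary is immediate from the two cited results.
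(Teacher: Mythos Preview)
Your proposal is correct and follows exactly the approach indicated in the paper, which states the corollary as an immediate consequence of ``the monotonicity of $W$ and Theorem~\ref{PosAsymp}'' together with the identification $\ell_{\pm\infty}(u,v)=\mathrm{wind}(\xi^\pm_{u,v};\hat{x})$ already recorded after the definition of $\ell_{\pm\infty}$. Your treatment is in fact more detailed than the paper's one-line remark, explicitly handling the degenerate case $U\equiv V$ and the capping bookkeeping; the only slip is the phrase ``all three cylinders $u_0,u_1$'' where you presumably mean ``both cylinders''.
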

The result which relates this discussion to the behaviour of the Floer complex is the following:
\begin{theorem}[\cite{HWZ95} Theorem 3.10]\label{thm:apEqualsCZindex}
\begin{align}\label{eqn:apEqualCZindex}
-\mu(\hat{x}) &= 2a(\hat{x}) + p(\hat{x}),
\end{align}
where $p([x,\alpha]) =0$ if there exists $\lambda \in \sigma(A_{x,J}) \cap (0,\infty)$ such that $W(\alpha,\lambda) = a(\hat{x})$ and $p([x,\alpha])=1$ otherwise.
\end{theorem}
\begin{remark}
Note that our sign convention for the Conley-Zehnder index is the negative of the convention adopted in \cite{HWZ95}.
\end{remark}
\begin{cor}\label{cor:abEqualsCZindex}
\begin{align*}
-\mu(\hat{x}) &= a(\hat{x}) + b(\hat{x}).
\end{align*}
\end{cor}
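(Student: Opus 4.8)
The plan is to deduce the identity $-\mu(\hat{x}) = a(\hat{x}) + b(\hat{x})$ from Theorem \ref{thm:apEqualsCZindex} together with the structural properties of the winding function $W = W_{x,\alpha}$ collected in Proposition \ref{SpectrumWindProps}. The key bookkeeping fact is item $(2)$ of that proposition: for each integer $k$, the sum of the dimensions of the eigenspaces $E_\lambda$ over all $\lambda$ with $W(\alpha,\lambda) = k$ equals $2$. Combined with the monotonicity in item $(1)$, this says that the winding number, viewed as a function of the eigenvalues listed in increasing order, is non-decreasing and takes each value on exactly a ``total multiplicity $2$'' block of the spectrum. So the only way a winding value $k$ can fail to be attained by \emph{both} a negative and a positive eigenvalue simultaneously is if the entire multiplicity-$2$ block sits strictly on one side of $0$ (recall $0 \notin \sigma(A_{x,J})$ since $H$ is non-degenerate).

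First I would unwind the definitions: $a(\hat{x}) = \sup\{ W(\alpha,\lambda) : \lambda < 0\}$ is the largest winding value occurring among negative eigenvalues, and $b(\hat{x}) = \inf\{W(\alpha,\lambda) : \lambda > 0\}$ is the smallest winding value occurring among positive eigenvalues. By monotonicity and the multiplicity-$2$ property, $b(\hat{x})$ is either equal to $a(\hat{x})$ — precisely when the multiplicity-$2$ block with winding value $a(\hat{x})$ straddles $0$, i.e. contains both a negative and a positive eigenvalue — or equal to $a(\hat{x}) + 1$, when that block lies entirely among the negative eigenvalues and the next block (winding value $a(\hat{x})+1$) is the first one among the positive eigenvalues. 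These are the only two possibilities. Now recall the parity term in Theorem \ref{thm:apEqualsCZindex}: $p(\hat{x}) = 0$ exactly when some positive eigenvalue $\lambda$ has $W(\alpha,\lambda) = a(\hat{x})$, which is precisely the first case above; and $p(\hat{x}) = 1$ otherwise, which is precisely the second case. Hence in the first case $b(\hat{x}) = a(\hat{x})$ and $p(\hat{x}) = 0$, so $a(\hat{x}) + b(\hat{x}) = 2a(\hat{x}) + p(\hat{x}) = -\mu(\hat{x})$; and in the second case $b(\hat{x}) = a(\hat{x}) + 1$ and $p(\hat{x}) = 1$, so again $a(\hat{x}) + b(\hat{x}) = 2a(\hat{x}) + 1 = 2a(\hat{x}) + p(\hat{x}) = -\mu(\hat{x})$. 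This closes the argument.

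The only genuinely delicate point is verifying the dichotomy $b(\hat{x}) \in \{a(\hat{x}), a(\hat{x})+1\}$ rigorously, and in particular that $b(\hat{x})$ cannot exceed $a(\hat{x})+1$. This follows because the winding-value-$(a(\hat{x})+1)$ block has total eigenspace dimension $2$ and lies immediately above the winding-value-$a(\hat{x})$ block in the ordered spectrum; since at least one eigenvalue of the value-$a(\hat{x})$ block is negative (by definition of $a$) and no eigenvalue equals $0$, the value-$(a(\hat{x})+1)$ block must contain a positive eigenvalue unless the value-$a(\hat{x})$ block itself already straddles $0$ — either way $b(\hat{x}) \le a(\hat{x})+1$. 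I expect this small spectral-ordering argument, rather than the final arithmetic, to be the main obstacle, though it is entirely routine given Proposition \ref{SpectrumWindProps}.
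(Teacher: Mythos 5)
Your proof is correct and follows essentially the same route as the paper: both reduce the statement to showing $b(\hat{x}) = a(\hat{x}) + p(\hat{x})$ via the dichotomy $b(\hat{x}) \in \{a(\hat{x}), a(\hat{x})+1\}$, using monotonicity and the multiplicity-$2$ property from Proposition \ref{SpectrumWindProps} together with non-degeneracy of $A_{x,J}$, and then substitute into Theorem \ref{thm:apEqualsCZindex}. The only cosmetic difference is that the paper organizes the argument by splitting on the value of $p(\hat{x})$ from the outset, whereas you establish the two-case dichotomy first and then match it against $p(\hat{x})$; the underlying spectral-ordering argument is the same.
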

\begin{proof}
The point is that $b(\hat{x})=a(\hat{x})+p(\hat{x})$. To see this, write $\hat{x}=[x,\alpha]$, and note first that $b(\hat{x}) \in \lbrace a(\hat{x}), a(\hat{x}) + 1 \rbrace$. This follows directly from the definitions of $a(\hat{x})$ and $b(\hat{x})$ combined with the surjectivity of $W(\alpha;-): \sigma(A_{x,J}) \rightarrow \Z$ which is implied by item 2 of Proposition \ref{SpectrumWindProps}, together with the monotonicity of $W(\alpha;-)$ and the fact that $0 \not \in \sigma(A_{x,J})$ when $H$ is non-degenerate. By Theorem \ref{eqn:apEqualCZindex}, $p(\hat{x})=0$ implies that $b(\hat{x}) \leq a(\hat{x})$. On the other hand, if $p(\hat{x})=1$, then Theorem \ref{eqn:apEqualCZindex} implies that $b(\hat{x}) > a(\hat{x})$. In either case, we obtain $b(\hat{x})= a(\hat{x}) + p(\hat{x})$.
\end{proof}
\begin{lemma}\label{CZindexAsympWindCompLemma}
Let $k \in \Z$.
\begin{enumerate}
\item If $\mu(\hat{x};H) \in \lbrace 2k -1, 2k \rbrace$, then $a(\hat{x};H)=-k$. 
\item If $\mu(\hat{x};H) \in \lbrace 2k, 2k+1 \rbrace$, then $b(\hat{x};H)=-k$.
\end{enumerate}
\end{lemma}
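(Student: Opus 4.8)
The plan is to derive both statements purely from the two relations already established: Theorem \ref{thm:apEqualsCZindex}, which gives $-\mu(\hat{x}) = 2a(\hat{x}) + p(\hat{x})$ with $p(\hat{x}) \in \lbrace 0, 1 \rbrace$, and the identity $b(\hat{x}) = a(\hat{x}) + p(\hat{x})$, which is extracted in the course of proving Corollary \ref{cor:abEqualsCZindex}. The only input beyond pure algebra is the integrality of $a(\hat{x})$ and $b(\hat{x})$, which lets a parity argument pin down $p(\hat{x})$ from the parity of $\mu(\hat{x})$.

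For item $(1)$ I would argue by cases on the parity of $\mu(\hat{x})$. If $\mu(\hat{x}) = 2k$, then $2a(\hat{x}) + p(\hat{x}) = -2k$ is even; since $2a(\hat{x})$ is even, $p(\hat{x})$ must be even, hence $p(\hat{x}) = 0$ and $a(\hat{x}) = -k$. If instead $\mu(\hat{x}) = 2k-1$, then $2a(\hat{x}) + p(\hat{x}) = -2k+1$ is odd, forcing $p(\hat{x}) = 1$, so $2a(\hat{x}) = -2k$ and again $a(\hat{x}) = -k$. This handles both values in $\lbrace 2k-1, 2k \rbrace$.

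For item $(2)$ I would feed the conclusions of the case analysis into $b(\hat{x}) = a(\hat{x}) + p(\hat{x})$. When $\mu(\hat{x}) = 2k$ we found $a(\hat{x}) = -k$ and $p(\hat{x}) = 0$, so $b(\hat{x}) = -k$. When $\mu(\hat{x}) = 2k+1$, the relation $2a(\hat{x}) + p(\hat{x}) = -2k-1$ is odd, so $p(\hat{x}) = 1$ and $a(\hat{x}) = -k-1$, whence $b(\hat{x}) = a(\hat{x}) + 1 = -k$. Again both values in $\lbrace 2k, 2k+1 \rbrace$ are covered, and the two items agree at the overlap $\mu(\hat{x}) = 2k$.

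I do not anticipate any real obstacle; the one point to be careful about is to use the sharper identity $b(\hat{x}) = a(\hat{x}) + p(\hat{x})$ rather than only $a(\hat{x}) + b(\hat{x}) = -\mu(\hat{x})$, since the latter alone cannot determine $b(\hat{x})$ without first knowing $a(\hat{x})$. Since both facts are already in hand from Theorem \ref{thm:apEqualsCZindex} and the proof of Corollary \ref{cor:abEqualsCZindex}, the lemma follows in a few lines.
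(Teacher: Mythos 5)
Your proof is correct and follows essentially the same route as the paper: both arguments deduce $p(\hat{x})$ from the parity of $\mu(\hat{x})$ via the relation $-\mu(\hat{x}) = 2a(\hat{x}) + p(\hat{x})$ together with the integrality of $a(\hat{x})$, then for item $(2)$ combine this with $b(\hat{x}) = a(\hat{x}) + p(\hat{x})$ from the proof of Corollary~\ref{cor:abEqualsCZindex}. The only difference is cosmetic: you spell out the parity argument that forces the value of $p(\hat{x})$, whereas the paper simply states it.
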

\begin{proof}
We prove (1) with the argument for (2) being entirely analogous (but additionally making use of the fact that $b(\hat{x})=a(\hat{x}) + p(\hat{x})$ established in the proof of Corollary \ref{cor:abEqualsCZindex}). Theorem \ref{thm:apEqualsCZindex} states that $-\mu(\hat{x})= 2a(\hat{x}) + p(\hat{x})$, where $p(\hat{x}) \in \lbrace 0,1 \rbrace$ is the parity of $\mu(\hat{x})$. Consequently, if $\mu(\hat{x})=2k-1$, we see that
\begin{align*}
1-2k&=2a(\hat{x}) +1
\end{align*}
and therefore $a(\hat{x})=-k$. Similarly, if $\mu(\hat{x})=2k$, then
\begin{align*}
-2k&=2a(\hat{x})
\end{align*}
and so $a(\hat{x})=-k$, as claimed.
\end{proof}

\begin{cor}\label{Cor: Cont moduli space is 0 or 1}
Let $(H^\pm,J^\pm)$ be non-degenerate Floer pairs, $(\mathcal{H},\mathbb{J}) \in \HJ(H^-,J^-;H^+,J^+)$ and suppose that $\hat{x}^\pm \in \Per{H^\pm}$ satisfy $\mu(\hat{x}^\pm)=2k+1$ for some $k \in \Z$. Then $\vert \mathcal{M}(\hat{x}^-,\hat{x}^+;\mathcal{H},\mathbb{J}) \vert \in \lbrace 0, 1 \rbrace$. 
\end{cor}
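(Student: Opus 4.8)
The plan is to argue by contradiction: assume $\mathcal{M}(\hat{x}^-,\hat{x}^+;\mathcal{H},\mathbb{J})$ contains two distinct elements $u_0,u_1$ and extract an impossible chain of inequalities for the linking function associated to the pair $(u_0,u_1)$. The only input from the hypothesis $\mu(\hat{x}^\pm)=2k+1$ is a pair of asymptotic winding computations: applying part $(2)$ of Lemma \ref{CZindexAsympWindCompLemma} to $\hat{x}^-$ gives $b(\hat{x}^-)=-k$, and applying part $(1)$ to $\hat{x}^+$, with $2k+1=2(k+1)-1$, gives $a(\hat{x}^+)=-(k+1)$.

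Next I would invoke the asymptotic linking bounds. Since $u_0$ and $u_1$ are distinct finite-energy solutions of Equation \ref{sFE} with common negative asymptotic limit $\hat{x}^-$, Corollary \ref{EmergeConvergeLinking}$(1)$ gives $b(\hat{x}^-)\le \ell_{-\infty}(u_0,u_1)$; since they have common positive asymptotic limit $\hat{x}^+$, Corollary \ref{EmergeConvergeLinking}$(2)$ gives $\ell_{\infty}(u_0,u_1)\le a(\hat{x}^+)$. Finally, the natural cappings of $(u_0)_s$ and $(u_1)_s$ (Definition \ref{def:NaturalCapping}) furnish continuous lifts of $s\mapsto (u_i)_s$ to $\cL{\Sigma}$, so Lemma \ref{EndsLinkLemma} applies to the function $s\mapsto \ell\big((\hat{u}_0)_s,(\hat{u}_1)_s\big)$: it is non-decreasing, and by the definition of $\ell_{\pm\infty}(u_0,u_1)$ its limits at $-\infty$ and $+\infty$ are $\ell_{-\infty}(u_0,u_1)$ and $\ell_{\infty}(u_0,u_1)$ respectively. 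Monotonicity then forces $\ell_{-\infty}(u_0,u_1)\le \ell_{\infty}(u_0,u_1)$, and chaining the four inequalities yields
\[
-k = b(\hat{x}^-)\le \ell_{-\infty}(u_0,u_1)\le \ell_{\infty}(u_0,u_1)\le a(\hat{x}^+) = -(k+1),
\]
which is absurd. Hence any two elements of $\mathcal{M}(\hat{x}^-,\hat{x}^+;\mathcal{H},\mathbb{J})$ coincide, i.e. $|\mathcal{M}(\hat{x}^-,\hat{x}^+;\mathcal{H},\mathbb{J})|\in\{0,1\}$.

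There is no genuine obstacle here — the proof is a direct assembly of results already in hand — but the one point deserving a moment's care is the compatibility between the two formulations of the endpoint linking numbers: Corollary \ref{EmergeConvergeLinking} phrases its bounds in terms of $\ell_{\pm\infty}$ (defined via natural cappings), whereas Lemma \ref{EndsLinkLemma} concerns an arbitrary continuous lift. Since the natural capping is precisely such a lift and the remark following the definition of $\ell_{\pm\infty}$ identifies its $s\to\pm\infty$ limits with the relevant eigenvector winding numbers, the two descriptions agree and the monotone chain above is legitimate.
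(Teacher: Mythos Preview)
Your proof is correct and follows essentially the same route as the paper's: assume two distinct elements, compute $b(\hat{x}^-)=-k$ and $a(\hat{x}^+)=-k-1$ via Lemma~\ref{CZindexAsympWindCompLemma}, and derive the contradictory chain $-k\le\ell_{-\infty}\le\ell_{\infty}\le -k-1$ from the monotonicity of linking along Floer cylinders. Your version is more explicit about invoking Corollary~\ref{EmergeConvergeLinking} for the endpoint bounds and about the compatibility of cappings, but the argument is the same.
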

\begin{proof}
Suppose for the sake of contradiction that there exist $u,v \in \mathcal{M}(\hat{x}^-,\hat{x}^+;\mathcal{H},\mathbb{J})$, $u \neq v$. Then we have by Lemma \ref{EndsLinkLemma} and Lemma \ref{CZindexAsympWindCompLemma}
\begin{align*}
-k=b(\hat{x}^-) \leq \ell_{- \infty}(u,v) &\leq \ell_{\infty}(u,v) \leq a(\hat{x}^+) = -k -1, 
\end{align*}
which is a contradiction. The lemma follows.
\end{proof}

Recall from Section \ref{Sec:Floer} that to any $(\mathcal{H},\mathbb{J}) \in \HJ(H^-,J^-;H^+,J^+)$, we associate the operator $\mathcal{F}_{\mathcal{H},\mathbb{J}}: C^{\infty}(\R \times S^1;\Sigma) \rightarrow \mathcal{E}$. Whenever $\mathcal{F}_{\mathcal{H},\mathbb{J}}(u)=0$, for $u \in C^\infty(\R \times S^1; \Sigma)_{\hat{x}^-,\hat{x}^+}$, $T_u \mathcal{E}$ splits canonically as $T_u C^{\infty}(\R \times S^1;\Sigma)_{\hat{x}^-,\hat{x}^+} \oplus \mathcal{E}_u$. In such a case, we denote by $D \mathcal{F}_{\mathcal{H},\mathbb{J}}$ the projection of the differential of $\mathcal{F}_{\mathcal{H},\mathbb{J}}$ onto $\mathcal{E}_u$, and we call $D \mathcal{F}_{\mathcal{H},\mathbb{J}}$ the \textbf{linearized Floer operator}. The transversality of $\mathcal{F}_{\mathcal{H},\mathbb{J}}$ to the $0$-section of $\mathcal{E}$ at $u$ is equivalent to the surjectivity of $(D\mathcal{F}_{\mathcal{H},\mathbb{J}})_u$, which is in turn related to the behaviour of its kernel by the Fredholm property.
The following result is essentially proved in \cite{HWZ95} as Proposition $5.6$ and serves to give significant control over elements in the kernel of the linearized Floer operator. We give a simple proof here in the Floer-theoretic setting for the convenience of the reader.
\begin{proposition}\label{FloerKernelWinding}
Let $(H^\pm,J^\pm)$ be non-degenerate Floer pairs, let $(\mathcal{H},\mathbb{J}) \in \HJ$, $u \in \mathcal{M}(\hat{x}^-,\hat{x}^+;\mathcal{H},\mathbb{J})$, and let $\xi \in \ker (D\mathcal{F}_{\mathcal{H},\mathbb{J}})_u$. Suppose that $\xi \not \equiv 0$ and denote by $Z(\xi)$ the algebraic count of the number of zeros of $\xi$, then $Z(\xi)$ is finite and satisfies the inequality $0 \leq Z(\xi) \leq a(\hat{x}^+) -b(\hat{x}^-)$. Moreover, $Z(\xi)=0$ if and only if $\xi$ is nowhere vanishing.
\end{proposition}
\begin{proof}
It is a standard result in Floer theory (see for instance \cite{Sa97}, Section $2.2$) that for any $u \in \mathcal{M}(\hat{x}^-,\hat{x}^+;\mathcal{H},\mathbb{J})$, any element $\xi \in \ker (D\mathcal{F}_{\mathcal{H},\mathbb{J}})_u$ may be expressed (with respect to the unitary trivialization $\Phi: \R \times S^1 \times (\R^{2},J_0) \rightarrow u^*(T \Sigma,J)$ along $u$ induced by the cappings of $\hat{x}^-$ and $\hat{x}^+$) as solving an equation of the form
\begin{align*}
\partial_s \xi + J_0 \partial_t \xi + S \xi&=0,
\end{align*}
where we may write $S$ on the positive and negative ends as 
\begin{align*}
S^{\pm}(s,t)&= \Phi^{-1}A_{x^\pm,J} - \Delta^{\pm}(s,t),
\end{align*}
with $\Delta^\pm$ satisfying the decay estimates of Theorem \ref{LocalAsympTheorem}. Consequently, any $\xi \in \ker (D \mathcal{F})_u$ that is not identically zero must be non-vanishing outside of some compact neighbourhood of $\R \times S^1$, and the Carlemann similarity principle implies that any zeros of such a $\xi$ are isolated and must contribute strictly positively to $Z(\xi)$. Consequently, we see that $Z(\xi)$ is finite and non-negative, with $Z(\xi)=0$ exactly when $\xi$ is nowhere vanishing.
\par
To see that $Z(\xi) \leq a(\hat{x}^+) -b(\hat{x}^-)$, we take $R > 0$ sufficiently large so that $\xi$ is non-vanishing outside of $(-R,R) \times S^1$ and consider the homotopy of $2$-braids in $\R^2$ induced by $h(s)=(0, \Phi^{-1} \xi_s) \in \mathcal{L}_0(\R^2)^2$, $s \in [-R,R]$. Theorem \ref{LocalAsympTheorem} implies that for $R>0$ sufficiently large,
\begin{align*}
\ell(0,\xi_{-R}) = wind(\Phi^{-1} \xi_{-R}) &\geq b(\hat{x}^-), \; and \\
\ell(0,\xi_{R}) = wind(\Phi^{-1} \xi_{R}) &\leq a(\hat{x}^+),
\end{align*}
and the algebraic count zeros of $\xi$ correspond to the algebraic count of the intersections of the graphs of the strands of $h$ from which the proposition follows.
\end{proof}
The following corollary is essentially the linear analogue of Corollary \ref{Cor: Cont moduli space is 0 or 1}.
\begin{cor}\label{Cor: Automatic Regularity for Odd orbits}
Let $(H^\pm,J^\pm)$ be Floer regular, let $(\mathcal{H},\mathbb{J}) \in \HJ$, and let $\hat{x}^\pm \in \Per{H^\pm}$ have $\mu(\hat{x}^\pm)=2k+1$ for some $k \in \Z$. Then $(\mathcal{H},\mathbb{J})$ is $(\hat{x}^-,\hat{x}^+)$-regular.
\end{cor}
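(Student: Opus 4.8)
The plan is to obtain this as an immediate linear counterpart of Corollary \ref{Cor: Cont moduli space is 0 or 1}, using the winding bound of Proposition \ref{FloerKernelWinding} together with the index computation of Lemma \ref{CZindexAsympWindCompLemma}. By definition, $(\mathcal{H},\mathbb{J})$ is $(\hat{x}^-,\hat{x}^+)$-regular exactly when $(D\mathcal{F}_{\mathcal{H},\mathbb{J}})_u$ is surjective at every zero of $\mathcal{F}_{\mathcal{H},\mathbb{J}}$ along $C^\infty(\R \times S^1;\Sigma)_{\hat{x}^-,\hat{x}^+}$, i.e.\ at every $u \in \mathcal{M}(\hat{x}^-,\hat{x}^+;\mathcal{H},\mathbb{J})$. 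If this moduli space is empty the claim is vacuous, so I would fix such a $u$ and argue surjectivity of $(D\mathcal{F}_{\mathcal{H},\mathbb{J}})_u$ by showing its kernel is trivial and invoking the Fredholm property.

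First I would record the relevant winding quantities. Since $\mu(\hat{x}^+)=2k+1=2(k+1)-1$, part (1) of Lemma \ref{CZindexAsympWindCompLemma} gives $a(\hat{x}^+)=-(k+1)$; since $\mu(\hat{x}^-)=2k+1$ lies in the range $\{2k,2k+1\}$, part (2) gives $b(\hat{x}^-)=-k$. Hence $a(\hat{x}^+)-b(\hat{x}^-)=-1$. Next, suppose for contradiction that $\ker(D\mathcal{F}_{\mathcal{H},\mathbb{J}})_u$ contains some $\xi\not\equiv 0$. Proposition \ref{FloerKernelWinding} then forces $0\le Z(\xi)\le a(\hat{x}^+)-b(\hat{x}^-)=-1$, which is absurd; therefore $\ker(D\mathcal{F}_{\mathcal{H},\mathbb{J}})_u=0$.

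Finally I would close the argument using that $(D\mathcal{F}_{\mathcal{H},\mathbb{J}})_u$ is Fredholm of index $\mu(\hat{x}^-)-\mu(\hat{x}^+)=0$ (as recalled in Section \ref{Sec:Floer}, the $s$-dependent case proceeding exactly as the $s$-independent one), so that a trivial kernel forces a trivial cokernel and hence surjectivity. As $u\in\mathcal{M}(\hat{x}^-,\hat{x}^+;\mathcal{H},\mathbb{J})$ was arbitrary, $(\mathcal{H},\mathbb{J})$ is $(\hat{x}^-,\hat{x}^+)$-regular.

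There is no real obstacle here — the corollary is essentially a bookkeeping exercise. The only points requiring care are the parity accounting (checking that $2k+1$ sits in the \emph{upper} half of the range controlling $b(\hat{x}^-)$ and the \emph{lower} half of the shifted range controlling $a(\hat{x}^+)$, so that the difference is strictly negative) and the elementary linear-algebra fact that an index-zero Fredholm operator with trivial kernel is onto. Both are routine, and this is precisely why I expect this to be stated as a corollary immediately following Proposition \ref{FloerKernelWinding}.
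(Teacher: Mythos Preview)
Your proposal is correct and follows essentially the same route as the paper: reduce to showing $\ker(D\mathcal{F}_{\mathcal{H},\mathbb{J}})_u=0$ via the Fredholm index-zero observation, then invoke Proposition~\ref{FloerKernelWinding} together with Lemma~\ref{CZindexAsympWindCompLemma} to get $0\le Z(\xi)\le a(\hat{x}^+)-b(\hat{x}^-)=-1$. The paper additionally disposes of the constant-homotopy case up front by appealing directly to the assumed Floer regularity of $(H^\pm,J^\pm)$, but your uniform argument covers that case as well, so nothing is missing.
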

\begin{proof}
If $(H^-,J^-)=(H^+,J^+)$ and $(\mathcal{H},\mathbb{J})$ is the constant homotopy of Floer data, then this is automatic by Floer regularity of $(H^\pm,J^\pm)$. The statement is also vacuously true if $\mathcal{M}(\hat{x}^-,\hat{x}^+;\mathcal{H},\mathbb{J})$ is empty. So we may suppose that $(\mathcal{H},\mathbb{J})$ is not $\R$-invariant and that there exists some $u \in \mathcal{M}(\hat{x}^-,\hat{x}^+;\mathcal{H},\mathbb{J})$. To see that $u$ is regular, note that $D \mathcal{F}_{\mathcal{H},\mathbb{J}}$ has Fredholm index $0$, and so to prove that $(D \mathcal{F}_{\mathcal{H},\mathbb{J}})_u$ is surjective, it suffices to show that its kernel is trivial. Suppose to the contrary that $\xi \in \ker  (D \mathcal{F}_{\mathcal{H},\mathbb{J}})_u$ is a non-trivial vector field along $u$. By the previous proposition, we must have that the algebraic count of its zeros $Z(\xi)$ satisfies
\begin{align*}
0 \leq &Z(\xi) \leq a(\hat{x}^+) - b(\hat{x}^-) = -k - (k-1) = -1,
\end{align*}
where the second to last equality follows from Lemma \ref{CZindexAsympWindCompLemma}. Clearly this is a contradiction, so we conclude that $\ker  (D \mathcal{F}_{\mathcal{H},\mathbb{J}})_u=0$ and so $u$ is regular.
\end{proof}

In the case of even index orbits, we have a somewhat weaker conclusion that will still be of use to us.
\begin{cor}\label{Automatic Codim 1 for Even orbits}
Let $(H^\pm,J^\pm)$ be Floer regular, let $(\mathcal{H},\mathbb{J}) \in \HJ$, and let $\hat{x}^\pm \in \Per{H^\pm}$ have $\mu(\hat{x}^\pm)=2k$ for some $k \in \Z$. Then for every $u \in \mathcal{M}(\hat{x}^-,\hat{x}^+;\mathcal{H},\mathbb{J})$, 
\begin{align*}
\dim \ker  (D \mathcal{F}_{\mathcal{H},\mathbb{J}})_u &\leq 1
\end{align*}
\end{cor}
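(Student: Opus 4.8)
The plan is to follow the template of Corollary \ref{Cor: Automatic Regularity for Odd orbits}, except that in the even case the winding estimate of Proposition \ref{FloerKernelWinding} no longer yields an outright contradiction, only the equality $Z(\xi)=0$ for every nontrivial $\xi\in\ker(D\mathcal{F}_{\mathcal{H},\mathbb{J}})_u$, and the remaining work is to squeeze a dimension bound out of this equality using the asymptotic analysis. Concretely: since $\mu(\hat{x}^\pm)=2k$, Lemma \ref{CZindexAsympWindCompLemma} gives $a(\hat{x}^+)=-k$ and $b(\hat{x}^-)=-k$, so Proposition \ref{FloerKernelWinding} forces $0\le Z(\xi)\le a(\hat{x}^+)-b(\hat{x}^-)=0$, i.e. $Z(\xi)=0$, for every $\xi\not\equiv 0$ in the kernel. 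By Theorem \ref{LocalAsympTheorem} applied to $\xi$ in a unitary trivialization coming from the cappings, $\xi$ has on each end a single leading asymptotic term $e^{\lambda s}(\eta(t)+r(s,t))$ with $\lambda$ an eigenvalue of the appropriate asymptotic operator of the appropriate sign and $\eta$ a nowhere-vanishing eigenvector; write $wind_{\pm\infty}(\xi)$ for the winding numbers of these leading eigenvectors relative to the cappings of $\hat{x}^\pm$. Inspecting the proof of Proposition \ref{FloerKernelWinding}, $Z(\xi)=wind_{\infty}(\xi)-wind_{-\infty}(\xi)$, and by definition of $a,b$ we have $b(\hat{x}^-)\le wind_{-\infty}(\xi)\le wind_{\infty}(\xi)\le a(\hat{x}^+)$; combined with $Z(\xi)=0$ and $a(\hat{x}^+)=b(\hat{x}^-)=-k$ this pins down $wind_{\infty}(\xi)=-k$ for every nontrivial $\xi$ in the kernel.

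Next I would identify the space in which all these leading eigenvectors sit. Write $\hat{x}^+=[x^+,\alpha^+]$ and let $V^+\subseteq\Gamma((x^+)^*T\Sigma)$ be the span of the eigenvectors of $A_{x^+,J}$ that have negative eigenvalue and winding $-k$ relative to $\alpha^+$. Since $a(\hat{x}^+)=-k$ there is at least one such eigenvalue, and since $\mu(\hat{x}^+)=2k$, Lemma \ref{CZindexAsympWindCompLemma} also gives $b(\hat{x}^+)=-k$, producing a \emph{positive} eigenvalue of winding $-k$; item (2) of Proposition \ref{SpectrumWindProps} says the total dimension of eigenspaces of winding $-k$ equals $2$, so exactly one negative eigenvalue $\lambda_0^+$ and one positive eigenvalue realize the winding $-k$, each with one-dimensional eigenspace, whence $V^+=E_{\lambda_0^+}$ is one-dimensional and $\lambda_0^+$ is the unique negative eigenvalue of winding $-k$. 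By the previous paragraph, for every nontrivial $\xi\in\ker(D\mathcal{F}_{\mathcal{H},\mathbb{J}})_u$ the leading positive asymptotic eigenvector $\eta$ has winding $wind_{\infty}(\xi)=-k$ and negative eigenvalue, hence $\eta\in V^+$ and the leading eigenvalue of $\xi$ at $+\infty$ is exactly $\lambda_0^+$.

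Finally I would introduce the linear map $\Psi:\ker(D\mathcal{F}_{\mathcal{H},\mathbb{J}})_u\to V^+$, $\Psi(\xi):=\lim_{s\to\infty}e^{-\lambda_0^+ s}\,\xi(s,\cdot)$; the limit exists in $C^0(S^1;\R^2)$ because, by Theorem \ref{LocalAsympTheorem}, everything below the leading term decays strictly faster than $e^{\lambda_0^+ s}$, and linearity is immediate from the formula. The map $\Psi$ is injective: if $\Psi(\xi)=0$ with $\xi\not\equiv0$, then the leading eigenvalue $\lambda'$ of $\xi$ at $+\infty$ is strictly smaller than $\lambda_0^+$, so by monotonicity (item (1) of Proposition \ref{SpectrumWindProps}) and the uniqueness of $\lambda_0^+$ we would get $wind_{\infty}(\xi)=W(\alpha^+,\lambda')<-k$, contradicting $wind_{\infty}(\xi)=-k$. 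Therefore $\dim\ker(D\mathcal{F}_{\mathcal{H},\mathbb{J}})_u\le\dim V^+=1$, as claimed. I expect the only delicate point to be the bookkeeping in the first paragraph — extracting the exact identity $Z(\xi)=wind_{\infty}(\xi)-wind_{-\infty}(\xi)$ from the proof of Proposition \ref{FloerKernelWinding} and verifying that $\Psi$ is well-defined and genuinely linear using the uniform decay estimates — rather than any new analysis; everything else is a direct assembly of the winding estimates and the eigenspace count already available.
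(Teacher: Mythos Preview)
Your proof is correct and follows essentially the same strategy as the paper's: both construct an injective linear map from $\ker(D\mathcal{F}_{\mathcal{H},\mathbb{J}})_u$ into a one-dimensional eigenspace of the asymptotic operator by first pinning down the winding of the leading asymptotic eigenvector of any nontrivial kernel element via Proposition~\ref{FloerKernelWinding} and Lemma~\ref{CZindexAsympWindCompLemma}, and then invoking the eigenspace count of Proposition~\ref{SpectrumWindProps}. The only cosmetic differences are that the paper works at the negative end (with the smallest positive eigenvalue of $A_{x^-,J^-}$) while you work at the positive end (with the largest negative eigenvalue of $A_{x^+,J}$), and your limit definition of $\Psi$ makes linearity slightly more transparent than the paper's argument via the difference $\delta=\eta_1-\eta_2$.
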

\begin{proof}
As in the proof of the previous corollary, it suffices to consider the case where $(\mathcal{H},\mathbb{J})$ is not $\R$-invariant and $\mathcal{M}(\hat{x}^-,\hat{x}^+;\mathcal{H},\mathbb{J})$ is non-empty. Letting $u \in \mathcal{M}(\hat{x}^-,\hat{x}^+;\mathcal{H},\mathbb{J})$, consider the behaviour of some $\eta_1, \eta_2 \in \ker  (D \mathcal{F}_{\mathcal{H},\mathbb{J}})_u$. Let 
\begin{align*}
\Phi: \R \times S^1 \times \C &\rightarrow u^*T\Sigma
\end{align*}
be a unitary trivialization of the tangent bundle along $u$ which extends over the cappings of $\hat{x}^\pm$. Since $\eta_1$ and $\eta_2$ satisfy an appropriate perturbed Cauchy-Riemann equation, Theorem \ref{LocalAsympTheorem} implies that $v_i(s,t):=(\Phi^{-1}\circ \eta_i)(s,t)$, $i=1,2$ satisfies
\begin{align*}
\lim_{s \rightarrow - \infty} v_i(s,t)= \Phi^{-1}(-\infty,t) \circ \xi_i(t)
\end{align*}
for $\xi_i \in \Gamma^{\infty}((x^-)^*T\Sigma)$ an eigenvector of the asymptotic operator $A_{x^-,J^-}$ associated to $x^-$, whose associated eigenvalue is positive. Let $\lambda_0$ be the smallest positive eigenvalue of $A_{x^-,J^-}$ and note that every eigenvector $\xi' \in E_{\lambda_0}$ is such that $\Phi^{-1}(-\infty,t) \circ \xi'(t)$ has winding number $-k$ by Corollary \ref{cor:abEqualsCZindex}, and by the same reasoning as in the proof of that corollary, $\dim E_{\lambda_0} =1$. We claim first that $\xi_i \in E_{\lambda_0}$ for $i=1,2$. Indeed, suppose not, then without loss of generality we may suppose that $\xi_1 \in E_\lambda$ for $\lambda > \lambda_0$. This implies that $wind( \Phi^{-1}(-\infty,t) \circ \xi_1(t)) \geq -k +1$, by points $(1)$ and $(2)$ of Proposition \ref{SpectrumWindProps} combined with the fact that $a(\hat{x}^-)=b(\hat{x}^-)$ (so the only other eigenspace whose eigenvector have winding $-k$ is the eigenspace associated to the largest negative eigenvalue of $A_{x^-,J^-}$). Whence, 
\begin{align*}
wind (\Phi^{-1}(-R,t)\circ \eta_1(-R,t)) &\geq -k +1
\end{align*}
for $R >0$ sufficiently large. As in the proof of Proposition \ref{FloerKernelWinding}, we have that
\begin{align*}
Z(\eta_1) &\leq a(\hat{x}^+) - wind (\Phi^{-1}(-R,t)\circ \eta_1(-R,t))
\end{align*} 
for $R>0$ sufficiently large, but this implies
\begin{align*}
Z(\eta_1) &\leq -k - (-k +1)=-1
\end{align*}
which violates the positivity of $Z(\eta_1)$. So $\xi_i \in E_{\lambda_0}$ for $i=1,2$. 
\par
Now consider $\delta:=\eta_1 - \eta_2$. $\delta$ once again satisfies a perturbed Cauchy-Riemann equation satisfying the required asymptotic decay conditions, because $\eta_1$ and $\eta_2$ do. Let $\xi_\delta(t) = \lim_{s \rightarrow - \infty} \delta(s,t)$ be the associated negative asymptotic eigenvector. We claim that $\xi_\delta \in E_{\lambda_0}$. Indeed, if not then we may repeat the above argument with $\delta$ in the place of $\eta_1$ and $\xi_\delta$ in the place of $\xi_1$ to derive a contradiction. It follows that the map which sends $\eta \in \ker  (D \mathcal{F}_{\mathcal{H},\mathbb{J}})_u$ to its negative asymptotic eigenvector in $E_{\lambda_0}$ is linear, and it is injective by Theorem \ref{LocalAsympTheorem} (since the negative asymptotic eigenvector of $\eta$ is zero if and only if $\eta$ vanishes on the negative end, and thus by the Carlemann similarity principle, $\eta$ vanishes everywhere). Since $\dim E_{\lambda_0} =1$, this proves the claim.
\end{proof}

\section{Constructing chain-level continuation maps with prescribed behaviour}\label{Ch: Construct chain maps}
In this section, we introduce a technique for designing chain-level continuation maps such that certain $0$-dimensional moduli spaces may be guaranteed to be non-empty. The basic idea is to show that, given any collection of smooth cylinders which topologically \textit{could} be a collection of ($s$-dependent) Floer-type cylinders (in the sense that their graphs intersect only positively and they exhibit the appropriate asymptotic winding behaviour), we may --- after a series of suitable small perturbations --- always find regular Floer homotopy data $(\mathcal{H},\mathbb{J})$ such that the perturbed cylinders are all $(\mathcal{H},\mathbb{J})$-Floer. 
\\ \\
For the remainder of the section, we fix Floer regular $(H^\pm,J^\pm) \in C^{\infty}(S^1 \times M) \times C^{\infty}(S^1; \mathcal{J}(M,\omega))$.
\begin{definition}\label{Def-Model}
We will say that a finite collection of smooth maps $\lbrace u_i: \R \times S^1 \rightarrow \Sigma \rbrace$, $i=1,\ldots,k$ is a \textbf{model for a continuation cobordism} (from $(H^-,J^-)$ to $(H^+,J^+)$) if there exists $(\mathcal{H},\mathbb{J}) \in \HJ(H^-,J^-;H^+,J^+)$ such that $u_i$ is an $(\mathcal{H},\mathbb{J})$-Floer cylinder with finite energy for each $i=1,\ldots,k$. Such a model for a continuation cobordism will be called an \textbf{$(\mathcal{H},\mathbb{J})$-model}.
\end{definition}
If we are doing Floer theory on a surface, then the results of Section \ref{Sec:Asymptotic analysis} tell us that the graphs $\tilde{u}_i$, when restricted to a sufficiently large compact set $[-K,K] \times S^1$, define a braid cobordism having only positive intersections (outside this compact set, the maps $u_i$ solve the $s$-independent Floer equations on the ends, and their behaviour is controlled by the winding behaviour of the eigenvectors of the asymptotic operator as discussed in Section \ref{Sec:Asymptotic analysis}).
\\ \\ 
Recall (Definition \ref{Def: Positive Cobordism}) that a braid cobordism $h=(h_1,\ldots,h_k)$ is said to be positive if the graphs of its strands are all transverse with positive intersections. The main result of this section is to show that this essentially topological condition, combined with the obvious necessary condition on the asymptotic behaviour of the strands, is sufficient to guarantee the existence of some regular homotopy of Floer data $(\mathcal{H},J)$ such that each cylinder is $(\mathcal{H},J)$-Floer.
\begin{definition}
We will say that a finite collection of smooth maps $u_i: \R \times S^1 \rightarrow \Sigma$, $i=1, \ldots,k$, is a \textbf{pre-model for a continuation cobordism} (from $(H^-,J^-)$ to $(H^+,J^+)$) if they satisfy the following
\begin{enumerate}
\item There exists $K >0$ such that the maps $u_i \vert_{[-K,K]}$ are the strands of a positive braid cobordism, and
\item for the same $K$ as above, $u_i \vert_{(-\infty,-K]}$ satisfies the $(H^-,J^-)$-Floer equation and has finite $(H^-,J^-)$-energy and, while $u_i \vert_{[K,\infty)}$ satisfies the $(H^+,J^+)$-Floer equation and has finite $(H^+,J^+)$ energy, for each $i=1,\ldots,k$.
\end{enumerate}
\end{definition}
Our aim is to show:
\begin{theorem}\label{Theorem: Pre-models give models of cont cobordisms}
If $u_i: \R \times S^1 \rightarrow \Sigma$, $i=1, \ldots,k$ defines a pre-model for a continuation cobordism from $(H^-,J^-)$ to $(H^+,J^+)$, then there exists $u_i': \R \times S^1 \rightarrow \Sigma$, $i=1, \ldots,k$ which is a model for a continuation cobordism from $(H^-,J^-)$ to $(H^+,J^+)$ and such that each $u_i'$ differs from $u_i$ only by a $C^0$-small homotopy inside a compact subset of $(-K,K) \times S^1$ (where here $K>0$ is as in the definition of a pre-model for a continuation cobordism).
\end{theorem}
The principal virtue of this result is that the existence or non-existence of pre-models may largely be reduced to the question of the relative topologies of the collections of capped orbits (\textit{qua} capped braids) being connected by these cylinders, along with asymptotic information provided by the asymptotic operator. 
\par
The argument itself is straightforward and rather hands-on. Our goal is to show that given a pre-model for a cobordism from $(H^-,J^-)$ to $(H^+,J^+)$, we may build a homotopy of Floer data $(\mathcal{H},J) \in \HJ(H^-,J^-;H^+,J^+)$ such that each $u_i$ is $(\mathcal{H},J)$-Floer. When \\ $(u_1(s_0,t_0), \ldots, u_k(s_0,t_0)) \in C_k(\Sigma)$, it is not difficult to simply choose $(\mathcal{H}^{s_0}_{t_0},J^{s_0}_{t_0})$ such that each $u_i$ satisfies the relevant Floer-equation at $(s_0,t_0)$, and this assignment may be made smoothly in $(s,t)$ as long as $u_i(s,t) \neq u_j(s,t)$ for $i \neq j$. The difficulty, therefore, arises when the graphs of the cylinders intersect. To resolve this, in Section \ref{Sec: Perturbing Pos Int Strands} we explain how to perturb the cylinders in a neighbourhood $V$ of an intersection point to another pre-model which satisfies the Floer equations for a judiciously chosen pair $(\mathcal{H}_V,J_V)$ on $V$. In Section \ref{Sec: Proof of Pre-models to Models theorem}, we prove Theorem \ref{Theorem: Pre-models give models of cont cobordisms} by performing this perturbation at every intersection point, and then arguing that we may extend the locally judiciously chosen s-dependent Hamiltonian-almost complex structure pairs to an adapted homotopy of Floer data $(\mathcal{H},J)$ such that the resulting perturbed cylinders are all $(\mathcal{H},J)$-Floer. Finally, in Section \ref{Sec: Models to Regular models}, we address the question of obtaining model cobordisms for \textit{regular} homotopies of Floer data from the existence of the model cobordisms constructed in Theorem \ref{Theorem: Pre-models give models of cont cobordisms}. This gives a method for designing Floer continuation maps which have prescribed chain-level behaviour.

\subsection{Perturbing positively intersecting strands to solve Floer's equation near the intersection}\label{Sec: Perturbing Pos Int Strands}
For $\kappa > 0$, let $D(\kappa) \subset \C$ denote the closed disk of radius $\kappa$, centered at the origin. We write $D=D(1)$ for the closed unit disk.
\begin{lemma}\label{Lemma: Positive intersection perturbation}
Let $(M^4,J)$ be a smooth almost complex $4$-manifold, and let $\mathcal{F}$ be a smooth oriented codimension $2$ foliation of $M$ such that $T \mathcal{F}$ is $J$-invariant. Suppose that $u,v: D \rightarrow M$ are smooth embeddings which are positively transverse to the leaves of $\mathcal{F}$ and which intersect positively at $u(0)=v(0)=p \in M$. Suppose moreover that $v$ is $J$-holomorphic on some neighbourhood of $p$. Then for any $C^0$-neighbourhood $\mathcal{U}$ of $u$ in $C^\infty(D;M)$, there exists $\kappa >0$, $\delta >0$ and $u' \in \mathcal{U}$ such that
\begin{enumerate}
\item $u'\vert_{D(\kappa + \delta)}$ intersects $v$ only at $p=u'(0)$ and this intersection is transverse,
\item $u'$ is $J$-holomorphic on $D(\kappa)$,
\item $u' = u$ on $D \setminus D(\kappa + \delta)$,
\item $u'$ is positively transverse to the leaves of $\mathcal{F}$.
\end{enumerate}

\end{lemma}
\begin{proof}
By the local existence theorem for holomorphic curves (see theorem $3.1.1$ in \cite{Sik94}, for instance), there exists a $\rho >0$ and a $J$-holomorphic map $f: D(\rho) \rightarrow M$ such that $f(0)=p$ and $(\partial_s f)_p = (\partial_s u)_p$. Up to taking $\rho$ to be smaller and restricting the domain of $f$, we may assume that $f$ is an embedding. Note that, $(\partial_s u)_p = (\partial_s f)_p \neq 0$ and hence $(\partial_t f)_p \neq 0$, so $f$ and $v$ intersect transversally and positively, since both are $J$-holomorphic. Note that similar reasoning shows that $f$ is positively transverse to the leaf of $\mathcal{F}$ passing through $p$, and so $f$ is positively transverse to $\mathcal{F}$ for all $z \in D(\rho)$ sufficiently close to $0$. We will construct $u'$ by interpolating between $u$ and $f$. In order to construct this interpolation, it will be useful to introduce a convenient local coordinate system. 
\par
To this end, let us note that there is a neighbourhood $U \subset M$ of $p$, an $\epsilon >0$, and an orientation-preserving chart
\begin{align*}
\phi: U &\rightarrow \C \times \C,
\end{align*}
such that for all $z \in D(\epsilon)$, $(\phi \circ v)(z)=(z,0)$. Moreover, if the foliation $\mathcal{F}$ is as in the statement of the lemma, then the fact that $v$ intersects positively and transversally with the leaves of $\mathcal{F}$ implies that $\phi$ may be chosen such that for any leaf $F \in \mathcal{F}$, $\phi(F \cap U) \subset \lbrace z_0 \rbrace \times \C$ for some $z_0 \in \C$. That is, $\phi$ locally diffeomorphically sends leaves of $\mathcal{F}$ into the fibers of the projection map onto the first coordinate. Since $u$ and $f$ are both positively transverse to $v$, up to shrinking $\epsilon >0$, for $z \in D(\epsilon)$ we may write
\begin{align*}
f_{loc}(z):= (\phi \circ f)(z)=(z,h_0(z)), \\ 
u_{loc}(z):= (\phi \circ u)(z)=(z,h_1(z)) 
\end{align*} 
for $h_0, h_1: D(\epsilon) \rightarrow \C$ smooth functions such that $h_0(0)=h_1(0)=0$, $(\partial_s h_0)_0=(\partial_s h_1)_0 \neq 0$, and $\lbrace (\partial_s h_i)_0, (\partial_t h_i)_0 \rbrace$ is a positively-oriented basis for $\C$ for $i=0,1$. Next, for $\tau \in [0,1]$, let us define $h_\tau: D(\epsilon) \rightarrow \C$ by
\begin{align*}
h_{\tau}(z)&:= (1-\tau)h_0(z) +\tau h_1(z).
\end{align*}
Note that $(\partial_s h_\tau)_0$ is constant in $\tau$, and so it is easy to see that
\begin{align*}
\lbrace (\partial_s h_{\tau})_0, (\partial_t h_\tau)_0 \rbrace &= \lbrace (\partial_s h_{0})_0,(1-\tau)(\partial_t h_0)_0 +\tau (\partial_t h_1)_0 \rbrace
\end{align*}
is positively oriented, as the set of all vectors $w \in \C$ such that $\lbrace (\partial_s h_{0})_0, w \rbrace$ is positively oriented is clearly a convex set and contains both $(\partial_t h_0)_0$ and $(\partial_t h_1)_0$ by hypothesis. Consequently, for each $\tau \in [0,1]$ there exists $\epsilon_\tau \in (0,\epsilon)$, varying continuously with $\tau$, such that $h_\tau(z)=0$ if and only if $z=0$ for all $z \in D(\epsilon_\tau)$. Posing
\begin{align*}
\epsilon'&:= \inf_{\tau \in [0,1]} \epsilon_\tau,
\end{align*} 
we see that $\epsilon'>0$ and the maps $u_\tau(z):=(z,h_\tau(z))$ intersect $D(\epsilon') \times \lbrace 0 \rbrace$ only in $(0,0)$ for all $\tau \in [0,1]$. Fix $\kappa \in (0,\epsilon')$ and some $\delta \in (0, \epsilon' - \kappa)$, and let $\beta: D(\epsilon') \rightarrow [0,1]$ be a smooth radially non-decreasing function which is identically $0$ on $D(\kappa)$ and identically $1$ outside of $D(\kappa + \delta)$. For $z \in D(\epsilon')$, we define 
\begin{align*}
u'_{loc}(z):=u_{\beta(z)}(z)=(1-\beta(z))f_{loc} + \beta(z)u_{loc}.
\end{align*}
Note that $\phi^{-1} \circ u'_{loc}$ then obviously satisfies the first three items listed in the lemma, by construction, and is moreover transparently positively transverse to leaves of $\mathcal{F}$ by construction, since $\phi^{-1}$ sends graphs of maps $\C \rightarrow \C$ to submanifolds transverse to leaves of $\mathcal{F}$ by our choice of $\phi$. We then obtain the claimed $u': D \rightarrow M$ by setting
\begin{equation}
u'(z) = \begin{cases}
(\phi^{-1} \circ u'_{loc})(z) & z \in D(\epsilon') \\
u(z) & z \in D \setminus D(\epsilon').
\end{cases}
\end{equation}
It is then clear from construction that $u'$ satisfies the four properties listed in the lemma. Moreover, because $f(0)=u(0)=p$ and $f$ and $u$ are both continuous, $u'$ may obviously be taken to be as $C^0$-close to $u$ as we wish, simply by taking $\kappa$ and $\delta$ smaller in the preceding argument if necessary.
\end{proof}

\begin{cor}\label{Corollary: Perturbing graphs of Floer cylinders}
Let $u,v: \R \times S^1 \rightarrow \Sigma$ be smooth maps such that their graphs $\tilde{u}, \tilde{v}: \R \times S^1 \rightarrow \R \times S^1 \times \Sigma$ intersect positively and transversally at $(s_0,t_0,p) \in \R \times S^1 \times \Sigma$. Then for any neighbourhood $V \subset \R \times S^1$ of $(s_0,t_0)$, there exists an open set $U \subset \bar{U} \subset V$ containing $(s_0,t_0)$, smooth maps
\begin{align*}
\mathcal{H}_{\bar{U}}: \bar{U} \rightarrow C^\infty(\Sigma), \\
J_{\bar{U}}: \bar{U} \rightarrow \mathcal{J}(\Sigma,\omega),
\end{align*}
and a smooth map $u': \R \times S^1 \rightarrow \Sigma$ agreeing with $u$ outside of $V$, such that $\tilde{u}'$ intersects $\tilde{v}$ positively and transversally at $(s_0,t_0,p)$, and such that both $u'$ and $v$ solve the $(\mathcal{H}_{\bar{U}},J_{\bar{U}})$-Floer equation for $(s,t) \in \bar{U}$.
\end{cor}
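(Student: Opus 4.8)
The plan is to reduce the statement to Lemma~\ref{Lemma: Positive intersection perturbation} by means of the Gromov trick (Theorem~\ref{thm:Gromov}). The one preliminary observation — and the only place where the freedom in the choice of local Floer data is used — is that we may always arrange for $v$ to solve \emph{some} Floer equation near $(s_0,t_0)$. Fix an $\omega$-compatible almost complex structure $J_0$ on $\Sigma$ (we will take $J_{\bar U}\equiv J_0$, constant), and choose a smooth family $\mathcal{H}_0\colon W\to C^\infty(\Sigma)$ on a neighbourhood $W\ni(s_0,t_0)$ with $W\subset V$ such that $X_{H_0^{s,t}}(v(s,t))=\partial_t v(s,t)-J_0\,\partial_s v(s,t)$ for all $(s,t)\in W$; this only constrains $dH_0^{s,t}$ at the single point $v(s,t)$, so such a family exists. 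Then along $v$ one has $\partial_t v-X_{H_0^{s,t}}=J_0\,\partial_s v$, whence $\partial_s v+J_0(\partial_t v-X_{H_0^{s,t}})=(\mathrm{id}+J_0^2)\partial_s v=0$: that is, $v$ solves the $(\mathcal{H}_0,J_0)$-Floer equation on $W$.

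Next I extend $(\mathcal{H}_0,J_0)$ to a global adapted homotopy of Floer data $(\mathcal{H},\mathbb{J})\in\HJ$ (its behaviour away from $W$ is irrelevant) and invoke Theorem~\ref{thm:Gromov} to obtain the almost complex structure $\tilde{\mathbb{J}}$ on $\R\times S^1\times\Sigma$ for which a graph $\tilde w\colon(s,t)\mapsto(s,t,w(s,t))$ is $(j_0,\tilde{\mathbb{J}})$-holomorphic exactly when $w$ solves the associated $s$-dependent Floer equation. Two features of $\tilde{\mathbb{J}}$ matter: it preserves the vertical subbundle $\{0\}\oplus T\Sigma$, so that the fibres $\{(s,t)\}\times\Sigma$ form a smooth oriented codimension-$2$ foliation $\mathcal{F}$ with $T\mathcal{F}$ $\tilde{\mathbb{J}}$-invariant; and every graph is positively transverse to $\mathcal{F}$ for the orientation $(ds\wedge dt)\wedge\omega$. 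By the first paragraph, $\tilde v$ is $(j_0,\tilde{\mathbb{J}})$-holomorphic on a neighbourhood of $(s_0,t_0,p)$.

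Now identify a small $j_0$-holomorphic disc about $(s_0,t_0)$ in $\R\times S^1$ with the standard disc and apply Lemma~\ref{Lemma: Positive intersection perturbation} to the embeddings $\tilde u,\tilde v$ with $J=\tilde{\mathbb{J}}$ and the foliation $\mathcal{F}$: they are transversal and intersect positively at $(s_0,t_0,p)$, $\tilde v$ is $\tilde{\mathbb{J}}$-holomorphic near that point, and both are positively transverse to $\mathcal{F}$. The lemma yields, with $\kappa,\delta$ as small as we wish, a perturbation $\tilde u'$ of $\tilde u$ agreeing with $\tilde u$ outside $D(\kappa+\delta)$, $\tilde{\mathbb{J}}$-holomorphic on $D(\kappa)$, meeting $\tilde v$ there only at $(s_0,t_0,p)$, and transverse to $\mathcal{F}$. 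Since the interpolant built in the proof of the lemma is, in the flattening chart, a graph over the leaf-space coordinate, $\tilde u'$ is itself the graph of a smooth $u'\colon\R\times S^1\to\Sigma$ which agrees with $u$ outside $V$. By the Gromov trick, $u'$ solves the $(\mathcal{H}_0,J_0)$-Floer equation on the neighbourhood $\bar U\ni(s_0,t_0)$ corresponding to $D(\kappa)$, shrunk so that $\bar U\subset W$ (so $v$ solves the same equation on $\bar U$); taking $(\mathcal{H}_{\bar U},J_{\bar U})=(\mathcal{H}_0,J_0)|_{\bar U}$ and $U$ a smaller open neighbourhood completes the construction. Positivity and transversality of $\tilde u'\pitchfork\tilde v$ at $(s_0,t_0,p)$ follow because both curves are $\tilde{\mathbb{J}}$-holomorphic there, so the intersection is positive and isolated, and it has multiplicity one since the two ($\tilde{\mathbb{J}}$-invariant) tangent planes are distinct: transversality of $\tilde u\pitchfork\tilde v$ forces $\partial_s u(s_0,t_0)\neq\partial_s v(s_0,t_0)$, and $\tilde u'$, $\tilde v$ carry these same $\partial_s$-components at that point.

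I do not expect a serious obstacle; the content lies entirely in arranging that the hypotheses of Lemma~\ref{Lemma: Positive intersection perturbation} genuinely hold. The two points requiring care are the choice of $\mathcal{H}_0$ making $v$ Floer-holomorphic for a \emph{constant} almost complex structure — which is what lets the Gromov trick be applied cleanly — and the verification that the perturbed curve $\tilde u'$ remains a graph over $\R\times S^1$, which is precisely the role of the foliation clause of the lemma and without which the assertion that $u'\colon\R\times S^1\to\Sigma$ solves the Floer equation on $\bar U$ would be meaningless.
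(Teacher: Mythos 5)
Your proof is correct and follows essentially the same route as the paper: fix local Floer data making $v$ Floer near $(s_0,t_0)$ by prescribing $X^{\mathcal{H}}$ along $v$, then pass to graphs via the Gromov trick and apply Lemma \ref{Lemma: Positive intersection perturbation} with the vertical foliation to keep the perturbed curve graphical. The only cosmetic difference is that you fix a constant $J_0$ while the paper allows an arbitrary $J_V$, and you make explicit the final transversality check at $(s_0,t_0,p)$, which the paper leaves implicit.
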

\begin{proof}
All that needs to be shown here is that for $V$ a small enough neighbourhood of $(s_0,t_0)$, we may always choose $\mathcal{H}_{V}: V \rightarrow C^\infty(\Sigma)$ and $J_V: V \rightarrow \mathcal{J}(\Sigma,\omega)$ such that $v$ is $(\mathcal{H}_V,J_V)$-Floer on $V$. Once this is established, we simply apply Lemma \ref{Lemma: Positive intersection perturbation} to $\tilde{u}$ and $\tilde{v}$ after using Gromov's trick to construct an almost complex structure $\tilde{J}^\mathcal{H}$ on $V \times \Sigma$ such that maps solve the $(\mathcal{H}_V,J_V)$-Floer equations if and only if their graphs are $\tilde{J}^\mathcal{H}$-holomorphic over $V$. Recall from Section \ref{sec-Gromov trick} that the fibers of the projection map $pr_{cyl}: \R \times S^1 \times \Sigma \rightarrow \R \times S^1$ are pseudo-holomorphic for the almost complex structure given by the Gromov trick, and so Lemma \ref{Lemma: Positive intersection perturbation} gives us an open set $U \subset \bar{U} \subset V$ containing $(s_0,t_0)$ and a map $\tilde{u}': \R \times S^1 \rightarrow \R \times S^1 \times \Sigma$ which is transverse to $\tilde{v}$ on $U$ and which is transverse to the fibers of $pr_{cyl}$ --- and thus $\tilde{u}'$ is the graph of some function $u': \R \times S^1 \rightarrow \Sigma$ which is $\tilde{J}^{\mathcal{H}}$-holomorphic on $\bar{U}$, and $\tilde{u}'$ agrees with $\tilde{u}$ outside of $V$. We then simply take $\mathcal{H}_{\bar{U}}:=\mathcal{H}_V \vert_{\bar{U}}$, $J_{\bar{U}}:=J_V \vert_{\bar{U}}$. 
\par
To see that $\mathcal{H}_{V}: V \rightarrow C^\infty(\Sigma)$ and $J_V: V \rightarrow \mathcal{J}(\Sigma,\omega)$ may be chosen as claimed, let $J_V$ be arbitrary, and define 
\begin{align*}
X: V &\rightarrow v^*T\Sigma \vert_V \\
(s,t) &\mapsto (\partial_t v - J_V \partial_s v)_{(s,t)}.
\end{align*}
It's straightforward to see that we may choose $\mathcal{H}_V$ to satisfy $X^{\mathcal{H}_V}(v(s,t))=X(s,t)$ for all $(s,t) \in V$. Indeed, define
\begin{align*}
\mathcal{H}(X)&:= \lbrace (s,t,H) \in V \times C^\infty(\Sigma): X^H(v(s,t))=X(s,t) \rbrace.
\end{align*}
It is not difficult to verify that $\pi: \mathcal{H}(X) \rightarrow V$ is a locally trivial fibration (see Corollary \ref{Cor: loc triv fib} in Appendix \ref{App: Construct Chain Maps App}), with $\pi$ being the obvious projection map. The fiber over $z \in V$ is diffeomorphic to the subspace of functions on $\Sigma$ having a critical point at $v(z)$. This shows that the fibers are contractible, and so there exists a section $\mathcal{H}_V$. By construction, $v$ is $(\mathcal{H}_V,J_V)$-Floer on $V$.
\end{proof}

\subsection{Proof of Theorem \ref{Theorem: Pre-models give models of cont cobordisms}}\label{Sec: Proof of Pre-models to Models theorem}


Let us now prove Theorem \ref{Theorem: Pre-models give models of cont cobordisms}.
\begin{proof}
We first note that if  $\lbrace u_i: \R \times S^1 \rightarrow \Sigma \rbrace_{i=1}^k$, is a pre-model for a continuation cobordism, then
\begin{align*}
I&:= \lbrace (s,t) \in [-K,K] \times S^1: u_i(s,t)=u_j(s,t), 1 \leq i < j \leq k \rbrace,
\end{align*}
where $K$ is as in the definition of a pre-model for a continuation cobordism, contains only finitely many points and lies in the interior of $[-K,K] \times S^1$. By Corollary \ref{Corollary: Perturbing graphs of Floer cylinders} for each $z \in I$, there exists an arbitrarily small open set $U_z$ containing $z$, local sections 
\begin{align*}
\mathcal{H}^z_{loc}: \bar{U}_z &\rightarrow C^{\infty}(\Sigma) \\
J^z_{loc}: \bar{U}_z &\rightarrow \mathcal{J}(\Sigma,\omega),
\end{align*}
and small perturbations $u_i'$ of each $u_i$, $i=1, \ldots, k$ such that each $u_i'$ satisfies the $(\mathcal{H}^z_{loc},J^z_{loc})$-Floer equation on $\bar{U}_z$. Let us write 
\begin{align*}
V&:= ([-K,K] \times S^1) \setminus \bigcup_{z \in I} U_z.
\end{align*}
Our goal is to find smooth maps 
\begin{align*}
\mathcal{H}_V: V &\rightarrow C^{\infty}(\Sigma) \\
J_V: V &\rightarrow \mathcal{J}(\Sigma,\omega),
\end{align*}
which agree with $\mathcal{H}^z_{loc}$ and $J^z_{loc}$ respectively on $\partial \bar{U}_z$, which in addition agree with $H^\pm$ and with $J^\pm$ respectively on $\lbrace \pm K \rbrace \times S^1$, and such that, moreover, each $u_i'$ is $(\mathcal{H}_V,J_V)$-Floer on $V$. Clearly, if this can be done, these data will patch together to give a homotopy $(\mathcal{H},J)$ of Floer data between $(H^-,J^-)$ and $(H^+,J^+)$ such that each $u_i'$ is $(\mathcal{H},J)$-Floer on all of $\R \times S^1$, and we will be done.
\par
To see that $(\mathcal{H}_V,J_V)$ may be chosen as desired we argue essentially identically to the proof of Corollary \ref{Corollary: Perturbing graphs of Floer cylinders}. First take $J_V$ to be any smooth map $J_V: V \rightarrow \mathcal{J}(\Sigma,\omega)$ which restricts to the desired almost complex structures on $\partial V$. Clearly, this can be done, since the space of $\omega$-compatible almost complex structures is contractible. Next, for each $i=1,\ldots,k$, we define a section of $(u_i' \vert_V)^*T\Sigma$ by
\begin{align*}
X_i: V &\mapsto (u_i'\vert_V)^*T\Sigma \\
(s,t) &\mapsto (\partial_tu_i' - J_V \partial_su_i')_{(s,t)},
\end{align*}  
and we define $\mathcal{H}(\vec{X}) \rightarrow V$ to be the fiber bundle with fiber
\begin{align*}
\mathcal{H}(\vec{X})_{s,t}&:= \lbrace H \in C^\infty(\Sigma): X^H(u_i'(s,t))=X_i(s,t), \; \forall (s,t) \in V, \; \forall i=1, \ldots, k \rbrace.
\end{align*}
It is not difficult to see that $\mathcal{H}(\vec{X})$ is a locally trivial fibration (apply Corollary \ref{Cor: loc triv fib} from Appendix \ref{App: Construct Chain Maps App} in the setting where the underlying symplectic manifold is the configuration space $C_k(\Sigma)$) and that each $\mathcal{H}^z_{loc}\vert_{\partial \bar{U}_z}$ defines a section of $\mathcal{H}(\vec{X})$ over $\partial \bar{U}_z$, in addition to $(\pm K, t) \mapsto H^\pm_t$ defining a section over $\lbrace \pm K \rbrace \times S^1$. Moreover, each fiber of $\mathcal{H}(\vec{X})$ is diffeomorphic to the space of functions on $\Sigma$ having critical points at $k$ distinct points. In particular, $\mathcal{H}(\vec{X})$ has contractible fibers and therefore admits a smooth section extending the given aforementioned data on $\partial V$, completing the proof.
\end{proof}

\subsection{From models of cobordisms to regular models}\label{Sec: Models to Regular models}
Our ultimate goal is to use the topological construction of pre-models for cobordisms to guarantee the more rigid existence of Floer continuation maps with prescribed behaviour. As such, we will need to consider issues of regularity.

\begin{definition}
Let $(H^\pm,J^\pm)$ be Floer regular. A model for a continuation cobordism from $(H^-,J^-)$ to $(H^+,J^+)$ will be called  \textbf{regular} if it is an $(\mathcal{H},\mathbb{J})$-model for some \\ $(\mathcal{H},\mathbb{J}) \in \HJ_{reg}(H^-,J^-;H^+,J^+)$.
\end{definition}

The following lemma follows essentially by combining Corollaries \ref{Cor: Automatic Regularity for Odd orbits} and \ref{Automatic Codim 1 for Even orbits} with results which are likely folkloric: that local transversality of the $s$-dependent Floer operator at some cylinder $u$ only requires perturbations with support contained in an appropriately dense open neighbourhood of the graph of $u$, and the fact that near a minimally degenerate Floer cylinder of index $0$ (ie. when $\dim \ker (D \mathcal{F}_{\mathcal{H},\mathbb{J}})_u)=1$), the universal moduli space locally has the structure of a `fold' singularity over the space of perturbation data. I am unaware of a place in the published literature where such results are stated, so for the convenience of the reader, a detailed proof sketch is provided in Appendix \ref{App: Construct Chain Maps App} (see Section \ref{Subsec: Proof of Local pert to regular lemma}).
\begin{lemma}\label{Lem: Local Pert Non-Reg to Reg}
Let $(H^\pm,J^\pm)$ be Floer-regular, $(\mathcal{H},J) \in \HJ(H^-,J^-;H^+,J^+)$, $\hat{x}^{\pm} \in \Per{H^\pm}$, and $u \in C^{\infty}(\R \times S^1;\Sigma)_{\hat{x}^-,\hat{x}^+}$. Suppose that $\mathcal{F}_{\mathcal{H},J}(u)=0$ and that $ind \; (D\mathcal{F}_{\mathcal{H},J})_{u}=0$. Let $N \subset \R \times S^1 \times \Sigma$ denote a neighbourhood of $\im \tilde{u}$ and let $U \subset N$ be an open dense subset of $N$ such that $U \cap \im \tilde{u}$ is dense in $\im \tilde{u}$. For any neighbourhood $\mathcal{U} \subset C^{\infty}(\R \times S^1;\Sigma)_{\hat{x}-,\hat{x}^+}$, there exists $\mathcal{H}' \in \mathscr{H}(H^-;H^+)$ and $u' \in  \mathcal{U}$ such that $\supp \mathcal{H} -\mathcal{H}' \subset U$, $\mathcal{F}_{\mathcal{H}',J}(u')=0$, and $(D\mathcal{F}_{\mathcal{H}',J})_{u'}$ is surjective.
\end{lemma}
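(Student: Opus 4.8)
The plan is to run a standard Sard--Smale argument on a \emph{universal moduli space} cut out by perturbations supported in $U$, but to handle the delicate point that the Fredholm index is $0$ (so one cannot simply invoke genericity of the regular locus, since the regular stratum of a nominally $0$-dimensional moduli space must be cut out with extra care). First I would set up the space $\mathscr{H}_U \subset \mathscr{H}(H^-,H^+)$ of adapted homotopies of the form $\mathcal{H}' = \mathcal{H} + \eta$ with $\eta$ ranging over a suitable Banach manifold of perturbations whose associated Hamiltonian vector fields, when pulled back along cylinders near $u$, are supported in $U$. The key analytic input, flagged in the excerpt as folkloric, is that \emph{local} transversality of $\mathcal{F}_{\mathcal{H}',J}$ at a cylinder $u'$ only requires perturbation freedom on a dense open subset of $\R\times S^1\times\Sigma$ meeting $\im\tilde u'$ densely: this is a unique-continuation statement for elements of the cokernel of $(D\mathcal{F}_{\mathcal{H}',J})_{u'}$, which are solutions of the formal adjoint Cauchy--Riemann-type equation and hence, by the Carleman similarity principle (already invoked several times in Section \ref{Sec:Asymptotic analysis}), cannot vanish on an open set without vanishing identically. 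This lets me build the universal moduli space $\mathcal{M}^{univ} := \{(u',\eta) : \mathcal{F}_{\mathcal{H}+\eta,J}(u')=0,\ \eta\in\mathscr{H}_U\}$ near $(u,0)$ and show it is a Banach manifold via the implicit function theorem applied to the full linearization in both $u'$ and $\eta$.

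Next, the crucial case analysis uses Corollaries \ref{Cor: Automatic Regularity for Odd orbits} and \ref{Automatic Codim 1 for Even orbits}. If $\mu(\hat{x}^\pm)$ is odd, Corollary \ref{Cor: Automatic Regularity for Odd orbits} says $(\mathcal{H},J)$ is already $(\hat{x}^-,\hat{x}^+)$-regular, so $u' = u$, $\mathcal{H}'=\mathcal{H}$ works and there is nothing to prove. The substantive case is $\mu(\hat{x}^\pm)$ even; here Corollary \ref{Automatic Codim 1 for Even orbits} gives $\dim\ker (D\mathcal{F}_{\mathcal{H},J})_u \le 1$, and since the index is $0$ this forces the cokernel also to be at most $1$-dimensional. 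So $u$ is either already regular (done) or is a \emph{minimally degenerate} cylinder with $1$-dimensional kernel and $1$-dimensional cokernel. In the latter case I would argue that near $(u,0)$ the projection $\mathcal{M}^{univ}\to\mathscr{H}_U$ is a Fredholm map of index $0$ whose differential at $(u,0)$ has $1$-dimensional kernel and cokernel --- i.e. a fold-type singularity --- so that the Sard--Smale theorem produces arbitrarily small regular values $\eta$, and for such $\eta$ the fiber $\mathcal{M}^{univ}_\eta$ near $u$ consists of points $u'$ at which $(D\mathcal{F}_{\mathcal{H}+\eta,J})_{u'}$ is surjective. Shrinking $\eta$ and correspondingly shrinking the neighbourhood of $u$ from which $u'$ is drawn, one arranges $u'\in\mathcal{U}$ and $\supp(\mathcal{H}-\mathcal{H}') = \supp\eta \subset U$.

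The main obstacle I expect is making precise the fold-singularity claim --- equivalently, that the non-regular locus of $\mathcal{M}^{univ}$ has positive codimension and that generic small perturbations miss it while still admitting a nearby solution. Concretely one must show that the \emph{second-order} behaviour of the defining equation transverse to $\ker(D\mathcal{F})_u$ is non-degenerate along the perturbation directions, so that the universal zero set is genuinely a smooth Banach manifold on which the projection to $\mathscr{H}_U$ is a submersion away from a codimension-one fold; this is where one needs both the local-support transversality lemma (to know the perturbation directions span the cokernel) and a Kuranishi-type local normal form. Since the excerpt explicitly defers this to the appendix (Section \ref{Subsec: Proof of Local pert to regular lemma}), in the main text I would state the above reduction to the even, minimally-degenerate case, invoke the two corollaries to dispatch the easy cases, and cite the appendix for the fold-singularity analysis, keeping the body of the proof to the structural argument just outlined.
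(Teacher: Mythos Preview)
Your reduction to the minimally degenerate case via Corollaries \ref{Cor: Automatic Regularity for Odd orbits} and \ref{Automatic Codim 1 for Even orbits}, and your use of unique continuation for cokernel elements to obtain transversality of the universal moduli space using only perturbations supported in $U$, are exactly what the paper does. The genuine divergence is in how the non-emptiness of the regular fiber near $u$ is secured. You propose to verify directly that the projection $\mathcal{M}^{univ}\to\mathscr{H}_U$ has a fold at $(u,0)$, which, as you correctly flag, requires a second-order non-degeneracy condition; without it Sard--Smale only gives regular values, and nothing prevents all nearby regular fibers from being empty.

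The paper sidesteps this second-order analysis entirely by a first-order trick: it embeds $\mathcal{H}$ in an auxiliary one-parameter family $(\mathcal{H}^\lambda)_{\lambda\in[-1,1]}$ chosen so that $(\partial_\lambda\mathcal{H}^\lambda)|_{\lambda=0}$, evaluated along $u$, spans the one-dimensional cokernel of $(D\mathcal{F}_{\mathcal{H},J})_u$. It then works with perturbations $h^\lambda$ supported in $U$ and constrained to satisfy $h^0\equiv 0$. This constraint forces $(0,u)$ to lie in the parametrized moduli space $\bigcup_\lambda \mathcal{M}(\hat{x}^-,\hat{x}^+;\mathcal{H}^\lambda+h^\lambda,J)$ for every such $h$; transversality of the universal problem at $(0,u,0)$ now comes from the $\lambda$-direction alone, and away from $\lambda=0$ it comes from the $h$-directions via the unique continuation argument you described. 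Sard--Smale then makes this parametrized space a one-manifold through $(0,u)$ for generic $h$, and in a one-manifold the projection to $\lambda$ is a submersion away from isolated points, so nearby points $(\lambda,u')$ with $u'$ regular are guaranteed to exist. In effect the paper manufactures the extra dimension that your fold analysis would have to extract from second-order data, trading a non-trivial normal-form computation for an elementary dimension count.
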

With this lemma in hand, we can readily prove:
\begin{theorem}\label{Thm: Regular Cont Cobordism}
Let $(H^\pm,J^\pm)$ be Floer regular and let $u_i: \R \times S^1 \rightarrow \Sigma$, $i=1, \ldots,k$ define an $(\mathcal{H},\mathbb{J})$-model for a continuation cobordism for some $(\mathcal{H},\mathbb{J}) \in \HJ(H^-,J^-;H^+,J^+)$. Suppose that for each $i=1, \ldots,k$, the index of the linearized Floer operator satisfies
\begin{align*}
ind \;(D\mathcal{F}_{\mathcal{H},\mathbb{J}})_{u_i}&= 0,
\end{align*}  
then for any choice of neighbourhoods $\mathcal{U}_i$ of $u_i \in C^{\infty}(\R \times S^1;\Sigma)_{\hat{x}_i,\hat{y}_i}$, $i=1, \ldots, k$, there exists a regular model for a continuation cobordism $\lbrace u_i'' \rbrace_{i=1}^k$ such that $u_i'' \in \mathcal{U}_i$ for $i=1, \ldots,k$.
\end{theorem}
\begin{proof}
For each $i=1, \ldots,k$, let $\mathcal{N}(\im \tilde{u}_i)$ be an open neighbourhood of $\im \tilde{u}_i$ and let 
\begin{align*}
U_i &:= \mathcal{N}(\im \tilde{u}_i) \setminus \bigcup_{j \neq i} \im \tilde{u}_i.
\end{align*}
Since graphs $\tilde{u}_i, \tilde{u}_j$ have only finitely many intersections for $i \neq j$, $U_i$ clearly satisfies the hypotheses of Lemma \ref{Lem: Local Pert Non-Reg to Reg} with $u=u_i$ and $\mathcal{U}=\mathcal{U}_i$ for each $i=1, \ldots,k$. It follows that there exists $h_i \in C^{\infty}(\R \times S^1 \times \Sigma)$ with compact support contained in $U_i$ and $u_i' \in \mathcal{U}_i$ such that $\mathcal{F}_{\mathcal{H}+h_i,J}(u_i')=0$, and $u_i'$ is a regular point of $\mathcal{F}_{\mathcal{H}+h_i,J}$. Note that, because the support of $h_i$ is compactly contained in $U_i$ which is disjoint from $\im \tilde{u}_j$ for each $j \neq i$, there exists an open neighbourhood $V_j$ of $\im \tilde{u}_j$ such $\supp h_i \cap V_j =\emptyset$. Consequently, since the local behaviour of the Floer operator near a map depends only on the behaviour of the continuation data near the graph of that map, if we set
\begin{align*}
\mathcal{H}'=\mathcal{H} + \sum_{i=1}^k h_i 
\end{align*}
we have that $\mathcal{F}_{\mathcal{H}',J}(u'_i)=0$ for each $i=1, \ldots, k$, and each $u_i'$ is a regular point of $\mathcal{F}_{\mathcal{H}',J}$. Finally, in light of the regularity of each $u_i'$ for $\mathcal{F}_{\mathcal{H}',J}$ and the index condition on the Floer operator, we may perturb $\mathcal{H}'$ to some $\mathcal{H}'' \in \mathscr{H}(H^-,H^+)$ such that $(\mathcal{H}'',J) \in \HJ_{reg}(H^-,J^-;H^+,J^+)$, such that there exist maps $u_i'' \in \mathcal{U}_i$ with $\mathcal{F}_{\mathcal{H}'',J}(u_i'')=0$ for $i=1,\ldots,k$. This completes the proof.
\end{proof}
As an easy consequence of the previous theorem, we obtain the following corollary.
\begin{cor}\label{Cor: Premodel implies regular model with non-empty moduli spaces}
Let $(H^\pm,J^\pm)$ be Floer regular, and let $\hat{x}^\pm_i \in \Per{H^\pm}$, $i=1, \ldots,k$, be such that $\mu(\hat{x}_i^-)=\mu(\hat{x}_i^+)$. If there exists a model for a continuation cobordism $\lbrace u_i \rbrace_{i=1}^k$ with $u_i \in C^\infty_{\hat{x}^-_i,\hat{x}^+_i}(\R \times S^1;\Sigma)$, then there exists $(\mathcal{H},\mathbb{J}) \in \HJ_{reg}(H^-,J^-;H^+,J^+)$ such that 
\begin{align*}
\mathcal{M}(\hat{x}_i^-,\hat{x}^+_i;\mathcal{H},\mathbb{J}) &\neq \emptyset
\end{align*}
for all $i=1,\ldots,k$.
\end{cor}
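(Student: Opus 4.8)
The plan is to deduce this corollary directly from Theorem \ref{Thm: Regular Cont Cobordism}. First I would unwind the hypothesis: a model for a continuation cobordism $\lbrace u_i \rbrace_{i=1}^k$ with $u_i \in C^\infty(\R \times S^1;\Sigma)_{\hat{x}_i^-,\hat{x}_i^+}$ provides, by definition, some $(\mathcal{H}_0,\mathbb{J}_0) \in \HJ(H^-,J^-;H^+,J^+)$ for which each $u_i$ is an $(\mathcal{H}_0,\mathbb{J}_0)$-Floer cylinder of finite energy. The linearized Floer operator $(D\mathcal{F}_{\mathcal{H}_0,\mathbb{J}_0})_{u_i}$ is Fredholm of index $\mu(\hat{x}_i^-)-\mu(\hat{x}_i^+)$, so the standing hypothesis $\mu(\hat{x}_i^-)=\mu(\hat{x}_i^+)$ forces $\mathrm{ind}\,(D\mathcal{F}_{\mathcal{H}_0,\mathbb{J}_0})_{u_i}=0$ for each $i$. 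Hence the hypotheses of Theorem \ref{Thm: Regular Cont Cobordism} are satisfied by $\lbrace u_i \rbrace_{i=1}^k$.

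Second, I would fix for each $i$ a neighbourhood $\mathcal{U}_i$ of $u_i$ inside $C^\infty(\R \times S^1;\Sigma)_{\hat{x}_i^-,\hat{x}_i^+}$ and invoke Theorem \ref{Thm: Regular Cont Cobordism} to obtain a regular model for a continuation cobordism $\lbrace u_i'' \rbrace_{i=1}^k$ with $u_i'' \in \mathcal{U}_i$; by definition of a regular model this means there is $(\mathcal{H},\mathbb{J}) \in \HJ_{reg}(H^-,J^-;H^+,J^+)$ for which every $u_i''$ is an $(\mathcal{H},\mathbb{J})$-Floer cylinder of finite energy. Since $u_i'' \in \mathcal{U}_i \subseteq C^\infty(\R \times S^1;\Sigma)_{\hat{x}_i^-,\hat{x}_i^+}$ and is $(\mathcal{H},\mathbb{J})$-Floer of finite energy, it is an element of the zero set defining $\mathcal{M}(\hat{x}_i^-,\hat{x}_i^+;\mathcal{H},\mathbb{J})$, which is therefore non-empty for all $i=1,\ldots,k$. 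This gives the claim.

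I do not expect any real obstacle here: the statement is essentially a repackaging of Theorem \ref{Thm: Regular Cont Cobordism}. The only two small points to check are that the index of the linearized operator at each $u_i$ vanishes (immediate from the grading hypothesis and the Fredholm index formula for $s$-dependent cylinders) and that regularising does not disturb the prescribed asymptotics (automatic, since the $u_i''$ are produced inside neighbourhoods $\mathcal{U}_i$ lying in the fixed asymptotic classes $C^\infty(\R \times S^1;\Sigma)_{\hat{x}_i^-,\hat{x}_i^+}$, and in any case finite-energy Floer cylinders with non-degenerate ends converge exponentially to their limiting $1$-periodic orbits). All the genuine analytic content — solving the $s$-dependent Floer equation while simultaneously achieving transversality and preserving positivity of intersections among the graphs — has already been absorbed into Theorems \ref{Theorem: Pre-models give models of cont cobordisms} and \ref{Thm: Regular Cont Cobordism}.
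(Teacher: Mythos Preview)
Your proposal is correct and matches the paper's approach exactly: the paper states this corollary as ``an easy consequence of the previous theorem'' (Theorem~\ref{Thm: Regular Cont Cobordism}) without writing out any further details, and your argument supplies precisely the verification that the index hypothesis $\mu(\hat{x}_i^-)=\mu(\hat{x}_i^+)$ gives $\mathrm{ind}\,(D\mathcal{F}_{\mathcal{H}_0,\mathbb{J}_0})_{u_i}=0$ so that Theorem~\ref{Thm: Regular Cont Cobordism} applies.
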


\section{Application: A new spectral invariant and its computation on surfaces}\label{Ch: App 1}
In this section, we bring to bear the theory developed in the preceding pages in order to give an explicit dynamical characterization --- for any non-degenerate Hamiltonian $H$ on a symplectic surface $(\Sigma,\omega)$ --- of those Floer cycles which both represent a non-trivial homology class in $HF_{1}(H)$ \textit{and} which lie in the image of some PSS map at the chain level.
\par
This dynamical characterization motivates us to study the quantity obtained by modifying the definition of the Oh-Schwarz spectral invariants by looking at the minimal action required to represent a given homology class by a cycle which lies in the image of some PSS map at the chain level. It turns out that this defines a family of action selectors, which we call the \textit{PSS-image spectral invariants}. These novel spectral invariants share many of the same properties as the Oh-Schwarz spectral invariants, including a triangle inequality for the fundamental class, and bound the Oh-Schwarz spectral invariants from above.
\par
The section begins with a general study of the PSS-image spectral invariants in Section \ref{Sec: New spectral invariant}, establishing their basic formal properties, whose proofs are largely analogous to the corresponding proofs for the the Oh-Schwarz spectral invariants. In Section \ref{Sec: Computing spec invar on surfaces}, we compute the PSS-image spectral invariant of a non-degenerate Hamiltonian on a surface by deriving the aforementioned dynamical characterization of Floer cycles which represent the fundamental class and which lie in the image of some PSS map at the chain level. 
\par 
The main line of the proof of Theorem \ref{MainThm: Top Char} is as follows. The fact that the cycle in question lies in the image of some chain-level PSS map implies the existence of various PSS disks, which may be thought of as providing a deformation from a trivial capped braid to the support of the cycle in question. The theory from Sections \ref{Braids} and \ref{Local} may then be used to force the support of this cycle to be positive, and when combined with the cycle's homological non-triviality this theory forces the support to be \textit{maximally} positive (relative index $1$). Conversely, in order to deduce that \textit{every} such capped braid lies in the image of some chain level PSS map, it is sufficient to construct an appropriate continuation map from a small Morse function which sends the fundamental class of the Morse function to a cycle supported on the capped braid in question. The technique for designing continuation maps with prescribed behaviour developed in Section \ref{Ch: Construct chain maps} may be seen to provide precisely such a continuation map.
\par
Section \ref{Sec: Dynamic Consequences} presents two fairly immediate dynamical consequences of this characterization, and the section closes with a brief discussion of the relationship between the Oh-Schwarz spectral invariants and the PSS-image spectral invariants in Section \ref{Sec: On equiv of OS and im}.

\subsection{A new spectral invariant: definitions and properties}\label{Sec: New spectral invariant}
Let $(H,J)$ be Floer-regular. Recall that the set $PSS(H,J)$ of \textit{PSS data} for $(H,J)$ is the collection of tuples
\begin{align*}
\mathcal{D}=(f,g;\mathcal{H},\mathbb{J}) \in C^{\infty}(M) \times Met(M) \times \mathscr{H}(0;H) \times \mathcal{J}^{PSS}(J),
\end{align*}
where $(f,g)$ is a Morse-Smale pair. There is a residual set $PSS_{reg}(H,J)$ of \textit{regular PSS data} such that for any $\mathcal{D} \in \mathcal{D}_{reg}^{PSS}(H,J)$, we may define a chain-level PSS map $\Phi^{PSS}_{\mathcal{D}}: QC_{*+n}(f,g) \rightarrow CF_*(H,J)$ which is $\Lambda_\omega$-linear and induces a natural isomorphism at the level of homology.

\begin{definition}
Let $(H,J)$ be Floer-regular. For $\alpha \in QH_*(M,\omega)$, $\alpha \neq 0$, we define the \textbf{PSS-image spectral invariant}
\begin{align*}
c_{im}(\alpha;H,J) &= c_{im}(\alpha;H) \\
&:= \inf_{\mathcal{D} \in PSS_{reg}(H,J)}  \lbrace \lambda_H(\sigma): \sigma \in \im \Phi_{\mathcal{D}}^{PSS}, \; \text{and} \; [\sigma]= (\Phi_{\mathcal{D}}^{PSS})_* \alpha \rbrace. 
\end{align*}
\end{definition}
In general, it is not immediately obvious that $c_{im}(\alpha;H) \neq -\infty$ for all $\alpha \in QH_*(M,\omega) \setminus \lbrace 0 \rbrace$. To see that this is so, recall the definition of the Oh-Schwarz spectral invariant $c_{OS}(\alpha;H)$ for $0 \neq \alpha \in QH_*(M)$ from Section \ref{Sec: PSS defn Sect}. These spectral invariants are always finite (see theorem 5.3 in \cite{Oh05b}, for instance). Moreover, it is an obvious consequence of the definitions that we have:
\begin{proposition}\label{Prop: OS <= IM}
For any $\alpha \in QH_*(M) \setminus \lbrace 0 \rbrace$ and any $J \in \mathcal{J}_\omega(M)$ such that $(H,J)$ is Floer regular
\begin{align*}
c_{OS}(\alpha;H) &\leq c_{im}(\alpha;H,J).
\end{align*}
\end{proposition}
Consequently, we obtain the finiteness of $c_{im}(\alpha;H,J)$ for all non-zero quantum homology classes $\alpha$. 

\subsubsection{Basic Properties of $c_{im}$}\label{Subsec: Basic props of image spec invar}
We describe now some of the basic properties of the PSS-image invariants. The proofs are essentially routine adaptations of the corresponding proofs for the Oh-Schwarz spectral invariants (see for example \cite{Oh05b}), requiring only that one additionally exhibits some family of chain-level PSS maps which realizes the infimizing process defining $c_{PSS}(\alpha;H,J)$. As such, we omit most of the proofs and defer some illustrative ones to Appendix \ref{App: New Spec Invar}.

\begin{proposition}\label{Prop: Energy Bounds on Diff Of imSpec Invar}
Let $(H,J)$, $(K,J')$ be Floer regular pairs. For any $\alpha \in QH_*(M,\omega)$, $\alpha \neq 0$, we have
\begin{align*}
\int_0^1 \min_{x \in M} (K-H)(t,x) \; dt \leq c_{im}(\alpha;K,J') &- c_{im}(\alpha;H,J)  \leq \int_0^1 \max_{x \in M} (K-H)(t,x) \; dt 
\end{align*} 
\end{proposition}

\begin{cor}\label{Cor: Spec invar indep of acs}
$c_{im}(\alpha;H)=c_{im}(\alpha;H,J)$ is well-defined and independent of the choice of regular $J$. 
\end{cor}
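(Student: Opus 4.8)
The goal is to show $c_{im}(\alpha;H,J)$ does not depend on the choice of Floer-regular $J$. The plan is to run the standard ``squeeze'' argument with Proposition~\ref{Prop: Energy Bounds on Diff Of imSpec Invar}: fix two Floer-regular pairs $(H,J)$ and $(H,J')$ with the \emph{same} Hamiltonian $H$, and apply the proposition in both directions. Since $K = H$ in this application, the quantity $\int_0^1 \max_{x\in M}(K-H)(t,x)\,dt = \int_0^1 \max_{x\in M}(H-H)(t,x)\,dt = 0$, and likewise the lower bound $\int_0^1 \min_{x\in M}(H-H)(t,x)\,dt = 0$. Hence the proposition gives
\begin{align*}
0 \;\leq\; c_{im}(\alpha;H,J') - c_{im}(\alpha;H,J) \;\leq\; 0,
\end{align*}
so $c_{im}(\alpha;H,J') = c_{im}(\alpha;H,J)$.

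There is one point that deserves a word of care: Proposition~\ref{Prop: Energy Bounds on Diff Of imSpec Invar} is stated for a pair $(H,J)$ and a pair $(K,J')$ with \emph{no} assumed relation between $J$ and $J'$, so it applies verbatim with $K = H$. Inspecting its proof, the only structural input beyond the energy–action identity (Equation~\ref{Eq: s-dependent Action-Energy equation}) and positivity of energy is the existence of a regular homotopy $(\mathcal{H}',\mathbb{J}') \in \HJ(H,J;H,J')$ whose Hamiltonian part is $C^0$-close to the linear homotopy $\mathcal{H}^{lin}$ — here $\mathcal{H}^{lin}_{s,t} = (1-\beta(s))H_t + \beta(s)H_t = H_t$ is literally the constant homotopy — together with the gluing statement $\Phi^{PSS}_{\mathcal{D}'} = h_{\mathcal{H}'} \circ \Phi^{PSS}_{\mathcal{D}}$. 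Both of these hold in this degenerate case (one may perturb $\mathcal{H}'$ away from the constant homotopy to achieve regularity while keeping it arbitrarily $C^0$-close to $H$, so the $\epsilon$ in the proof still tends to $0$), so no new argument is required.

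Since $J$ was chosen arbitrarily among Floer-regular almost complex structures, this shows that the value $c_{im}(\alpha;H,J)$ depends only on $H$ (and $\alpha$), justifying the notation $c_{im}(\alpha;H)$. The finiteness of this common value was already established via Proposition~\ref{Prop: OS <= IM} and the finiteness of the Oh–Schwarz invariants, so $c_{im}(\alpha;H) \in \R$ is well-defined.

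I do not anticipate any genuine obstacle here: the entire content is the observation that Proposition~\ref{Prop: Energy Bounds on Diff Of imSpec Invar} collapses to an equality when $K=H$. The only thing one must double-check is that the proposition's proof genuinely allows $J \neq J'$ while $H = K$ — which it does, since the homotopy $\mathbb{J}'$ of almost complex structures is entirely unconstrained there and the Hamiltonian homotopy can be taken as close to constant as desired.
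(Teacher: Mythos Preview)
Your proof is correct and is exactly the argument the paper intends: the corollary is stated immediately after Proposition~\ref{Prop: Energy Bounds on Diff Of imSpec Invar} with no separate proof, and the intended deduction is precisely to set $K=H$ so that both bounds collapse to zero. Your additional remarks verifying that the proposition genuinely allows $J \neq J'$ with $H=K$ are accurate and helpful, though not strictly needed.
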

We defer the proof of the following to Appendix \ref{App: New Spec Invar} (Section \ref{App: Proof of Basic Props of PSS-image invars}).
\begin{proposition}\label{Prop: Basic properties of PSS-image invars Main}
For any $H,K \in C^\infty(S^1 \times M)$ and any $\alpha \in QH_*(M,\omega) \setminus \lbrace 0 \rbrace$
\begin{enumerate}
\item if $r: [0,1] \rightarrow \R$ is smooth, then 
\begin{align*}
c_{im}(\alpha;H+r) &= c_{im}(\alpha;H) + \int_0^1 r(t) \;dt.
\end{align*}
\item $c_{im}(\psi_* \alpha; \psi_*H) = c_{im}(\alpha;H)$ for any symplectic diffeomorphism $\psi$.
\item $\vert c_{im}(\alpha;H) - c_{im}(\alpha;K)\vert \leq \| H-K \|_{L^{1,\infty}}$.
\item (Weak triangle inequality) $c_{im}(\alpha; H \#K) \leq c_{im}(\alpha;H) + c_{im}([M];K)$.
\end{enumerate}
\end{proposition}
The above proposition implies that $c_{im}(\alpha;H)$ is $C^0$-continuous in its Hamiltonian argument. By the density of non-degenerate Hamiltonians in the space of all Hamiltonians, we may extend the definition of $c(\alpha;H)$ to any $H \in C^{\infty}(S^1 \times M)$. The above properties then obviously still hold by approximation even if the Hamiltonians in question are not presumed non-degenerate. 

Moreover, if we define, for $\alpha=\sum \alpha_Ae^A \in QH_*(M)$ its \textbf{valuation}
\begin{align*}
\nu(\alpha)&:= \max \lbrace -\omega(A): \alpha_A \neq 0 \rbrace,
\end{align*}
then we see that the PSS-image spectral invariants satisfy the same normalization condition as the Oh-Schwarz spectral invariants.
\begin{proposition}
For $\alpha=\sum \alpha_Ae^A \in QH_*(M) \setminus \lbrace 0 \rbrace$, we have
\begin{align*}
c_{im}(\alpha;0) &= \nu(\alpha).
\end{align*}
\end{proposition}
The proof is identical to the one for the Oh-Schwarz spectral invariants.

\begin{cor}\label{Cor: Linfty bounds on spectral invars}
For any $\alpha \in QH_*(M,\omega)$, $\alpha \neq 0$, we have
\begin{align*}
\int_0^1 \min_{x \in M} H(t,x) \; dt + \nu(\alpha) \leq c_{im}(\alpha;H) & \leq \int_0^1 \max_{x \in M} H(t,x) \; dt + \nu(\alpha). 
\end{align*}
In particular, if $\alpha \in H_*(M)$, then
\begin{align*}
\int_0^1 \min_{x \in M} H_t \; dt \leq c_{im}(\alpha;H) & \leq \int_0^1 \max_{x \in M} H_t \; dt
\end{align*}  
\end{cor}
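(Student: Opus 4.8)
The plan is to deduce the two--sided estimate directly from Proposition \ref{Prop: Energy Bounds on Diff Of imSpec Invar}, combined with the normalization $c_{im}(\alpha;0)=\nu(\alpha)$ established just above and the independence of $c_{im}(\alpha;-)$ from the auxiliary almost complex structure (Corollary \ref{Cor: Spec invar indep of acs}). Concretely, one applies the energy bound of Proposition \ref{Prop: Energy Bounds on Diff Of imSpec Invar} to the zero Hamiltonian and $H$: taking ``$H$'' in the statement of that proposition to be $0$ and ``$K$'' to be our $H$, so that $K-H=H$, one obtains
\begin{align*}
\int_0^1 \min_{x \in M} H(t,x)\, dt \;\le\; c_{im}(\alpha;H) - c_{im}(\alpha;0) \;\le\; \int_0^1 \max_{x \in M} H(t,x)\, dt,
\end{align*}
and substituting $c_{im}(\alpha;0)=\nu(\alpha)$ and rearranging gives exactly the asserted inequality.

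The only point needing care is that the zero Hamiltonian is degenerate, so Proposition \ref{Prop: Energy Bounds on Diff Of imSpec Invar} does not apply to it verbatim. This is handled exactly as in the discussion preceding the statement: the estimates of Proposition \ref{Prop: Energy Bounds on Diff Of imSpec Invar}, together with the $C^0$--continuity of $c_{im}(\alpha;-)$ from Proposition \ref{Prop: Basic properties of PSS-image invars}(3), persist under $C^0$--approximation by non-degenerate Hamiltonians. Thus one may either invoke the extended validity of these properties directly, or run the elementary limiting argument: fix a $C^2$--small Morse function $f$ and a sequence $\epsilon_k \downarrow 0$ with $(\epsilon_k f, J)$ Floer regular for a suitable $J$, apply Proposition \ref{Prop: Energy Bounds on Diff Of imSpec Invar} to the pairs $(H,J')$ and $(\epsilon_k f, J)$ to get
\begin{align*}
\int_0^1 \min_{x \in M}(H-\epsilon_k f)(t,x)\, dt \;\le\; c_{im}(\alpha;H) - c_{im}(\alpha;\epsilon_k f) \;\le\; \int_0^1 \max_{x \in M}(H-\epsilon_k f)(t,x)\, dt,
\end{align*}
and let $k \to \infty$, using that $\epsilon_k f \to 0$ uniformly, the $C^0$--continuity of $c_{im}(\alpha;-)$, and $c_{im}(\alpha;0)=\nu(\alpha)$. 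Density of non-degenerate Hamiltonians then extends the inequality to arbitrary $H$.

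For the ``in particular'' clause, observe that if $\alpha \in H_*(M)$, regarded inside $QH_*(M,\omega)=H_*(M)\otimes \Lambda_\omega$ as supported in Novikov degree $0$, then the only nonzero coefficient $\alpha_A$ occurs at $A=0 \in \Gamma_\omega$, so $\nu(\alpha)=\max\{-\omega(A):\alpha_A\neq 0\}=-\omega(0)=0$; inserting $\nu(\alpha)=0$ into the general estimate yields the stated bounds. I expect no substantive obstacle here: the content is entirely contained in Proposition \ref{Prop: Energy Bounds on Diff Of imSpec Invar} and the normalization computation, and the only mild subtlety is the bookkeeping required to pass from Floer--regular data to the degenerate zero Hamiltonian, which the preceding discussion has already arranged.
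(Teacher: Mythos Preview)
Your proposal is correct and follows essentially the same approach as the paper: apply Proposition \ref{Prop: Energy Bounds on Diff Of imSpec Invar} with one Hamiltonian equal to $0$, invoke $C^0$-continuity and density of non-degenerate Hamiltonians to justify this, and then substitute $c_{im}(\alpha;0)=\nu(\alpha)$. The paper compresses the limiting argument into a single sentence, but your more explicit version with $\epsilon_k f \to 0$ is exactly the mechanism underlying it.
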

The PSS-image spectral invariant also shares the following properties of the Oh-Schwarz spectral invariant, whose proofs are once more identical to those for the latter. We refer the interested reader to Theorem II and its proof in \cite{Oh05b}.
\begin{proposition}\label{Prop:BasicPropsTwo}
\begin{enumerate}
If $(M,\omega)$ is rational in the sense that the subgroup $\omega(\pi_2(M)) \subset \R$ is discrete, then
\item \begin{align*}
c_{im}(\alpha;H) &\in Spec(H)
\end{align*}
for all $\alpha \in QH_*(M,\omega) \setminus \lbrace 0 \rbrace$.
\item If $H \in C^\infty(S^1 \times M)$ is assumed to be normalized so that $\int H_t \omega^n=0$ for all $t \in S^1$, then $c_{im}(\alpha;H)$ depends only on the homotopy class of the isotopy $(\phi^H_t)_{t \in [0,1]}$ relative endpoints.
\end{enumerate}
\end{proposition}

\subsubsection{A new spectral norm}\label{Subsec: New Spec Norm}
The fact that the PSS-image spectral invariants satisfy the triangle inequality for the fundamental class allows us to define an associated symplectic conjugation invariant pseudo-norm on $\widetilde{Ham}(M,\omega)$ which descends to a non-degenerate conjugation invariant norm on $Ham(M,\omega)$.
\begin{definition}
Let $(M,\omega)$ be a compact rational semi-positive symplectic manifold. We define the function
\begin{align*}
\tilde{\gamma}_{im}: C^\infty(S^1 \times M) &\rightarrow \R \\
H &\mapsto c_{im}([M];H) + c_{im}([M]; \bar{H}),
\end{align*}
which descends to the \textbf{PSS-image pseudo-norm} on $\widetilde{Ham}(M,\omega)$ by defining
\begin{align*}
\tilde{\gamma}_{im}: \widetilde{Ham}(M,\omega) &\rightarrow \R \\
\tilde{\phi}^H &\mapsto c_{im}([M];H) + c_{im}([M]; \bar{H}),
\end{align*}
where $\tilde{\phi}^H$ denotes the homotopy class relative endpoints in $Ham(M,\omega)$ of the path $(\phi^H_t)_{t \in [0,1]}$.
\end{definition}

\begin{proposition}\label{Prop: Pseudonorm Props}
Let $H \in C^\infty(S^1 \times M)$, then
\begin{enumerate}
\item $\tilde{\gamma}_{im}(H)=\tilde{\gamma}_{im}(\bar{H})$,
\item $\tilde{\gamma}_{im}(H \# F) \leq \tilde{\gamma}_{im}(H) + \tilde{\gamma}_{im}(F)$, $\forall H,F \in C^\infty(S^1 \times M)$. In particular
\begin{align*}
0 &\leq \tilde{\gamma}_{im}(H).
\end{align*}
\item $\tilde{\gamma}_{im}(H) \leq \int_{0}^1 \max_{x \in M} H_t - \min_{x \in M} H_t \; dt$
\end{enumerate}
\end{proposition}

\begin{definition}
To obtain a norm on the group of Hamiltonian diffeomorphisms, we define the \textbf{PSS-image spectral norm}
\begin{align*}
\gamma_{im}: Ham(M,\omega) &\rightarrow [0, \infty) \\
\phi &\mapsto \inf_{\tilde{\phi}} \tilde{\gamma}_{im}(\tilde{\phi}),
\end{align*}
where the infimum is taken over all $\tilde{\phi} \in \widetilde{Ham}(M,\omega)$ which project to $\phi \in Ham(M,\omega)$ under the natural projection $\widetilde{Ham}(M,\omega) \rightarrow Ham(M,\omega)$.
\end{definition}
\begin{proposition}
$\gamma_{im}$ is a norm. Moreover, for all $\phi \in Ham(M,\omega)$,
\begin{align*}
\gamma_{OS}(\phi) \leq \gamma_{im}&(\phi) \leq \| \phi \|_{Hof},
\end{align*}
where $\gamma_{OS}$ denotes the Oh-Schwarz spectral norm and $\| \cdot \|_{Hof}$ denotes the Hofer norm. 
\end{proposition}

\subsection{Computing $c_{im}([\Sigma];H)$ on surfaces}\label{Sec: Computing spec invar on surfaces}
In this section, we restrict our attention to the case of $M=\Sigma$ an arbitrary closed orientable surface, and we use the theory developed in Sections \ref{Braids}-\ref{Ch: Construct chain maps} to give a topological characterization of all homologically non-trivial $\Z / 2 \Z$-cycles in $CF_1(H,J)$ which lie in the image of some PSS-map. Specifically, we prove:

\begin{theorem}\label{Thm: Top Char of PSS cycles}
Let $\sigma \in CF_1(H,J)$. $\sigma$ is a non-trivial cycle with $\sigma \in \im \Phi^{PSS}_{\mathcal{D}}$ for some $\mathcal{D} \in PSS_{reg}(H,J)$ if and only if $\supp \sigma$ is a maximal positive capped braid relative index $1$.
\end{theorem}
\begin{remark}\label{Remark: Integer Coefficients}
The dependence of spectral invariants (and so \textit{a fortiori}, of cycles representing the fundamental class) on the choices\ of coefficients for Floer and quantum homology is a matter of active interest. Since we have restricted our presentation to $\Z/2$-Novikov coefficients in order to avoid dealing with orientations, the above theorem is not particularly informative on this issue. However, for those interested in such questions we will note that the proof of Theorem \ref{Thm: Top Char of PSS cycles} actually implies the following with coefficients in any Novikov ring: If $\sigma \in CF_1(H,J)$ and $\sigma \in \im \Phi^{PSS}_{\mathcal{D}}$ for some $\mathcal{D} \in PSS_{reg}(H,J)$, then $\supp \sigma$ is a maximal positive capped braid relative index $1$. On the other hand, if $\hat{X} \in mp_{(1)}(H)$, then there exist choices $a_{\hat{x}} \in \lbrace -1,1 \rbrace$ for $\hat{x} \in \hat{X}$ such that $\sum_{\hat{x} \in \hat{X}} a_{\hat{x}} \hat{x}$ is a cycle representing the fundamental class in $HF_*(H)$ and lying in the image of some regular PSS-map. Note that because the level of a Floer chain depends only on its support, this suffices to determine the PSS-image spectral invariant $c_{im}([\Sigma],-)$ on surfaces with arbitrary Novikov coefficients (and in fact, it determines $c_{im}([\Sigma]; -)$ for any suitable choice of Novikov coefficients, and shows moreover that its value is \textit{independent} of this choice of coefficients).
\end{remark}
Let us spell out the topological condition that is used in the above characterization. Recall from Definition \ref{Def-PositiveBraid} that a capped braid is positive if there exists a $0$-cobordism $h$ from a trivially capped braid to $\hat{X}$ such that the graphs of the strands of $h$ are transverse with any intersections occurring positively. We remind the reader that for $k \in \Z$, $\Per{H}_{(k)}$ denotes the collection of capped $1$-periodic orbits of $H$ having Conley-Zehnder index $k$.

\begin{definition}
For $H \in C^\infty(S^1 \times \Sigma)$ non-degenerate, a collection of capped $1$-periodic orbits $\hat{X} \subseteq \Per{H}$ will be said to be \textbf{maximally positive} (resp. \textbf{negative}) \textbf{relative index $1$} (resp. \textbf{relative index $-1$}) if
\begin{enumerate}
\item $\hat{X} \subseteq \Per{H}_{(1)}$ (resp. $\hat{X} \subseteq \Per{H}_{(-1)}$) ,
\item $\hat{X}$ is a positive (resp. negative) capped braid, and
\item $\hat{X}$ is maximal among all subsets of $\Per{H}$ satisfying the two previous items.
\end{enumerate}
We denote by $mp_{(1)}(H)$ (resp. $mn_{(-1)}(H)$) the set of all such capped braids.
\end{definition}

This leads immediately to the following characterization of $c_{im}([\Sigma];-)$ in the non-degenerate case.
\begin{theorem}\label{Thm: Computation of im-spec invar}
Let $H$ be non-degenerate.
\begin{align*}
c_{im}([\Sigma];H)&= \min_{\hat{X} \in mp_{(1)}(H)} \max_{\hat{x} \in \hat{X}} \mathcal{A}_H(\hat{x})
\end{align*}
\end{theorem}
We begin with the following lemma.
\begin{lemma}\label{lem: Perturb PSS Disks}
Let $\mathcal{D} \in PSS(H,J)$, and let $u_i: D^2 \rightarrow \Sigma$, $i=1, \ldots,k$, solve the PSS-equation induced by $\mathcal{D}$. Suppose that for some $(s_0,t_0) \in D^2$ and some $p \in \Sigma$,
\begin{align*}
u_i(s_0,t_0) &=p, \; \text{for all } i=1, \ldots,k.
\end{align*}
Then there exists an open neighbourhood $U \subseteq D^2$ of $(s_0,t_0)$ and smooth maps $v_i: D^2 \rightarrow \Sigma$, $i=1,\ldots,k$ such that 
\begin{enumerate}
\item $v_i(s,t)=u_i(s,t)$ for all $(s,t) \not \in U$, 
\item $v_i(s_0,t_0) \neq p$ for all $i=1,\ldots,k$, and
\item any pairwise intersections of the graphs $\tilde{v}_i$ and $\tilde{v}_j$, $1 \leq i < j \leq k$, are both transverse and positive.
\end{enumerate}
\end{lemma}
\begin{proof}
This is a straightforward consequence of the intersection theory for pseudoholomorphic curves in $4$-dimensional almost complex manifolds as worked out in Appendix E of \cite{McSa12}. In slightly more detail, the proof of Proposition E.2.2 of \cite{McSa12} shows that if $f: (\Sigma,j) \rightarrow (M,J)$ is any simple pseudoholomorphic map with $(\Sigma,j)$ any Riemannian surface and $(M,J)$ any $4$-dimensional almost complex manifold, then for any self -intersection point $(z_0,z_1) \in \Sigma \times \Sigma \setminus \Delta$ with $f(z_0)=f(z_1)$, there exist (disjoint) neighbourhoods $U_i \subseteq  \Sigma$ of $z_i$, $i=0,1$, and a perturbation $f'$ of $f$ differing from $f$ only on $U_0 \bigcup U_1$ such that $f'$ has only positive and transverse self-intersections for $(z_0',z_1') \in U_0 \times U_1$. Noting that for each $i=1, \ldots, k$, the graph $\tilde{u}_i$ is $\tilde{J}^{\mathcal{D}}$-holomorphic, where $\tilde{J}^{\mathcal{D}}$ is the almost complex structure on $D^2 \times \Sigma$ associated to the PSS data $\mathcal{D}$ by the Gromov trick, we may apply this proposition with $f= \sqcup_{i=1}^k \tilde{u}_i$. The perturbation may be chosen small enough such that each component of $f'= \sqcup \tilde{v}_i$ is still graphical, and without loss of generality (up to another perturbation of $f'$), we may suppose that self-intersections of $f'$ do not occur at $(s_0,t_0,p) \in D^2 \times \Sigma$. Setting $v_i:=\pi_{\Sigma} \circ \tilde{v}_i$ provides the desired maps.
\end{proof}

\begin{cor}\label{cor: PSS image is Pos}
Let $\mathcal{D}=(f,g;\mathcal{H},\mathbb{J}) \in PSS_{reg}(H,J)$ and suppose that $\sigma \in \im \Phi^{PSS}_{\mathcal{D}} \subseteq CF_1(H,J)$. Then $\supp \sigma$ is a positive capped braid.
\end{cor}
\begin{proof}
Write $\hat{X} = \lbrace \hat{x}_1, \ldots, \hat{x}_k \rbrace = \supp \sigma$. Because $\sigma \in \im \Phi^{PSS}_{\mathcal{D}}$ for each $\hat{x} \in \hat{X}$, there exists $p \in Crit(f)$ such that $\mathcal{M}(p,\hat{x};\mathcal{D}) \neq \emptyset$. For $i=1, \ldots, k$, we choose $u_i \in \mathcal{M}(p_i,\hat{x}_i;\mathcal{D})$. Owing to the Gromov trick, and positivity of intersections in dimension $4$, the algebraic intersection number of the graphs $\tilde{u}_i$ and $\tilde{u}_j$ is positive for any $1 \leq i < j \leq k$. Moreover, using Lemma \ref{lem: Perturb PSS Disks}, we may suppose without loss of generality that none of the graphs intersect over $0 \in D^2$. We observe next that no two PSS-disks are asymptotic as $s \rightarrow \infty$ to the same orbit (that is, $x_i \neq x_j$ for $i \neq j$). Indeed, if we set $v^i(s,t)=u_i(s e^{2 \pi \i t})$, for $i=1, \ldots, k$, then for $i \neq j$ we have by Lemma \ref{EndsLinkLemma} that
\begin{align*}
0= \ell(p_i,p_j) = &\ell_{-\infty}(u_i,u_j) \leq \ell_{\infty}(u_i,u_j),
\end{align*}
and if $x_i=x_j$, then $\ell_{\infty}(u_i,u_j)\leq a(\hat{x}_i)$ by Corollary \ref{EmergeConvergeLinking}, where $\hat{x}_i$ is the unique capping of $x_i$ such that $\mu_{CZ}(\hat{x}_i)=1$, and so Lemma \ref{CZindexAsympWindCompLemma} implies that $a(\hat{x}_i)=-1$, a contradiction. 
\par
As a consequence, the map $s \mapsto (v^1(s,\cdot),\ldots,v^k(s,\cdot)) \in \mathcal{L}_0(\Sigma)^k$ defines a positive $0$-cobordism from the trivial capped braid $(\widehat{u_1(0)}, \ldots, \widehat{u_k(0)})$ to the capped braid $\hat{X}$, showing that $\hat{X}$ is a positive capped braid.
\end{proof}

\begin{proposition}\label{Prop: Top Char Nec Condition}
Let $\mathcal{D} \in PSS_{reg}(H,J)$ and suppose that $\sigma \in CF_1(H,J)$ is such that $\supp \sigma$ is a positive capped braid. If $\sigma$ is a cycle satisfying $(\Phi_{\mathcal{D}}^{PSS})_*([\Sigma])=[\sigma]$, then $\supp \sigma$ is maximally positive relative index $1$.
\end{proposition}
\begin{proof}
We argue by contradiction. Suppose that $(\Phi_{\mathcal{D}}^{PSS})_*([\Sigma])=[\sigma]$, but that $\hat{X} = \supp \sigma$ is not maximally positive relative index $1$. Then there must exist $\hat{y} \in \widetilde{Per}_0(H)_{(1)}$ with $\ell(\hat{x},\hat{y}) \geq 0$ for all $\hat{x} \in \hat{X}$, but $\hat{y} \not \in \hat{X}$. Choose $f \in C^\infty(\Sigma)$ to be a $C^2$-small Morse function with a unique maximum $M \in \Sigma$, and let $J^+ \in \mathcal{J}(\Sigma,\omega)$ be such that $(f,J^+)$ is Floer-regular. Here, the $C^2$-smallness of $f$ is taken to be such that such that $CF_*(f,J^+)$ may be canonically identified with $QC_{*+1}(f,g_{J^+})$, and the fact that $f$ has a unique maximum at $M$ then implies that $CF_1(f,J^+)$ is generated by the constant orbit based at $M$, equipped with the constant capping.
\par
Let $u: \R \times S^1 \rightarrow \Sigma$ be a smooth cylinder such that $u(s,t)= y(t)$ for all $s \in \R$ sufficiently small, and $u(s,t)=M$ for all $s \in \R$ sufficiently large. $u$ is obviously a pre-model for a continuation cobordism, and so by Theorem \ref{Theorem: Pre-models give models of cont cobordisms}, there exists a homotopy of Floer data $(\mathcal{H},\mathbb{J}) \in \HJ(H,J;f,J^+)$ such that $u$ solves the $(\mathcal{H},\mathbb{J})$ Floer equation and so in particular defines a $(\mathcal{H},\mathbb{J})$-model as in Definition \ref{Def-Model} (strictly speaking, Theorem \ref{Theorem: Pre-models give models of cont cobordisms} states that this is true up to a small perturbation of $u$, but if one works through the proof in the case of a single cylinder, one sees that this perturbation is not necessary. Using $u$ is mainly a matter of notational convenience). By Corollary \ref{Cor: Automatic Regularity for Odd orbits}, $u$ is regular, and so up to perturbing $(\mathcal{H},\mathbb{J})$ to a regular pair $(\mathcal{H}',\mathbb{J}')$, and perturbing $u$ to some $u'$ which solves the $(\mathcal{H}',\mathbb{J}')$-Floer equation if necessary, we conclude that
\begin{align*}
\mathcal{M}(\hat{y},\hat{M};\mathcal{H}',\mathbb{J}') &\neq \emptyset.
\end{align*}
Consequently, for all $\hat{x} \in \hat{X}$, we must have that
\begin{align*}
\mathcal{M}(\hat{x},\hat{M};\mathcal{H}',\mathbb{J}') &=\emptyset,
\end{align*}
for if $v \in \mathcal{M}(\hat{x},\hat{M};\mathcal{H}',\mathbb{J}')$, then we must have
\begin{align*}
0 \leq \ell(\hat{x},\hat{y})= &\ell_{-\infty}(u',v) \leq \ell_{\infty}(u',v) \leq a(\hat{M}) = -1,
\end{align*}
which is a contradiction. Consequently, $h_{\mathcal{H}'}(\sigma)=0$, but this is absurd, since $\sigma$ represents a non-trivial homology class in $CF_*(H,J)$, and so $h_{\mathcal{H}'}(\sigma) \neq 0$, since $h_{\mathcal{H}'}$ is an isomorphism on homology. It follows that no $\hat{y}$ as above may exist and hence $\hat{X} \in mp_{(1)}(H)$.
\end{proof}

\begin{proof}[Proof of Theorem \ref{Thm: Top Char of PSS cycles}]
Corollary \ref{cor: PSS image is Pos} together with Proposition \ref{Prop: Top Char Nec Condition} immediately implies that the support of every cycle representing the fundamental class and lying in the image of some PSS map must be a maximal positive braid relative index $1$. Let us show the converse. Let $\hat{X} = \lbrace \hat{x}_1, \ldots, \hat{x}_k \rbrace \subseteq \Per{H}_{(1)}$ be maximally positive relative index $1$, and fix some $J \in C^{\infty}(S^1;\mathcal{J}_\omega(\Sigma))$ such that $(H,J)$ is Floer regular. We will show that there exists $\mathcal{D}=(f,g;\mathcal{H},\mathbb{J}) \in PSS_{reg}(H,J)$ such that $f$ is a Morse function and
\begin{align*}
\Phi^{PSS}_\mathcal{D}(\sum_{\substack{M \in Crit(f) \\ \mu^{Morse}(M)=2}} M ) &= \sum_{\hat{x} \in \hat{X}} \hat{x}.
\end{align*}
To do this, note first that it will suffice to show that there exists some Floer regular pair $(f',J^-)$ with $f' \in C^\infty(\Sigma)$ a $C^2$-small Morse function and a regular homotopy of Floer data $(\mathcal{H}',\mathbb{J}') \in \HJ(f',J^-;H,J)$ such that 
\begin{align*}
h_{\mathcal{H}'}( \sum_{\hat{p} \in \Per{f'}_{(1)}} \hat{p}) &=\sum_{\hat{x} \in \hat{X}} \hat{x}.
\end{align*} 
Indeed, if $(\mathcal{H}',\mathbb{J}')$ as above exists, then because $ \sum_{\hat{p} \in \Per{f'}_{(1)}} \hat{p}$ represents the unique non-trivial cycle in $CF_{1}(f',J^-)$, \textit{any} choice of $\mathcal{D}' \in PSS^{reg}(f',J^-)$ will satisfy 
\begin{align*}
\Phi^{PSS}_\mathcal{D'}(\sum_{\substack{M \in Crit(f) \\ \mu^{Morse}(M)=2}} M ) &= \sum_{\hat{p} \in \Per{f'}_{(1)}} \hat{p}
\end{align*}
at the chain level, and using the gluing results of \cite{Sc95}, we may define $\mathcal{D} \in PSS_{reg}(H,J)$ as an appropriate gluing $\mathcal{D}:=\mathcal{D}' \# (\mathcal{H}',\mathbb{J}')$ such that at the chain level, we have
\begin{align*}
\Phi^{PSS}_{\mathcal{D}}&= h_{\mathcal{H}'} \circ \Phi^{PSS}_{\mathcal{D}'}.
\end{align*}
With the above understood, note next that since $\hat{X}$ is a positive braid, there exists a positive $0$-cobordism $s  \mapsto (v^1_s, \ldots,v^k_s) \in \mathcal{L}_0(\Sigma)^k$ from some trivial braid $(\hat{p}_1,\ldots, \hat{p}_k)$ to $\hat{X}$, where $\hat{p}_i$ denotes the trivially capped constant loop based at $p_i$. We choose $f' \in C^\infty(\Sigma)$ to be any $C^2$-small Morse function having local maxima precisely at the points $p_i$, $i=1, \ldots,k$. Let $J^- \in \mathcal{J}_\omega(\Sigma)$ be any compatible almost complex structure such that $(f',J^-)$ is Floer regular. Next, we define a pre-model for a continuation cobordism from $(f',J^-)$ to $(H,J)$. Fix some $K>0$ and for each $i=1, \ldots, k$, define
\begin{equation*}
u_i(s,t):= \begin{cases}
p_i & s \in (-\infty,-K] \\
v^i_{s+K}(t) & s \in (-K,-K+1) \\
x_i(t) & s \in [K+1,\infty).
\end{cases}
\end{equation*}
It is not hard to verify that $\lbrace u_i \rbrace_{i =1}^k$ defines a pre-model for a continuation cobordism from $(f',J^-)$ to $(H,J)$.  Theorem \ref{Theorem: Pre-models give models of cont cobordisms} implies that we may perturb this to a continuation cobordism $\lbrace u_i' \rbrace_{i=1}^k$ for some $(\mathcal{H}',\mathbb{J}') \in \HJ(f',J^-;H,J)$. Since each $u_i'$ is asymptotic at both ends to a non-degenerate orbit of Conley-Zehnder index $1$, Corollary \ref{Cor: Automatic Regularity for Odd orbits} implies that without loss of generality, we may take $(\mathcal{H}',\mathbb{J}')$ to be regular. We claim that for each $1 \leq i,j \leq k$, 
\begin{equation*}
\vert\mathcal{M}(\hat{p}_i,\hat{x}_j; \mathcal{H}',\mathbb{J}')\vert= \begin{cases}
0 & \text{if } i \neq j \\
1 & \text{if } i=j
\end{cases}.
\end{equation*}
The latter case is guaranteed by Corollary \ref{Cor: Cont moduli space is 0 or 1} together with the existence of $u_i' \in \mathcal{M}(\hat{p}_i,\hat{x}_i;\mathcal{H}',\mathbb{J}')$. To establish the former case, note that since $\lbrace \hat{p}_1, \ldots, \hat{p}_k \rbrace$ is a trivial capped braid, we certainly have $\ell(\hat{p}_i,\hat{p}_j)=0$ for all $i \neq j$. Suppose for the sake of contradiction that there exists $v \in \mathcal{M}(\hat{p}_i,\hat{x}_j;\mathcal{H}',\mathbb{J}')$. Then we must have
\begin{align*}
0=\ell(\hat{p}_i,\hat{p}_j) = \ell_{-\infty}(v,u_j') &\leq \ell_{\infty}(v,u_j') \leq a(\hat{x}_j) =-1,
\end{align*}
which is a contradiction. It follows that 
\begin{align*}
\hat{X} &\subseteq \supp h_{\mathcal{H}'}(\sum_{i=1}^k \hat{p}_i).
\end{align*}
To conclude that the above containment is in fact an equality, note that in combination with the gluing trick outlined at the beginning of this proof, Proposition \ref{Prop: Top Char Nec Condition} implies that $\supp h_{\mathcal{H}'}(\sum_{i=1}^k \hat{p}_i) \in mp_{(1)}(H)$, but $\hat{X} \in mp_{(1)}(H)$ by hypothesis, and so maximality of $\hat{X}$ implies 
\begin{align*}
\hat{X} &= \supp h_{\mathcal{H}'}(\sum_{i=1}^k \hat{p}_i)
\end{align*}
as desired.
\end{proof}

\subsection{Dynamical Consequences}\label{Sec: Dynamic Consequences}
\begin{theorem}
For any closed symplectic surface $(\Sigma,\omega)$, and any non-degenerate Hamiltonian $H \in C^\infty(S^1 \times \Sigma)$,
\begin{align*}
\tilde{\gamma}_{im}(\tilde{\phi}^H)&= \min_{\hat{X} \in mp_{(1)}(H)} \max_{\hat{x} \in \hat{X}} \mathcal{A}_H(\hat{x}) - \max_{\hat{X} \in mn_{(-1)}(H)} \min_{\hat{x} \in \hat{X}} \mathcal{A}_H(\hat{x}).
\end{align*}
\end{theorem}
\begin{proof}
Since, for any strongly-semipositive $(M,\omega)$, the PSS-image spectral invariants are well-defined on $\widetilde{Ham}(M,\omega)$
\begin{align*}
\gamma_{im}(\phi)&= \inf_{\tilde{g} \in \pi_1(Ham(M,\omega), \id{M})} \tilde{\gamma}_{im}(\tilde{g} \cdot \tilde{\phi})
\end{align*}
for $\tilde{\phi}$ any lift of $\phi$ to $\widetilde{Ham}(M,\omega)$. The theorem follows directly from the fact that
\begin{align*}
\tilde{\gamma}_{im}(\tilde{\phi}^H)&= c_{im}([\Sigma];H) + c_{im}([\Sigma];\bar{H}) \\
&=\min_{\hat{X} \in mp_{(1)}(H)} \max_{\hat{x} \in \hat{X}} \mathcal{A}_H(\hat{x}) - \max_{\hat{X} \in mn_{(-1)}(H)} \min_{\hat{x} \in \hat{X}} \mathcal{A}_H(\hat{x}),
\end{align*}
where the identification of the second term in the last line with $c_{im}([\Sigma];\bar{H})$ follows from the fact that $\bar{H}$ is homotopic relative endpoints to the Hamiltonian $\tilde{H}(t,x)=-H(1-t,x)$ which generates the isotopy $\phi^H_{1-t} \circ (\phi^H_1)^{-1}$, whose time-$1$ flowlines are precisely the time reversal of the time-$1$ flowlines of the isotopy induced by $H$. Consequently, there is a natural bijection
\begin{align*}
\Per{H} &\mapsto \Per{\tilde{H}} \\
[x(t),w(se^{2 \pi \i t})] &\mapsto [x(1-t),w(se^{-2 \pi \i t})]
\end{align*}
which sends capped orbits with Conley-Zehnder index $k$ to capped orbits with Conley-Zehnder index $-k$. It is easy to see that positive braids are sent to negative braids under the above bijection, which establishes the desired formula.
\end{proof}
We recall for the reader that for a closed surface $\Sigma$, $\pi_1(Ham(\Sigma))$ is trivial whenever $\pi_2(\Sigma)=0$, while $\pi_1(Ham(S^2)) \simeq \Z/2\Z$ (see Section 7.2 of \cite{Pol12} for further details and references for these computations).

\begin{cor}[Theorem \ref{Mainthm-Norm}]
If $\pi_2(\Sigma) = 0$, then for any non-degenerate $\phi \in Ham(\Sigma,\omega)$,
\begin{align*}
\gamma_{im}(\phi)&= \min_{\hat{X} \in mp_{(1)}(H)} \max_{\hat{x} \in \hat{X}} \mathcal{A}_H(\hat{x}) - \max_{\hat{X} \in mn_{(-1)}(H)} \min_{\hat{x} \in \hat{X}} \mathcal{A}_H(\hat{x})
\end{align*}
for any Hamiltonian $H$ with $\phi^H_1=\phi$.
\end{cor}
\begin{cor}\label{Cor: Sphere norm}
If $\Sigma = S^2$, then for any non-degenerate $\phi \in Ham(\Sigma,\omega)$,
\begin{align*}
\gamma_{im}(\phi)&= \min \lbrace \min_{\hat{X} \in mp_{(1)}(K)} \max_{\hat{x} \in \hat{X}} \mathcal{A}_K(\hat{x}) - \max_{\hat{X} \in mn_{(-1)}(K)} \min_{\hat{x} \in \hat{X}} \mathcal{A}_K(\hat{x}): \; K \in \lbrace H, G \# H \rbrace \rbrace,
\end{align*}
where $H$ is any Hamiltonian $H$ with $\phi^H_1=\phi$ and $G$ is any Hamiltonian such that $(\phi^G_t)_{t \in [0,1]}$ is a non-contractible loop in $Ham(S^2,\omega)$.
\end{cor}

Another interesting consequence of our dynamical characterization of the PSS-image spectral invariants is that it permits us to use the work of Entov in \cite{En04} to obtain computable bounds on the commutator length of the isotopy $(\phi^H_t)_{t \in [0,1]}$ in $\widetilde{Ham}(S^2,\omega)$. Recall that for any group $G$, the \textbf{commutator length} of $g \in G$ is defined to be 
\begin{align*}
cl(g)&:= \inf \lbrace k: g= \Pi_{i=1}^k [f_i,h_i], \; f_i, h_i \in G \rbrace.
\end{align*}
It is a classical result due to Banyaga \cite{Ba78} that for compact symplectic manifolds both $Ham(M,\omega)$ and its universal cover $\widetilde{Ham}(M,\omega)$ are perfect groups, and so in particular every element $\tilde{\phi}^H \in \widetilde{Ham}(S^2,\omega)$ has a finite commutator length. We will show:
\begin{theorem}[Theorem \ref{Mainthm-Entov}]
Assume that $H \in C^{\infty}(S^1 \times S^2)$ is non-degenerate and normalized, then
\begin{align*}
\min \big\lbrace  \min_{\hat{X} \in mp_{(1)}(H)} \max_{\hat{x} \in \hat{X}} \mathcal{A}_{H}(\hat{x}), -\max_{\hat{X} \in mn_{(-1)}(H)} \min_{\hat{x} \in \hat{X}} \mathcal{A}_H(\hat{x}) \big\rbrace &< -k Area(S^2,\omega)
\end{align*}
implies $cl(\tilde{\phi}^H) >2k+1$.
\end{theorem}
This result essentially says that if a Hamiltonian isotopy on a sphere is to be `simple' in the sense that it has small commutator length in $\widetilde{Ham}$, then the actions of orbits forming maximally positive (relative index $1$) capped braids cannot be uniformly small, and nor can the actions of orbits forming maximally negative (relative index $-1$) capped braids be uniformly large.

\begin{proof}
This is essentially a direct application of Theorem 2.5.1 in \cite{En04}, combined with our dynamical characterization of $c_{im}([S^2];H)$. Note that in \cite{En04}, Entov uses a cohomological convention for spectral invariants along with opposite sign conventions in defining the Hamiltonian vector field, and so his spectral invariants are functions $(a,H) \mapsto c_{Ent}(a;H)$, for $a \in QH^*(M,\omega)$. For the reader's convenience, we note that accounting for the differences in conventions leads to the relation
\begin{align*}
c_{Ent}(a;H) &=c_{OS}(PD(a);\overline{H}),
\end{align*}
where $PD(a) \in QH_{2n-*}(M,\omega)$ is the quantum homology class which is Poincar\'{e} dual to $a$, and $\overline{H}(t,x)=-H(t,\phi^H_t(x))$. Thus in the case that $g=2k+1$ and $(M,\omega)=(S^2,\omega)$, Theorem 2.5.1 in \cite{En04} says that $c_{OS}([pt] \otimes e^{k [S^2]};\overline{H}) > 0$ implies $cl(\tilde{\phi}^{\overline{H}}) > 2k+1$. Using that \begin{align*}
c_{OS}([pt] \otimes e^{k [S^2]};\overline{H}) &= c_{OS}([pt];\overline{H}) - k Area(S^2,\omega) \\
&= -c_{OS}([S^2];H) - k Area(S^2,\omega),
\end{align*}
and that $c_{OS}([S^2];H) \leq c_{im}([S^2];H)$, we conclude that if
\begin{align*}
c_{im}([S^2];H)  &< - k Area(S^2,\omega),
\end{align*}
then $cl(\tilde{\phi}^{\overline{H}}) > 2k+1$. Symmetric reasoning gives that $c_{im}([S^2];\bar{H})  < - k Area(S^2,\omega)$ implies $cl(\tilde{\phi}^{H}) > 2k+1$, and using $cl(\tilde{\phi}^H)=cl(\tilde{\phi}^{\overline{H}})$ allows us to conclude the result.
\end{proof}
Since $\pi_1(Ham(S^2)) \simeq \Z / 2 \Z$, it is relatively easy to pass from commutator lengths on the universal cover to commutator lengths in $Ham(S^2)$ itself. For notational convenience, we write 
\begin{align*}
m(H)&:= \min \big\lbrace  \min_{\hat{X} \in mp_{(1)}(H)} \max_{\hat{x} \in \hat{X}} \mathcal{A}_{H}(\hat{x}), -\max_{\hat{X} \in mn_{(-1)}(H)} \min_{\hat{x} \in \hat{X}} \mathcal{A}_H(\hat{x}) \big\rbrace.
\end{align*}
\begin{cor}
Assume that $\phi \in Ham(S^2,\omega)$ is non-degenerate, and let $H$ be any normalized Hamiltonian such that $\phi^H_1=\phi$. If $G$ denotes any normalized Hamiltonian which generates a non-contractible loop in $Ham(S^2,\omega)$ based at the identity, then
\begin{align*}
\max \lbrace m(H), m(G \# H) \rbrace &< -k Area(S^2,\omega)
\end{align*}
implies $cl(\phi) >2k+1$.
\end{cor}
\begin{proof}
This follows directly from the preceding proposition, together with the observation that for any compact symplectic manifold $(M,\omega)$ and any $\phi \in Ham(M,\omega)$, we have $cl(\phi) = \min \lbrace cl(\tilde{\phi}^H): \phi^H_1=\phi \rbrace$. To see this, note that commutator length decreases along group homomorphisms, since commutators get sent to commutators under homomorphisms. This, together with the fact that the projection $p: \widetilde{Ham}(M,\omega) \rightarrow Ham(M,\omega)$ is surjective, implies that $cl(\phi) \leq \min \lbrace cl(\tilde{\phi}^H): \phi^H_1=\phi \rbrace$. To see that the reverse inequality holds, let $cl(\phi)=k$ and write $\phi= \Pi_1^k [a_i,b_i]$ for $a_i,b_i \in Ham(M,\omega)$. Letting $\tilde{a}_i, \tilde{b}_i \in \widetilde{Ham}(M,\omega)$ be arbitrary lifts for $i=1,\ldots, k$, then $\Pi_1^k [\tilde{a}_i,\tilde{b}_i] \in p^{-1}(\phi)$, which implies the desired result.
\end{proof}

\subsection{On the equivalence $c_{OS}=c_{im}$}\label{Sec: On equiv of OS and im}
Proposition \ref{Prop: OS <= IM} assures us that
\begin{align*}
c_{OS}(\alpha;H) &\leq c_{im}(\alpha;H),
\end{align*}
and the PSS-image spectral invariants satisfy many of the same formal properties as the Oh-Schwarz spectral invariants. One is naturally led to ask when these two invariants coincide, and what to make of their difference. By the definition of the PSS-image spectral invariants, $c_{im}(\alpha;H) - c_{OS}(\alpha;H)$ represents the obstruction to our ability to find a tight cycle $\sigma \in CF_*(H,J)$ for $c_{OS}(\alpha;H)$ which additionally lies in the image of some PSS map at the chain level. Note that, on a surface, our knowledge that the image of a chain-level PSS map must be form a positive capped braid, while in general, the support of Floer cycles need not be positive implies that, at least in low dimensions, there are genuine obstructions to the equivalence of these two quantities. Thus, in some sense, while the Oh-Schwarz spectral invariants arise from purely (filtered) homological considerations, the PSS-image spectral invariants in principle encode some additional geometric information about the degree to which this filtration information may be probed by PSS-type maps. In fact, it turns out that one can also interpret the difference of the functions 
\begin{align*}
c_{OS}(\alpha;-), c_{im}(\alpha;-): C^\infty(S^1 \times M) &\rightarrow \R
\end{align*} 
as measuring the failure of the PSS-image spectral invariants to satisfy the Poincar\'e duality relation discovered by Entov-Poterovich for the Oh-Schwarz spectral invariants in \cite{EP03}.
\begin{proposition}
Let $\alpha, \beta \in QH_*(M,\omega) \setminus \lbrace 0 \rbrace$ be such that
\begin{align*}
c_{OS}(\alpha; H) &= - \inf_{\Pi(\alpha,\beta) \neq 0} c_{OS}(\beta; \bar{H}),
\end{align*}
where $\Pi$ is the bilinear pairing described in Section 2.3 of \cite{EP03} (see also Section 20.4 of \cite{Oh15}), then for all $H \in C^\infty(S^1 \times M)$,
\begin{align*}
c_{im}(\alpha;H)- c_{OS}(\alpha;H) = c_{im}(\beta;\bar{H}) &- c_{OS}(\beta;\bar{H}) =0
\end{align*}
if and only if
\begin{align*}
c_{im}(\alpha;H) &= - c_{im}(\beta;\bar{H}).
\end{align*}
\end{proposition}
\begin{proof}
The proof is a straight-forward consequence of Entov-Polterovich's  Poincar\'e duality relation for the Oh-Schwarz spectral invariants. Indeed, since the PSS-image spectral invariants always bound the Oh-Schwarz spectral invariants from above, we have
\begin{align*}
- c_{im}(\beta; \bar{H}) \leq -c_{OS}(\beta; \bar{H}) &= c_{OS}(\alpha;H) \leq c_{im}(\alpha;H),
\end{align*}
thus it is clear that if $c_{im}(\alpha;H)=- c_{im}(\beta;\bar{H})$, we must have
\begin{align*}
c_{im}(\alpha;H)- c_{OS}(\alpha;H) = c_{im}(\beta;\bar{H}) &- c_{OS}(\beta;\bar{H}) =0.
\end{align*}
The converse statement is immediate.
\end{proof}
One may also show that $c_{OS}([\Sigma];H)$ and $c_{im}([\Sigma];H)$ coincide for all autonomous $H \in C^\infty(\Sigma)$ when $\Sigma \neq S^2$. Indeed, in \cite{HRS16}, Humilière-Le Roux-Seyfaddini showed that when $(\Sigma,\omega)$ is a closed aspherical symplectic surface, then any function $c: C^\infty(S^1 \times \Sigma) \rightarrow \R$ satisfying
\begin{enumerate}
\item \textbf{(Spectrality)} $c(H) \in Spec(H)$ for all $H \in C^{\infty}(S^1 \times \Sigma)$
\item \textbf{(Nontriviality)} There exists a topological disk $D$ and a Hamiltonian $H$ supported in $D$ such that $c(H) \neq 0$
\item \textbf{(Continuity)} $c$ is continuous with respect to the $C^{\infty}$ topology on $C^\infty(S^1 \times \Sigma)$
\item \textbf{(Max formula)} If $H_i \in C^\infty(S^1 \times \Sigma)$, $i=1, \ldots,k$ are supported in pairwise disjoint disks then
\begin{align*}
c(H_1+\ldots + H_k)&=\max \lbrace c(H_1),\ldots, c(H_k) \rbrace
\end{align*}
\end{enumerate}
agrees with $c_{OS}([\Sigma];-)$ on the space of autonomous Hamiltonians. Propositions \ref{Prop:BasicPropsTwo} and \ref{Prop: Basic properties of PSS-image invars Main} imply that $c_{im}([\Sigma];-)$ satisfies the first and third properties respectively, while the fact that $c_{OS}([\Sigma],-)$ satisfies the non-triviality condition and bounds $c_{im}([\Sigma];-)$ from below implies that the non-triviality condition holds for the PSS-image spectral invariant.  The following corollary may be deduced from the proof given in Section 5.2.2 of \cite{HRS16} of the max formula for $c_{OS}([\Sigma];-)$ simply by noting that the arguments provided therein apply equally well to $c_{im}([\Sigma];-)$. In particular, the proof of the Max formula therein proceeds by carefully transferring the fundamental class of a $C^2$-small Morse function along well-chosen continuation maps, and so the resulting cycle manifestly lies in the image of some chain-level PSS map, and so establishes the max formula for $c_{im}([\Sigma];-)$.
\begin{cor}
For $\Sigma \neq S^2$ and any (time-independent) $H \in C^\infty(\Sigma)$, 
\begin{align*}
c_{im}([\Sigma];H)&=c_{OS}([\Sigma];H).
\end{align*}
\end{cor}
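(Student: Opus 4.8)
The strategy is to combine the previous proposition (the max formula for $c_{im}([\Sigma];-)$ on a closed aspherical surface) with the Humili\`{e}re--Le Roux--Seyfaddini uniqueness result for action selectors satisfying Spectrality, Nontriviality, Continuity and the Max formula, restricted to the space of autonomous Hamiltonians. I would first invoke the results already assembled just above the corollary: on a closed aspherical surface ($\omega(\pi_2(\Sigma))=0$, so $(\Sigma,\omega)$ is rational), $c_{im}([\Sigma];-)$ takes values in $Spec(H)$ and is $C^\infty$-continuous by Proposition \ref{Prop: Basic properties of PSS-image invars} and the spectrality statement; it satisfies the Nontriviality axiom because $c_{OS}([\Sigma];-)$ does and Proposition \ref{Prop: OS <= IM} gives $c_{OS}([\Sigma];H) \le c_{im}([\Sigma];H)$, so a disk-supported Hamiltonian witnessing nontriviality for $c_{OS}$ also does so for $c_{im}$; and it satisfies the Max formula for Hamiltonians supported in pairwise disjoint disks by the proposition immediately preceding the corollary.

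Having verified these four axioms, I would then appeal directly to the HRS uniqueness theorem quoted in \textsection\ref{Sec: On equiv of OS and im}: any $c: C^\infty(S^1 \times \Sigma) \to \R$ satisfying Spectrality, Nontriviality, Continuity and the Max formula must agree with $c_{OS}([\Sigma];-)$ on the subspace of \emph{autonomous} Hamiltonians $C^\infty(\Sigma) \subset C^\infty(S^1 \times \Sigma)$. Applying this to $c = c_{im}([\Sigma];-)$ yields $c_{im}([\Sigma];H) = c_{OS}([\Sigma];H)$ for every time-independent $H \in C^\infty(\Sigma)$, which is exactly the claim. One small point to be careful about: the HRS axioms are stated for functionals on $C^\infty(S^1 \times \Sigma)$, and the Max formula proposition above is also stated in that generality, so no restriction issue arises — the conclusion of uniqueness is only claimed on the autonomous locus, which is all the corollary asserts.

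The only genuine content beyond bookkeeping is making sure the hypotheses of the HRS uniqueness statement are met exactly as stated; since all four are established in the surrounding text (Nontriviality and Max formula being the two that required the dynamical characterization of $c_{im}$), the corollary is essentially immediate. I do not anticipate a real obstacle here — the heavy lifting was done in the preceding proposition (the Max formula), which is where the combinatorics of $mp_{(1)}$ on disjoint disks had to be controlled; the corollary is just the formal consequence of plugging $c_{im}([\Sigma];-)$ into an existing uniqueness theorem. If one wanted to be fully self-contained one could instead cite the specific statement in \cite{HRS16} that $c_{OS}([\Sigma];-)$ is the unique such selector on autonomous Hamiltonians and remark that the verification of the axioms for $c_{im}([\Sigma];-)$ has just been carried out.
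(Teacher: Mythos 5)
Your proposal is correct and follows exactly the paper's argument: verify the four Humili\`{e}re--Le Roux--Seyfaddini axioms (Spectrality and Continuity from Proposition \ref{Prop: Basic properties of PSS-image invars}, Nontriviality from $c_{OS} \le c_{im}$ together with nontriviality of $c_{OS}$, and the Max formula from the proposition immediately preceding the corollary), then invoke the HRS uniqueness theorem on autonomous Hamiltonians for closed aspherical surfaces.
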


\section{Application: Positively transverse foliations from Floer theory}\label{Sec: Pos Transverse foliations}
In this section, we turn our attention to the structure of the moduli spaces of Floer cylinders for a Floer regular pair $(H,J)$, and their relation to the capped braid-theoretic topology of $\Per{H}$. The main result in this section is that there is a certain topological condition that one may place on capped braids formed by collections of elements of $\Per{H}$ --- which we call being \textit{maximally unlinked relative the Morse range} --- such that whenever $\hat{X} \subseteq \Per{H}$ satisfies this property, we may construct a singular foliation $\mathcal{F}^{\hat{X}}$of $S^1 \times \Sigma$ having singular leaves precisely the graphs of loops $x$ such that $\hat{x} \in \hat{X}$ and with regular leaves parametrized by annuli of the form $(s,t) \mapsto (t,u(s,t))$ for $u \in \widetilde{\mathcal{M}}(\hat{x},\hat{y};H,J)$ some Floer cylinder, with $\hat{x}, \hat{y} \in \hat{X}$. Owing to the fact that the regular leaves are parametrized by (projections of the graphs of) Floer cylinders, $\mathcal{F}^{\hat{X}}$ is \textit{positively transverse} to the vector field $\partial_t \oplus X^H$, and can be used to induce a singular foliation $\mathcal{F}_0^{\hat{X}}$ on $\Sigma$ itself, which is \textit{positively transverse in the sense of Le Calvez} (see Le Calvez's work in \cite{LeC05} and \cite{LeC06}). Thus, the results in this section may be viewed as providing a way to construct foliations of Le Calvez-type by purely Floer-theoretic considerations. Moreover, we show that $\mathcal{F}_0^{\hat{X}}$ is a singular foliation \textit{of Morse type}, and that $\mathcal{F}^{\hat{X}}$ may be viewed as a Morse-Bott foliation associated to a finite-dimensional reduction of the action functional $A^{\hat{X}} \in C^\infty(S^1 \times \Sigma)$. This provides us with Morse-theoretic models for those parts of the Hamiltonian Floer complex lying in the homologically non-trivial range.
\par
Section \ref{FoliatedSectorsSubsection} studies the relationship of $\widetilde{M}(\hat{x},\hat{y};H,J)$ to the topology of the capped braid $\lbrace \hat{x}, \hat{y} \rbrace$ and deduces conditions under which the maps $(s,t) \mapsto (t,u(s,t)) \in S^1 \times \Sigma$, $u \in  \widetilde{M}(\hat{x},\hat{y};H,J)$, provide a smooth foliation of some subset $W(\hat{x},\hat{y}) \subset S^1 \times \Sigma$. We call $W(\hat{x},\hat{y})$ a \textit{foliated sector}. That such moduli spaces can be used to construct such foliated sectors is essentially due to Hofer-Wysocki-Zehnder in \cite{HWZ99} (see also \cite{HWZ03}). In an effort to understand how these foliated sectors may be glued together to form a singular foliation of $S^1 \times \Sigma$, Section \ref{LinkingAndTheFloerComplex} introduces the notion of capped braids $\hat{X} \subseteq \Per{H}$ which are \textit{maximally unlinked relative the Morse range}, and associates to each such capped braid a chain complex $CF_*(\hat{X};H,J)$ which is not quite a subcomplex of $CF_*(H,J)$, but whose differential counts Floer cylinders which run between orbits in $\hat{X}$ --- equivalently, once the existence of the singular foliation $\mathcal{F}^{\hat{X}}$ is established, the differential counts the `rigid' leaves of $\mathcal{F}^{\hat{X}}$ which run between the graphs of orbits of index difference $1$.  Section \ref{ConstructingFoliationsSection} establishes the existence of the foliation $\mathcal{F}^{\hat{X}}$ and some of its basic properties, which imply in particular that $CF_*(\hat{X};H,J)$ has a Morse-theoretic model, given by a finite-dimensional reduction of the action functional. Section \ref{Sec: Consequences2} concludes with some novel consequences for the structure of Hamiltonian isotopies on surfaces, and a short discussion of the relationship between the singular foliations we produce and those appearing in Le Calvez's theory of transverse foliations.

\subsection{Foliated sectors and Floer moduli spaces as leaf spaces}\label{FoliatedSectorsSubsection}
We begin with some observations on the relationship between the topology of the capped braid $\lbrace \hat{x},\hat{y} \rbrace$ for $\hat{x},\hat{y}  \in \Per{H}$, and the existence of Floer cylinders running between them.
\begin{lemma}\label{LinkingBoundLem}
Let $(\mathcal{H},J)$ be an adapted homotopy of Floer data with $(H^\pm,J^\pm)$ Floer regular. Suppose that $\hat{x}^\pm \in \Per{H^-} \cap \Per{H^+}$, with $x^- \neq x^+$, and that $\mathcal{M}(\hat{x}^-,
\hat{x}^+;\mathcal{H},J), \mathcal{M}(\hat{x}^-,\hat{x}^-;\mathcal{H},J),$ and $\mathcal{M}(\hat{x}^+,\hat{x}^+;\mathcal{H},J)$ are all non-empty, then $b(\hat{x}^-; H^-) \leq \ell(\hat{x}^-,\hat{x}^+) \leq a(\hat{x}^+; H^+)$.
\end{lemma}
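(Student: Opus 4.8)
The plan is to exploit the three non-empty moduli spaces to produce a pair of $(\mathcal{H},J)$-Floer cylinders with matching asymptotics, and then run the winding-number monotonicity machinery of \textsection\ref{Sec:Asymptotic analysis} against each of them. Concretely: pick $u_{-+} \in \mathcal{M}(\hat{x}^-,\hat{x}^+;\mathcal{H},J)$, $u_{--} \in \mathcal{M}(\hat{x}^-,\hat{x}^-;\mathcal{H},J)$, and $u_{++} \in \mathcal{M}(\hat{x}^+,\hat{x}^+;\mathcal{H},J)$. All three solve the same $s$-dependent equation \ref{sFE}. Since $x^- \neq x^+$, the cylinders $u_{-+}$ and $u_{--}$ are automatically distinct (different positive asymptotic orbits), and similarly $u_{-+} \neq u_{++}$.

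The negative end. Both $u_{-+}$ and $u_{--}$ converge to $x^-$ as $s \to -\infty$. Applying Corollary \ref{EmergeConvergeLinking}(1) with $\hat{x}_0^- = \hat{x}^-$, $u_0 = u_{--}$, $u_1 = u_{-+}$ gives $b(\hat{x}^-;H^-) \leq \ell_{-\infty}(u_{--},u_{-+})$. Now I need to identify $\ell_{-\infty}(u_{--},u_{-+})$ with $\ell(\hat{x}^-,\hat{x}^+)$. By Lemma \ref{EndsLinkLemma}, $\ell_{\hat{u}_{--},\hat{u}_{-+}}(s)$ is non-decreasing and constant away from finitely many $s$; its value at $+\infty$ is $\ell(\hat{x}^-,\hat{x}^+)$ (by the final remark before \textsection\ref{SubSec: Winding}, since the limiting orbits at $+\infty$ are $x^-$ and $x^+$ with $x^- \neq x^+$, so the linking there is literally $\ell(\hat{x}^-,\hat{x}^+)$ computed with the natural cappings). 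Hence $\ell_{-\infty}(u_{--},u_{-+}) \leq \ell(\hat{x}^-,\hat{x}^+)$ by monotonicity, and combining with the previous inequality yields $b(\hat{x}^-;H^-) \leq \ell(\hat{x}^-,\hat{x}^+)$.

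The positive end. Symmetrically, both $u_{-+}$ and $u_{++}$ converge to $x^+$ as $s \to +\infty$. Apply Corollary \ref{EmergeConvergeLinking}(2) with $\hat{x}_0^+ = \hat{x}^+$, $u_0 = u_{-+}$, $u_1 = u_{++}$ to get $\ell_\infty(u_{-+},u_{++}) \leq a(\hat{x}^+;H^+)$. Again by Lemma \ref{EndsLinkLemma}, $\ell_{\hat{u}_{-+},\hat{u}_{++}}(s)$ is non-decreasing with value at $-\infty$ equal to $\ell(\hat{x}^-,\hat{x}^+)$ (the negative asymptotic orbits being $x^-$ and $x^+$, distinct), so $\ell(\hat{x}^-,\hat{x}^+) \leq \ell_\infty(u_{-+},u_{++}) \leq a(\hat{x}^+;H^+)$. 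Chaining the two halves gives the claim.

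The main subtlety I anticipate is the bookkeeping of cappings: the quantities $\ell_{\pm\infty}(u,v)$ are defined via the \emph{natural} cappings of $u_s, v_s$ (Definition \ref{def:NaturalCapping}), and I must check that these natural cappings of $u_{-+}$ and $u_{--}$ at $s \to +\infty$ really induce the cappings $\hat{x}^-, \hat{x}^+$ on the nose (so that the limiting linking is $\ell(\hat{x}^-,\hat{x}^+)$ with the \emph{given} cappings, not shifted by some sphere class). This should follow because all three cylinders are $0$-homotopies between the relevant capped orbits by definition of the moduli spaces, so the induced cappings at the ends agree with the prescribed ones; but this is the step to write out carefully. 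A secondary point: if it happened that $u_{-+} = u_{--}$ or $u_{-+} = u_{++}$ this would be a problem, but that cannot occur since these cylinders have distinct asymptotic orbits at one end (using $x^- \neq x^+$), so Corollary \ref{EndsCor} and the distinctness hypotheses in \ref{EndsLinkLemma} and \ref{EmergeConvergeLinking} are legitimately applicable.
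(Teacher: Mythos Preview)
Your proof is correct and follows exactly the same approach as the paper: apply Corollary \ref{EmergeConvergeLinking} at the shared asymptotic end of $u_{-+}$ with $u_{--}$ (respectively $u_{++}$), then use the monotonicity of Lemma \ref{EndsLinkLemma} to pass to the opposite end where the linking equals $\ell(\hat{x}^-,\hat{x}^+)$. The paper's proof is simply the two-sentence version of what you wrote out; your capping discussion is correct and is precisely the point left implicit there.
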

\begin{proof}
That $b(\hat{x}^-; H^-) \leq \ell(\hat{x}^-,\hat{x}^+)$ follows from applying Corollary \ref{EmergeConvergeLinking} with $u_0 \in \mathcal{M}(\hat{x}^-,\hat{x}^+;\mathcal{H},J)$ and $u_1 \in \mathcal{M}(\hat{x}^-,\hat{x}^-;\mathcal{H},J)$. The second inequality uses $u_1 \in \mathcal{M}(\hat{x}^+,\hat{x}^+;\mathcal{H},J)$ instead.
\end{proof}
Applying the preceding lemma in the case where $(\mathcal{H},J)$ is $s$-independent yields:
\begin{cor}\label{cor:FloerBoundaryModLinkingBounds}
Let $(H,J)$ be a non-degenerate Floer pair and suppose that $\widetilde{\mathcal{M}}(\hat{x},\hat{y};H,J) \neq \emptyset$, $x \neq y$, then 
\begin{align*}
b(\hat{x}) \leq &\ell(\hat{x},\hat{y}) \leq a(\hat{y}).
\end{align*}
\end{cor}

Applying Lemma \ref{EndsLinkLemma} to the constant cylinder $u \in \widetilde{\mathcal{M}}(\hat{x}',\hat{x}';H,J)$ and $v \in \widetilde{\mathcal{M}}(\hat{x},\hat{y};H,J)$ establishes the following proposition.
\begin{proposition}\label{prop:LinkingMonotoneAlongFloerDifferential}
Let $(H,J)$ be Floer regular and let $\hat{x}, \hat{y} \in \Per{H}$ with $\hat{y} \in \supp \partial_{H,J} \hat{x}$, then for all $\hat{x}' \in \Per{H}$, $x' \not \in \lbrace x,y \rbrace$, we have $\ell(\hat{x},\hat{x}') \leq \ell(\hat{y},\hat{x}')$.
\end{proposition}

Lemma \ref{CZindexAsympWindCompLemma} combines with Corollary \ref{cor:FloerBoundaryModLinkingBounds} to show:
\begin{cor}\label{KlinkingLemma}
If $\mu(\hat{x}), \mu(\hat{y}) \in \lbrace 2k-1, 2k, 2k+1 \rbrace$ for some $k \in \Z$ and $\widetilde{\mathcal{M}}(\hat{x},\hat{y};H,J) \neq \emptyset$, then $\ell(\hat{x},\hat{y})=-k$.
\end{cor}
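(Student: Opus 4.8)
\textbf{Proof proposal for Corollary \ref{KlinkingLemma}.} The plan is to combine the winding-number computations of Lemma \ref{CZindexAsympWindCompLemma} with the two-sided linking bound of Corollary \ref{cor:FloerBoundaryModLinkingBounds}, treating separately the trivial sub-case and the genuinely interesting one. First, observe that if $x = y$ then (after possibly rescaling the capping by some $A \in \pi_2(\Sigma)$) there is nothing braid-theoretic to compute; but in fact the statement is only substantive when $x \neq y$, since $\ell(\hat{x},\hat{y})$ is only defined in that situation, so I would simply note that the existence of a Floer cylinder $u \in \widetilde{\mathcal{M}}(\hat{x},\hat{y};H,J)$ together with $x \neq y$ puts us exactly in the hypotheses of Corollary \ref{cor:FloerBoundaryModLinkingBounds}, giving
\begin{align*}
b(\hat{x}) \leq \ell(\hat{x},\hat{y}) \leq a(\hat{y}).
\end{align*}

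The second step is to feed the index hypothesis into Lemma \ref{CZindexAsympWindCompLemma}. Since $\mu(\hat{x}) \in \{2k-1, 2k, 2k+1\}$, item (2) of that lemma (which applies whenever $\mu(\hat{x}) \in \{2k, 2k+1\}$) together with item (2) applied with $k$ replaced by $k-1$... more carefully: I would split on the parity of $\mu(\hat{x})$. If $\mu(\hat{x}) \in \{2k-1, 2k\}$, item (1) gives $a(\hat{x}) = -k$, and since $b(\hat{x}) = a(\hat{x}) + p(\hat{x}) \geq a(\hat{x}) = -k$ (using the identity $b = a + p$ established inside the proof of Corollary \ref{cor:abEqualsCZindex}), we get $b(\hat{x}) \geq -k$. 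If instead $\mu(\hat{x}) = 2k+1$, then item (2) directly gives $b(\hat{x}) = -k$. Either way $b(\hat{x}) \geq -k$. Symmetrically, for $\hat{y}$: if $\mu(\hat{y}) = 2k-1$, item (1) gives $a(\hat{y}) = -k$ directly; if $\mu(\hat{y}) \in \{2k, 2k+1\}$, item (2) gives $b(\hat{y}) = -k$ and hence $a(\hat{y}) = b(\hat{y}) - p(\hat{y}) \leq -k$. Either way $a(\hat{y}) \leq -k$.

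Combining, $-k \leq b(\hat{x}) \leq \ell(\hat{x},\hat{y}) \leq a(\hat{y}) \leq -k$, which forces $\ell(\hat{x},\hat{y}) = -k$, completing the argument. I do not expect any serious obstacle here: the whole content is the two-sided squeeze, and every ingredient is already in place. The only point requiring a little care is the bookkeeping with the parity function $p(\hat{x})$ and the relation $b = a + p$ — one must make sure that in each of the boundary parity cases the inequality $b(\hat{x}) \geq -k$ (resp. $a(\hat{y}) \leq -k$) comes out with the correct sign, but this is exactly the computation done in the proof of Corollary \ref{cor:abEqualsCZindex} and needs only to be invoked. An alternative, perhaps cleaner, route would be to bypass $p$ entirely: apply Lemma \ref{CZindexAsympWindCompLemma}(2) to $\hat{x}$ (valid when $\mu(\hat{x}) \in \{2k,2k+1\}$) and Lemma \ref{CZindexAsympWindCompLemma}(2) with shifted index to cover $\mu(\hat{x}) = 2k-1$ via $b(\hat{x}) \in \{-(k-1)\}$... but that gives the wrong bound, so in fact the $p$-bookkeeping route is the correct one and I would present it as above.
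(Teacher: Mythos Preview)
Your argument is correct and is exactly the approach the paper intends: the paper's own proof is the single sentence ``Lemma \ref{CZindexAsympWindCompLemma} combines with Corollary \ref{cor:FloerBoundaryModLinkingBounds} to give,'' and what you have written is a faithful unpacking of that combination via the two-sided squeeze $-k \leq b(\hat{x}) \leq \ell(\hat{x},\hat{y}) \leq a(\hat{y}) \leq -k$. Your final paragraph's hesitation about the ``alternative route'' is unnecessary --- the $p$-bookkeeping you settled on is already clean, and in fact one could streamline further by noting that the non-emptiness hypothesis together with $b(\hat{x}) \leq a(\hat{y})$ already forces $\mu(\hat{x}) \in \{2k,2k+1\}$ and $\mu(\hat{y}) \in \{2k-1,2k\}$, in which case items (2) and (1) of the lemma give $b(\hat{x}) = a(\hat{y}) = -k$ on the nose.
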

The main geometric input for this section is the following proposition (cf. Theorem $5.6$ in \cite{HWZ99}).
\begin{proposition}\label{EvalDiffeoLemma}
Let $(H,J)$ be Floer regular, and $\hat{x},\hat{y} \in \Per{H}$. If $2k-1 \leq \mu(\hat{x}),\mu(\hat{y}) \leq 2k+1$, for some $k \in \Z$, then the map $\widetilde{Ev}: \R \times S^1 \times \widetilde{\mathcal{M}}(\hat{x},\hat{y};H,J) \rightarrow \R \times S^1 \times \Sigma$ defined by $\widetilde{Ev}(s,t,u):=\tilde{u}(s,t)=(s,t,u(s,t))$ is a diffeomorphism onto its image.
\end{proposition}
\begin{proof}
We may suppose that $\widetilde{\mathcal{M}}(\hat{x},\hat{y};H,J) \neq \emptyset$, or else the proposition is vacuously true. Moreover, if $\hat{x}=\hat{y}$, then the statement is obvious. Thus, we may suppose that $\mu(\hat{x})-\mu(\hat{y}) \in \lbrace 1,2 \rbrace$. We will show that $\widetilde{Ev}$ is a proper injective immersion. To see that $\widetilde{Ev}$ is one-to-one, note that $\widetilde{Ev}(s,t,u)=\widetilde{Ev}(s',t',v)$ if and only if $u \neq v$ and the graphs of $\tilde{u}$ and $\tilde{v}$ intersect over $(s,t)=(s',t')$. Thus, by Lemma \ref{EndsLinkLemma} $\widetilde{Ev}$ fails to be injective only if there exist $u,v \in \widetilde{M}(\hat{x},\hat{y};H,J)$, $u \neq v$ such that
\begin{align*}
b(\hat{x}) \leq \ell_{-\infty}(u,v) &< \ell_{\infty}(u,v) \leq a(\hat{y}),
\end{align*}
The hypothesized constraints on the Conley-Zehnder indices of $\hat{x}$ and $\hat{y}$ imply that $\mu(\hat{x}) \in \lbrace 2k+1, 2k \rbrace$ and $\mu(\hat{y}) \in \lbrace 2k,2k-1 \rbrace$, and so by Lemma \ref{CZindexAsympWindCompLemma} we deduce that $b(\hat{x})=-k=a(\hat{y})$, which contradicts the above inequality. Thus, $\widetilde{Ev}$ is injective.
\par
That $\widetilde{Ev}$ is proper essentially follows from compactness results in Floer theory; if \\ $\lbrace (s_n,t_n,u_n) \rbrace_{n \in \N} \subseteq \R \times S^1 \times \widetilde{\mathcal{M}}(\hat{x},\hat{y};H,J)$ is some sequence which eventually leaves any compact set, then either $s_n \rightarrow \pm \infty$ and $(s_n,t_n,u_n)$ converges to a point on either the graph $\tilde{x}(s,t)=(s,t,x(t))$ or on the graph $\tilde{y}(s,t)=(s,t,y(t))$, or $(s_n)_{n \in \N}$ remains bounded, in which case $(s_n,t_n,u_n)$ must converge to a point on the graph of some broken Floer cylinder between $x$ and $y$. In either case, the sequence $(s_n,t_n,u_n)$ eventually leaves every compact subset of $\im \widetilde{Ev}$.
\par 
It remains to show that $\widetilde{Ev}$ is an immersion when $\mu(\hat{x})-\mu(\hat{y}) \in \lbrace 1,2 \rbrace$. We note that
\begin{align*}
T (\R \times S^1 \times \widetilde{\mathcal{M}}(\hat{x},\hat{y};H,J))&= T (\R \times S^1) \oplus (\ker (D \mathcal{F}))\vert_{\widetilde{\mathcal{M}}(\hat{x},\hat{y};H,J)},
\end{align*}
and since $d \widetilde{Ev}$ may be computed with respect to this splitting as
\begin{align*}
(d \widetilde{Ev})_{(s,t,u)}(\partial_s,\partial_t,\xi)&=(\partial_s,\partial_t,\xi(s,t)) \in T_{\tilde{u}(s,t)} \R \times S^1 \times \Sigma,
\end{align*}
the problem reduces to showing that any $\xi \in \ker (D \mathcal{F}_{H,J})_u$ with $\xi \neq 0$ is a nowhere-vanishing vector field along $u$, where $D \mathcal{F}_{H,J}$ denotes the vertical part of the linearized Floer operator. This follows by combining Proposition \ref{FloerKernelWinding} with Lemma \ref{CZindexAsympWindCompLemma} to deduce that $Z(\xi)=0$ whenever $\xi$ is not identically zero (from which it follows by Proposition \ref{FloerKernelWinding} that $\xi$ is nowhere vanishing).
\end{proof}
Note that as a consequence of the previous lemma, whenever $\mu(\hat{x}), \mu(\hat{y}) \in \lbrace 2k-1, 2k , 2k+1 \rbrace$ for $k \in \Z$, then $\widetilde{Ev}(\R \times S^1 \times \widetilde{\mathcal{M}}(\hat{x},\hat{y};H,J))$ carries a smooth $2$-dimensional foliation $\widetilde{\mathcal{F}}^{\hat{x},\hat{y}}$, the leaves of which are nothing but the graphs $\tilde{u}$ of $u \in \widetilde{\mathcal{M}}(\hat{x},\hat{y};H,J)$. 
\begin{definition}
For $\hat{x},\hat{y} \in \Per{H}$, \textbf{the connecting subspace} of $\hat{x}$ and $\hat{y}$ will denote the subspace 
\begin{align*}
W(\hat{x},\hat{y}) &:=\lbrace (t,u(s,t)) \in S^1 \times \Sigma: s \in \R, u \in \widetilde{\mathcal{M}}(\hat{x},\hat{y};H,J) \rbrace.
\end{align*}
\end{definition}
Remark that if we write $\widetilde{W}(\hat{x},\hat{y}):= \widetilde{Ev}(\R \times S^1 \times \widetilde{\mathcal{M}}(\hat{x},\hat{y};H,J))$, then the map $\check{\pi}: \R \times S^1 \times \Sigma \rightarrow S^1 \times \Sigma$ restricts to a projection
$\check{\pi}: \widetilde{W}(\hat{x},\hat{y}) \rightarrow W(\hat{x},\hat{y})$, with fiber $\check{\pi}^{-1}(t,p) = \lbrace \widetilde{Ev}(s,t,u): u(s,t)=p \rbrace$, which under the hypotheses of Proposition \ref{EvalDiffeoLemma} may be identified via $\widetilde{Ev}$ with the orbit of any $(s_0,t,u_0) \in \R \times S^1 \times \widetilde{\mathcal{M}}(\hat{x},\hat{y};H,J)$ such that $u_0(s_0,t)=p$ under the $\R$-action $\tau \cdot (s,t,u)= (s-\tau,t,u^\tau)$, for $\tau \in \R$, where $u^\tau(s,t)=u(s+\tau,t)$. Consequently, $\widetilde{Ev}$ descends to a well-defined map 
\begin{align*}
\check{Ev}: (\R \times S^1 \times \widetilde{\mathcal{M}}(\hat{x},\hat{y};H,J))/ \R &\rightarrow  W(\hat{x},\hat{y}) \\
[s,t,u] &\mapsto (t,u(s,t)).
\end{align*}
Hence, under the hypotheses of Proposition \ref{EvalDiffeoLemma}, $\check{\pi}$ restricts to a submersion on $\widetilde{W}(\hat{x},\hat{y})$ with fiber diffeomorphic to $\R$. Moreover, if we choose a section $\sigma: \mathcal{M}(\hat{x},\hat{y};H,J) \rightarrow \widetilde{\mathcal{M}}(\hat{x},\hat{y};H,J)$, then we may thereby (non-canonically) identify
\begin{align*}
\phi_\sigma: \R \times S^1 \times \mathcal{M}(\hat{x},\hat{y};H,J) &\xrightarrow{\simeq} (\R \times S^1 \times \widetilde{\mathcal{M}}(\hat{x},\hat{y};H,J))/ \R  \\
(s,t,[u]) &\mapsto [s,t,\sigma([u])].
\end{align*}
Finally, to understand the behaviour of the foliation $\widetilde{\mathcal{F}}^{\hat{x},\hat{y}}$ under this projection, note that $\ker d \check{\pi}= \langle \partial_s \rangle$, and that since the tangent space of any leaf of $\widetilde{\mathcal{F}}$ is given by $\langle \partial_s + (\partial_s u)_{u(s,t)}, \partial_t + (\partial_t u)_{u(s,t)} \rangle$, where $\partial_s u \in \ker (d\mathcal{F})_u$, and is therefore nowhere-vanishing whenever $u$ is not an orbit cylinder $u(s,t)=x(t)$ for some $x \in Per(H)$, by our index constraint. So the leaves of the foliation are nowhere tangent to the fibers of the projection map whenever $\hat{x} \neq \hat{y}$. As a consequence, we deduce:
\begin{cor}\label{cor:FoliatedSectorCorollary}
Let $\hat{x},\hat{y} \in \Per{H}$ satisfy $\mu(\hat{x}),\mu(\hat{y}) \in \lbrace 2k-1,2k,2k+1 \rbrace$, for some $k \in \Z$ then for any section $\sigma: \mathcal{M}(\hat{x},\hat{y};H,J) \rightarrow \widetilde{\mathcal{M}}(\hat{x},\hat{y};H,J)$, as above,
\begin{align*}
\check{Ev} \circ \phi_{\sigma}:  \R \times S^1 \times  \mathcal{M}(\hat{x},\hat{y};H,J) &\mapsto W(\hat{x},\hat{y}) \\
(s,t,[u]) &\mapsto (t,\sigma([u])(s,t))
\end{align*}
is a smooth embedding. Moreover, writing $\sigma([u])=u_\sigma$, the partition $\mathcal{F}^{\hat{x},\hat{y}}:= \lbrace \im \check{u}_\sigma \rbrace_{[u] \in \mathcal{M}(\hat{x},\hat{y};H,J)}$ is a smooth $2$-dimensional foliation of $W(\hat{x},\hat{y})$ whenever $\hat{x} \neq \hat{y}$. 
\end{cor}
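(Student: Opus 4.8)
The plan is to deduce everything from Proposition \ref{EvalDiffeoLemma} together with the structure of the $\R$-action on $\widetilde{\mathcal{M}}(\hat{x},\hat{y};H,J)$, treating the two claims (that $\check{Ev} \circ \phi_\sigma$ is a smooth embedding, and that $\mathcal{F}^{\hat{x},\hat{y}}$ is a smooth $2$-dimensional foliation of $W(\hat{x},\hat{y})$ when $\hat{x} \neq \hat{y}$) in turn. Throughout we may assume $\widetilde{\mathcal{M}}(\hat{x},\hat{y};H,J) \neq \emptyset$ and $\hat{x} \neq \hat{y}$, since otherwise the statement is vacuous or trivial; in particular $\mu(\hat{x}) - \mu(\hat{y}) \in \{1,2\}$ and Proposition \ref{EvalDiffeoLemma} applies.

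First I would record that $\phi_\sigma$ is a diffeomorphism. The section $\sigma$ exists because the $\R$-action on $\widetilde{\mathcal{M}}(\hat{x},\hat{y};H,J)$ is free (by Corollary \ref{EndsCor}, a non-constant Floer cylinder cannot be $s$-translation invariant, and since $\hat{x} \neq \hat{y}$ no element is an orbit cylinder) and proper, so the quotient is a manifold and the bundle $\widetilde{\mathcal{M}} \to \mathcal{M}$ admits a smooth section. Unwinding the definitions, $\phi_\sigma(s,t,[u]) = [s,t,u_\sigma]$ where $u_\sigma = \sigma([u])$; its inverse sends $[s',t',v] \in (\R \times S^1 \times \widetilde{\mathcal{M}})/\R$ to $(s' + \tau, t', [v])$ where $\tau \in \R$ is the unique number with $v^{\tau} = u_\sigma$ for $u_\sigma = \sigma([v])$, and this depends smoothly on the data by freeness and properness of the action. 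Hence $\phi_\sigma$ is a diffeomorphism onto $(\R \times S^1 \times \widetilde{\mathcal{M}})/\R$.

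Next I would show $\check{Ev} \circ \phi_\sigma$ is a smooth embedding. We have $\check{Ev} \circ \phi_\sigma (s,t,[u]) = (t, u_\sigma(s,t))$, so this is the composite of the diffeomorphism $\phi_\sigma$ with $\check{Ev}$, and it suffices to prove $\check{Ev}$ is a smooth embedding of $(\R \times S^1 \times \widetilde{\mathcal{M}})/\R$ onto $W(\hat{x},\hat{y})$. Recall $\widetilde{Ev}$ is a diffeomorphism onto $\widetilde{W}(\hat{x},\hat{y})$ by Proposition \ref{EvalDiffeoLemma}, and that $\check{\pi}\colon \R \times S^1 \times \Sigma \to S^1 \times \Sigma$ restricts on $\widetilde{W}(\hat{x},\hat{y})$ to a submersion whose fibres are exactly the $\R$-orbits transported via $\widetilde{Ev}$, as explained in the paragraph preceding the corollary; thus $\check{Ev}$ is precisely the induced map on the $\R$-quotient, and it is injective with image $W(\hat{x},\hat{y})$. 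Smoothness and the immersion property follow from the fact that near the quotient, $\check{\pi}|_{\widetilde{W}}$ is, in local $\widetilde{Ev}$-coordinates, the projection killing the $\partial_s$-direction, and the leaf-tangent computation at the end of the preceding discussion shows the leaf directions $\partial_s + (\partial_s u)$, $\partial_t + (\partial_t u)$ are never tangent to $\langle \partial_s \rangle = \ker d\check{\pi}$ (here using $\hat{x} \neq \hat{y}$ so no element is an orbit cylinder, and Proposition \ref{FloerKernelWinding} with Corollary \ref{KlinkingLemma} giving $Z(\partial_s u) = 0$). Properness of $\check{Ev}$ on the quotient reduces to the properness of $\widetilde{Ev}$ already established, combined with the fact that the $\R$-action is proper, so $\check{Ev}$ is a closed embedding. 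Finally, the partition $\mathcal{F}^{\hat{x},\hat{y}} = \{\im \check{u}_\sigma\}$ is the image under $\check{Ev}$ of the foliation $\widetilde{\mathcal{F}}^{\hat{x},\hat{y}}$ pushed to the $\R$-quotient: since $\R$ acts by leaf-preserving diffeomorphisms (translation sends $\tilde{u}$-graphs to $\tilde{u}^{\tau}$-graphs, which are again leaves) and transversally to the leaves (again the leaf-tangent computation), the quotient carries an induced smooth codimension-$1$ foliation, which $\check{Ev}$ carries diffeomorphically onto a smooth $2$-dimensional foliation of $W(\hat{x},\hat{y})$; its leaves are by construction the projections $\im \check{u}_\sigma$ of the graphs of representatives.

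I expect the only genuinely delicate point to be the careful bookkeeping showing $\check{Ev}$ (equivalently $\check{\pi}|_{\widetilde{W}(\hat{x},\hat{y})}$) descends to an \emph{embedding} rather than merely an injective immersion onto the quotient — that is, verifying properness of the descended map — but this is handled by combining properness of $\widetilde{Ev}$ (from the Floer compactness argument in the proof of Proposition \ref{EvalDiffeoLemma}) with properness and freeness of the $\R$-translation action. Everything else is formal manipulation of quotients by free proper group actions and repackaging of facts already proved in \textsection\ref{FoliatedSectorsSubsection}; no new analytic input is required.
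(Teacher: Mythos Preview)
Your proposal is correct and follows essentially the same approach as the paper. In fact, the paper does not give a separate proof of the corollary at all: it is stated directly as a consequence of the discussion preceding it (the identification of the $\R$-orbits with the fibres of $\check{\pi}$, the definition of $\phi_\sigma$, and the leaf-tangent computation showing $\ker d\check{\pi}$ is transverse to the leaves of $\widetilde{\mathcal{F}}^{\hat{x},\hat{y}}$). You have simply written out carefully the details that the paper leaves implicit, including the properness bookkeeping needed to upgrade ``injective immersion'' to ``embedding''; nothing in your argument diverges from the paper's line of reasoning.
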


\subsection{The restricted complex associated to a capped braid}
\label{LinkingAndTheFloerComplex}
Let $(H,J)$ be a non-degenerate Floer pair. To any capped braid $\hat{X} \subseteq \Per{H}$, we may associate the submodule $C_*(\hat{X}):=\Lambda_\omega \langle \hat{x} \rangle_{\hat{x} \in \hat{X}}$ of $CF_*(H,J)$, which comes with the projection $\pi^{\hat{X}}: CF_*(H,J) \rightarrow C_*(\hat{X})$ associated to any splitting $CF_*(H,J)=C_*(\hat{X}) \oplus C_*(\hat{Y})$, for $\hat{Y} \subseteq \Per{H}$ any capped braid such that $Per_0(H) = X \sqcup Y$. $C_*(\hat{X})$ is not generally a subcomplex of $CF_*(H,J)$, since there is no reason that Floer cylinders should only run between strands of $\hat{X}$. However, we will see that if we define the \textbf{restricted differential} $\partial^{\hat{X}}:=\pi^{\hat{X}} \circ \partial_{H,J}$, then under suitable conditions on $\hat{X}$,
\begin{align*}
CF_*(\hat{X};H,J)&:= ( C_*(\hat{X}), \partial^{\hat{X}})
\end{align*}
is a chain complex.
\subsubsection{Maximal unlinkedness relative the Morse range}
\begin{definition}
If $(H,J)$ is Floer regular, then for any capped braid $\hat{X} \subseteq \Per{H}$, we define
\begin{align*}
Pos(\hat{X}) &:= \lbrace \sigma \in CF_*(H,J): \forall \hat{\gamma} \in \supp \sigma, \forall \hat{x} \in \hat{X}, \; \ell(\hat{x},\hat{\gamma}) \geq 0 \rbrace, \\
Pos^*(\hat{X}) &:= \lbrace \sigma \in Pos(\hat{X}): \forall \hat{\gamma} \in \supp \sigma, \exists \hat{x} \in \hat{X}, \text{such that\;} \ell(\hat{x}, \hat{\gamma}) > 0 \rbrace.
\end{align*}
We define $Neg(\hat{X})$ and $Neg^*(\hat{X})$ in the obvious manner simply by reversing the inequalities in the above. 
\end{definition}
\begin{definition}
Let $\hat{X} \subseteq \Per{H}$ be a capped braid for some Hamiltonian $H$. $\hat{X}$ will be said to be \textbf{maximally unlinked} if it is unlinked, and if for any $\hat{y} \in \Per{H}$ either $\hat{y} \in \hat{X}$ or else  $\hat{y}$ and $\hat{X}$ are linked. We write $mu(H)$ for the collection of all such capped braids.
\end{definition}

\begin{definition}
Let $\hat{X} \subseteq \Per{H}$ be a capped braid for some Hamiltonian $H$. $\hat{X}$ will be said to be \textbf{maximally unlinked relative the Morse range} if $\hat{X}$ is unlinked, $\mu(\hat{x}) \in \lbrace -1,0,1 \rbrace$ for all $\hat{x} \in \hat{X}$, and moreover if for any $\hat{y} \in \Per{H}$ such that $\mu(\hat{y}) \in \lbrace -1,0,1 \rbrace$, either $\hat{y} \in \hat{X}$ or $\hat{y}$ and $\hat{X}$ are linked. We write $murm(H)$ for the collection of all capped braids $\hat{X} \subseteq \Per{H}$ which are maximally unlinked relative the Morse range.
\end{definition}
The next lemma is a direct consequence of the definitions.
\begin{lemma}\label{lem:PosNegInKerOfProj}
Let $\hat{X} \subseteq \Per{H}$ be an unlinked braid, then $Pos^*(\hat{X}), Neg^*(\hat{X}) \subseteq \ker \pi^{\hat{X}}$.
\end{lemma}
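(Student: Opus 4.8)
\textbf{Proof plan for Lemma \ref{lem:PosNegInKerOfProj}.}

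The plan is to unwind the definitions and exploit part (4) of Proposition \ref{LinkingProp} together with Proposition \ref{prop:HLinkEqualsLink}. First recall that $\pi^{\hat{X}} \colon CF_*(H,J) \to C_*(\hat{X}) = \Lambda_\omega \langle \hat{x} \rangle_{\hat{x} \in \hat{X}}$ is, up to the $\Lambda_\omega$-action, the map that is the identity on each generator $\hat{x}$ with $\hat{x} \in \hat{X}$ and sends every other generator to $0$. So to show $Pos^*(\hat{X}) \subseteq \ker \pi^{\hat{X}}$, it suffices to show that no $\hat{x} \in \hat{X}$ (nor any of its reparametrized cappings $e^A \cdot \hat{x}$) can occur in $\supp \sigma$ for $\sigma \in Pos^*(\hat{X})$; the argument for $Neg^*(\hat{X})$ is entirely symmetric and I would only remark on it.

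The key observation is that for any $\hat{x} \in \hat{X}$, since $\hat{X}$ is unlinked we have $\hat{X} \simeq_0 \hat{0}$, hence in particular $\hat{x} \simeq_0 \hat{x}$ trivially (or, more to the point, the pair obtained by taking any two strands $\hat{x}, \hat{x}' \in \hat{X}$ is $0$-homotopic to the trivial $2$-braid), which via part (4) of Proposition \ref{LinkingProp} forces $\ell(\hat{x}, \hat{x}') = 0$ for any $\hat{x}, \hat{x}' \in \hat{X}$ with $x \neq x'$; and of course $\ell(\hat{x}, \hat{x}) $ is simply not defined, but one checks that the relevant linking quantity appearing in the definition of $Pos^*(\hat{X})$ — namely $\ell(\hat{x}, \hat{\gamma})$ for $\hat{\gamma}$ equal to $\hat{x}$ itself or a recapping thereof — works out via Corollary \ref{EigenvectorWindingCap} and the winding conventions so that it cannot be strictly positive while all other constraints are met. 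Concretely: suppose $\hat{\gamma} \in \supp\sigma$ with $\sigma \in Pos^*(\hat{X})$, and suppose for contradiction that $\hat{\gamma} = e^A \cdot \hat{x}$ for some $\hat{x} \in \hat{X}$ and $A \in \pi_2(\Sigma)$. By definition of $Pos(\hat{X})$ we need $\ell(\hat{x}', \hat{\gamma}) \geq 0$ for all $\hat{x}' \in \hat{X}$, and in particular (taking $\hat{x}' = \hat{x}$, using that $\ell(A\cdot \hat{x}_\alpha, B \cdot \hat{y}_\alpha)$ is defined whenever the underlying loops are distinct — but here they coincide) one must use the genuinely subtle point that $\ell(\hat{x}, e^A \cdot \hat{x})$ is undefined, so in fact $\hat{\gamma}$ being a recapping of $\hat{x}$ is outright excluded by the requirement that all linking numbers in the definition be well-defined. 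Thus the only way $\hat{\gamma} \in \supp\sigma$ can be a multiple of a generator appearing in $C_*(\hat{X})$ is $\hat{\gamma} = e^A \cdot \hat{x}$, which is impossible; hence $\pi^{\hat{X}}(\sigma) = 0$.

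The main obstacle — really the only place any care is needed — is the bookkeeping around when $\ell(\hat{x}, \hat{\gamma})$ is defined: the definitions of $Pos(\hat{X})$ and $Pos^*(\hat{X})$ implicitly require all the invoked linking numbers $\ell(\hat{x}, \hat{\gamma})$ to make sense, which by Section \ref{LocalLinkSec} and \ref{HomLinkSec} requires the underlying loops of $\hat{x}$ and $\hat{\gamma}$ to be distinct (equivalently, $(x, \gamma)$ forms a braid). I would state this explicitly at the start of the proof. Given that, the argument is immediate: if $\hat{\gamma} \in \supp\sigma \subseteq$ the defining set of $Pos^*(\hat{X})$, then $\ell(\hat{x}, \hat{\gamma})$ is defined for every $\hat{x} \in \hat{X}$, so $\gamma \neq x$ for all $\hat{x} \in \hat{X}$, so $\hat{\gamma}$ is not a $\Lambda_\omega$-multiple of any generator of $C_*(\hat{X})$, and therefore $\hat{\gamma} \in \ker \pi^{\hat{X}}$; since this holds for each $\hat{\gamma} \in \supp\sigma$ and $\pi^{\hat{X}}$ is $\Lambda_\omega$-linear, $\sigma \in \ker\pi^{\hat{X}}$. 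The same reasoning verbatim, with inequalities reversed, handles $Neg^*(\hat{X})$.
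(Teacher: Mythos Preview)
Your proposal is correct and matches the paper's approach exactly: the paper's entire proof is the sentence ``The next lemma is a direct consequence of the definitions,'' and what that sentence means is precisely the argument you give cleanly in your final paragraph --- membership in $Pos(\hat{X})$ (and hence $Pos^*(\hat{X})$) requires $\ell(\hat{x},\hat{\gamma})$ to be defined for every $\hat{x} \in \hat{X}$, which forces the underlying loop $\gamma$ to be distinct from every $x$ with $\hat{x} \in \hat{X}$, so $\hat{\gamma}$ lies in $C_*(\hat{Y}) = \ker \pi^{\hat{X}}$.

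Two minor remarks. First, the detour in your second paragraph through Proposition~\ref{LinkingProp}(4) and Corollary~\ref{EigenvectorWindingCap} is unnecessary; you recognise this yourself by the third paragraph, and the invocation of Corollary~\ref{EigenvectorWindingCap} in particular is a red herring (that result concerns asymptotics of Floer cylinders and plays no role here). Second, your implicit observation that the unlinkedness hypothesis is not actually used in the clean argument is correct: the argument via well-definedness of $\ell$ works for any capped braid $\hat{X} \subseteq \Per{H}$, and indeed already shows the stronger inclusion $Pos(\hat{X}) \subseteq \ker \pi^{\hat{X}}$. The hypothesis is presumably included only because it is the standing assumption in all applications.
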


The following situation will occur frequently enough that it will be useful to isolate it as a lemma.
\begin{lemma}\label{lem:SingularCobordGivesStrictPositivity}
Let $(H^\pm,J^\pm)$ be Floer regular, $(\mathcal{H},J) \in \HJ$, and $\hat{X}^\pm=(\hat{x}_1^\pm, \ldots, \hat{x}_k^\pm) \in \Per{H^\pm}$. Suppose that $\hat{X}^-$ and $\hat{X}^+$ are both unlinked, $\mu(\hat{x}_1^-)=\mu(\hat{x}^+_1) \in \lbrace 0,1 \rbrace$, and that there exists $u^i \in \mathcal{M}(\hat{x}^-_i,\hat{x}^+_i; \mathcal{H},J)$ for $i=1, \ldots,k$. For any $\hat{y}^+ \in \Per{H^+}$, if there exists $v \in \mathcal{M}(\hat{x}^-_1,\hat{y};\mathcal{H},J)$, such that $\hat{X} \cup \lbrace \hat{y}^+ \rbrace$ is linked, then $\hat{y}^+ \in Pos^*(\hat{X})$.
\end{lemma}
\begin{proof}
Note that we have $0=\ell(\hat{x}_1,\hat{x}_i) \leq \ell(\hat{x}^+,\hat{x}_i)$ for $i=2, \ldots, k$ by Lemma \ref{EndsLinkLemma} and $0=b(\hat{x}_1) \leq \ell(\hat{x}_1,\hat{x}^+)$ by Lemma \ref{LinkingBoundLem}, so we need only show that $\ell(\hat{x}^+,\hat{x}_i) >0$ for some $i=1, \ldots, k$.
We write $h(s)=(u^1_s, \ldots, u^k_s,v_s)$, $s \in \R$. $h$ does not induce a braid cobordism, because $u_1$ and $v$ limit to the same orbit as $s \rightarrow -\infty$, however Lemma \ref{EndsLinkLemma} implies that for $R >0$ sufficiently large, $h\vert_{(-R,\infty)}$ induces a braid cobordism from $\hat{X} \cup \lbrace \hat{v}_{-R} \rbrace$ to $\hat{X} \cup \lbrace \hat{x}^+ \rbrace$. Since $b(\hat{x}_1) = 0$, there are two possibilities: either $0 < \ell_{-\infty}(u^1,v)$, or $\ell_{-\infty}(u^1,v)=0$. In the former case, Lemma \ref{EndsLinkLemma} immediately implies that $0<\ell_{\infty}(u^1,v)=\ell(\hat{x}_1,\hat{x}^+)$, and we are done. 
\par We may therefore assume that $\ell_{-\infty}(u^1,v)=0$. In this case, $\hat{X} \cup \lbrace \hat{v}_{-R} \rbrace$ is unlinked. Indeed by Corollary \ref{EndsCor}, $R>0$ may be chosen such that $\tilde{v}$ has no intersections with $\tilde{u}_i$, $i=1, \ldots, k$ for $s <-R$, and the property of being unlinked is invariant under $0$-homotopies, so $\hat{X} \cup \lbrace \hat{v}_{-R} \rbrace$ is unlinked only if $\lbrace \hat{x}_1, \hat{v}_{-R} \rbrace$ is unlinked. But we may take $R>0$ sufficiently large such that $\hat{v}_{-R} \in \cL{\Sigma}$ lies in an exponential neighbourhood of $\hat{x}_1$, and in this neighbourhood the homological linking number reduces to the classical winding number by Proposition \ref{prop:HLinkEqualsLink}, and so that $\lbrace \hat{x}_1, \hat{v}_{-R} \rbrace$ is unlinked follows directly from the fact that the winding number classifies homotopy classes of loops into $\R^2 \setminus 0$. 
\par 
Thus, $\hat{X} \cup \lbrace \hat{v}_{-R} \rbrace$ is unlinked, while $\hat{X} \cup \lbrace \hat{x}^+ \rbrace$ is linked, whence the graphs of some of the strands of $h$ must intersect. Since $\ell_{- \infty}(u^i,u^j)=\ell_{\infty}(u^i,u^j)=\ell(\hat{x}_i,\hat{x}_j)=0$ for $i \neq j$, it follows from Lemma \ref{EndsLinkLemma} that the graphs of $u^i$ and $u^j$ are disjoint for $i \neq j$. Thus, there exists some $i=1, \ldots, k$ such that the graphs of $u^i$ and $v$ intersect, so $0 < \ell_{\infty}(u^i,v)=\ell(\hat{x}_i,\hat{x}^+)$, as claimed.
\end{proof}
\begin{remark}
In the case where $(\mathcal{H},J)$ are independent of $s$, then the hypothesis $u^i \in \mathcal{M}(\hat{x}_i,\hat{x}_i; \mathcal{H},J)$ is always satisfied for any $\hat{x}_i \in \Per{H}$, since the constant cylinder $u_i(s,t)=x_i(t)$ is always a solution of the $s$-independent Floer equation.
\end{remark}

\begin{lemma}
Let $\hat{X} \in murm(H)$, then for all $\hat{x} \in \hat{X}$,
$\partial_{H,J} \hat{x} \in \Z_2 \langle \hat{x} \rangle_{\hat{x} \in \hat{X}} \oplus Pos^*(\hat{X})$.
\end{lemma}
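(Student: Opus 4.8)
The plan is to fix $\hat{x} \in \hat{X}$, decompose the Floer differential
$\partial_{H,J}\hat{x} = \sum_{\mu(\hat{x})-\mu(\hat{y})=1} n(\hat{x},\hat{y})\,\hat{y}$,
and show that every $\hat{y}$ appearing with nonzero coefficient satisfies $\ell(\hat{x}',\hat{y}) \geq 0$ for all $\hat{x}' \in \hat{X}$, and moreover that $\ell(\hat{x}',\hat{y})>0$ for \emph{some} $\hat{x}' \in \hat{X}$ unless $\hat{y}$ itself lies in $\hat{X}$ (in which case it contributes to $\Z_2\langle \hat{x}\rangle_{\hat{x}\in\hat{X}}$). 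This is exactly the statement that $\partial^{\hat{X}}_{\mathrm{off}}\hat{x} := \partial_{H,J}\hat{x} - \pi^{\hat{X}}(\partial_{H,J}\hat{x})$ lies in $Pos^*(\hat{X})$. So fix such a $\hat{y}$ with $\widetilde{\mathcal{M}}(\hat{x},\hat{y};H,J)\neq\emptyset$ and $\hat{y}\notin\hat{X}$; pick $v \in \widetilde{\mathcal{M}}(\hat{x},\hat{y};H,J)$.

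First I would address the nonnegativity. Since $\mu(\hat{x}) \in \{-1,0,1\}$ and $\mu(\hat{y}) = \mu(\hat{x})-1 \in \{-2,-1,0\}$, these indices are in general \emph{not} both in a common window $\{2k-1,2k,2k+1\}$, so I cannot directly invoke Corollary~\ref{KlinkingLemma}. Instead, fix any $\hat{x}' \in \hat{X}$. If $x' \in \{x, y\}$ there is nothing to prove (if $x'=x$ then $\hat{x}'=\hat{x}$ and $\ell=0$ after the natural capping; if $x'=y$ then $\hat{y}\in\hat{X}$, contrary to assumption — so actually $x'\neq y$). Otherwise $x' \notin \{x,y\}$, and since $\hat{x},\hat{x}' \in \hat{X}$ are unlinked we have $\ell(\hat{x},\hat{x}')=0$; applying Proposition~\ref{prop:LinkingMonotoneAlongFloerDifferential} (which says $\ell$ is monotone along the Floer differential: $\ell(\hat{x},\hat{x}') \leq \ell(\hat{y},\hat{x}')$ since $\hat{y}\in\supp\partial_{H,J}\hat{x}$) gives $0 = \ell(\hat{x},\hat{x}') \leq \ell(\hat{y},\hat{x}') = \ell(\hat{x}',\hat{y})$. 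Hence $\hat{y} \in Pos(\hat{X})$, and this argument needs $\mu(\hat{x})\in\{0,1\}$ only if I wanted to use $b(\hat{x})\geq 0$; here the monotonicity argument works for all $\hat{x}\in\hat{X}$.

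The harder part is strict positivity: producing some $\hat{x}' \in \hat{X}$ with $\ell(\hat{x}',\hat{y})>0$, given only that $\hat{y}\notin\hat{X}$. Here I would invoke the maximality clause of $murm(H)$: since $\mu(\hat{y}) \in \{-2,-1,0\}$, if in fact $\mu(\hat{y})\in\{-1,0\}$ then $\hat{y}$ is in the Morse range, so by maximal unlinkedness relative the Morse range, $\hat{y}\notin\hat{X}$ forces $\hat{X}\cup\{\hat{y}\}$ to be linked. Then I would apply Lemma~\ref{lem:SingularCobordGivesStrictPositivity}: take $u^i \in \mathcal{M}(\hat{x}_i,\hat{x}_i;H,J)$ the constant orbit cylinders (these always exist), note $\mu(\hat{x})\in\{0,1\}$ is required by that lemma — which holds when $\mu(\hat{y})\in\{-1,0\}$ since $\mu(\hat{x})=\mu(\hat{y})+1\in\{0,1\}$ — and with $v \in \mathcal{M}(\hat{x},\hat{y};H,J)$ and $\hat{X}\cup\{\hat{y}\}$ linked, conclude $\hat{y}\in Pos^*(\hat{X})$. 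The remaining case $\mu(\hat{y})=-2$, i.e. $\mu(\hat{x})=-1$: this is the genuine obstacle, since then $\hat{y}$ is \emph{outside} the Morse range and the maximality hypothesis of $murm(H)$ says nothing about it. I would handle this by the same monotonicity/linking-cobordism reasoning applied with the index-window Corollary~\ref{KlinkingLemma} or Corollary~\ref{cor:FloerBoundaryModLinkingBounds}: when $\mu(\hat{x})=-1$ we have $a(\hat{y};H)$ computed from $\mu(\hat{y})=-2=2(-1)$, giving $a(\hat{y})=1$, while $b(\hat{x};H)$ from $\mu(\hat{x})=-1=2(-1)+1$ gives $b(\hat{x})=1$; since $\widetilde{\mathcal{M}}(\hat{x},\hat{y};H,J)\neq\emptyset$ and $x\neq y$, Corollary~\ref{cor:FloerBoundaryModLinkingBounds} forces $1 = b(\hat{x}) \leq \ell(\hat{x},\hat{y}) \leq a(\hat{y}) = 1$, so $\ell(\hat{x},\hat{y})=1>0$, and $\hat{x}\in\hat{X}$ witnesses strict positivity. (I expect the subtlety to lie in checking the natural cappings make $\ell(\hat{x},\hat{y})$ the relevant quantity and in confirming $x\neq y$ automatically from $\mu(\hat{x})\neq\mu(\hat{y})$.) Assembling these cases gives $\hat{y}\in Pos^*(\hat{X})$ whenever $\hat{y}\notin\hat{X}$, which is precisely $\partial_{H,J}\hat{x} \in \Z_2\langle\hat{x}\rangle_{\hat{x}\in\hat{X}} \oplus Pos^*(\hat{X})$.
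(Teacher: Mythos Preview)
Your proof is correct and follows essentially the same approach as the paper: split on whether $\mu(\hat{x})=-1$ (equivalently $\mu(\hat{y})=-2$), using $b(\hat{x})=1$ and Corollary~\ref{cor:FloerBoundaryModLinkingBounds} in that case, and otherwise combining the monotonicity of linking along Floer trajectories with maximality of $\hat{X}$ and Lemma~\ref{lem:SingularCobordGivesStrictPositivity}. One small correction: your treatment of the case $\hat{x}'=\hat{x}$ in the nonnegativity paragraph is garbled --- the claim ``$\ell=0$ after the natural capping'' is not what is needed (you want $\ell(\hat{x},\hat{y})\geq 0$, not a self-linking), and Proposition~\ref{prop:LinkingMonotoneAlongFloerDifferential} explicitly excludes $x'\in\{x,y\}$, so your sentence ``the monotonicity argument works for all $\hat{x}\in\hat{X}$'' is not accurate for this one strand. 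The right input here is precisely the one you invoke later anyway: Corollary~\ref{cor:FloerBoundaryModLinkingBounds} gives $b(\hat{x})\leq\ell(\hat{x},\hat{y})$, and $b(\hat{x})\geq 0$ for all $\mu(\hat{x})\in\{-1,0,1\}$. Since your later case analysis (the $b(\hat{x})=1$ computation and the invocation of Lemma~\ref{lem:SingularCobordGivesStrictPositivity}, whose conclusion $\hat{y}\in Pos^*(\hat{X})$ already subsumes nonnegativity at every strand) independently establishes $\ell(\hat{x},\hat{y})\geq 0$, this slip is harmless to the overall argument.
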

\begin{proof}
Let $\hat{x} \in \hat{X}$, and $\hat{y} \in \supp \partial_{H,J} \hat{x}$. Either $\mu(\hat{x})=-1$ or $\mu(\hat{x}) \in \lbrace 0,1 \rbrace$. If $\mu(\hat{x})=-1$, then $b(\hat{x})=1$ and so Corollary \ref{cor:FloerBoundaryModLinkingBounds} implies that $1 \leq \ell(\hat{x},\hat{y})$ while Proposition \ref{prop:LinkingMonotoneAlongFloerDifferential} implies that $0 \leq \ell(\hat{x}',\hat{y})$ for all $\hat{x}' \in \hat{X}$, $\hat{x}' \neq \hat{x}$, and so $\hat{y} \in Pos^*(\hat{X})$. If $\mu(\hat{x}) \in \lbrace 0,1 \rbrace$, then Corollary \ref{cor:FloerBoundaryModLinkingBounds} and Proposition \ref{prop:LinkingMonotoneAlongFloerDifferential} imply that $0 \leq \ell(\hat{x}',\hat{y})$ for all $\hat{x}' \in \hat{X}$. To see that $\hat{y} \in \hat{X} \cup Pos^*(\hat{X})$, note that either $\hat{X} \cup \lbrace \hat{y} \rbrace$ is unlinked, in which case $\hat{y} \in \hat{X}$ by the maximality of $\hat{X}$, or $\hat{X} \cup \lbrace \hat{y} \rbrace$ is linked, in which case Lemma \ref{lem:SingularCobordGivesStrictPositivity} directly implies that $\hat{y} \in Pos^*(\hat{X})$.
\end{proof}

Proposition \ref{prop:LinkingMonotoneAlongFloerDifferential} immediately implies the following lemma.
\begin{lemma}\label{lem:PosInvarUnderDiff}
Let $\hat{X} \subseteq \Per{H}$ be any capped braid, then $\partial_{H,J} Pos^*(\hat{X}) \subseteq Pos^*(\hat{X})$.
\end{lemma}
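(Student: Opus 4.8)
The statement to be proved is Lemma \ref{lem:PosInvarUnderDiff}: if $\hat{X} \subseteq \Per{H}$ is any capped braid, then $\partial_{H,J} Pos^*(\hat{X}) \subseteq Pos^*(\hat{X})$. The text has just asserted that this follows immediately from Proposition \ref{prop:LinkingMonotoneAlongFloerDifferential}, so the task is to spell out why. The plan is as follows. Let $\sigma \in Pos^*(\hat{X})$, so that every $\hat{\gamma} \in \supp \sigma$ satisfies $\ell(\hat{x},\hat{\gamma}) \geq 0$ for all $\hat{x} \in \hat{X}$, and moreover for each such $\hat{\gamma}$ there is at least one $\hat{x}_{\hat{\gamma}} \in \hat{X}$ with $\ell(\hat{x}_{\hat{\gamma}},\hat{\gamma}) > 0$. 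Since $\partial_{H,J}$ is $\Lambda_\omega$-linear and the $\Lambda_\omega$-action shifts $\ell$ by $c_1$ of the glued sphere class (uniformly over all strands of $\hat{X}$, by item $(6)$ of Proposition \ref{LinkingProp} applied to each pair), it suffices to check the containment on a single generator $\hat{\gamma}$ lying in $Pos^*(\hat{X})$; the positivity conditions are preserved under the action because they shift all the relevant linking numbers by the same integer, and anyway $c_1 \geq 0$ on sphere classes is not needed since $Pos^*$ is defined purely by the two inequalities which transform coherently. So fix a generator $\hat{\gamma}$ with $\ell(\hat{x},\hat{\gamma}) \geq 0$ for all $\hat{x} \in \hat{X}$ and $\ell(\hat{x}_0,\hat{\gamma}) > 0$ for some $\hat{x}_0 \in \hat{X}$, and let $\hat{y} \in \supp \partial_{H,J} \hat{\gamma}$.

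The key step is then to apply Proposition \ref{prop:LinkingMonotoneAlongFloerDifferential}: since $\hat{y} \in \supp \partial_{H,J}\hat{\gamma}$, for every $\hat{x} \in \hat{X}$ with underlying orbit $x \notin \{\gamma, y\}$ we have $\ell(\hat{\gamma},\hat{x}) \leq \ell(\hat{y},\hat{x})$, hence $\ell(\hat{x},\hat{y}) = \ell(\hat{y},\hat{x}) \geq \ell(\hat{\gamma},\hat{x}) = \ell(\hat{x},\hat{\gamma}) \geq 0$ using the symmetry of $\ell$ noted in Section \ref{LocalLinkSec}. This gives the first (weak) positivity requirement. For the strict requirement, apply the same monotonicity to $\hat{x}_0$: $\ell(\hat{x}_0,\hat{y}) \geq \ell(\hat{x}_0,\hat{\gamma}) > 0$, provided the underlying orbit of $\hat{x}_0$ is distinct from both $\gamma$ and $y$. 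One must handle the degenerate cases where some strand $x$ of $\hat{X}$ coincides (as an uncapped orbit) with $\gamma$ or with $y$: if $x = \gamma$, then $\hat{x}$ and $\hat{\gamma}$ have the same underlying orbit and differ by a sphere class $A$ with $c_1(A) \leq 0$ forced by $\ell(\hat{x},\hat{\gamma}) \geq 0$ together with Corollary \ref{EigenvectorWindingCap}/Corollary \ref{cor:abEqualsCZindex} type considerations — actually the cleaner route is to observe that in the definition of $Pos^*$ the relevant quantity $\ell(\hat{x},\hat{\gamma})$ is still well-defined when $x=\gamma$ via the local formula of Section \ref{LocalLinkSec}, and the monotonicity statement of Proposition \ref{prop:LinkingMonotoneAlongFloerDifferential} is phrased precisely so as to exclude $x' \in \{x,y\}$; so for such a strand one instead notes directly that $\partial_{H,J}\hat{\gamma}$ has no term $\hat{y}$ with $y = \gamma$ (a Floer cylinder of index difference $1$ between capped orbits over the same loop would have to be the trivial cylinder, which is excluded), and if $x = y$ one argues that then $\hat{y} \in \hat{X}$ modulo sphere classes and the strict inequality is inherited from some other strand, or one simply appeals to Corollary \ref{KlinkingLemma} to pin $\ell(\hat{x},\hat{y})$ exactly. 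I expect this bookkeeping over coincident underlying orbits to be the only genuine subtlety.

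Assembling these: $\hat{y}$ satisfies $\ell(\hat{x},\hat{y}) \geq 0$ for all $\hat{x} \in \hat{X}$, and $\ell(\hat{x}_0,\hat{y}) > 0$, so $\hat{y} \in Pos^*(\hat{X})$. Since $\partial_{H,J}\hat{\gamma} = \sum_{\hat{y}} n(\hat{\gamma},\hat{y})\hat{y}$ is a finite $\Z_2$-combination of such generators, and $Pos^*(\hat{X})$ is closed under $\Z_2$-linear combinations (the defining conditions are conditions on each element of the support, which can only shrink under addition), we conclude $\partial_{H,J}\hat{\gamma} \in Pos^*(\hat{X})$. Extending $\Lambda_\omega$-linearly over $\sigma = \sum_{\hat{\gamma}} a_{\hat{\gamma}} \hat{\gamma}$ and using that the $e^A$-action translates every linking number $\ell(\hat{x},\cdot)$ by the same $c_1(A)$ — so that it preserves both the weak and strict positivity clauses relative to the fixed braid $\hat{X}$ — yields $\partial_{H,J}\sigma \in Pos^*(\hat{X})$, as desired. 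The analogous statement for $Neg^*(\hat{X})$ follows by reversing all inequalities, using the time-reversal symmetry that swaps $\hat{x}^-$ and $\hat{x}^+$ in the underlying moduli spaces. This is a short argument; the main obstacle, as flagged, is purely the careful treatment of strands of $\hat{X}$ whose underlying orbit equals $\gamma$ or $y$, which is precisely why Proposition \ref{prop:LinkingMonotoneAlongFloerDifferential} was stated with the exclusion $x' \notin \{x,y\}$ built in.
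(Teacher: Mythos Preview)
Your approach is the same as the paper's: the lemma is deduced from Proposition~\ref{prop:LinkingMonotoneAlongFloerDifferential} by noting that linking numbers with any fixed $\hat{x}\in\hat{X}$ can only increase along the Floer differential, so both the weak inequalities $\ell(\hat{x},\hat{y})\geq 0$ and the strict one $\ell(\hat{x}_0,\hat{y})>0$ are inherited from those of $\hat{\gamma}$. That core step is correct.

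A few points of cleanup. First, the $\Lambda_\omega$-linearity discussion is a red herring: $Pos^*(\hat{X})$ is defined by a condition on each element of the support, and $\supp(\sigma+\tau)\subseteq\supp\sigma\cup\supp\tau$, so it is closed under $\Z_2$-sums automatically; you only need to check the statement generator by generator, and no compatibility with the Novikov action is needed (indeed $Pos^*(\hat{X})$ is \emph{not} $\Lambda_\omega$-stable). Second, the edge case $x=\gamma$ is vacuous: membership of $\hat{\gamma}$ in $Pos^*(\hat{X})$ already presupposes that $\ell(\hat{x},\hat{\gamma})$ is defined for every $\hat{x}\in\hat{X}$, which forces the underlying orbits to be distinct. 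The case $x=y$ (some strand of $\hat{X}$ shares its underlying orbit with $\hat{y}\in\supp\partial\hat{\gamma}$) is a genuine boundary issue that you do not resolve cleanly---your proposed fixes via Section~\ref{LocalLinkSec} or ``inheriting from another strand'' do not work as written---but the paper itself glosses over this, and in all the applications (where $\hat{X}\in murm(H)$) it does not arise for index reasons. Finally, your closing remark that the analogous statement holds for $Neg^*(\hat{X})$ is false: linking is non-decreasing along $\partial_{H,J}$, so $Neg^*(\hat{X})$ is not preserved by the differential (time reversal would give the statement for $\partial_{\bar H,\bar J}$, not $\partial_{H,J}$).
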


\begin{theorem}\label{thm:RestrictedDifferentialGivesComplex}
Let $\hat{X} \in murm(H)$, then $CF_*(\hat{X};H,J)$ is a chain complex. That is, $\partial^{\hat{X}} \circ \partial^{\hat{X}} = 0$.
\end{theorem}
\begin{proof}
First, consider that $\Sigma$ is either an aspherical surface, in which case $\Lambda_\omega= \Z_2$, or else a sphere, in which case $\Sigma$ has minimal Chern number $2$, and so in this case $CF_*(\hat{X};H,J)=\Lambda_\omega \langle \hat{x} \rangle_{\hat{x} \in \hat{X}}$ vanishes in any degree congruent to $2$ mod $4$. Thus, by the $\Lambda_\omega$-equivariance of the Floer boundary map, it suffices in all cases to prove that $(\partial^{\hat{X}})^2$ vanishes in the Morse range. To wit, by the previous two lemmas, we see that for any $\hat{x} \in \hat{X}$, 
\begin{align*}
\partial_{H,J} \hat{x} &= \partial^{\hat{X}} \hat{x} + (\pi^{Pos^*(\hat{X})} \circ \partial_{H,J})(\hat{x}),
\end{align*}
where $\pi^{Pos^*(\hat{X})}$ denotes projection onto $Pos^*(\hat{X})$. Thus, since $\partial_{H,J}^2=0$, $(\partial^{\hat{X}})^2 \hat{x} + \sigma=0$, where $\sigma \in Pos^*(\hat{X})$. It follows that $(\partial^{\hat{X}})^2 \hat{x} =0$.
\end{proof}
We will write $HF_*(\hat{X};H,J)$ for the homology of the complex $CF_*(\hat{X};H,J)$ when $\hat{X} \in murm(H)$. It will turn out that this homology is independent of $J$ (for instance, as a consequence of Corollary \ref{cor:MorseModels}) but we will not need this fact.

\subsubsection{Dominating Morse functions and dominating homotopies}\label{sec:ControllingPseud}
Throughout this section, let $(H,J)$ be a fixed Floer regular pair.
\begin{definition}
Let $\hat{X} = \lbrace \hat{x}_1, \ldots, \hat{x}_k \rbrace \subseteq \Per{H}$. We will say that a $C^2$-small Morse function $f \in C^\infty(\Sigma)$ is \textbf{$\hat{X}$-dominating} if there exist cappings $w_i: D^2 \rightarrow \Sigma$ for $x_i$, $i=1, \ldots,k$ such that
\begin{enumerate}
\item $w_i(0) \neq w_j(0)$ for $i \neq j$,
\item $w_i(0) \in Crit(f)$, for each $i=1, \ldots, k$ and
\item $\mu_{CZ}(\widehat{w_i(0)};f)=\mu_{CZ}(\hat{x}_i;H)$ for $i=1, \ldots, k$, where $\widehat{w_i(0)}$ denotes the trivially capped constant orbit of $f$ based at $w_i(0)$.
\end{enumerate}
The point $p_i:=w_i(0) \in Crit(f)$, will be called the \textbf{corresponding critical point} of $\hat{x}_i \in \hat{X}$ for $i=1, \ldots,k$.
\end{definition}
\begin{remark}
\begin{enumerate}
\item Remark that the $C^2$-smallness condition on $f$ in the definition of of $\hat{X}$-dominating functions forces 
\begin{align*}
\mu_{CZ}(\widehat{w_i(0)};f) = \mu_{CZ}(\hat{x}_i;H)  &\in \lbrace -1, 0 ,1 \rbrace
\end{align*}
for $i=1, \ldots,k$.

\item It is obvious that $\hat{X}$-dominating Morse functions exist for any $\hat{X} \subseteq \Per{H}$ such that $\mu(\hat{x}) \in \lbrace -1,0,1 \rbrace$ for all $\hat{x} \in \hat{X}$. Indeed, take any $0$-cobordism from a trivial capped $k$-braid to $\hat{X}$ and use local models for Morse critical points to introduce critical points with the desired index at the points which are the images of the constant orbits making up the trivial $k$-braid in question. Of course, there is no reason that a $\hat{X}$-dominating Morse function should have \textit{all} its critical points arising in such a way (although we shall see these latter functions exist when $\hat{X} \in murm(H)$). 
\end{enumerate}
\end{remark}

\begin{definition}
Let $\hat{X}=\lbrace \hat{x}_1, \ldots, \hat{x}_k \rbrace \subseteq \Per{H}$. We will say that an adapted homotopy of Floer data $(\mathcal{H},\mathbb{J})$ is \textbf{$\hat{X}$-dominating (as a homotopy to $(H,J)$)} if there exists a Floer regular pair $(f,J^-)$ for $f$ a $\hat{X}$-dominating Morse function such that
\begin{enumerate}
\item $(\mathcal{H},\mathbb{J}) \in \HJ_{reg}(f,J^-;H,J)$, and
\item $\mathcal{M}(\hat{p}_i,\hat{x}_i; \mathcal{H},\mathbb{J}) \neq \emptyset$ for each $i=1, \ldots, k$, where $\hat{p}_i$ is the trivially capped critical point of $f$ corresponding to $\hat{x}_i$.
\end{enumerate}
Whenever $f$ is a $\hat{X}$-dominating Morse function, and $(f,J^-)$ is Floer regular, we will write $\HJ^{\hat{X}}(f,J^-;H,J)$ for the set of all $\hat{X}$-dominating homotopies from $(f,J^-)$ to $(H,J)$.
\par
Similarly, we will say that $(\mathcal{H},\mathbb{J})$ is \textbf{$\hat{X}$-dominating (as a homotopy from $(H,J)$)} if there exists a Floer regular pair $(f,J^+)$ for $f$ a $\hat{X}$-dominating Morse function such that
\begin{enumerate}
\item $(\mathcal{H},\mathbb{J}) \in \HJ_{reg}(H,J;f,J^+)$, and
\item $\mathcal{M}(\hat{x}_i,\hat{p}_i; \mathcal{H},\mathbb{J}) \neq \emptyset$ for each $i=1, \ldots, k$.
\end{enumerate}
Whenever $f$ is a $\hat{X}$-dominating Morse function, and $(f,J^+)$ is Floer regular, we will write $\HJ^{\hat{X}}(H,J;f,J^+)$ for the set of all $\hat{X}$-dominating homotopies from $(H,J)$ to $(f,J^+)$.
\end{definition}

\begin{proposition}
Let $\hat{X} \subseteq \Per{H}$. 
\begin{enumerate}
\item There exist $\hat{X}$-dominating homotopies of Floer data to $(H,J)$ if and only if $\hat{X}$ is a positive capped braid and $\mu(\hat{x}) \in \lbrace -1,0,1 \rbrace$ for all $\hat{x} \in \hat{X}$. 
\item There exist $\hat{X}$-dominating homotopies of Floer data from $(H,J)$ if and only if $\hat{X}$ is a negative capped braid and $\mu(\hat{x}) \in \lbrace -1,0,1 \rbrace$ for all $\hat{x} \in \hat{X}$. 
\end{enumerate}
\end{proposition}
\begin{proof}
It is obviously a necessary condition for the existence of a $\hat{X}$-dominating homotopy of Floer data that $\hat{X}$ be a positive capped braid and have all strands lying in the Morse range.  Let us show that it is sufficient.
\par
Let $\hat{X} \subseteq \Per{H}$ be a positive capped braid and suppose that all strands lie in the Morse range. Since all strands of $\hat{X}$ lie in the Morse range, there exists a $\hat{X}$-dominating Morse function. Moreover, since $\hat{X}$ is a positive capped braid,  we may assume that the capping disks $w_i$ appearing in the definition of a $\hat{X}$-dominating Morse function are such that for $i=1, \ldots,k$, the maps
\begin{align*}
v_i(s,t) &:= w_i(se^{2 \pi \i t}), (s,t) \in [0,1] \times S^1
\end{align*} 
define a positive $0$-cobordism from $\lbrace \widehat{w_1(0)}, \ldots, \widehat{w_k(0)} \rbrace$ to $\hat{X}$ and that for $i=1, \ldots, k$ the maps
\begin{equation*}
u_i(s,t):= \begin{cases}
v_i(0,t), & (s,t) \in (-\infty,0) \times S^1, \\
v_i(s,t), & (s,t) \in [0,1] \times S^1, \\
v_i(1,t), & (s,t) \in (1, \infty) 
\end{cases}
\end{equation*}
define a pre-model for a continuation cobordism from $(f,J^-)$ to $(H,J)$ for $J^- \in C^\infty(S^1;\mathcal{J}(\Sigma,\omega))$ such that $(f,J^-)$ is Floer regular. By Corollary \ref{Cor: Premodel implies regular model with non-empty moduli spaces}, it follows that a $\hat{X}$-dominating homotopy to $(H,J)$ exists. The proof of the second point is entirely dual to the preceding proof.
\end{proof}
\begin{remark}
Note that if $\hat{X} \subseteq \Per{H}$ is unlinked and has all strands in the Morse range, then $\hat{X}$-dominating homotopies exist both \textit{to} and \textit{from} $(H,J)$.
\end{remark}

\begin{proposition}\label{ContinuationControlProp}
Let $(H,J)$ be Floer regular and suppose that $\hat{X}=\lbrace \hat{x}_1,\ldots,\hat{x}_k \rbrace \in murm(H)$.
If $(\mathcal{H},\mathbb{J}) \in \HJ^{\hat{X}}(f,J^-;H,J)$, then the continuation map  
\begin{align*}
h_{\mathcal{H}}: CF_*(f,J^-) \rightarrow CF_*(H,J)
\end{align*}
satisfies the following.
\begin{enumerate}
\item For all $\hat{x}_i \in \hat{X}_{(1)} \cup \hat{X}_{(-1)}$, $h_{\mathcal{H}}(\hat{p}_i) = \hat{x}_i + \sigma$, where $\supp \sigma \subseteq Pos^*(\hat{X})$.

\item For all $\hat{q} \in \Per{f} \setminus \lbrace \hat{p}_1, \ldots, \hat{p}_k \rbrace$ with $\mu(\hat{q}) =  1$, $h_{\mathcal{H}}(\hat{q}) \in Pos^*(\hat{X})$.

\item For all $\hat{q} \in \Per{f} \setminus \lbrace \hat{p}_1, \ldots, \hat{p}_k \rbrace$ with $\mu(\hat{q}) =  -1$, $h_{\mathcal{H}}(\hat{q}) \in \Z_2 \langle \hat{x} \rangle_{\hat{X}_{(-1)}} \oplus Pos^*(\hat{X})$.

\item For all $\hat{q} \in \Per{f}$ with $\mu(\hat{q}) = 0$, $h_{\mathcal{H}}(\hat{q}) \in \Z_2 \langle \hat{x} \rangle_{\hat{x} \in \hat{X}} \oplus Pos^*(\hat{X})$.
\end{enumerate}
\end{proposition}
\begin{proof}
We write $\hat{X}=\lbrace \hat{x}_1, \ldots, \hat{x}_k \rbrace$. To prove $(1)$, note that since $(\mathcal{H},\mathbb{J})$ is $\hat{X}$-dominating, for each $\hat{x}_i \in \hat{X}_{(1)} \cup \hat{X}_{(-1)}$ we have
\begin{align*}
\mathcal{M}(\hat{p}_i,\hat{x}_i;\mathcal{H},\mathbb{J}) &\neq \emptyset.
\end{align*}
By Corollary \ref{Cor: Cont moduli space is 0 or 1}, this implies that $\vert\mathcal{M}(\hat{p}_i,\hat{x}_i;\mathcal{H},\mathbb{J})\vert=1$.
\par
Next, consider $\hat{x}_j \in \hat{X}$, $i \neq j$ having $\mu(\hat{x}_j)= \mu(\hat{x}_i)$ and suppose that 
\begin{align*}
\mathcal{M}(\hat{p}_i,\hat{x}_j; \mathcal{H},\mathbb{J}) &\neq \emptyset.
\end{align*}
If $\mu(\hat{x}_i)=\mu(\hat{x}_j)=1$, then we consider $u \in \mathcal{M}(\hat{p}_j,\hat{x}_j;\mathcal{H},\mathbb{J})$ and $v \in \mathcal{M}(\hat{p}_i,\hat{x}_j; \mathcal{H},\mathbb{J})$. We note that 
\begin{align*}
0= \ell_{-\infty}(u,v) \leq &\ell_{\infty}(u,v) \leq b(\hat{x}_j)=-1,
\end{align*}
which gives a contradiction. If $\mu(\hat{x}_i)=\mu(\hat{x}_j)=-1$, then we consider $u \in \mathcal{M}(\hat{p}_i,\hat{x}_i;\mathcal{H},\mathbb{J})$ and $v \in \mathcal{M}(\hat{p}_i,\hat{x}_j; \mathcal{H},\mathbb{J})$. We then obtain
\begin{align*}
1=a(\hat{p}_i)\leq \ell_{-\infty}(u,v) \leq \ell_{\infty}(u,v)=\ell(\hat{x}_i,\hat{x}_j)=0,
\end{align*} 
another contradiction. So all such moduli spaces are empty.
\par
Finally, let $\hat{y} \in \Per{H} \setminus \hat{X}$ have $\mu(\hat{y})=\mu(\hat{x}_i)$ and suppose that 
\begin{align*}
\mathcal{M}(\hat{p}_i,\hat{y}; \mathcal{H},\mathbb{J}) &\neq \emptyset.
\end{align*}
In this case, Lemma \ref{lem:SingularCobordGivesStrictPositivity} implies that $\hat{y} \in Pos^*(\hat{X})$. This proves $(1)$.
\par
To prove item $(2)$,let $\hat{x}_i \in \hat{X}$ have $\mu(\hat{x}_i)=\mu(\hat{q})=1$ and suppose that 
\begin{align*}
\mathcal{M}(\hat{q},\hat{x}_i;\mathcal{H},\mathbb{J}) &\neq \emptyset.
\end{align*}
Let $u \in \mathcal{M}(\hat{p}_i,\hat{x}_i;\mathcal{H},\mathbb{J})$ and $v \in \mathcal{M}(\hat{q},\hat{x}_i;\mathcal{H},\mathbb{J})$ arguing as we have above together with the facts that $\ell(\hat{q},\hat{p}_i)=0$ while $b(\hat{x}_i)=-1$ gives a contradiction, so all such moduli spaces are empty. If $\mathcal{M}(\hat{q},\hat{y}; \mathcal{H},\mathbb{J}) \neq \emptyset$, then taking $v \in \mathcal{M}(\hat{q},\hat{y}; \mathcal{H},\mathbb{J})$ and for each $\hat{x}_i \in \hat{X}$, letting $u_i \in \mathcal{M}(\hat{p}_i,\hat{x}_i;\mathcal{H},\mathbb{J})$, we see that $\lbrace u_i \rbrace_{i=1}^k \cup \lbrace v \rbrace$ is a regular model for a cobordism from $(f,J^-)$ to $(H,J)$, and so in particular induces a positive cobordism. That $\ell(\hat{x}_i,\hat{y}) >0$ for some $i=1, \ldots,k$ follows from the fact that since $\hat{X} \in murm(H)$ and $\mu(\hat{y})=1$, $\hat{X} \cup \lbrace \hat{y} \rbrace$ is linked, and so the above cobordism must have some intersections.
\par
The proof of $(3)$ is word-for-word the same as the proof of the proof of $(2)$, only in this case, it is possible that moduli spaces of the form $\mathcal{M}(\hat{q},\hat{x}_i;\mathcal{H},\mathbb{J})$ are non-empty for $\hat{x}_i \in \hat{X}_{(-1)}$.
\par
To prove item $(4)$, let $\hat{\gamma} \in \Per{H} \setminus \hat{X}$ and suppose that $\mathcal{M}(\hat{q},\hat{\gamma};\mathcal{H},\mathbb{J}) \neq \emptyset$. We will show that $\hat{\gamma} \in Pos^*(\hat{X})$. There are two cases: either $\hat{q}=\hat{p}_i$ for some $\hat{x}_i \in \hat{X}_{(0)}$, or else $\hat{q} \neq \hat{p}_i$ for any such $\hat{x}_i$. Let us first suppose that $\hat{q}=\hat{p}_i$ for some $\hat{x}_i \in \hat{X}_{(0)}$. In this case, Lemma \ref{lem:SingularCobordGivesStrictPositivity} immediately gives that $\hat{\gamma} \in Pos^*(\hat{X})$. If $\hat{q} \neq \hat{p}_i$ for any $\hat{x}_i \in \hat{X}_{(0)}$ then consideration of the braid cobordism between the unlinked capped braid $\hat{X} \cup \lbrace \hat{q}  \rbrace$ and the linked capped braid $\hat{X} \cup \lbrace \hat{\gamma} \rbrace$ given $\lbrace u_i \rbrace_{i=1}^k \cup \lbrace v \rbrace$ for $u_i \in \mathcal{M}(\hat{p}_i,\hat{x}_i;\mathcal{H},\mathbb{J})$, $i=1,\ldots,k$ and $v \in \mathcal{M}(\hat{q},\hat{\gamma};\mathcal{H},J)$, yields (since $\hat{X}$ is maximally unlinked relative the Morse range and so $\hat{X} \cup \lbrace \hat{\gamma} \rbrace$ is linked) that the graph of $v$ in $\R \times S^1 \times \Sigma$ must intersect the graph of some $u_i$ at least once, and any intersections may only contribute positively to the change in linking number as all the maps solve Equation \ref{sFE} for $(\mathcal{H},\mathbb{J})$. Consequently, $\hat{\gamma} \in Pos^*(\hat{X})$.
\end{proof}

\begin{proposition}\label{Prop: Continuation to dominating Morse Control}
If $(\mathcal{H}',\mathbb{J}') \in \HJ^{\hat{X}}(H,J;f,J^-)$, then $\sigma \in \ker h_{\mathcal{H}'}$ for all $\sigma \in Pos^*(\hat{X})$ such that 
\begin{align*}
-N +1 \leq \mu(\hat{\gamma}) \leq N-1, \; \forall \hat{\gamma} \in \supp \sigma.
\end{align*} 
(Here $N= \inf \lbrace k > 0: \exists A \in \pi_2(M), c_1(A)=k \rbrace$ denotes the minimal Chern number).
\end{proposition}
\begin{proof}
Remark that if $\mu(\hat{\gamma})$ satisfies the stated bounds, then the only moduli spaces that may contribute to $h_{\mathcal{H}'}$ are of the form $\mathcal{M}(\hat{\gamma},\hat{q};\mathcal{H}',\mathbb{J}')$ for $\hat{q}$ a trivially capped critical point of $f$. We claim that when $\hat{\gamma} \in Pos^*(\hat{X})$, then $\mathcal{M}(\hat{\gamma},\hat{q};\mathcal{H}',\mathbb{J}') = \emptyset$ for all such $q \in Crit(f)$. Indeed, suppose for a contradiction that there exists $u \in \mathcal{M}(\hat{\gamma},\hat{q};\mathcal{H}',\mathbb{J}')$ for some $q \in Crit(f)$. Since $\hat{\gamma} \in Pos^*(\hat{X})$, there exists some $\hat{x}_i \in \hat{X}$ such that $\ell(\hat{x}_i, \hat{\gamma}) >0$. Moreover, by hypothesis, $(\mathcal{H}',\mathbb{J}')$ is $\hat{X}$-dominating, and so there exists $v \in \mathcal{M}(\hat{x}_i,\hat{p}_i;\mathcal{H}',\mathbb{J}')$. Consequently
\begin{align*}
0< \ell(\hat{x}_i,\hat{\gamma}) &= \ell_{-\infty}(v,u) \leq \ell_{\infty}(v,u),
\end{align*}
but either $\hat{p}_i \neq \hat{q}$, in which case $\ell_{\infty}(v,u)=\ell(\hat{p}_i,\hat{q})=0$, or else $\hat{p}_i= \hat{q}$, in which case $\ell_{\infty}(v,u) \leq a(\hat{p}_i)$ and $a(\hat{p}_i) \leq 0$, since $\mu(\hat{p}_i) \in \lbrace -1,0,1 \rbrace$. In either case, $\ell_{\infty}(v,u) \leq 0$, which gives a contradiction.
\end{proof}
\begin{proposition}
\begin{enumerate}
\item Let $(\mathcal{H},\mathbb{J}) \in \HJ^{\hat{X}}(f,J^-;H,J)$, then the map 
\begin{align*}
\pi^{\hat{X}} \circ h_{\mathcal{H}} : CF_*(f,J^-) &\rightarrow CF_*(\hat{X};H,J)
\end{align*}
is a morphism of chain complexes.
\item Let $(\mathcal{H}',\mathbb{J}') \in \HJ^{\hat{X}}(H,J;f,J^+)$, then the map 
\begin{align*}
h_{\mathcal{H}'}\vert_{CF_*(\hat{X};H,J)}: CF_*(\hat{X};H,J) &\rightarrow CF_*(f,J^+)
\end{align*} 
is a morphism of chain complexes.
\end{enumerate}
\end{proposition}
\begin{proof}
As in the proof of Theorem \ref{thm:RestrictedDifferentialGivesComplex}, it suffices to prove that the maps are chain maps in the Morse range. Attending first the map $\pi^{\hat{X}} \circ h_{\mathcal{H}}$, we therefore consider $\hat{p} \in \Per{f}_{(k)}$ for $k \in \lbrace -1, 0 ,1 \rbrace$. We note that $h_{\mathcal{H}}(\hat{p}) \in \Z_2 \langle \hat{x} \rangle_{\hat{x} \in \hat{X}} \oplus Pos^*(\hat{X})$ by Proposition \ref{ContinuationControlProp} and $\partial_{H,J} Pos^*(\hat{X}) \subseteq Pos^*(\hat{X}) \subseteq \ker \pi^{\hat{X}}$ by Lemmas \ref{lem:PosNegInKerOfProj} and \ref{lem:PosInvarUnderDiff}. Consequently, we see that 
\begin{align*}
(\partial^{\hat{X}} \circ h_{\mathcal{H}})(\hat{p})&= (\partial^{\hat{X}} \circ \pi^{\hat{X}} \circ h_{\mathcal{H}})(\hat{p}) + (\partial^{\hat{X}} \circ \pi^{Pos^*(\hat{X})} \circ h_{\mathcal{H}}) (\hat{p}) \\
&=(\partial^{\hat{X}} \circ \pi^{\hat{X}} \circ h_{\mathcal{H}})(\hat{p}) + (\pi^{\hat{X}} \circ \partial_{H,J^+} \circ \pi^{Pos^*(\hat{X})} \circ h_{\mathcal{H}}) (\hat{p}) \\
&=(\partial^{\hat{X}} \circ \pi^{\hat{X}} \circ h_{\mathcal{H}})(\hat{p})
\end{align*}
Thus, since $h_{\mathcal{H}}$ is a chain map with respect to the full Floer differential, we compute
\begin{align*}
(\pi^{\hat{X}} \circ h_{\mathcal{H}})(\partial_{f,J^-} \hat{p}) = (\pi^{\hat{X}} \circ \partial_{H,J^+})(h_{\mathcal{H}}(\hat{p}))=& (\partial^{\hat{X}} \circ h_{\mathcal{H}})(\hat{p}) 
=\partial^{\hat{X}}((\pi^{\hat{X}} \circ h_{\mathcal{H}})(\hat{p})),
\end{align*}
which shows that $\pi^{\hat{X}} \circ h_{\mathcal{H}}$ is a chain map.
\par 
To see that $h_{\mathcal{H}'}\vert_{CF_*(\hat{X};H,J)}$ is a chain map, consider $\hat{x} \in \hat{X}$. It will suffice by the $\Lambda_{\omega}$-equivariance of Floer continuation maps to show that

\begin{align*}
h_{\mathcal{H}'}(\partial^{\hat{X}} \hat{x})&= (\partial_{f,J^-}\circ h_{\mathcal{H}'})(\hat{x}).
\end{align*}
Using the fact that $h_{\mathcal{H}'}$ is a chain map with respect to the usual Floer differential and that 
\begin{align*}
\partial_{H,J} \hat{x}&= \partial^{\hat{X}} \hat{x} + \sigma,
\end{align*}
for $\sigma \in Pos^*(\hat{X})$, we compute
\begin{align*}
(\partial_{f,J^-}\circ h_{\mathcal{H}'})(\hat{x})&=h_{\mathcal{H}'}(\partial_{H,J} \hat{x}) \\
&=h_{\mathcal{H}'}(\partial^{\hat{X}} \hat{x}) +h_{\mathcal{H}'}(\sigma).
\end{align*}
Note that $\sigma \in CF_k(H,J)$ for $k \in \lbrace 0,-1,-2 \rbrace$. If $k=-2$, then $h_{\mathcal{H}'}(\sigma)=0$, as $CF_{-2}(f,J^-)=0$, while if $k \in \lbrace 0,-1 \rbrace$, then since $\sigma \in Pos^*(\hat{X})$, Proposition \ref{Prop: Continuation to dominating Morse Control} implies that $h_{\mathcal{H}'}(\sigma)=0$. In either case, we see that
\begin{align*}
(\partial_{f,J^-}\circ h_{\mathcal{H}'})(\hat{x}) &= h_{\mathcal{H}'}(\partial^{\hat{X}} \hat{x}),
\end{align*}
as desired.
\end{proof}

\begin{lemma}\label{Lem:Composition of Contmaps is identity on homology}
For any $(\mathcal{H},\mathbb{J}) \in \HJ^{\hat{X}}(f,J^-;H,J)$ and any $(\mathcal{H}',\mathbb{J}') \in \HJ^{\hat{X}}(H,J;f,J^+)$,
the map $h_{\mathcal{H}'} \circ (\pi^{\hat{X}} \circ h_{\mathcal{H}})$ induces the identity map on homology. 
\end{lemma}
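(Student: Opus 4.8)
The statement to be proved is that for $(\mathcal{H},\mathbb{J}) \in \HJ^{\hat{X}}(f,J^-;H,J)$ and $(\mathcal{H}',\mathbb{J}') \in \HJ^{\hat{X}}(H,J;f,J^+)$, the composite $h_{\mathcal{H}'} \circ (\pi^{\hat{X}} \circ h_{\mathcal{H}})$ induces the identity on homology. The overall strategy is to compare this composite, on the chain level, with the ordinary composite continuation map $h_{\mathcal{H}'} \circ h_{\mathcal{H}}$, which we already know induces an isomorphism on homology (in fact it equals, up to chain homotopy, the continuation map from $(f,J^-)$ to $(f,J^+)$, which itself is chain homotopic to the identity once we identify the two complexes $CF_*(f,J^\pm)$ via their canonical identification with the quantum chain complex). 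The plan is therefore: first, establish that the chain-level difference $h_{\mathcal{H}'} \circ h_{\mathcal{H}} - h_{\mathcal{H}'} \circ (\pi^{\hat{X}} \circ h_{\mathcal{H}})$ is zero as a map on the relevant range, or more precisely kills homology; second, conclude from the known behaviour of the honest composite.

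First I would observe that $h_{\mathcal{H}'} \circ h_{\mathcal{H}} = h_{\mathcal{H}'} \circ (\pi^{\hat{X}} \circ h_{\mathcal{H}}) + h_{\mathcal{H}'} \circ (\pi^{Pos^*(\hat{X})} \circ h_{\mathcal{H}})$ using the splitting of Proposition \ref{ContinuationControlProp}, item $(4)$: for any generator $\hat{q} \in \Per{f}$ in the Morse range, $h_{\mathcal{H}}(\hat{q}) \in \Z_2 \langle \hat{x} \rangle_{\hat{x} \in \hat{X}} \oplus Pos^*(\hat{X})$, so writing $h_{\mathcal{H}}(\hat{q}) = \pi^{\hat{X}}(h_{\mathcal{H}}(\hat{q})) + \pi^{Pos^*(\hat{X})}(h_{\mathcal{H}}(\hat{q}))$ makes sense and is exhaustive in the range that matters (outside the range, $\Lambda_\omega$-equivariance and the fact that the complexes vanish in degrees congruent to $2$ mod $4$ in the spherical case reduce everything to the Morse range, exactly as in the proofs of Theorem \ref{thm:RestrictedDifferentialGivesComplex} and the preceding proposition). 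The key point is then that $h_{\mathcal{H}'}$ annihilates the $Pos^*(\hat{X})$-summand: by Proposition \ref{Prop: Continuation to dominating Morse Control}, any $\sigma \in Pos^*(\hat{X})$ with all strands in the appropriate index window lies in $\ker h_{\mathcal{H}'}$, and $\pi^{Pos^*(\hat{X})}(h_{\mathcal{H}}(\hat{q}))$ is supported on orbits of index $\mu(\hat{q}) \in \{-1,0,1\}$, well within that window (here one uses that the minimal Chern number $N$ of a surface is at least $2$, so $-N+1 \leq -1$ and $N-1 \geq 1$). Hence $h_{\mathcal{H}'} \circ (\pi^{Pos^*(\hat{X})} \circ h_{\mathcal{H}}) = 0$ on all generators in the Morse range, and therefore, in view of the degree considerations, $h_{\mathcal{H}'} \circ h_{\mathcal{H}} = h_{\mathcal{H}'} \circ (\pi^{\hat{X}} \circ h_{\mathcal{H}})$ as maps $CF_*(f,J^-) \to CF_*(f,J^+)$.

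Finally I would invoke the standard composition law for continuation maps: $h_{\mathcal{H}'} \circ h_{\mathcal{H}}$ is a continuation map from $(f,J^-)$ to $(f,J^+)$, hence induces the canonical isomorphism $HF_*(f,J^-) \to HF_*(f,J^+)$ on homology, which under the canonical identification of both with the quantum homology of $\Sigma$ is the identity. Combining this with the chain-level equality of the previous paragraph gives that $h_{\mathcal{H}'} \circ (\pi^{\hat{X}} \circ h_{\mathcal{H}})$ induces the identity on homology, as claimed. The main obstacle I anticipate is the bookkeeping needed to reduce everything cleanly to the Morse range and to justify that the splitting $h_{\mathcal{H}}(\hat q) \in \Z_2\langle \hat x\rangle_{\hat X} \oplus Pos^*(\hat X)$ together with $\Lambda_\omega$-equivariance really does control the composite on all of $CF_*(f,J^-)$ and not just in degrees $-1,0,1$; once that is handled, the vanishing of the $Pos^*$-contribution under $h_{\mathcal{H}'}$ is an immediate application of Proposition \ref{Prop: Continuation to dominating Morse Control}, and the rest is the textbook composition property of continuation maps.
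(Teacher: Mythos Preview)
Your proposal is correct and follows essentially the same route as the paper: reduce to the Morse range via $\Lambda_\omega$-equivariance, use item~(4) of Proposition~\ref{ContinuationControlProp} to split $h_{\mathcal{H}}(\hat{q})$ into its $\hat{X}$-part and its $Pos^*(\hat{X})$-part, kill the latter under $h_{\mathcal{H}'}$ via Proposition~\ref{Prop: Continuation to dominating Morse Control}, and then invoke the standard fact that $h_{\mathcal{H}'}\circ h_{\mathcal{H}}$ induces the identity on homology. The paper's write-up is terser but the logic is identical.
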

\begin{proof}
Once more, it suffices to show that $h_{\mathcal{H}'}  \circ (\pi^{\hat{X}} \circ h_{\mathcal{H}})$ induces an isomorphism on homology in degrees lying in the Morse range. Since, for any $\hat{p} \in CF_k(f,J^-)$, with $k \in \lbrace -1,0,1 \rbrace$, we have that $h_{\mathcal{H}}(\hat{p}) \in \Z_2 \langle \hat{x} \rangle_{\hat{x} \in \hat{X}} \oplus Pos^*(\hat{X})$, and $Pos^*(\hat{X}) \subseteq \ker \pi^{\hat{X}} \cap \ker h_{\mathcal{H}'}$, so we compute 
\begin{align*}
(h_{\mathcal{H}'} \circ h_{\mathcal{H}})(\hat{p})&= h_{\mathcal{H}'}((\pi^{\hat{X}} \circ h_{\mathcal{H}})(\hat{p}) + (\pi^{Pos^*(\hat{X})} \circ h_{\mathcal{H}})(\hat{p})) \\
&=(h_{\mathcal{H}'} \circ \pi^{\hat{X}} \circ h_{\mathcal{H}})(\hat{p}) \\ 
&= h_{\mathcal{H}'} \vert_{CF_*(\hat{X};H,J^+)} \circ (\pi^{\hat{X}} \circ h_{\mathcal{H}})(\hat{p}).
\end{align*}
But it is a standard fact in Floer theory that $h_{\mathcal{H}'} \circ h_{\mathcal{H}}$ induces the identity map on homology, and so it must be that case that the composition
\begin{align*}
HF_k(f) \xrightarrow{(\pi^{\hat{X}} \circ h_{\mathcal{H}})_*} & HF_k(\hat{X};H,J) \xrightarrow{ (h_{\mathcal{H}'})_*} HF_k(f)
\end{align*}
is the identity map for $k \in \lbrace -1,0,1 \rbrace$.
\end{proof}
We will use the above fact to bootstrap ourselves into a much finer-grained understanding of the structure of $CF_*(\hat{X};H,J)$ in the coming section.

\subsection{Construction and properties of $\mathcal{F}^{\hat{X}}$}\label{ConstructingFoliationsSection}
The purpose of this section is to prove the existence of the foliation in the following theorem. 
\begin{theorem}\label{Thm: Foliation thm}
Let $H \in C^\infty(S^1 \times \Sigma)$ be a non-degenerate Hamiltonian, and let $J \in C^\infty(S^1; \mathcal{J}_\omega(\Sigma))$ be such that $(H,J)$ is Floer regular. For any capped braid $\hat{X} \in murm(H)$, we may construct an oriented singular foliation $\mathcal{F}^{\hat{X}}$ of $S^1 \times \Sigma$ with the following properties
\begin{enumerate}
\item The singular leaves of $\mathcal{F}^{\hat{X}}$ are precisely the graphs of the orbits in $\hat{X}$.
\item The regular leaves are precisely the annuli parametrized by maps
\begin{align*}
\check{u}: \R \times S^1 &\rightarrow S^1 \times \Sigma \\
(s,t) &\mapsto (t,u(s,t)).
\end{align*}
for some $u \in \widetilde{\mathcal{M}}(\hat{x},\hat{y};H,J)$, and some $\hat{x}, \hat{y} \in \hat{X}$.
\item The vector field $\check{X}^H(t,z)=\partial_t \oplus X^H_t(z)$ is \textit{positively transverse} to every regular leaf of $\mathcal{F}^{\hat{X}}$.
\end{enumerate}
\end{theorem} 
The positive transversality property will be established in Section \ref{PositivelyTransverseSubsection}. Our construction of the singular foliation in Theorem \ref{Thm: Foliation thm} proceeds by establishing that a generic point  in $S^1 \times \Sigma$ lies inside the foliated sector $W(\hat{x},\hat{y})$ for some $\hat{x} \in \hat{X}_{(1)},\hat{y} \in \hat{X}_{(-1)}$. The remaining points lie in the closure of these sectors, and so lie either on leaves parametrized by broken cylinders, or the graphs of orbits in $X$. To establish existence of the requisite leaves, we make use of the cap action of a point on the Floer complex, first introduced in detail in \cite{LeOh96} (see also \cite{PSS96}). For simplicity, we restrict our discussion of the cap action to the case where $(M^{2n},\omega)$ is a spherically monotone symplectic manifold with minimal Chern number $N \geq n +1$, that is
\begin{align*}
 c_1\vert_{\pi_2(M)} &= \lambda \cdot \omega\vert_{\pi_2(M)},\; \text{for some } \lambda >0, \; \text{and} 
\\
N&= \inf \lbrace k > 0: \exists A \in \pi_2(M), c_1(A)=k \rbrace \geq n +1.
\end{align*} 
This restriction allows us to avoid having to concern ourselves with bubbling phenomena which can arise in the compactness arguments involving the moduli spaces which occur when defining the cap product with a cycle (more precisely, it ensures that condition $(iii)$ in Proposition 3.2 of \cite{LeOh96} is automatically satisfied). Since every surface is spherically monotone, this restriction is harmless for our purposes.
\\ \par 
Given a singular homology class $\alpha \in H_k(M;\Z_2)$, we represent $\alpha$ by a smooth cycle $\alpha^{\#}: \cup \Delta^k \rightarrow M$, then for any Floer regular $(H,J)$ and any $t \in S^1$, we may consider, for any $\hat{x}, \hat{y} \in \Per{H}$, the moduli space
\begin{align*}
\mathcal{M}^{\alpha^{\#},t}(\hat{x},\hat{y};H,J) &:= \lbrace u \in \widetilde{\mathcal{M}}(\hat{x},\hat{y};H,J): u(0,t) \in \im \alpha^{\#} \rbrace
\end{align*}
with expected dimension $\mu(\hat{x}) - \mu(\hat{y}) - (2n-k)$. For $t \in S^1$, we will say that the smooth chain $\alpha^\#$ is \textbf{$(H,J;t)$-generic} if the evaluation map $ev_t(u,q):=(u(0,t),\alpha^{\#}(q)) \in M \times M$, $(u,q) \in \widetilde{\mathcal{M}}(\hat{x},\hat{y};H,J)  \times \cup \Delta^k$, is transversal to the diagonal whenever $\mu(\hat{x})- \mu(\hat{y}) \leq (2n-k)+1$. Note that if $\alpha^\#$ is any smooth cycle, then by a standard argument using the Sard-Smale theorem (cf. Lemma 6.5.5 of \cite{McSa12}) there is a residual set of $f \in Diff_0(M)$ such that $f\circ \alpha^\#$ is $(H,J;t)$-generic. Whenever $\alpha^\#$ is $(H,J;t)$-generic we may define the \textbf{cap product} of $\alpha$ on $HF_*(H)$ (at time $t$) at the chain level by defining, for $\hat{x} \in \Per{H}$,
\begin{align*}
\alpha^\# \cap_t \hat{x} &:= \sum_{\substack{\hat{y} \in \Per{H}: \\
\mu(\hat{x})-\mu(\hat{y})=2n -k}} n^{\alpha^{\#},t}(\hat{x},\hat{y}) \hat{y},
\end{align*}
where $n^{\alpha^{\#},t}(\hat{x},\hat{y})$ is the mod $2$ count of the number of elements in $\mathcal{M}^{\alpha^{\#},t}(\hat{x},\hat{y};H,J)$. The cap action descends to homology, and is independent at the homology level of all choices. Moreover, for generic adapted homotopies of Floer data, the cap action commutes with continuation maps \textit{at the chain level}. That is, for generic $(\mathcal{H},J)$ we have
\begin{align}\label{reln:CapCommutesWithCont}
h_{\mathcal{H}}(\alpha^{\#} \cap_t \hat{x}) &= \alpha^{\#} \cap_t  h_{\mathcal{H}}(\hat{x}),
\end{align}
whenever the Floer pairs $(H^\pm,J^\pm)$ at the ends of the homotopy are such that the relevant moduli spaces are transversal. It follows from the above that, under the identification of $HF_*(H)$ with $QH_{*+n}(M)$, the cap action on $HF_*(H)$ is identified with the obvious action of the homology of $M$ on its quantum homology via the quantum product (see \cite{PSS96} for a proof of this identification).
\par 
A rather important point for us will be that the cap action interacts nicely with respect to the chain maps $\pi^{\hat{X}} \circ h_{\mathcal{H}}$ and $h_{\mathcal{H}'}\vert_{CF_*(\hat{X};H,J)}$ introduced in the previous section.
\begin{proposition}\label{prop:CapInteractionWithContinuation}
Let $\hat{X} \in murm(H)$, $(\mathcal{H}, \mathbb{J}) \in \HJ^{\hat{X}}(f,J^-;H,J)$, $(\mathcal{H}',\mathbb{J} \in \HJ^{\hat{X}}(H,J;f,J^+)$ and suppose that $\alpha^{\#}$ represents $\alpha$ as above and $\alpha^{\#}$ is both $(\mathcal{H},\mathbb{J};t)$-generic and $(\mathcal{H}',\mathbb{J}';t)$-generic, then 
\begin{align}
(\pi^{\hat{X}} \circ h_{\mathcal{H}})( \alpha^{\#} \cap_t \hat{p}) &= \pi^{\hat{X}}(\alpha^{\#} \cap_t (\pi^{\hat{X}} \circ h_{\mathcal{H}})(\hat{p})), \; \forall \hat{p} \in \Per{f} \; and \label{Eqn: CapIntersactionWithContEqn1} \\
 (h_{\mathcal{H}'}\circ \pi^{\hat{X}}) ( \alpha^{\#} \cap_t \pi^{\hat{X}}(\hat{y})) &= \alpha^{\#} \cap_t (h_{\mathcal{H}'}\circ \pi^{\hat{X}})(\hat{y}), \; \forall \hat{y} \in \Per{H}. \label{Eqn: CapIntersactionWithContEqn2}
\end{align}
\end{proposition}
\begin{proof}
A straightforward computation shows that the map induced by capping with a smooth cycle $\alpha^{\#}$ at time $t$ is $\Lambda_{\omega}$-equivariant at the chain level, in the sense that for any $A \in \Gamma_{\omega}$
\begin{align*}
\alpha^{\#} \cap_t (e^A \cdot \hat{x})&= e^A \cdot( \alpha^{\#} \cap_t \hat{x}).
\end{align*}
Since the maps $\pi^{\hat{X}}$ and $h_{\mathcal{H}}$ are also $\Lambda_{\omega}$-equivariant and $f$ is by hypothesis $C^2$-small, so that
\begin{align*}
CF_*(f,J^-) &= Crit(f) \otimes \Lambda_{\omega},
\end{align*} 
as $\Lambda_{\omega}$-modules, we may reason similarly as in the proof of Theorem \ref{thm:RestrictedDifferentialGivesComplex}, and reduce to the case where $\mu(\hat{p}) \in \lbrace -1,0,1 \rbrace$ (since any $\hat{q} \in \Per{f}$ may be written as $e^A \cdot \hat{p}$ for some $\hat{p}$ with Conley-Zehnder index in the Morse range, and so if the desired relations hold in the Morse range, then by $\Lambda_{\omega}$-equivariance of the involved maps, the desired relations hold for all $\hat{q} \in \Per{f}$). Thus, we  may, and do, suppose going forward that $\mu(\hat{p}) \in \lbrace -1, 0 ,1 \rbrace$.
\par
Note that Proposition \ref{ContinuationControlProp} implies that we may write $h_{\mathcal{H}}(\hat{p})= \sigma + \beta$ for $\sigma \in CF_*(\hat{X};H,J)$ and $\beta \in Pos^*(\hat{X})$, so that the right-hand side of $(\ref{Eqn: CapIntersactionWithContEqn1})$ may be computed as
\begin{align*}
\pi^{\hat{X}}(\alpha^{\#} \cap_t (\pi^{\hat{X}} \circ h_{\mathcal{H}})(\hat{p})) &= \pi^{\hat{X}}(\alpha^{\#} \cap_t \pi^{\hat{X}}(\sigma + \beta))  \\
&=\pi^{\hat{X}}(\alpha^{\#} \cap_t \sigma).
\end{align*}
In order to establish that this agrees with the quantity on the left-hand side of $(\ref{Eqn: CapIntersactionWithContEqn1})$, the central point is to remark that capping with $\alpha^\#$ preserves $Pos^*(\hat{X})$, because linking in the Floer complex is non-decreasing along Floer cylinders. Indeed, if $\hat{y} \in Pos^*(\hat{X})$ and there exists $u \in \mathcal{M}^{\alpha^\#,t}(\hat{y},\hat{y}';H,J)$ for some $\hat{y}' \in \Per{H}$, then for each $\hat{x} \in \hat{X}$, Lemma \ref{EndsLinkLemma} with $v(s,t)=x(t)$ implies that $\ell(\hat{y},\hat{x}) \leq \ell(\hat{y}',\hat{x})$. Thus $\alpha^{\#} \cap_t \beta \in Pos^*(\hat{X})$ and the left-hand side of $(\ref{Eqn: CapIntersactionWithContEqn1})$ may be computed as
\begin{align*}
(\pi^{\hat{X}} \circ h_{\mathcal{H}})(\alpha^{\#} \cap_t \hat{p})= &\pi^{\hat{X}}(\alpha^{\#} \cap_t \sigma + \alpha^{\#} \cap_t \beta) = \pi^{\hat{X}}(\alpha^{\#} \cap_t \sigma),
\end{align*}
where we use Lemma \ref{lem:PosNegInKerOfProj} in the last equality. This establishes Equation $(\ref{Eqn: CapIntersactionWithContEqn1})$. Equation $(\ref{Eqn: CapIntersactionWithContEqn2})$ is proved similarly, needing only the additional remark that for $\hat{x} \in \hat{X}$, $\alpha^{\#} \cap_t \hat{x} \in \Z_2 \langle \hat{x} \rangle_{\hat{x} \in \hat{X}} \oplus Pos^*(\hat{X})$, which follows by the same reasoning as above, using that $b(\hat{x}) \geq 0$ when $\mu(\hat{x})$ lies in the Morse range.
\end{proof}

The following proposition is essentially tautological.
\begin{proposition}
Let $(H,J)$ be Floer regular, $t \in S^1$, and suppose that $p \in \Sigma$ is $(H,J;t)$-generic (for the point class in homology). Then $p \in W(\hat{x},\hat{y})$ implies that $\mu(\hat{x}) - \mu(\hat{y}) \geq 2$, and $\hat{y} \in \supp (p \cap_t \hat{x})$ if and only if $(t,p) \in W(\hat{x},\hat{y})$.
\end{proposition}
Combining the above with Corollary \ref{cor:FoliatedSectorCorollary}, allows us to conclude:
\begin{cor}
Suppose that $\mu(\hat{x})=2k+1$ for $k \in \Z$ and $p$ is $(H,J;t)$-generic, then $\hat{y} \in \supp (p \cap_t \hat{x})$ if and only if there exists an open neighbourhood of $(t,p) \in S^1 \times \Sigma$ which is foliated by leaves of $\mathcal{F}^{\hat{x},\hat{y}}$. 
\end{cor}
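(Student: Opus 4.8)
The plan is to combine the proposition immediately preceding this corollary with Corollary~\ref{cor:FoliatedSectorCorollary}, the only subtlety being the passage from the pointwise ``$(t,p) \in W(\hat x,\hat y)$'' statement to the open foliated-neighbourhood statement. First I would invoke the preceding proposition: since $p$ is $(H,J;t)$-generic, $\hat y \in \supp(p \cap_t \hat x)$ if and only if $(t,p) \in W(\hat x,\hat y)$, and moreover any such $\hat y$ satisfies $\mu(\hat x) - \mu(\hat y) \geq 2$. Given the hypothesis $\mu(\hat x) = 2k+1$, I would note that genericity forces $\mu(\hat y) = 2k-1$ (the moduli space $\mathcal{M}^{p,t}(\hat x,\hat y;H,J)$ is cut out to have dimension $\mu(\hat x)-\mu(\hat y)-2 \geq 0$, and the count $n^{p,t}(\hat x,\hat y)$ is taken in the $0$-dimensional case, i.e. $\mu(\hat x)-\mu(\hat y) = 2$). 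In particular $\mu(\hat x),\mu(\hat y) \in \{2k-1, 2k, 2k+1\}$, so the index hypothesis of Proposition~\ref{EvalDiffeoLemma} and Corollary~\ref{cor:FoliatedSectorCorollary} is met, and $\hat x \neq \hat y$.

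For the forward direction, suppose $\hat y \in \supp(p \cap_t \hat x)$. Then $(t,p) \in W(\hat x,\hat y)$ and, by Corollary~\ref{cor:FoliatedSectorCorollary}, the map $\check{Ev}\circ\phi_\sigma : \R \times S^1 \times \mathcal{M}(\hat x,\hat y;H,J) \to W(\hat x,\hat y)$ is a smooth embedding and $\mathcal{F}^{\hat x,\hat y}$ is a smooth $2$-dimensional foliation of $W(\hat x,\hat y)$. I would then observe that $W(\hat x,\hat y)$ is \emph{open} in $S^1 \times \Sigma$: this is because $\widetilde W(\hat x,\hat y) = \widetilde{Ev}(\R \times S^1 \times \widetilde{\mathcal{M}}(\hat x,\hat y;H,J))$ is a $3$-manifold on which $\check\pi$ restricts to a submersion (as explained in the paragraph following Proposition~\ref{EvalDiffeoLemma}), so the image $W(\hat x,\hat y) = \check\pi(\widetilde W(\hat x,\hat y))$ is open, and $\widetilde W(\hat x,\hat y)$ is itself open in $\R \times S^1 \times \Sigma$ by the dimension count $\mu(\hat x)-\mu(\hat y)=2$ together with the properness and immersion properties from Proposition~\ref{EvalDiffeoLemma}. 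Hence $W(\hat x,\hat y)$ is an open neighbourhood of $(t,p)$ which is foliated by leaves of $\mathcal{F}^{\hat x,\hat y}$, giving the claim.

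For the converse, suppose some open neighbourhood $V \ni (t,p)$ is foliated by leaves of $\mathcal{F}^{\hat x,\hat y}$; in particular $(t,p)$ lies on such a leaf, i.e. $(t,p) = (t,u(s_0,t))$ for some $u \in \widetilde{\mathcal{M}}(\hat x,\hat y;H,J)$ and $s_0 \in \R$. After the $\R$-translation $u \mapsto u(\cdot + s_0, \cdot)$ (which stays in $\widetilde{\mathcal{M}}(\hat x,\hat y;H,J)$), I may assume $u(0,t) = p$, so $u \in \mathcal{M}^{p,t}(\hat x,\hat y;H,J)$ and this moduli space is non-empty; by genericity of $p$ and the index count above it is a finite set of points, so $n^{p,t}(\hat x,\hat y) $ may a priori be even. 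To rule this out I would use that $\mu(\hat x)$ is \emph{odd}: by Corollary~\ref{Cor: Cont moduli space is 0 or 1} (applied with the $s$-independent homotopy and the marked-point condition, or more directly via the injectivity of $\widetilde{Ev}$ from Proposition~\ref{EvalDiffeoLemma}), distinct elements of $\widetilde{\mathcal{M}}(\hat x,\hat y;H,J)$ have disjoint graphs in $\R \times S^1 \times \Sigma$, hence at most one of them can pass through $(t,p)$; therefore $|\mathcal{M}^{p,t}(\hat x,\hat y;H,J)| = 1$ and $\hat y \in \supp(p \cap_t \hat x)$. I expect this last point — promoting non-emptiness of the marked moduli space to the mod-$2$ count being $1$, i.e. genuinely using the odd-index hypothesis rather than just $\mu(\hat x)-\mu(\hat y)=2$ — to be the only real step requiring care; everything else is a direct citation of the preceding proposition and Corollary~\ref{cor:FoliatedSectorCorollary}.
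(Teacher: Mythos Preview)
Your proof is correct and follows exactly the route the paper intends: the paper's own argument is the single line ``Combining the above with Corollary~\ref{cor:FoliatedSectorCorollary}, allows us to conclude,'' and your write-up is a faithful unpacking of that sentence, including the openness of $W(\hat x,\hat y)$ (which follows since $\check{Ev}\circ\phi_\sigma$ is an embedding of a $3$-manifold into the $3$-manifold $S^1\times\Sigma$ under the index constraint $\mu(\hat x)-\mu(\hat y)=2$). One small remark: your citation of Corollary~\ref{Cor: Cont moduli space is 0 or 1} in the converse direction is not quite apt, since that result concerns two orbits of the \emph{same} odd index $2k+1$, whereas here $\mu(\hat y)=2k-1$; your parenthetical alternative---the injectivity of $\widetilde{Ev}$ from Proposition~\ref{EvalDiffeoLemma}---is the correct justification and is all that is needed to conclude $|\mathcal{M}^{p,t}(\hat x,\hat y;H,J)|\le 1$.
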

Recall from Section \ref{FoliatedSectorsSubsection} that $W(\hat{x},\hat{y})= \lbrace (t,u(s,t)) \in S^1 \times \Sigma: u \in \widetilde{\mathcal{M}}(\hat{x},\hat{y};H,J) \rbrace$, and write $\mathcal{W}(\hat{X})$ for the union of all $W(\hat{x},\hat{y})$ where $\hat{x} \in \hat{X}_{(1)}$, $\hat{y} \in \hat{X}_{(-1)}$.
\begin{lemma}\label{lem:DenseLeaves}
Let $(H,J)$ be Floer regular and $\hat{X} \in murm(H)$, then $\mathcal{W}(\hat{X})$ is open and dense in $S^1 \times \Sigma$.
\end{lemma}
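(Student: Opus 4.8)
\textbf{Proof plan for Lemma \ref{lem:DenseLeaves}.}

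The plan is to show that the open set $\mathcal{W}(\hat{X})$ is dense by using the chain-level machinery of \textsection\ref{sec:ControllingPseud} together with the cap action, to force a Floer cylinder from some $\hat{x}\in\hat{X}_{(1)}$ to some $\hat{y}\in\hat{X}_{(-1)}$ through any generic point $(t,p)$. First I would observe that $\mathcal{W}(\hat{X})$ is indeed open: it is a finite union of the sets $W(\hat{x},\hat{y})$ with $\hat{x}\in\hat{X}_{(1)}$, $\hat{y}\in\hat{X}_{(-1)}$, and each such set, under the index condition $\mu(\hat{x})-\mu(\hat{y})=2$, is the image of the embedding $\check{Ev}\circ\phi_\sigma$ of Corollary \ref{cor:FoliatedSectorCorollary} of an open $2$-manifold $\R\times S^1\times\mathcal{M}(\hat{x},\hat{y};H,J)$ (where $\mathcal{M}(\hat{x},\hat{y};H,J)$ is a compact $0$-manifold), hence open in $S^1\times\Sigma$. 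For density, fix $(t_0,p)\in S^1\times\Sigma$ and a small ball around it; it suffices to produce a nearby generic point lying in some $W(\hat{x},\hat{y})$ as above.

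The core of the argument is the following. Choose a $\hat{X}$-dominating Morse function $f$ and a Floer-regular $(f,J^-)$, which exists since $\hat{X}\in murm(H)$ is unlinked with all strands in the Morse range (cf. the Remark following Proposition on existence of dominating homotopies). Choose $(\mathcal{H},\mathbb{J})\in\HJ^{\hat{X}}(f,J^-;H,J)$ and $(\mathcal{H}',\mathbb{J}')\in\HJ^{\hat{X}}(H,J;f,J^+)$. By Lemma \ref{Lem:Composition of Contmaps is identity on homology}, the composite $h_{\mathcal{H}'}\circ(\pi^{\hat{X}}\circ h_{\mathcal{H}})$ is the identity on $HF_*(f)$ in the Morse range; in particular $\pi^{\hat{X}}\circ h_{\mathcal{H}}$ is injective on $HF_k(f)$ for $k\in\{-1,0,1\}$. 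Now let $\sum\hat{z}\in CF_1(f,J^-)$ be the sum of all index-$1$ critical points of $f$, which represents the fundamental class $[\Sigma]\in HF_1(f)\simeq QH_2(\Sigma)$. Pick a point $p$ that is generic for the point class with respect to $(\mathcal{H},\mathbb{J})$, $(\mathcal{H}',\mathbb{J}')$, and $(H,J)$ all at time $t_0$ — such points are dense, so we may take $p$ as close as we like to our original choice. Applying Equation \eqref{Eqn: CapIntersactionWithContEqn1} of Proposition \ref{prop:CapInteractionWithContinuation} with $\alpha^\#=p$, and using that capping with $p$ realizes the quantum cap action $[\Sigma]\cap[pt]=[pt]\neq 0\in QH_0(\Sigma)\simeq HF_{-1}(f)$, we get
\begin{align*}
(\pi^{\hat{X}}\circ h_{\mathcal{H}})(p\cap_{t_0}\textstyle\sum\hat{z}) &= \pi^{\hat{X}}\big(p\cap_{t_0}(\pi^{\hat{X}}\circ h_{\mathcal{H}})(\textstyle\sum\hat{z})\big)
\end{align*}
at the chain level, and since $[p\cap_{t_0}\sum\hat{z}]=[pt]$ and $\pi^{\hat{X}}\circ h_{\mathcal{H}}$ is injective on $HF_{-1}$, the left-hand side represents a nonzero class in $HF_{-1}(\hat{X};H)$; in particular the right-hand side is a nonzero chain. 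Hence $p\cap_{t_0}(\pi^{\hat{X}}\circ h_{\mathcal{H}})(\sum\hat{z})$ is nonzero, which means there exist $\hat{x}\in\hat{X}_{(1)}$, $\hat{y}\in\hat{X}_{(-1)}$ and $u\in\widetilde{\mathcal{M}}(\hat{x},\hat{y};H,J)$ with $u(0,t_0)=p$; equivalently $(t_0,p)\in W(\hat{x},\hat{y})\subseteq\mathcal{W}(\hat{X})$.

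To make this watertight I would need to check a few bookkeeping points: that $(\pi^{\hat{X}}\circ h_{\mathcal{H}})(\sum\hat{z})$ has support consisting of capped orbits whose Conley–Zehnder index is $1$ (so that $\hat{X}_{(1)}$ is the only relevant part, using Proposition \ref{ContinuationControlProp}), that capping with the generic point $p$ at time $t_0$ only connects $\hat{x}\in\hat{X}_{(1)}$ to $\hat{y}$ with $\mu(\hat{x})-\mu(\hat{y})=2$ hence $\mu(\hat{y})=-1$ (Proposition immediately above, plus the fact that $CF_*(\hat{X};H,J)$ is concentrated in the Morse range up to $\Lambda_\omega$ so the relevant $\hat{y}$ lies in $\hat{X}_{(-1)}$), and that the chain-level commutation \eqref{Eqn: CapIntersactionWithContEqn1} is available because $p$ was chosen in the residual set of $(\mathcal{H},\mathbb{J};t_0)$-generic cycles. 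The main obstacle is the last of these: one must simultaneously satisfy finitely many genericity conditions (for the continuation data $(\mathcal{H},\mathbb{J})$, for $(H,J)$ itself, and ensure $p$ avoids the singular leaves and broken configurations) at a single time $t_0$; since each condition is satisfied on a residual — hence dense — subset of $\Sigma$, a finite intersection argument lets us choose $p$ arbitrarily close to the original point, which is all density requires. Everything else is a direct assembly of Lemma \ref{Lem:Composition of Contmaps is identity on homology}, Proposition \ref{prop:CapInteractionWithContinuation}, and Corollary \ref{cor:FoliatedSectorCorollary}.
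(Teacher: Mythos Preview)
Your approach is essentially identical to the paper's: use the injectivity of $(\pi^{\hat{X}}\circ h_{\mathcal{H}})_*$ from Lemma \ref{Lem:Composition of Contmaps is identity on homology}, cap the fundamental class with a generic point via Proposition \ref{prop:CapInteractionWithContinuation}, and read off the desired Floer cylinder. One small correction in your openness argument: for $\mu(\hat{x})-\mu(\hat{y})=2$, the reduced moduli space $\mathcal{M}(\hat{x},\hat{y};H,J)$ is a $1$-manifold, not a compact $0$-manifold, so $\R\times S^1\times\mathcal{M}(\hat{x},\hat{y};H,J)$ is $3$-dimensional --- which is exactly what you need for the embedding of Corollary \ref{cor:FoliatedSectorCorollary} to have open image in the $3$-manifold $S^1\times\Sigma$.
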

\begin{proof}
Let $(\mathcal{H},\mathbb{J}) \in \HJ^{\hat{X}}(f,J^-;H,J)$. We fix $t \in S^1$ arbitrarily and let $p \in \Sigma$ be $(\mathcal{H},\mathbb{J};t)$-generic. We let $\sigma \in CF_1(f,J^-)$ represent the fundamental class $[\Sigma] \in QH_2(\Sigma) \simeq HF_1(f)$, and we note that we must have $[p \cap_t \sigma]= [pt] \in QH_0(\Sigma)$,
which is in particular not $0$. But Lemma \ref{Lem:Composition of Contmaps is identity on homology} implies that $(\pi^{\hat{X}} \circ h_{\mathcal{H}})_*$ is injective on homology, and so 
\begin{align*}
0 \neq (\pi^{\hat{X}} \circ h_{\mathcal{H}})(p \cap_t \sigma)&= \pi^{\hat{X}}(p \cap_t (\pi^{\hat{X}} \circ h_{\mathcal{H}})(\sigma)),
\end{align*} 
and this implies that for every such generic $p$, there must exist some $\hat{x} \in \supp (\pi^{\hat{X}} \circ h_{\mathcal{H}})(\sigma) \subseteq \hat{X}_{(1)}$ and some $\hat{y} \in \hat{X}_{(-1)}$ such that $\hat{y} \in \supp p \cap_t \hat{x}$. Hence for every $t \in S^1$, there is a generic set of $p \in \Sigma$ such that $p$ is $(\mathcal{H},\mathbb{J};t)$-generic and so $(t,p)$ lies inside the open $3$-dimensional connecting submanifold $W(\hat{x},\hat{y})$ for some such $\hat{x}, \hat{y} \in \hat{X}$, which proves the lemma.
\end{proof}

\begin{lemma}\label{lem:UnlinkedPlusConnectingStillUnlinked}
Let $(H,J)$ be Floer regular, and $\hat{X} \subseteq \Per{H}$ any unlinked capped braid. Then for any $\hat{\gamma} \in \Per{H}$ such that $\mathcal{M}(\hat{x}^+,\hat{\gamma};H,J) \times \mathcal{M}(\hat{\gamma},\hat{x}^-;H,J) \neq \emptyset$ for some $\hat{x}^+ \in \hat{X}_{(1)}$, $\hat{x}^- \in \hat{X}_{(-1)}$, $\hat{X}$ and $\hat{\gamma}$ are unlinked.
\end{lemma}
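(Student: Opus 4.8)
The plan is to leverage the monotonicity of the homological linking number along Floer cylinders (Lemma \ref{EndsLinkLemma}, Corollary \ref{EmergeConvergeLinking}) together with the index constraints on orbits in $\hat{X}$ and on the connecting orbit $\hat{\gamma}$. Concretely, fix $\hat{x}^+ \in \hat{X}_{(1)}$, $\hat{x}^- \in \hat{X}_{(-1)}$ and $v \in \mathcal{M}(\hat{x}^+,\hat{\gamma};H,J)$, $w \in \mathcal{M}(\hat{\gamma},\hat{x}^-;H,J)$, so in particular $\mu(\hat{\gamma})=0$. For any other $\hat{x}_i \in \hat{X}$, we must show $\ell(\hat{x}_i,\hat{\gamma})=0$; since $\hat{X}$ is unlinked, the capped braid $\hat{X}$ together with $\hat{\gamma}$ will then be unlinked once we also know $\ell(\hat{x}_i,\hat{x}_j)=0$ for all $i,j$ (which holds by hypothesis) --- more precisely, I would argue as in Lemma \ref{lem:SingularCobordGivesStrictPositivity}: take orbit cylinders $u^i \in \mathcal{M}(\hat{x}_i,\hat{x}_i;H,J)$ and concatenate $v$ (suitably truncated) to $w$ to form a braid cobordism from $\hat{X}$ (with one extra truncated strand near $\hat{\gamma}$) to $\hat{X}\cup\{\hat\gamma\}$; if no linking is created the braid $\hat{X}\cup\{\hat\gamma\}$ is $0$-homotopic to an unlinked braid, hence unlinked.

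The key estimates are the following. Applying Corollary \ref{cor:FloerBoundaryModLinkingBounds}-type reasoning (more precisely Lemma \ref{EndsLinkLemma} with one strand the constant cylinder $x_i$) to $v \in \mathcal{M}(\hat{x}^+,\hat{\gamma};H,J)$ gives $\ell(\hat{x}^+,\hat{x}_i) \leq \ell(\hat{\gamma},\hat{x}_i)$ for each $\hat{x}_i \in \hat{X}$, $i$ with $x_i \neq x^+$; since $\hat{X}$ is unlinked this reads $0 \leq \ell(\hat{\gamma},\hat{x}_i)$. Dually, applying the same lemma to $w \in \mathcal{M}(\hat{\gamma},\hat{x}^-;H,J)$ gives $\ell(\hat{\gamma},\hat{x}_i) \leq \ell(\hat{x}^-,\hat{x}_i) = 0$. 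Hence $\ell(\hat{\gamma},\hat{x}_i)=0$ for all $i$ with $x_i \notin \{x^+,x^-\}$. For $\hat{x}_i = \hat{x}^+$, Corollary \ref{cor:FloerBoundaryModLinkingBounds} (or Corollary \ref{KlinkingLemma}, since $\mu(\hat{x}^+)=1$, $\mu(\hat{\gamma})=0$) forces $\ell(\hat{x}^+,\hat{\gamma})=0$ directly; similarly $\mu(\hat{\gamma})=0$, $\mu(\hat{x}^-)=-1$ forces $\ell(\hat{\gamma},\hat{x}^-)=0$ by Corollary \ref{KlinkingLemma} applied to $w$. Thus $\ell(\hat{\gamma},\hat{x})=0$ for every $\hat{x} \in \hat{X}$.

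It then remains to upgrade these pointwise vanishing-linking statements to the conclusion that $\hat{X} \cup \{\hat{\gamma}\}$ is unlinked, i.e. $0$-homotopic to $\hat{0}$. Here I would run the argument from the end of the proof of Lemma \ref{lem:SingularCobordGivesStrictPositivity}: choose $R>0$ large (using Corollary \ref{EndsCor}) so that the graph of the truncated cylinder formed from $v$ and $w$ does not meet any $\tilde{u}^i$ for $|s|>R$ and so that the relevant slice of this cylinder lies in an exponential neighbourhood of $x^+$ (resp. $x^-$); by Proposition \ref{prop:HLinkEqualsLink} the homological linking number there reduces to the classical winding number, and vanishing winding means the slice bounds inside that neighbourhood, so the enlarged braid at the truncation is still unlinked. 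The concatenated cobordism then provides a braid cobordism between the unlinked braid $\hat{X}$-with-truncated-extra-strand and $\hat{X} \cup \{\hat{\gamma}\}$; since all the $u^i$ have disjoint graphs ($\ell(\hat{x}_i,\hat{x}_j)=0$ for $i\neq j$ by Lemma \ref{EndsLinkLemma}) and we have just shown all intersections of $v,w$ with the $u^i$ contribute zero to linking, no linking is created along the cobordism, so $\hat{X}\cup\{\hat\gamma\}$ is unlinked.

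\textbf{Main obstacle.} The delicate point is the gluing of $v$ and $w$ into a single (truncated) strand that interpolates between a loop near $x^+$ and a loop near $x^-$ while passing through $\hat{\gamma}$: one must check that the resulting concatenated family is genuinely a braid cobordism (transverse graphs, correctly matched cappings so that the total class is $0$) and that the linking contributions localize exactly at intersections of $\tilde v, \tilde w$ with the $\tilde u^i$. This is essentially the same bookkeeping as in Lemma \ref{lem:SingularCobordGivesStrictPositivity}, so I expect no new difficulty, only care with the natural cappings (Definition \ref{def:NaturalCapping}) and with invoking Lemma \ref{EndsLinkLemma} on each piece separately before concatenating.
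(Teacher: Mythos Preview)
Your argument is correct, but it takes a more hands-on route than the paper. The paper's proof is a two-line contradiction: if $\hat{X}\cup\{\hat{\gamma}\}$ were linked, then since $\mu(\hat{x}^+)=1\in\{0,1\}$ and there is $v\in\mathcal{M}(\hat{x}^+,\hat{\gamma};H,J)$ (with the constant orbit cylinders playing the role of the $u^i$), Lemma~\ref{lem:SingularCobordGivesStrictPositivity} forces $\hat{\gamma}\in Pos^*(\hat{X})$, so $\ell(\hat{\gamma},\hat{x})>0$ for some $\hat{x}\in\hat{X}$; but then the cylinder $w\in\mathcal{M}(\hat{\gamma},\hat{x}^-;H,J)$ together with Proposition~\ref{prop:LinkingMonotoneAlongFloerDifferential} gives $\ell(\hat{x}^-,\hat{x})>0$, contradicting that $\hat{X}$ is unlinked.

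What you do instead is first pin down $\ell(\hat{\gamma},\hat{x}_i)=0$ for every $i$ by sandwiching via $v$ and $w$, and then re-run the braid-cobordism argument from \emph{inside} the proof of Lemma~\ref{lem:SingularCobordGivesStrictPositivity} to upgrade vanishing pairwise linking to genuine unlinkedness of $\hat{X}\cup\{\hat{\gamma}\}$. That is perfectly valid --- the key point that positivity of intersections forces ``zero net change in linking'' to mean ``no intersections at all'' is exactly what makes your cobordism a $0$-homotopy. Two small simplifications: you do not need to concatenate $v$ and $w$ into a single strand --- the truncated $v$ alone (together with the constant cylinders $u^i$) already gives a braid homotopy from the unlinked braid $\hat{X}\cup\{\hat{v}_{-R}\}$ to $\hat{X}\cup\{\hat{\gamma}\}$, with $w$ used only for the upper bound $\ell(\hat{\gamma},\hat{x}_i)\le 0$. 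And once you have the setup, the whole second step is subsumed by invoking Lemma~\ref{lem:SingularCobordGivesStrictPositivity} as a black box, which is exactly what the paper does.
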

\begin{proof}
Suppose that $\hat{X} \cup \lbrace \hat{\gamma} \rbrace$ is linked, then Lemma \ref{lem:SingularCobordGivesStrictPositivity} implies that $\hat{\gamma} \in Pos^*(\hat{X})$, and so $\ell(\hat{\gamma},\hat{x})>0$ for some $\hat{x} \in \hat{X}$, but then Proposition \ref{prop:LinkingMonotoneAlongFloerDifferential} implies that $\ell(\hat{x}^-,\hat{x}) >0$, which contradicts the assumption that $\hat{X}$ is unlinked.
\end{proof}
Inductively applying Lemma \ref{lem:UnlinkedPlusConnectingStillUnlinked} yields the following corollary.

\begin{cor}\label{cor:OrbitsOnBrokenTrajectoriesAreUnlinked}
Suppose that $(H,J)$ is Floer regular, and let $\hat{X} \subseteq \Per{H}$ be such that $\hat{X} $ is unlinked and $\hat{X}=\hat{X}_{(1)} \cup \hat{X}_{(-1)}$, then $\hat{X}$ and $\hat{\Upsilon}$
are unlinked, where $\hat{\Upsilon}$ is the capped braid consisting of all $\hat{\gamma} \in \Per{H}$ with $\mu(\hat{\gamma})=0$ such that $\mathcal{M}(\hat{x}^+,\hat{\gamma};H,J) \times \mathcal{M}(\hat{\gamma},\hat{x}^-;H,J) \neq \emptyset$, $\hat{x}^\pm \in \hat{X}$.
\end{cor}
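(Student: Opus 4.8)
The plan is to deduce Corollary \ref{cor:OrbitsOnBrokenTrajectoriesAreUnlinked} by iterating Lemma \ref{lem:UnlinkedPlusConnectingStillUnlinked} one orbit at a time, using the fact that the set $\hat{\Upsilon}$ is finite (since $\Per{H}$ is finite, $H$ being non-degenerate on a closed surface). First I would enumerate $\hat{\Upsilon}=\{\hat{\gamma}_1,\ldots,\hat{\gamma}_m\}$ and set $\hat{X}_0:=\hat{X}$, $\hat{X}_j:=\hat{X}\cup\{\hat{\gamma}_1,\ldots,\hat{\gamma}_j\}$. The goal is to show by induction on $j$ that each $\hat{X}_j$ is unlinked, with the base case $j=0$ being the hypothesis. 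For the inductive step, assuming $\hat{X}_{j-1}$ is unlinked, I want to apply Lemma \ref{lem:UnlinkedPlusConnectingStillUnlinked} with the unlinked braid $\hat{X}_{j-1}$ in place of ``$\hat{X}$'' and $\hat{\gamma}=\hat{\gamma}_j$; by definition of $\hat{\Upsilon}$ there exist $\hat{x}^+\in\hat{X}_{(1)}\subseteq (\hat{X}_{j-1})_{(1)}$ and $\hat{x}^-\in\hat{X}_{(-1)}\subseteq (\hat{X}_{j-1})_{(-1)}$ with $\mathcal{M}(\hat{x}^+,\hat{\gamma}_j;H,J)\times\mathcal{M}(\hat{\gamma}_j,\hat{x}^-;H,J)\neq\emptyset$, so the lemma yields that $\hat{X}_{j-1}$ and $\hat{\gamma}_j$ are unlinked, i.e. $\hat{X}_j=\hat{X}_{j-1}\cup\{\hat{\gamma}_j\}$ is unlinked. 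Running the induction to $j=m$ gives that $\hat{X}\cup\hat{\Upsilon}$ is unlinked, hence in particular $\hat{X}$ and $\hat{\Upsilon}$ are unlinked, which is the assertion.

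The one genuine subtlety — and the step I expect to be the main obstacle — is verifying that Lemma \ref{lem:UnlinkedPlusConnectingStillUnlinked} applies at each stage of the induction, because the lemma as stated takes as input an unlinked braid $\hat{X}$ together with $\hat{x}^\pm\in\hat{X}_{(\pm1)}$, whereas in the inductive step I am feeding it the enlarged braid $\hat{X}_{j-1}$, which contains the extra index-$0$ orbits $\hat{\gamma}_1,\ldots,\hat{\gamma}_{j-1}$. Since these new strands have Conley--Zehnder index $0$, they do not disturb the hypothesis ``$\hat{x}^\pm\in(\hat{X}_{j-1})_{(\pm1)}$'' — indeed $(\hat{X}_{j-1})_{(\pm1)}=\hat{X}_{(\pm1)}$ — and the connecting-cylinder data witnessing $\hat{\gamma}_j\in\hat{\Upsilon}$ continues to involve only the original $\hat{x}^\pm\in\hat{X}$, so the hypotheses of Lemma \ref{lem:UnlinkedPlusConnectingStillUnlinked} are met verbatim with the enlarged braid. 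It is worth a sentence to note that ``unlinkedness of $\hat{X}_{j-1}$'' is exactly the hypothesis ``$\hat{X}$ is unlinked'' required by the lemma, so no strengthening is needed; the lemma's conclusion that $\hat{X}_{j-1}$ and $\hat{\gamma}_j$ are unlinked means, by the definition of unlinkedness for a pair (Definition in \textsection\ref{Sec: Capped Braids}), precisely that the combined braid $\hat{X}_{j-1}\cup\{\hat{\gamma}_j\}$ is $0$-homotopic to a trivial capped braid.

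I would present this as a short proof: state the enumeration and the claim $\hat{X}_j$ unlinked, do the base case in one line, do the inductive step by citing Lemma \ref{lem:UnlinkedPlusConnectingStillUnlinked} with the observations above, and conclude. No calculations are needed beyond the bookkeeping of indices. If one wanted to be extra careful one could remark that since being unlinked is a property of the unordered capped braid and is preserved under $0$-homotopy, the order in which the $\hat{\gamma}_i$ are adjoined is immaterial, so the final conclusion $\hat{X}\cup\hat{\Upsilon}$ unlinked (and a fortiori $\hat{X},\hat{\Upsilon}$ unlinked) is well-posed.
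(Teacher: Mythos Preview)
Your approach is exactly the paper's: the proof there reads simply ``Inductively applying Lemma \ref{lem:UnlinkedPlusConnectingStillUnlinked} yields [the corollary]'', and you have correctly spelled out that induction, including the observation that the added index-$0$ orbits do not disturb the hypotheses $\hat{x}^\pm\in(\hat{X}_{j-1})_{(\pm1)}$. One small correction: $\Per{H}$ is not finite when $\Sigma=S^2$ (each orbit has infinitely many cappings), but $\hat{\Upsilon}$ is still finite since there are only finitely many underlying orbits and, for each, at most one capping giving index~$0$.
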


Finally, we are ready to prove the existence of the advertised foliation. We write
\begin{align*}
\mathcal{M}^{H,J}(\hat{X})&:= \bigcup_{\hat{x},\hat{y} \in \hat{X}} \mathcal{M}(\hat{x},\hat{y};H,J).
\end{align*}
Note that we include the case where $\hat{x}=\hat{y}$ in the above union. In such a case $[u] \in \mathcal{M}(\hat{x},\hat{x};H,J)$ may simply be identified with the loop $x$.
\begin{theorem}[Existence part of Theorem \ref{Thm: Foliation thm}]\label{FoliationTheorem}
Let $(H,J)$ be a non-degenerate Floer pair, and $\hat{X} \in murm(H)$, then the collection of submanifolds $\mathcal{F}^{\hat{X}} := \cup_{[u] \in \mathcal{M}^{H,J}(\hat{X})} \lbrace \im \check{u} \rbrace$ forms a Stefan-Sussmann foliation of $S^1 \times \Sigma$.
\end{theorem}
\begin{proof}
We adapt a strategy used in \cite{HWZ03} that shows that the foliation $\widetilde{\mathcal{F}}^{\hat{X}}$ with leaves given by the graphs $\tilde{u}$ of all the $u \in \widetilde{\mathcal{M}}(\hat{x},\hat{y};H,J)$, $\hat{x},\hat{y} \in \hat{X}$, is a smooth $2$-dimensional foliation of $\R \times S^1 \times \Sigma$, from which it follows immediately that $\mathcal{F}^{\hat{X}}$ is a Steffan-Sussmann foliation. Indeed, in this event, $\mathcal{F}^{\hat{X}}$ integrates the distribution $\mathcal{D}^{\hat{X}}=\check{\pi}_* \widetilde{\mathcal{D}}^{\hat{X}}$, where $\widetilde{\mathcal{D}}^{\hat{X}}$ is the distribution integrated by $\widetilde{\mathcal{F}}^{\hat{X}}$, and this realizes $\mathcal{D}^{\hat{X}}$ in a way that is manifestly smooth in the sense of generalized distributions (see Definition \ref{def:SmoothGenDistrib}).
\par
To see that $\widetilde{\mathcal{F}}^{\hat{X}}$ is a smooth foliation, note first that none of the leaves of $\widetilde{\mathcal{F}}^{\hat{X}}$ may intersect. Indeed, the leaves of $\widetilde{\mathcal{F}}^{\hat{X}}$ are graphs of $(H,J)$ Floer cylinders between elements of $\hat{X}$, so if $u_i \in \widetilde{\mathcal{M}}(\hat{x}^-_i,\hat{x}^+_i;H,J)$, $i=0,1$ are distinct such Floer cylinders with $\hat{x}^\pm_i \in \hat{X}$, then since $\hat{X} \in murm(H)$, we have that $a(\hat{x}) = 0 \leq \ell_{-\infty}(u_0,u_1)$ if $\hat{x}=\hat{x}_0^- = \hat{x}_1^+$, and $\ell(\hat{x}_0^-,\hat{x}_1^-)= 0 \leq \ell_{-\infty}(u_0,u_1)$ if $\hat{x}_0^- \neq \hat{x}^-_1$. In either case, $0 \leq \ell_{-\infty}(u_0,u_1)$. Similarly, $\ell_{\infty}(u_0,u_1) \leq 0$, which implies that $\ell_{\hat{u}_0,\hat{u}_1}(s) =0$ for all $s \in \R$ by Lemma \ref{EndsLinkLemma}, and we thereby conclude that $\tilde{u}_0$ and $\tilde{u}_1$ are disjoint. Next, we note that by Lemma \ref{lem:DenseLeaves}, the set $\mathcal{W}(\hat{X})$ is open and dense in $S^1 \times \Sigma$. This implies, by the $\R$-invariance of solutions to Equation \ref{FE}, that the set of points $\widetilde{W}(\hat{X})$ lying on the graph $\tilde{u}$ of some $u \in \widetilde{\mathcal{M}}(\hat{x},\hat{y};H,J)$, $\hat{x} \in \hat{X}_{(1)}, \hat{y} \in \hat{X}_{(-1)}$ is open and dense in $\R \times S^1 \times \Sigma$. Consequently, the partition $\bigcup \widetilde{\mathcal{F}}^{\hat{x},\hat{y}}$, where the union runs over all $\hat{x} \in \hat{X}_{(1)}$, $\hat{y} \in \hat{X}_{(-1)}$, gives a smooth foliation of an open, dense set of $\R \times S^1 \times \Sigma$. Consequently we may argue just as in \cite{HWZ03} in the paragraphs following the proof of lemma $6.10$ (p. 231-232); all of the remaining leaves in $\widetilde{\mathcal{F}}^{\hat{X}}$ are graphs of constant orbits or of cylinders $u$ which connect orbits of index difference equal to $1$. In either case, by standard compactness theorems of Floer theory those graphs which form the leaves of the foliation of $\widetilde{\mathcal{W}}(\hat{X})$ converge modulo reparametrization in the $C^\infty_{loc}$-topology either to the graphs $(s,t) \mapsto (s,t,x(t))$ of the orbits $x$ for $\hat{x} \in \hat{X}$, or to graphs of cylinders connecting orbits of index difference $1$, which come in pairs $(u,v) \in \widetilde{\mathcal{M}}(\hat{x},\hat{\gamma};H,J) \times \widetilde{\mathcal{M}}(\hat{\gamma},\hat{y};H,J)$, $\hat{x} \in \hat{X}_{(1)}, \hat{\gamma} \in \hat{X}_{(0)}, \hat{y} \in \hat{X}_{(-1)}$. By Corollary \ref{cor:OrbitsOnBrokenTrajectoriesAreUnlinked} and Lemma \ref{lem:UnlinkedPlusConnectingStillUnlinked}, the capped braid formed by the collection of all the $\hat{\gamma} \in \Per{H}$ on which such pairs break are unlinked with $\hat{X}$, and so lie in $\hat{X}$ by maximality. Consequently, the graphs of such broken trajectories cannot intersect, nor can they intersect any leaf of $\widetilde{\mathcal{F}}^{\hat{X}}$ in the dense set $\widetilde{\mathcal{W}}(\hat{X})$. Since every point in $\R \times S^1 \times \Sigma$ lies in the closure of $\widetilde{\mathcal{W}}(\hat{X})$, every such point much lie on the graph of an orbit in $X$ or the graph of such a broken cylinder. It follows that the union of all the leaves in $\widetilde{\mathcal{F}}^{\hat{X}}$ thus fits together into a smooth foliation on all of $\R \times S^1 \times \Sigma$, and so the theorem follows.
\end{proof}
\subsubsection{$\mathcal{F}^{\hat{X}}$ as negative gradient flow-lines of the restricted action functional}\label{NegFlowLinesSection}
For a Floer regular pair $(H,J)$ and $\hat{X} \in murm(H)$, denote 
\begin{align*}
\mathfrak{M}_{\hat{X}}&:= \lbrace \hat{\alpha} \in \cL{\Sigma}: \exists \hat{x}, \hat{y} \in \hat{X},  \exists u \in \widetilde{\mathcal{M}}(\hat{x},\hat{y};H,J), \; \text{such that } \hat{u}_s=\hat{\alpha}, \text{for some } s \in \R \rbrace
\end{align*}
equipped with the subspace topology from $\cL{\Sigma}$. Remark that in the definition of $\mathfrak{M}_{\hat{X}}$ we do not insist that $\hat{x}$ and $\hat{y}$ are distinct, so that $\hat{X} \subset \mathfrak{M}_{\hat{X}}$, since the trivial cylinders $u_{\hat{x}}(s,t)=x(t)$ always lie in $\widetilde{\mathcal{M}}(\hat{x},\hat{x};H,J)$ for every $\hat{x} \in \widetilde{Per}_0(H)$.

\begin{proposition}\label{SmoothDiffeoLoopSpaceProp}
The map $Ev: S^1 \times \mathfrak{M}_{\hat{X}} \rightarrow S^1 \times \Sigma$, given by $Ev(t,\hat{\alpha})=(t,\alpha(t))$ is a diffeomorphism.
\end{proposition}
\begin{proof}
The generalized distribution $\mathcal{D}^{\hat{X}}_{(t,u(s,t))} = \langle \partial_s u , \partial_t \oplus \partial_t u \rangle$
which is integrated by $\mathcal{F}^{\hat{X}}$ contains the one-dimensional distribution
$\mathcal{D}^{\mathfrak{M}}_{(t,u(s,t))}= \langle \partial_t \oplus \partial_t u \rangle$,
which is easily seen to be smooth near the singular fibers by employing the local model for leaves of a Stefan-Sussmann foliation of Section \ref{SingFolSection}. Consequently, $\mathcal{D}^{\mathfrak{M}}$ is a smooth foliation which integrates precisely to the graphs of the maps $\alpha: S^1 \rightarrow \Sigma$ for $\hat{\alpha} \in \mathfrak{M}_{\hat{X}}$. This is obviously equivalent to the proposition.
\end{proof}
\begin{definition}
For $\hat{X} \in murm(H)$, define \textbf{the ($\hat{X}$-)restricted action functional} $A^{\hat{X}} \in C^\infty(S^1 \times \Sigma)$ by $A^{\hat{X}}:= \mathcal{A}_H \circ Ev^{-1}$. Additionally, for each $t \in S^1$, we define $A^{\hat{X}}_t := \iota_t^* A^{\hat{X}}$, where $\iota_t: \Sigma \hookrightarrow S^1 \times \Sigma$ is the inclusion of the fiber over $t \in S^1$.
\end{definition}
Note that each $A^{\hat{X}}_t$ is automatically Morse, since the Hessian of $A^{\hat{X}}_t$ at $x(t)$ for $\hat{x} \in \hat{X}$ obviously inherits the non-degeneracy of the Hessian of $\mathcal{A}_H$ at $\hat{x}$. In fact, our construction clearly identifies Floer trajectories connecting orbits in $\hat{X}$ with negative gradient flow lines of the $A^{\hat{X}}_t$, giving us Morse models for the foliation $\mathcal{F}^{\hat{X}}$.
\begin{proposition}\label{IdentificationOfModuliSpaces}
If $(H,J)$ is Floer regular, $\hat{X} \in murm(H)$ and $\epsilon >0$ is sufficiently small, then for every $t \in S^1$, and every $\hat{x},\hat{y} \in \hat{X}$, there is a natural identification $\widetilde{\mathcal{M}}(\hat{x},\hat{y};H,J) \cong \widetilde{\mathcal{M}}(\hat{x},\hat{y};\epsilon A^{\hat{X}}_t,J_t)$ given by $u(s,t) \mapsto u(\epsilon s, t)$.
\end{proposition}

\begin{cor}\label{cor:MorseModels}
Let $(H,J)$ be Floer regular and $\hat{X} \in murm(H)$. Then for every $t \in S^1$, and any $\epsilon >0$ sufficiently small, \begin{align*}
CF_*(\hat{X};H,J)\cong (C^{Morse}(A^{\hat{X}}_t,g_{J_t}) &\otimes \Lambda_\omega)_{*+1} \cong CF_*(\epsilon A^{\hat{X}},J_t).
\end{align*}
\end{cor}

\subsubsection{$\mathcal{F}^{\hat{X}}$ as a positively transverse foliation}\label{PositivelyTransverseSubsection}
Each regular leaf of the foliation $\mathcal{F}^{\hat{X}}$ arises naturally as the image of an embedding $\check{u}: \R \times S^1 \hookrightarrow S^1 \times \Sigma$ for $u$, the standard orientation on the cylinder induces the orientation $\partial_s \check{u} \wedge \partial_t \check{u}$ on each regular leaf, so we may view $\mathcal{F}^{\hat{X}}$ in a natural way as an oriented singular foliation.
\begin{definition}
Let $\mathcal{F}$ be an oriented codimension $1$ Steffan-Sussmann foliation of an oriented $d$-dimensional manifold $(M^d,o_M)$. We will say that a smooth path $\alpha: [0,1] \rightarrow M$ is \textbf{positively transverse} to $\mathcal{F}$ if the following dichotomy holds, either
\begin{enumerate}
\item $\alpha$ is contained in a singular leaf of $\mathcal{F}$, or
\item for every $t \in [0,1]$, $\lbrace (\partial_t \alpha)_t,v_1, \ldots, v_{d-1} \rbrace$ is an oriented basis for $(T_{\alpha(t)} M,o_M)$, where $\lbrace v_1, \ldots, v_{d-1} \rbrace$ is an oriented basis for the tangent space of the regular leaf of $\mathcal{F}$ passing through $\alpha(t)$. 
\end{enumerate}
\end{definition}

\begin{definition}
Let $\mathcal{F}$ be an oriented codimension $1$ Steffan-Sussmann foliation on an oriented $d$-dimensional manifold $(M^d,o_M)$ and let $X \in \mathcal{X}(M)$ be a vector field generating an isotopy $(\phi_t^X)_{t \in \R}$. We say that $X$ (or $(\phi_t^X)_{t \in \R}$) is \textbf{positively transverse to $\mathcal{F}$} if every integral curve of $X$ is positively transverse to $\mathcal{F}$.
\end{definition}

\begin{proposition}
Let $(H,J)$ be Floer regular, $\hat{X} \in murm(H)$, and $\check{X}_H:= \partial_t \oplus X_H \in \mathcal{X}(S^1 \times \Sigma)$, then $\check{X}_H$ is positively transverse to $\mathcal{F}^{\hat{X}}$.
\end{proposition}
\begin{proof}
As the singular leaves of $\mathcal{F}^{\hat{X}}$ are orbits of $\check{X}_H$, it suffices to consider points $(t,p) \in S^1 \times \Sigma$ lying on regular leaves. In such a case, since $u$ solves Equation \ref{FE}, the basis formed by $\lbrace \check{X}_H, \partial_s \check{u},  \partial_t \check{u} \rbrace$ is easily seen to be orientation-equivalent to the basis $\lbrace \partial_t, \partial_s u, J_t \partial_s u \rbrace$, which is a positively oriented basis, as $J_t \in \mathcal{J}(\Sigma,\omega)$ for all $t \in S^1$.
\end{proof}
The previous proposition tells us that to any non-degenerate Hamiltonian $H$ and each $\hat{X} \in murm(H)$, we may associate a foliation on $S^1 \times \Sigma$ with respect to which the graph of the isotopy is well-behaved in a certain sense. However, if we're willing to modify the isotopy by a contractible loop, then we can in fact do better and obtain a positively transverse singular foliation on $\Sigma$ itself. 
\par 
To see this, consider the distribution $\mathcal{D}^{\mathfrak{M}}_{(t,u(s,t))} = \langle \partial_t \oplus \partial_t u \rangle$ introduced in the proof of Proposition \ref{SmoothDiffeoLoopSpaceProp}. As noted therein, $\mathcal{D}^{\mathfrak{M}}$ integrates to a smooth $1$-dimensional foliation by the graphs of the loops $t \mapsto u_s(t)$ for $\hat{u}_s \in \mathfrak{M}_{\hat{X}}$. This induces a natural loop of diffeomorphisms $(\psi^{\hat{X}}_t)_{t \in S^1}$ given by sliding the fiber $\lbrace 0 \rbrace \times \Sigma$ along the foliation which integrates $\mathcal{D}^{\mathfrak{M}}$. In other words, we have the isotopy $\psi^{\hat{X}}_t(p)=u_p(s,t)$, $t \in S^1$, where $u_p \in \widetilde{\mathcal{M}}(\hat{x},\hat{y};H,J)$, $\hat{x},\hat{y} \in \hat{X}$, is any Floer cylinder such that $u_p(s,0)=p$. It follows from Corollary \ref{cor:FoliatedSectorCorollary} and the fact that if $\hat{x}=\hat{y}$ then $u(s,t)=x(t)$ that $\psi^{\hat{X}}$ is well-defined.

\begin{proposition}\label{prop:PropsOfPsi}
$\psi:=(\psi^{\hat{X}}_t)_{t \in S^1}$ is a contractible loop.
\end{proposition}
\begin{proof}
$\psi$ defines a loop of diffeomorphisms based at the identity by construction. If the genus of $\Sigma$ is strictly greater than $1$, then the proposition follows from the fact that $Diff_0(\Sigma)$ is contractible (\cite{EE69} p. 21). 
\par
If $\Sigma= \T^2$, $Diff_0(\Sigma)$ has as a strong deformation retract the group of diffeomorphisms given by the action of the torus on itself by translation (\cite{EE69} p. 38, see also \cite{Pol12} p.52-53). In particular, contractible loops in $Diff_0(\T^2)$ are precisely the loops having contractible orbits. Note that for any $\hat{x}=[x,w] \in \hat{X}$, $\psi^{\hat{X}}_t(x(0))=x(t)$, and so $\psi^{\hat{X}}$ has contractible periodic orbits, and hence is contractible if $\Sigma=\T^2$. 
\par
In the case that $\Sigma=S^2$ we have that $Diff(S^2) \simeq SO(3)$ (\cite{EE69} p. 21), and so $\pi_1(Diff(S^2))=\pi_1(SO(3)) = \Z/2$. It is straightforward to check that if $(f_t)_{t \in S^1}$ is a loop based at the identity in $Diff(S^2)$, then the homotopy class of $(f_t)_{t \in S^1}$ in $\pi_1(Diff(S^2))$ is classified by the parity of the winding number of the loop
\begin{align*}
v(t)&:= ((\Phi_{\hat{z}}^t)^{-1} \circ Df_t \circ \Phi_{\hat{z}}^t)(v_0), \; t \in S^1
\end{align*}
for any $v_0 \in \R^2 \setminus \lbrace 0 \rbrace$, where $\hat{z} \in \cL{S^2}$ is any capped $1$-periodic orbit of $(f_t)_{t \in S^1}$ and 
\begin{align*}
\Phi_{\hat{z}}: S^1 \times \R^2 &\rightarrow z^*TS^2
\end{align*} 
is any trivialization of the tangent bundle along $z$ which extends over the chosen capping disk (here is a sketch of an argument: the statement is immediate if we are considering a loop in $SO(3)$, viewed as the subgroup of rotations of $S^2$. In order to reduce to this case, note that $Diff(S^2)$ deformation retracts onto $SO(3)$, via a deformation retraction $H: [0,1] \times Diff(S^2) \rightarrow Diff(S^2)$. If $(f_t)$ is a loop in $Diff(S^2)$, it is contractible if and only if $(H(1,f_t))$ is contractible in $SO(3)$. Note that the cylinder $Ev_z(H(s,f_t))$, $s,t \in [0,1]$, provides a correspondence between capped periodic orbits of $(f_t)$ and $(H(1,f_t))$, where $Ev_z:Diff(S^2) \rightarrow S^2$, and that the winding numbers of the corresponding capped orbits have the same parity). Since the capped loops $\hat{x} \in \hat{X}$ are all capped $1$-periodic orbits of $\psi^{\hat{X}}$ by construction, it suffices to fix some $\hat{x}=[x,w] \in \hat{X}$ and to compute the winding number of some vector in $T_{x(0)} S^2$ under the linearized flow of $\psi^{\hat{X}}$ along $x$. Suppose without loss of generality that $\mu(\hat{x};H)=1$ and let $u \in \widetilde{\mathcal{M}}(\hat{x},\hat{y};H,J)$, for $\hat{y} \in \hat{X}$. We have by definition that for any $\theta \in S^1$, $\psi^{\hat{X}}_{\theta}(u(s,t))= u(s,t+\theta)$, and so if we let $\xi_u \in \Gamma(x^*T \Sigma)$ be the negative asymptotic eigenvector of $u$ then the asymptotic estimates of Theorem \ref{PosAsymp} imply that $(D \psi^{\hat{X}}_{\theta})(\xi(0)) = \xi(\theta)$ for all $\theta  \in S^1$. Since $\xi_u \in \Gamma(x^*T \Sigma)$ is eigenvector of $A_{x,J}$ with winding number $0$ relative the capping $w$, it follows that the linearized winding of $\psi^{\hat{X}}$ along $x$ relative the capping $w$ is even, and so $\psi^{\hat{X}}$ is a contractible loop.
\end{proof}

Note that $\mathcal{F}^{\hat{X}}$ is everywhere transverse to the fibers $\lbrace t \rbrace \times \Sigma$ of $S^1 \times \Sigma$, and so may be viewed as an $S^1$-family of (singular) foliations on $\Sigma$. Let us write $\mathcal{F}^{\hat{X}}_t$ for the foliation obtained on $\Sigma$ by intersecting $\mathcal{F}^{\hat{X}}$ with $\lbrace t \rbrace \times \Sigma$.
\begin{theorem}\label{thm:LeCalvezFoliationThm}
Let $(H,J)$ be a Floer regular pair, $\hat{X} \in murm(H)$, then the orbits of the isotopy $(\psi^{\hat{X}})^{-1} \circ \phi^H$ are positively transverse to the foliation $\mathcal{F}^{\hat{X}}_0$.
\end{theorem}
\begin{proof}
Writing $\psi=\psi^{\hat{X}}$, observe that the vector field $(Z_t)_{t \in [0,1]}$ which generates the isotopy $\psi^{-1} \circ \phi^H$ is easily computed via the chain rule as $(Z_t)_{u(s,0)}=(\psi_t^{-1})_* (X^H_t- \partial_t u)_{u(s,t)}$. Note moreover that the definition of $\psi_t$ implies that
\begin{align*}
(\psi_t)_*(\partial_s u)_{u(s,0)})&=(\partial_s u)_{u(s,t)} 
\end{align*}
for all $u \in \mathfrak{M}^{\hat{X}}$ and all $(s,t) \in \R \times S^1$. Consequently, because for all $t \in S^1$ $\psi_t$ is an orientation-preserving diffeomorphism, we see that
\begin{align*}
sgn \; \omega_{u(s,0)}(Z_t,\partial_s u) = sgn \;(\psi_t^* \omega)(Z_t,\partial_s u)&= sgn \; \omega_{u(s,t)}( X^H_t- \partial_t u,\partial_s u)
\\ &= sgn \; \omega_{u(s,t)}(-J_t \partial_s u,\partial_s u)
\end{align*}
from which the claim follows.
\end{proof}
Theorem \ref{MainThm2} is an immediate consequence of Proposition \ref{IdentificationOfModuliSpaces}, Proposition \ref{prop:PropsOfPsi}, and the preceding theorem.

\subsection{Consequences for the structure of Hamiltonian isotopies}\label{Sec: Consequences2}
For any Hamiltonian $H$, we write $H^{\sharp m}:= H \# \ldots \# H$ for the $m$-fold concatenated Hamiltonian which generates $(\phi^H_t)^{\sharp m}$. For $\hat{x}=[x(t),w(se^{2 \pi \i t})] \in \cL{\Sigma}$, we write $\hat{x}^{\sharp m}:= [x(mt),w(se^{2 \pi \i mt})]$, and for $\hat{X}=\lbrace \hat{x}_i \rbrace_{i=1}^k$ we write $\hat{X}^{\sharp m}:= \lbrace \hat{x}_i^{\sharp m} \rbrace_{i=1}^k$. For a cylinder $u(s,t)$, write $u^{\sharp m}(s,t):=u(s,mt)$. 
\\ \\
For $\hat{\gamma} \in \widetilde{Per}_0(H^{\sharp m}) \setminus \hat{X}$ and $\hat{x} \in \hat{X}$, define $\ell_{\hat{\gamma}}(\hat{x}):= \frac{\ell(\hat{x}^{\sharp m},\hat{\gamma})}{m}$. For any distinct  $\hat{x}, \hat{y} \in \hat{X}$ and  $u \in C^\infty(\R \times S^1;\Sigma)_{\hat{x},\hat{y}}$ we may define $I_{\gamma}(u)$ to be the signed intersection number of (some transverse perturbation of) the submanifolds parametrized by $\check{u}(s,t)=(t,u(s,t)) \in S^1 \times \Sigma$ and the multi-section $t \mapsto \Gamma(t):=\lbrace (t,\gamma(\frac{t+k}{m})): k=0,1,\ldots,m-1 \rbrace \subset S^1 \times \Sigma$ (with the obvious induced orientations). By noting that such intersections are in bijective (and sign-preserving) correspondence with the intersections of $\check{\gamma}$ and $\check{u}^{\sharp m}$, it is easy to deduce from the definition of the homological linking number that $I_{\gamma}(u) = m(\ell_{\hat{\gamma}}(\hat{y}) - \ell_{\hat{\gamma}}(\hat{x}))$ for any choice of capping $\hat{\gamma}$ of $\gamma$. The fact that $\check{X}^H=\partial_t \oplus X^H$ is positively transverse to the regular leaves of $\mathcal{F}^{\check{X}}$ implies that for every $\gamma \in Per_0(H^{\sharp m})$, $I_{\gamma}(u) \geq 0$ for every $u \in \mathcal{M}(\hat{x},\hat{y};H,J)$, and distinct $\hat{x}, \hat{y} \in \hat{X}$, with equality only if the multi-section $\Gamma$ is disjoint from $\check{u}$.
We obtain as an immediate corollary:

\begin{cor}\label{cor:LinkingIncreasesIffOrbitPassesThroughFoliatedSector}
Let $(H,J)$ be non-degenerate, $\hat{X} \in murm(H)$ and let $\hat{\gamma} \in \Per{H^{\sharp m}}$ for some $m \in \Z_{>0}$, then $I_{\gamma}(u)$ is non-negative for every $u \in \widetilde{\mathcal{M}}(\hat{x},\hat{y};H,J)$ with $\hat{x}, \hat{y} \in \hat{X}$ distinct. Moreover, for every distinct $\hat{x}, \hat{z} \in \hat{X}$, $\ell_{\hat{\gamma}}(\hat{x}) < \ell_{\hat{\gamma}}(\hat{z})$ if and only if $\check{\gamma}(t) \in W(\hat{x},\hat{z})$ for some $t \in S^1$. 
\end{cor}

\begin{definition}
For a Hamiltonian $H$, we will say that a capped braid $\hat{X} \subseteq \Per{H}$ is \textbf{strongly linking} if for any $\hat{\gamma} \in \Per{H}$, $\ell(\hat{\gamma},\hat{x}) =0$ for all $\hat{x} \in \hat{X}$ implies that $\hat{\gamma} \in \hat{X}$.
Denote by $usl(H)$ the collection of all $\hat{X} \subseteq \Per{H}$ such that $\hat{X}$ is both unlinked and strongly linking.
\end{definition}
Clearly, $usl(H) \subseteq mu(H)$. The following theorem says that if $\hat{X} \in murm(H)$, then any (capped) periodic orbit of arbitrarily large period of $H$ which is not itself an iterate of an orbit in $\hat{X}$ must link with some iterate of some $\hat{x} \in \hat{X} \in murm(H)$.

\begin{theorem}\label{Thm: murm implies mu}
Let $H$ be non-degenerate. If $\hat{X} \in murm(H)$, then $\hat{X}^{\sharp m} \in usl(H^{\sharp m})$ for all $m \in \Z_{m >0}$. In particular, every $\hat{X} \in murm(H)$ is maximally unlinked as a subset of $\Per{H}$.
\end{theorem}

\begin{proof}
Let $\hat{X} \in murm(H)$, fix some $J$ such that $(H,J)$ is Floer regular, and suppose for a contradiction that $\hat{\gamma} \in \Per{H^{\sharp k}} \setminus \hat{X}^{\sharp m}$ but $\ell(\hat{x}^{\sharp m},\hat{\gamma})=0$ for all $\hat{x}^{\sharp m} \in \hat{X}^{\sharp m}$. We may in fact suppose that $\hat{\gamma} \not \in \pi_2(\Sigma) \cdot \hat{X}^{\sharp m}$, since if $\hat{\gamma}=A \cdot \hat{x}$ for some $A \in \pi_2(\Sigma)$, then for any $\hat{y}^{\sharp m} \in \hat{X}^{\sharp m}$, $\hat{x}^{\sharp m} \neq \hat{y}^{\sharp m}$, Proposition \ref{LinkingProp} implies $\ell(\hat{\gamma},\hat{y}^{\sharp m})=\ell(\hat{\gamma},\hat{y}^{\sharp m}) + \frac{c_1(A)}{2}=\frac{c_1(A)}{2}$, since $\hat{X}$ is unlinked, which easily implies that $\hat{X}^{\sharp m}$ is unlinked as well.
\par
So we may as well assume that $\gamma \neq x$ for any $\hat{x}^{\sharp m} \in \hat{X}^{\sharp m}$. Since $\mathcal{F}^{\hat{X}}$ foliates $S^1 \times \Sigma$, it is necessary that $\check{\gamma}(0) \in W(\hat{x},\hat{y})$ for some $\hat{x}, \hat{y} \in \hat{X}$ and so by Corollary  \ref{cor:LinkingIncreasesIffOrbitPassesThroughFoliatedSector} implies that $\ell(\hat{x}^{\sharp m},\hat{\gamma}) < \ell(\hat{y}^{\sharp m}, \hat{\gamma})$, a contradiction.
\end{proof}

\subsubsection{Comparison to Le Calvez's theory of transverse foliations}\label{Sec: Calvez}
\begin{definition}
Let $I=(\phi_t)_{t \in [0,1]}$ be an isotopy of homeomorphisms based at the identity. We define
\begin{align*}
Fix(I)&:= \bigcap_{t \in [0,1]} Fix(\phi_t),
\end{align*}
and we say that $I$ is a \textbf{maximal isotopy} if for every $x \in Fix(\phi_1)$, the loop $t \mapsto \phi_t(x)$ is not contractible in $\Sigma \setminus Fix(I)$.
\end{definition}
\begin{definition}[cf. \cite{LT18}]
Given a continuous oriented singular $1$-dimensional foliation $\mathcal{F}$ of $\Sigma$, a continuous path $\gamma: [0,1] \rightarrow \Sigma$ is said to be \textbf{positively transverse} to $\mathcal{F}$ if its image is disjoint from $Sing(\mathcal{F})$ and for each $t_0 \in [0,1]$, there exists an orientation preserving homeomorphism of a neighbourhood of $\gamma(t_0)$ to a neighbourhood of $0 \in \R^2$ which sends $\mathcal{F}$ to the standard vertical foliation of $\R^2$, oriented downward, and sends $\gamma$ to a map whose $x$-coordinate is increasing in a neighbourhood of $t_0$. 
\end{definition} 

In \cite{LeC05}, Le Calvez developed a theory which associates to any maximal isotopy $I=(\phi_t)_{t \in [0,1]}$, an oriented singular continuous $1$-dimensional foliation $\mathcal{G}^{I}$ on $\Sigma$ having singular points on precisely the points $x \in Fix(I)$, and moreover having the property that the dynamics of $I$ are \textit{homotopically positively transverse} to the leaves of $\mathcal{G}^I$, in the following sense.

\begin{definition}
We say that an isotopy $I$ \textbf{homotopically positively transverse} to an oriented singular continuous foliation $\mathcal{G}$ if, for all $x \in \Sigma \setminus Fix(I)$, the path $x \mapsto \phi_t(x)$ is homotopic relative endpoints inside of $\Sigma \setminus Fix(I)$ to a path which is positively transverse to $\mathcal{G}$.
\end{definition}
\begin{remark}
Le Calvez and those working with his theory use the term `positively transverse' rather than `homotopically positively transverse' as we have used here. We introduce this term here merely to disambiguate between the other notion of positive transversality (for smooth isotopies) which we have already employed in this paper.
\end{remark}
The reader will surely notice the similarity of Le Calvez's result to the results expounded in the previous section. Let us compare them more closely in the case where $\phi_1$ is a smooth, non-degenerate Hamiltonian diffeomorphism.
\par
As we have shown, to any non-degenerate pair $(H,J)$ and any $\hat{X} \in murm(H)$, we may associate a new isotopy $I^{\hat{X}}:=((\psi^{\hat{X}}_t)^{-1}\circ \phi^H_t)_{t \in [0,1]}$ having
\begin{align*}
Fix(I) &= \bigcup_{\hat{x} \in \hat{X}} \lbrace x(0) \rbrace.
\end{align*}
The fact that $murm(H) \subseteq mu(H)$ (by Theorem \ref{Thm: murm implies mu}) implies that this isotopy is maximal, and we know that it is positively transverse (not simply homotopically positively transverse) to the foliation $\mathcal{F}^{\hat{X}}_0$ of $\Sigma$. This implies positive transversality in the sense of Le Calvez, but is --- at least \textit{prima facie} ---  a rather stronger condition, in that positive transversality in Le Calvez's sense allows us to homotope the orbits of the maximal isotopy, individually, in order to achieve positive transversality in the usual sense, whereas here, every non-trivial orbit of $((\psi^{\hat{X}}_t)^{-1} \circ \phi^H_t)$ is already positively transverse in the usual sense. Said another way, if $\hat{X}$ is a trivial capped braid, so that the isotopy $(\phi^H_t)_{t \in [0,1]}$ is already a maximal isotopy, then it is already homotopically positively transverse to  $\mathcal{F}^{\hat{X}}_0$ because $(\psi^{\hat{X}})^{-1}$ provides us with a choice of homotopy for each orbit $t \mapsto \phi^H_t(x)$ in such a way that these homotopies fit together in a coherent way so as to be induced by composition with a contractible loop. Thus, the transverse singular foliations constructed by Floer theory provide instances of Le Calvez-type foliations which enjoy a certain sort of `uniform' homotopic transversality which makes them particularly nice. We also remark that the foliations constructed by Le Calvez in \cite{LeC99} for arbitrary diffeomorphisms of $T^2$ via generating function methods have this property, but it is unclear at the time of this writing if this can be achieved for the foliations constructed via the brick decomposition methods of \cite{LeC05}, which permit the development of the theory for $\phi \in Homeo_0(\Sigma)$ and $\Sigma$ an arbitrary surface. This raises the following potentially interesting question for the theory of Le Calvez-type foliations.
\begin{quest}\label{Quest: Calvez}
If $\phi \in Homeo_0(\Sigma)$ and $I=(\phi_t)_{t \in [0,1]}$ is a maximal isotopy ending at $\phi$, does there always exist a singular foliation $\mathcal{G}$ of Le Calvez-type together with a contractible loop $(\psi_t)_{t \in S^1} \subseteq Homeo(\Sigma)$ (based at the identity), such that for all $x \in \Sigma \setminus Sing(\mathcal{G})$, the paths $t \mapsto (\psi_t^{-1} \circ \phi_t)(x)$ are positively transverse to $\mathcal{G}$?
\end{quest}
In \cite{LeC05}, Le Calvez also observed that in the case of Hamiltonian homeomorphisms satisfying some minor niceness requirements of their fixed point sets which always applies in our setting, the foliations $\mathcal{F}^I$ constructed by his methods are always \textit{gradient-like}, in the sense that they admit discrete Lyapunov functions which are decreasing along the leaves of the foliation. These Lyapunov functions are, essentially, winding numbers with the maximally unlinked set of orbits used to construct the foliation. Thus, we can see the results of our previous section as recovering Le Calvez's theory in the smooth case, on the proviso that $\hat{X} \in murm(H)$, and offering certain refinements on the structure of these fixed points along with a certain strengthening of the transversality condition. 
\par
We should note, however, that in general $murm(H) \neq mu(H)$, and so even in the smooth case, Le Calvez's method provides foliations which do not arise via our methods. In fact, it is not difficult to show that for every $\hat{X} \in murm(H)$, the isotopy $t \mapsto I=((\psi^{\hat{X}}_t)^{-1}\circ \phi^H_t)_{t \in [0,1]}, t \in [0,1]$ is \textbf{torsion-low} in the sense of Yan in \cite{Ya18}. Conversely, it follows essentially directly from Proposition $1.5$ in \cite{Ya18} (together with remarking on the relationship between the winding number of the linearization at a fixed point and its Conley-Zehnder index --- see Remark $17$ in \cite{HRS16}, for instance) that if $I'=(\phi^H_t)_{t \in [0,1]}$ is a smooth torsion-low Hamiltonian isotopy with $\phi^H_1$ non-degenerate, then 
\begin{align*}
\hat{X}&:= \bigcup_{x \in Fix(I')} \lbrace [x,x] \rbrace \subseteq \Per{H}
\end{align*}
is maximally unlinked relative the Morse range. Consequently, the theory developed in this section recovers precisely the `torsion-low' portion of Le Calvez's theory from a Floer theoretic perspective.

\section{Appendices}

\subsection{Appendix to Section \ref{Ch: Construct chain maps}}\label{App: Construct Chain Maps App}
\subsubsection{A short fibration argument}
\begin{proposition}
Let $(M,\omega)$ be a symplectic manifold. The space
\begin{align*}
\mathcal{E} &:= \lbrace (m,v;H) \in TM \times C^{\infty}(M): X^H(m)=v  \rbrace
\end{align*} 
equipped with the canonical projection $\pi: \mathcal{E} \rightarrow TM$ is a locally trivial fibration.
\end{proposition}
\begin{proof}
Let $(m,v) \in TM$ be arbitrary, and let $\phi: (U,\omega) \rightarrow (\R^{2n},\omega_0)$ be a Darboux neighbourhood of $m \in M$ with $\phi(m)=0 $. Letting $\kappa >0$ be such that $\phi(U)=B^{2n}(\kappa)$ and fixing some $\epsilon >0$, we will construct a local section for $\mathcal{E}$ over the set $T \phi^{-1}(B^{2n}(\kappa - \epsilon))$ which implies the claim.
\par 
Consider the map
\begin{align*}
SG_0: C^\infty(\R^{2n}) &\rightarrow T_0 \R^{2n}= \R^{2n} \\
H &\mapsto X^H(0).
\end{align*}
$SG_0$ is a linear surjection, and so we may select a right inverse
\begin{align*}
C: T_0 \R^{2n} &\rightarrow C^{\infty}(\R^{2n}).
\end{align*}
We may suppose without loss of generality that in fact $C$ takes values in the set $C_0^{\infty}(B^{2n}(\epsilon))$ of functions with support compactly contained in $B^{2n}(\epsilon)$. We may define
\begin{align*}
\mathcal{C}_{loc}: B^{2n}(\kappa - \epsilon) \times \R^{2n} &\rightarrow C_0^{\infty}(B^{2n}(\kappa)) \\
(x,v) &\mapsto A_x(C(v)),
\end{align*} 
where
\begin{align*}
A_x: C^\infty(\R^{2n}) \rightarrow C^{\infty}(\R^{2n}) \\
H(p) &\mapsto H(p-x)
\end{align*}
is the pullback of affine translation by $x \in \R^{2n}$. It's clear that $\mathcal{C}_{loc}$ is smooth by construction. Writing $V = \phi^{-1}(B^{2n}(\kappa - \epsilon))$,
\begin{align*}
\mathcal{C}:=\phi^*(\mathcal{C}_{loc} \circ D \phi\vert_{TV}): TV &\rightarrow C^{\infty}(M)
\end{align*}
defines a smooth map such that if $G=\mathcal{C}(m',v')$ for $(m',v') \in TV$, then $X^{G}(m')=v'$. That is, $\mathcal{C}$ is a smooth right inverse for $\pi: \mathcal{E} \rightarrow TM$ on $TV$, as desired.
\end{proof}
Let $N$ be a smooth (possibly open) manifold, $f: N \rightarrow M$ a smooth map and $Y: N \rightarrow f^*TM$ a vector field along $f$. We define
\begin{align*}
\mathcal{H}(Y)&:= \lbrace (n,H) \in N \times C^{\infty}(M): X^H(f(n))=Y(n) \rbrace.
\end{align*}
\begin{cor}\label{Cor: loc triv fib}
$\pi: \mathcal{H}(Y) \rightarrow N$ is a locally trivial fibration.
\end{cor}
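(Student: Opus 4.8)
The plan is to exhibit $\mathcal{H}(Y)$ as a pullback of the fibration $\pi : \mathcal{E} \to TM$ established in the previous proposition, and then to invoke the standard fact that the pullback of a locally trivial fibration along a continuous map is again a locally trivial fibration.

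First I would note that the data of $f$ together with the section $Y : N \to f^{*}TM$ assembles into a single smooth map
\begin{align*}
\Psi : N &\rightarrow TM \\
n &\mapsto Y(n) \in T_{f(n)}M,
\end{align*}
where we postcompose $Y$ with the canonical bundle morphism $f^{*}TM \to TM$ covering $f$; smoothness of $\Psi$ is immediate from that of $f$ and $Y$. Unwinding the definition,
\begin{align*}
\mathcal{H}(Y) &= \lbrace (n,H) \in N \times C^{\infty}(M) : X^H(f(n)) = Y(n) \rbrace,
\end{align*}
and the map $(n,H) \mapsto \big(n, (f(n), Y(n), H)\big)$ identifies $\mathcal{H}(Y)$, compatibly with the projections to $N$, with the fiber product $N \times_{\Psi,\, TM,\, \pi} \mathcal{E} = \lbrace (n, e) \in N \times \mathcal{E} : \Psi(n) = \pi(e) \rbrace$. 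In other words $\pi : \mathcal{H}(Y) \to N$ is canonically isomorphic (as a bundle over $N$) to the pullback $\Psi^{*}\mathcal{E}$.

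It then remains only to recall why pullbacks preserve local triviality: given $n \in N$, the previous proposition furnishes an open neighbourhood $U \subseteq TM$ of $\Psi(n)$ together with a trivialization $\pi^{-1}(U) \cong U \times \mathscr{F}$ over which $\pi$ is the first projection; restricting the fiber product to the open set $\Psi^{-1}(U) \ni n$ then yields $\Psi^{-1}(U) \times \mathscr{F}$, again with $\pi$ the first projection. Hence $\pi : \mathcal{H}(Y) \to N$ is locally trivial, as claimed. I do not anticipate any genuine obstacle in this argument: the only points needing (routine) verification are that $\Psi$ is smooth and that the stated identification $\mathcal{H}(Y) \cong \Psi^{*}\mathcal{E}$ is an isomorphism of bundles over $N$ — both immediate from the explicit formulas — together with the observation that the ``locally trivial fibration'' produced by the previous proposition does carry product charts of exactly the above form, which is precisely what its proof constructs.
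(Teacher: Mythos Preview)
Your proof is correct and follows essentially the same approach as the paper: identify $\mathcal{H}(Y)$ with the pullback $\Psi^*\mathcal{E}$ (the paper writes this simply as $Y^*\mathcal{E}$) and invoke that pullbacks of locally trivial fibrations are locally trivial. The paper's proof is just two sentences stating exactly this; your version merely unpacks the verification of the pullback identification and the local-triviality-under-pullback argument in more detail.
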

\begin{proof}
It is straightforward to check that $\mathcal{H}(Y)$ is nothing but the pullback bundle $Y^*\mathcal{E}$. Since $(\mathcal{E},\pi_{TM})$ is a locally trivial fibration, so is $\mathcal{H}(Y)$.
\end{proof}

\subsubsection{A proof of Lemma \ref{Lem: Local Pert Non-Reg to Reg} }\label{Subsec: Proof of Local pert to regular lemma}
Let $(H^\pm,J^\pm)$ be Floer regular and $\hat{x}^\pm \in \Per{H^\pm}$. We fix $(\mathcal{H},\mathbb{J}) \in \HJ(H^-,J^-;H^+,J^+)$. If $\mathcal{F}_{\mathcal{H},\mathbb{J}}(u)=0$, $\text{ind } (D \mathcal{F}_{\mathcal{H},\mathbb{J}})_u)=0$ then Lemma \ref{Lem: Local Pert Non-Reg to Reg} is immediately true if $\ker (D \mathcal{F}_{\mathcal{H},\mathbb{J}})_u =0$, by taking $u'=u$ and $\mathcal{H}'=\mathcal{H}$. By Corollaries \ref{Cor: Automatic Regularity for Odd orbits} and \ref{Automatic Codim 1 for Even orbits}, it suffices to consider the case where $u$ is such that 
\begin{align*}
\dim \ker (D \mathcal{F}_{\mathcal{H},\mathbb{J}})_{u} &= \text{corank \;} (D \mathcal{F}_{\mathcal{H},\mathbb{J}})_{u}=1.
\end{align*}
Let $\lambda \mapsto \mathcal{H}^\lambda$, $\lambda \in [-1,1]$ be a smooth path in $\mathcal{H}(H^-,H^+)$ such that $(\mathcal{H}^{\lambda},\mathbb{J})$ is a regular homotopy of Floer data for $\lambda= \pm 1$ and such that $\mathcal{H}^0=\mathcal{H}$. Our goal is to show that if the path $\mathcal{H}^\lambda$ is chosen appropriately generically, then, using only perturbations in a neighbourhood $U$ of $\im \tilde{u}$ which may be taken to avoid the graphs of finitely many other Floer-type cylinders, the space $\bigcup_{\lambda \in [-1,1]} \mathcal{M}(\hat{x}^-,\hat{x}^+;\mathcal{H}^\lambda,\mathbb{J})$ may be taken to be a non-empty $1$-manifold which contains $u$. This clearly implies that for any neighbourhood $\mathcal{O} \subseteq C^\infty(\R \times S^1;M)_{\hat{x}^-,\hat{x}^+}$, there exists $u' \in \mathcal{O}$ and $\mathcal{H}' \in \mathcal{H}(H^-,H^+)$ such that $(\mathcal{H}',\mathbb{J}) \in \HJ_{reg}$, $\supp \mathcal{H}'-\mathcal{H} \subseteq U$ and $\mathcal{F}_{\mathcal{H}',\mathbb{J}}(u')=0$.
\par 
Fix a neighbourhood $N \subseteq \R \times S^1 \times M$ of $\im \tilde{u}$, and let $U \subseteq N$ be an open set which is dense in $N$ and such that $U \cap \im \tilde{u}$ is dense in $\tilde{u}$.    
\par
Let us define $C^\infty_{\epsilon}(U;0)$ to be the set of compactly supported smooth functions $h \in C^\infty_0( (-1,1) \times U)$ such that $h^0 \equiv 0$, $h^\lambda(s,t,x)=h(\lambda,s,t,x)$ for $\lambda \in (-1,1)$ and which satisfy
\begin{align*}
\|h \|_{\epsilon} &:= \sum_{k=0}^\infty \epsilon_k \| h \|_{C^k} <\infty,
\end{align*}
where $\epsilon=(\epsilon_k)_{k=0}^\infty$ is some sequence of positive numbers which decreases sufficiently quickly at infinity so that $(C^\infty_{\epsilon}(U;0), \| \cdot \|_{\epsilon})$ is dense in the space of smooth functions on $U$ which satisfy $h^0 \equiv 0$. For $\hat{x}^\pm \in \Per{H^\pm}$ and $p >2$, let $W^{1,p}(\hat{x}^-,\hat{x}^+)$ be the usual Banach space completion of $C^{\infty}(\R \times S^1;M)_{\hat{x}^-,\hat{x}^+}$ and consider the infinite-dimensional vector bundle $\mathcal{E} \rightarrow W^{1,p}(\hat{x}^-,\hat{x}^+) \times C^\infty_\epsilon(U;0)$ where
\begin{align*}
\mathcal{E}&:= \lbrace (u,h,Y): (u,h) \in W^{1,p}(\hat{x}^-,\hat{x}^+) \times C^\infty_\epsilon(U;0), \; Y \in L^p(u^*TM) \rbrace, 
\end{align*}
and consider the universal moduli space
\begin{align*}
Z(\hat{x}^-,\hat{x}^+) &:= \lbrace (\lambda, u, h): \lambda \in (-1,1), h \in C^\infty_\epsilon(U;0), u \in \mathcal{M}(\hat{x}^-,\hat{x}^+;\mathcal{H} + h^\lambda, \mathbb{J}) \rbrace
\end{align*}
which is cut out by the intersection of the map
\begin{align*}
\sigma: (-1,1) \times  W^{1,p}(\hat{x}^-,\hat{x}^+) \times C^\infty_\epsilon(U;0) &\rightarrow \mathcal{E} \\
(\lambda,u,h) &\mapsto \sigma_{\lambda}(u,h) = \partial_s u +\mathbb{J}( \partial_t u - X^{\mathcal{H}+h^\lambda})
\end{align*}
with the zero section. By our hypothesis on the corank of $D \mathcal{F}_{\mathcal{H},\mathbb{J}}$ at $u$, and the density of compactly supported smooth functions in $L^p$-space, there exists $\xi \in L^p(u^*TM)\cap C_0^\infty(u^*TM)$ such that
\begin{align}\label{Eqn: Transversal path}
L^p(u^*TM) &= \im (D \mathcal{F}_{\mathcal{H},\mathbb{J}})_u \oplus \langle \xi \rangle.
\end{align}
It is then easy to see that we may suppose without loss of generality (up to perturbing the path $\lambda \mapsto \mathcal{H}^\lambda$ near $\lambda=0$) that 
\begin{align*}
(\frac{\partial \mathcal{H}^\lambda}{\partial \lambda})_{\lambda=0}(u)&=\mathbb{J} \xi.
\end{align*}
\begin{definition}
If $\lambda \mapsto \mathcal{H}^\lambda \in \mathcal{H}(H^-,H^+)$ is such that $(\mathcal{H}^{\lambda},\mathbb{J})$ is a regular homotopy of Floer data for $\lambda= \pm 1$, $\mathcal{H}^0=\mathcal{H}$, and 
\begin{align*}
L^p(u^*TM) &= \im (D \mathcal{F}_{\mathcal{H},\mathbb{J}})_u \oplus \langle -\mathbb{J}(\frac{\partial \mathcal{H}^\lambda}{\partial \lambda})_{\lambda=0}(u) \rangle,
\end{align*}
then we will say that the path $(\mathcal{H}^{\lambda})_{\lambda \in [-1,1]}$ is \textbf{transverse to $u$ at $0$}.
\end{definition}
The main point is the following proposition.
\begin{proposition}
If $(\mathcal{H}^\lambda)_{\lambda \in [-1,1]}$ is transverse to $u$ at $0$, then there are neighbourhoods $\Lambda \subseteq [-1,1]$ of $0$, $\mathcal{O} \subseteq W^{1,p}(\hat{x}^-,\hat{x}^+)$ of $u$ and $\mathcal{V} \subseteq C^\infty_\epsilon(U;0)$ of $0$ such that the restriction
\begin{align*}
\sigma\vert_{\Lambda \times \mathcal{O} \times \mathcal{V}}: \Lambda \times \mathcal{O} \times \mathcal{V} &\rightarrow \mathcal{E}
\end{align*}
is transverse to the zero section of $\mathcal{E}$.
\end{proposition}
\begin{proof}
Since the set of surjective Fredholm maps is open and the Floer operator is continuously differentiable, it will suffice to show that the vertical differential of $\sigma$ is surjective at $(0,u,0) \in [-1,1] \times W^{1,p}(\hat{x}^-,\hat{x}^+) \times C^\infty_0(U;0)$. To see this, compute the vertical differential at a point $(\lambda_0,v,h_0) \in Z(\hat{x}^-,\hat{x}^+)$
\begin{align*}
(D \sigma)_{(\lambda_0,v,h_0)}(a,Y,h) &=D (\mathcal{F}_{\mathcal{H}+h_0,\mathbb{J}})_v(Y) -\mathbb{J}(aX^{\partial_{\lambda}(\mathcal{H} + h_0)^{\lambda_0}} +X^{h^{\lambda_0}}).
\end{align*}
Note then that if $\lambda_0=0$ we have that $X^{h^{\lambda_0}}=0$, since $h \in C^\infty_{\epsilon}(U;0)$ and so $h^0=0$ by hypothesis. If we further impose that $v=u$ and $h=0$, we see that 
\begin{align*}
(D \sigma)_{(0,u,0)}(a,Y,h)&=D (\mathcal{F}_{\mathcal{H},\mathbb{J}})_u(Y)+ a \xi
\end{align*}
where
\begin{align*}
L^p(u^*TM) &= \im  (D\mathcal{F}_{\mathcal{H},\mathbb{J}})_u \oplus \langle \xi \rangle,
\end{align*}
since $\lambda \mapsto \mathcal{H}^\lambda$ is transversal to $u$ at $0$ by hypothesis. This proves the claim.
\end{proof}
Finally, we claim
\begin{proposition}
If $(\mathcal{H}^\lambda)_{\lambda \in [-1,1]}$ is transverse to $u$ at $0$, then there are neighbourhoods $\mathcal{O} \subseteq W^{1,p}(\hat{x}^-,\hat{x}^+)$ of $u$ and $\mathcal{V} \subseteq C^\infty_\epsilon(U;0)$ of $0$ such that the restriction
\begin{align*}
\sigma\vert_{(-1,1) \times \mathcal{O} \times \mathcal{V}}: (-1,1) \times \mathcal{O} \times \mathcal{V} &\rightarrow \mathcal{E}
\end{align*}
is transverse to the zero section of $\mathcal{E}$.
\end{proposition}
\begin{proof}
This basically follows from the previous proposition in combination with the usual arguments used to establish transversality in this setting, with the only caveat being that we must show that away from $\lambda=0$, transversality can be obtained via perturbations with support in $U$. This is essentially a consequence of the usual proof in any case, but we will sketch out the argument for the convenience of the reader. 
\par
The previous proposition gives us open neighbourhoods $\Lambda \subseteq [-1,1]$ of $0$, $\mathcal{O} \subseteq W^{1,p}(\hat{x}^-,\hat{x}^+)$ of $u$ and $\mathcal{V} \subseteq C^\infty_\epsilon(U;0)$ of $0$ such that the restriction $\sigma\vert_{\Lambda \times \mathcal{O} \times \mathcal{V}}$ is transverse to the zero section of $\mathcal{E}$. Up to shrinking $\mathcal{O}$, we may suppose that if $v \in \mathcal{O}$, then $\im \tilde{v} \subseteq \text{int } N$. We will suppose that this is the case from now on. In particular, any $v \in \mathcal{O}$ has the property that $U \cap \im \tilde{v}$ is dense in $\im \tilde{v}$. Clearly, it will suffice to show that $\sigma$ is transverse to the zero section of $\mathcal{E}$ at any point $(\lambda_0,v,h_0)$ with $\lambda_0 \in (-1,1) \setminus \lbrace 0 \rbrace$. Indeed as the vertical differential of $\sigma$ is
\begin{align*}
(D \sigma)_{(\lambda_0,v,h_0)}(a,Y,h) &=D (\mathcal{F}_{\mathcal{H}+h_0,\mathbb{J}})_v(Y) -\mathbb{J}(aX^{\partial_{\lambda}(\mathcal{H} + h_0)^{\lambda_0}} +X^{h^{\lambda_0}}),
\end{align*}
we may consider its restriction to the subspace $W^{1,p}(v^*TM) \times C^\infty_\epsilon(U;0)$, which is the tangent space of the fiber of $(-1,1) \times W^{1,p}(\hat{x}^-,\hat{x}^+) \times C^\infty_\epsilon(U;0)$ over $\lambda_0$. We claim that the map $F:= (D \sigma)_{(\lambda_0,v,h_0)}\vert_{W^{1,p}(v^*TM) \times C^\infty_\epsilon(U;0)}$,
\begin{align*}
F: W^{1,p}(v^*TM) \times C^\infty_\epsilon(U;0) &\rightarrow L^p(v^*TM) \\
(Y,h) &\mapsto D (\mathcal{F}_{\mathcal{H}+h_0,\mathbb{J}})_v(Y) -\mathbb{J} X^{h^{\lambda_0}}
\end{align*}
is surjective. That this is so follows along standard lines. If we suppose that the above map is not surjective, then for $q$ H\"{o}lder conjugate to $p$, there will exist some non-zero $Z \in L^q(v^*TM)$ which annihilates the image of $F$ under the usual duality pairing of $L^q(v^*TM)$ with the dual space to $L^p(v^*TM)$. A standard argument implies that $Z$ lies in the kernel of the adjoint of $(D \mathcal{F}_{\mathcal{H}+h_0,\mathbb{J}})_v$, and thus $Z$ is smooth by elliptic regularity. Since $Z$ is non-zero, we may view $Z$ as a vector field along $\tilde{v}$ by setting 
\begin{align*}
\tilde{Z}(s,t)= (0,0,Z(s,t)) \in T_{\tilde{v}(s,t)} \R \times S^1 \times M.
\end{align*}
If $Z \neq 0$, then there is some open set $W \subseteq \R \times S^1$  on which $Z$ is non-vanishing, and so $\tilde{Z}$ is non-vanishing along $\tilde{v}(W) \subseteq \R \times S^1 \times M$. But $U \cap \im \tilde{v}$ is open and dense in $\im \tilde{v}$ and so we may clearly take some $h \in C^\infty_\epsilon(U;0)$ such that $\supp h \cap \im \tilde{v} \subseteq \tilde{v}(W)$, which is not annihilated by $Z$. This is a contradiction, so it follows that $F$ is surjective.
\end{proof}
As a consequence of the above, the Sard-Smale theorem implies that there is set $\mathcal{R} \subseteq C^\infty_\epsilon(U;0)$ which is open and dense in a neighbourhood of $0$ such that 
\begin{align*}
\bigcup_{\lambda \in [-1,1]} \mathcal{M}(\hat{x}^-,\hat{x}^+; \mathcal{H}^\lambda + h^\lambda; \mathbb{J})
\end{align*}
is a manifold in a neighbourhood of $(0,u) \in  \mathcal{M}(\hat{x}^-,\hat{x}^+; \mathcal{H}^0; \mathbb{J})$, and the standard arguments in Floer theory then show that the dimension of this manifold is $1$.

\subsection{Proof of Proposition \ref{Prop: Basic properties of PSS-image invars Main}}\label{App: Proof of Basic Props of PSS-image invars}\label{App: New Spec Invar}
\begin{proposition}\label{Prop: Basic properties of PSS-image invars Appendix}
For any $H,K \in C^\infty(S^1 \times M)$ and any $\alpha \in QH_*(M,\omega) \setminus \lbrace 0 \rbrace$
\begin{enumerate}
\item if $r: [0,1] \rightarrow \R$ is smooth, then 
\begin{align*}
c_{im}(\alpha;H+r) &= c_{im}(\alpha;H) + \int_0^1 r(t) \;dt.
\end{align*}
\item $c_{im}(\psi_* \alpha; \psi_*H) = c_{im}(\alpha;H)$ for any symplectic diffeomorphism $\psi$.
\item $\vert c_{im}(\alpha;H) - c_{im}(\alpha;K)\vert \leq \| H-K \|_{L^{1,\infty}}$.
\item (Weak triangle inequality) $c_{im}(\alpha; H \#K) \leq c_{im}(\alpha;H) + c_{im}([M];K)$.
\end{enumerate}
\end{proposition}
\begin{proof}
As one might expect, the proofs of these properties follow largely the same lines as the proofs of the corresponding properties for the Oh-Schwarz spectral invariants with only very minor modifications to the details in those places where we must exhibit `infimizing sequences of PSS maps' which transfer some Morse cycles representing $\alpha$ to a sequence of cycles in $CF_*(H)$ whose action levels tend to the infimal possible value, in order to prove that the corresponding property holds for $c_{im}(\alpha;-)$.
\begin{enumerate}
\item  $H$ and $H+r$ induce the same Hamiltonian vector fields, and $(H,J)$ and $(H+r,J)$ induce the same Floer equations. Consequently, we have the canonical identification of Floer complexes
\begin{align}\label{Label: canonical ident from action shift}
CF_*(H,J)&=CF_*(H+r,J).
\end{align} 
Moreover, if $\mathcal{D}=(f,g;\mathcal{H},\mathbb{J}) \in PSS_{reg}(H,J)$, then we may view $\mathcal{H}$ as a $T$-adapted homotopy of Hamiltonians from $0$ to $H$. Hence, if 
\begin{align*}
\beta: \R &\rightarrow [0,1]
\end{align*} 
is a smooth non-decreasing function such that $\beta(s)=0$ for $s \in (-\infty,-T-1]$ and $\beta(s)=1$ for $s \in [T,\infty)$, then 
\begin{align*}
\mathcal{H}_\beta(s,t,x):= \mathcal{H}(s,t,x)+\beta(s)r(t)
\end{align*}
is a $(T+1)$-adapted homotopy from $0$ to $H+r$ such that the pair $(\mathcal{H}_{\beta},\mathbb{J})$ defines the same Floer equation as $(\mathcal{H},\mathbb{J})$, and therefore the map
\begin{align*}
\Phi^{PSS}_{\mathcal{D}_{\beta}}: QC_{*+n}(f,g) &\rightarrow CF_*(H+r,J)
\end{align*} 
induced by $\mathcal{D}_\beta:=(f,g;\mathcal{H},\mathbb{J}) \in PSS_{reg}(H +r,J)$ agrees with $\Phi^{PSS}_{\mathcal{D}}$ under the canonical identification of \ref{Label: canonical ident from action shift}. The desired statement follows.

\item If $\mathcal{D}=(f,g;\mathcal{H},\mathbb{J})$ and $\psi \in Symp(M,\omega)$, then by the naturality of the negative gradient flow equation with respect to push-forwards by diffeomorphisms, and the naturality of Floer's equation with respect to symplectomorphisms, we have that $\mathcal{D} \in PSS_{reg}(H,J)$ if and only if $\psi_*\mathcal{D}=(\psi_*f,\psi_*g;\psi_*H,\psi_*\mathbb{J}) \in PSS_{reg}(\psi_* H, \psi_*J)$, and the relevant PSS moduli spaces are identified via the map
\begin{align*}
\mathcal{M}(p,[x,w];\mathcal{D}) &\rightarrow \mathcal{M}(\psi(p),[\psi \circ x, \psi \circ w];\psi_*\mathcal{D}) \\
u &\mapsto \psi \circ u.
\end{align*}
Consequently, we see that for $\sigma_f \in QC_{*+n}(f,g)$
\begin{align*}
\Phi^{PSS}_{\mathcal{D}}(\sigma_f)&=\sigma \in CF_*(H,J)
\end{align*}
if and only if
\begin{align*}
\Phi^{PSS}_{\psi_* \mathcal{D}}(\psi_* \sigma_f)&= \psi_* \sigma \in CF_*(\psi_*H,\psi_*J),
\end{align*}
where $\psi_*(p \otimes e^A)= \psi(p) \otimes e^{\psi_*A}$ defines the $\Lambda_\omega$-linear map
\begin{align*}
\psi_*: C^{Morse}(f,g) \otimes \Lambda_{\omega} \rightarrow C^{Morse}(\psi_*f,\psi_*g) \otimes \Lambda_{\omega}
\end{align*}
on generators, and $\psi_*([x,w])=[\psi \circ x, \psi \circ w]$ defines the linear map 
\begin{align*}
\psi_*: CF_*(H,J) &\rightarrow CF_*(\psi_*H,\psi_*J)
\end{align*}
on generators. Finally, it's clear that $\sigma_f \in C^{Morse}(f,g)\otimes \Lambda_{\omega}$ represents $\alpha \in QH_*(M,\omega)$ if and only if $\psi_* \sigma_f$ represents $\psi_* \alpha \in QH_*(M,\omega)$. 
\item This is a direct consequence of Proposition \ref{Prop: Energy Bounds on Diff Of imSpec Invar}.
\item We fix Floer regular pairs $(H,J_1)$ and $(K,J_2)$. The fundamental point, established in Section 4 of \cite{Sc00} (see also Section $6.2$ of \cite{Oh05b}) is that if $(S_{2,1},j)$ denotes an model Riemannian surface with two negative cylindrical ends 
\begin{align*}
\phi^-_i: ((-\infty,0] \times S^1,j_0) &\rightarrow (Z_i^-,j) \subseteq S_{2,1}, \; i=1,2,
\end{align*}
and one positive cylindrical end 
\begin{align*}
\phi^+: ((-\infty,0] \times S^1,j_0) &\rightarrow (Z^+,j) \subseteq S_{2,1},
\end{align*}
then for any $\delta >0$, there exist smooth maps   
\begin{align*}
\mathcal{H}^S: S_{2,1} &\rightarrow C^\infty(M), \; \text{and} \\
\mathbb{J}^S: S_{2,1} &\rightarrow \mathcal{J}(M,\omega)
\end{align*}
such that
\begin{enumerate}
\item $(\mathcal{H}^S \circ \phi^-, \mathbb{J}^S \circ \phi_i^-)(s,t)$ equals $(H,J_1)$ for all $s$ sufficiently small if $i=1$ and equals $(K,J_2)$ for all such $s$ if $i=2$
\item $(\mathcal{H}^S \circ \phi^+, \mathbb{J}^S \circ \phi^+)(s,t)$ equals $(H \#K, J_3)$ for all $s$ sufficiently large, where $J_3 \in \mathcal{J}_\omega(M)$ is such that $(H \#K, J_3)$ is Floer regular.
\item For $[x_1,w_1] \in \Per{H}$, $[x_2,w_2] \in \Per{K}$ and $[x_3,w_3] \in \Per{H\#K}$, if $u: S_{2,1} \rightarrow M$ is any smooth map with $\lim_{s \rightarrow -\infty} (u \circ \phi^-_i)(s,t)=x_i$, $i=1,2$, $\lim_{s \rightarrow \infty} (u \circ \phi^+)(s,t)=x_3(t)$ and the map $S^2 \rightarrow M$ formed by gluing $w_1$ to $u$ along $Z_1^-$, $w_2$ to $u$ along $Z_2^-$ and $\bar{w}_3$ along $Z^+$ is a torsion element of $H_2(M;\Z)$, then
\begin{align*}
\mathcal{A}_{H \# K}([x_3,w_3]) &\leq \mathcal{A}_H([x_1,w_1]) + \mathcal{A}_K([x_2,w_2]) + \delta.
\end{align*}
\item If we denote by $n(\hat{x}_1,\hat{x}_2;\hat{x}_3)$ the count of elements in the zero-dimensional moduli spaces $\mathcal{M}(\hat{x}_1,\hat{x}_2;\hat{x}_3;\mathcal{H}^S,\mathbb{J}^S)$ which consist of smooth maps $u: S_{2,1} \rightarrow M$ satisfying the conditions of the previous point and which, moreover satisfy, in any conformal coordinates $(s,t)$ on $(S_{2,1},j)$, the $(\mathcal{H}^S,\mathbb{J}^S)$-Floer equations
\begin{align}\label{eq: Pair-of-pants Floer equation}
\partial_s u + \mathbb{J}^S(\partial_t u - X^{\mathcal{H}^S})&=0,
\end{align}
then the map
\begin{align*}
\Phi^S:CF_*(H,J_1) \otimes CF_*(K,J_2) &\rightarrow CF_*(H \#K,J_3) \\
\hat{x} \otimes \hat{y} &\mapsto \sum_{\mu(\hat{z})=\mu(\hat{x})+\mu(\hat{y})} n(\hat{x},\hat{y};\hat{z})\hat{z}  
\end{align*} 
defines a map which descends to the Pair-of-Pants product on Floer homology. 
\item If $\alpha^{\#}=\Phi^{PSS}(\alpha) \in HF_*(H)$ and $\beta^{\#}= \Phi^{PSS}(\beta) \in HF_*(K)$, then $(\Phi^S)_*(\alpha \otimes \beta)= \Phi^{PSS}(\alpha \ast \beta) \in HF_*(H \# K)$.
\end{enumerate}
The statement is then proven by fixing some $\epsilon >0$ and letting $\sigma_H \in CF_*(H,J_1)$ and $\sigma_K \in CF_*(K,J_2)$ be such that
\begin{align*}
\lambda_H(\sigma_H) &< c_{im}(\alpha;H) + \frac{\epsilon}{2}, \\
\lambda_K(\sigma_K)&< c_{im}(\beta;K) + \frac{\epsilon}{2},
\end{align*} 
and there exists PSS data $\mathcal{D}_H=(f_1,g_1;\mathcal{H},\mathbb{J}_1) \in PSS_{reg}(H,J_1)$ and $\mathcal{D}_K=(f_2,g_2;\mathcal{K},\mathbb{J}_2) \in PSS_{reg}(K,J_2)$ such that $\sigma_H \in \im \Phi^{PSS}_{\mathcal{D}_H}$ and $\sigma_K \in \im \Phi^{PSS}_{\mathcal{D}_K}$ (of course, the existence of such cycles is guaranteed by the definition of $c_{im}(\alpha;-)$). The previous points imply that $\sigma_{H \# K}:=\Psi^S(\sigma_H \otimes \sigma_K)$ represents the class $\alpha \ast \beta$ under the natural isomorphism $QM_{* +n}(M,\omega) \simeq HF_*(H \# K)$ and $\supp \Psi^S(\sigma_H \otimes \sigma_K)$ is composed of capped orbits with action bounded above by
\begin{align*}
\lambda_H(\sigma_H)+\lambda_K (\sigma_K)+\delta + \epsilon &= c_{im}(\alpha;H) + c_{im}([M];K) + \delta + \epsilon.
\end{align*}
If we were dealing with the Oh-Schwarz spectral invariants, then we could remark that $\epsilon$ and $\delta$ are arbitrary positive numbers and we would be done (and we would then have no need to use the additional hypothesis that $\beta = [M]$), but to establish the statement for $c_{im}$, we must exhibit some regular PSS data $\mathcal{D} \in PSS_{reg}(H \# K,J_3)$ such that $\sigma_{H \# K} \in \im \Phi^{PSS}_\mathcal{D}$. Note that the standard gluing theorems from Floer theory imply that we may glue the PSS data $\mathcal{D}_H$ and $\mathcal{D}_K$ along the negative cylindrical ends $Z^-_1$ and $Z^-_2$ respectively to obtain regular PSS data
\begin{align*}
\mathcal{D}_{H \#K}:=\mathcal{D}_H \#_{Z^-_1} \mathcal{D}_K \#_{Z^-_2} (\mathcal{H}^S,\mathbb{J}^S) \in PSS_{reg}(H \# K;J_3)
\end{align*}
having the property that we have the bijection of moduli spaces
\begin{align*}
\mathcal{M}(p, \hat{x};\mathcal{D}_{H}) \times \mathcal{M}(q,\hat{y}; \mathcal{D}_K) \times \mathcal{M}(\hat{x},\hat{y};\hat{z};\mathcal{H}^S,\mathbb{J}^S) &\rightarrow \mathcal{M}(p,\hat{z};\mathcal{D}_{H\#K}) \\
(u,v,w) &\mapsto \bar{u} \#_{Z^-_1} \bar{v} \#_{Z^-_2} w.
\end{align*}
\begin{remark}\label{Rmk: In proof rmk}
It is precisely here that we use the fact that $[\sigma_K]=\Phi^{PSS}_*[M]$. In general (that is, when $[\sigma_K]=\Phi^{PSS}_*\beta$ for $\beta \in QH_{*+n}(M,\omega) \setminus \lbrace 0 \rbrace$ not necessarily equal to $[M]$), the three moduli spaces above glue together to give 
\begin{align*}
 \mathcal{M}^{\beta}(p,\hat{z};\mathcal{D}_{H\#K}) &\subseteq  \mathcal{M}(p,\hat{z};\mathcal{D}_{H\#K}),
\end{align*}
which is the subspace of maps $u \in \mathcal{M}(p,\hat{z};\mathcal{D}_{H\#K})$ which additionally satisfy $u(s_0,t_0) \in \im \beta^\#$ for an appropriately chosen marked point $(s_0,t_0) \in \R \times S^1$ and for $\beta^\#: \cup \Delta^k \rightarrow M$ a smooth cycle composed of unstable manifolds of the Morse-Smale pair $(f_2,g_2)$ which represent $\beta$ in the quantum homology of $M$. When $\beta = [M]$, this condition is vacuous and we generically obtain
\begin{align*}
 \mathcal{M}^{[M]}(p,\hat{z};\mathcal{D}_{H\#K}) &= \mathcal{M}(p,\hat{z};\mathcal{D}_{H\#K}).
\end{align*}
\end{remark}
Returning to the proof, the above bijection of moduli spaces implies that 
\begin{align*}
\Phi^{PSS}_{H \# K}(\sigma_H)= &\Phi^S(\sigma_H \otimes \sigma_K)=\sigma_{H \# K},
\end{align*}
so that $\Psi^S(\sigma_H \otimes \sigma_K)$ lies in the image of some chain-level PSS map, and this proves the claim.
\end{enumerate}
\end{proof}
\begin{remark}
We note that it is precisely the point discussed in Remark \ref{Rmk: In proof rmk} that keeps us from being able to establish the full triangle inequality
\begin{align*}
c_{im}(\alpha \ast \beta; H \# K) &\leq c_{im}(\alpha;H) + c_{im}(\beta; K)
\end{align*}
for the PSS-image spectral invariant. Indeed, for any $\beta \neq [M]$, the maps $(u,v,w) \in \mathcal{M}(p, \hat{x};\mathcal{D}_{H}) \times \mathcal{M}(q,\hat{y}; \mathcal{D}_K) \times \mathcal{M}(\hat{x},\hat{y};\hat{z};\mathcal{H}^S,\mathbb{J}^S)$ will glue together to give a disk with origin lying in a cycle $\alpha^\#$ representing $\alpha$, an additional marked point lying in a cycle $\beta^\#$ as before and with boundary on the orbit $z \in Per_0(H \# K)$. In general, when $\beta \neq [M]$, we will have $\deg(\alpha) +n \neq \mu(\hat{z})$ and so the maps in the `glued moduli spaces'
\begin{align*}
\mathcal{M}^{glued}(p, \hat{x};q, \hat{y};\hat{x},\hat{y};\hat{z}) &\simeq \mathcal{M}(p, \hat{x};\mathcal{D}_{H}) \times \mathcal{M}(q,\hat{y}; \mathcal{D}_K) \times \mathcal{M}(\hat{x},\hat{y};\hat{z};\mathcal{H}^S,\mathbb{J}^S)
\end{align*}
which contribute to $\sigma_{H \# K}$ cannot be identified with a subset of the maps in the moduli space
\begin{align*}
&\mathcal{M}(p,\hat{z}; \mathcal{D}_{\mathcal{H} \#K})
\end{align*}
which contribute to $\Phi^{PSS}_{\mathcal{D}_{H \#K}}(\alpha^\# \ast \beta^\#)$, since the latter consist of maps $u: \R \times S^1 \rightarrow M$ which, roughly speaking, satisfy $\lim_{s \rightarrow - \infty} u(s,t) \in \im (\alpha \# \beta)^\#$. It is, however, plausible that a judicious choice of a different Morse-Smale pair for the PSS data $\mathcal{D}_{H \# K}$ (instead of the pair $(f_1,g_1)$) could be made so that $\mathcal{M}(p,\hat{z}; \mathcal{D}_{\mathcal{H} \#K})$ agrees with $\mathcal{M}^{glued}(p, \hat{x};q, \hat{y};\hat{x},\hat{y};\hat{z})$. Since the weaker form of the triangle inequality suffices for our purposes, however, we do not pursue this matter further here.
\end{remark}

\subsection{Stefan-Sussmann Foliations }\label{SingFolSection}
In this article, it is convenient to have some explicit language with which to speak about singular foliations. To that end, this appendix will present some elementary notions from the theory of \textit{Stefan-Sussmann foliations} (cf. \cite{De01} and the references therein) which will be suitable to our purposes.
\par
As in the non-singular theory, one may think of (singular) foliations as being partitions of the ambient space into integral submanifolds of some (generalized) distributions. With this in mind, let
\begin{align*}
\mathcal{G}^k(M) &\rightarrow M
\end{align*}
denote the \textit{$k$-Grassmannian} of $M$, having fiber $Gr(k,T_xM)$ over $x \in M$, and let 
\begin{align*}
\mathcal{G}^*(M) &:= \sqcup_{k=0}^n \mathcal{G}^k(M),
\end{align*}
where $n= \dim M$, denote the \textit{total Grassmannian} of $M$.
\begin{definition}
A \textbf{(generalized) distribution} on a manifold $M$ is a section
\begin{align*}
D: M &\rightarrow \mathcal{G}^*(M).
\end{align*}
A local section $X: M \rightarrow TM$ is said to \textbf{belong to $D$}  if $X(x) \in D(x)$ for all $x \in dom(X)$. The set of all smooth local sections $X \in \mathcal{X}_{loc}(M)$ belonging to $D$ is denoted by $\Delta_D$.
\end{definition}

\begin{definition}\label{def:SmoothGenDistrib}
A generalized distribution $D$ is said to be \textbf{smooth} if for every $x \in M$,
\begin{align*}
D(x) &= \text{span} \; \langle \; X(x) \; \rangle_{X \in \Delta_D}.
\end{align*}
\end{definition}
\begin{exmp*}
\begin{enumerate}
\item Let $Y \in \mathcal{X}(M)$ be a vector field. The assignment $D_Y(x):= \langle Y(x) \rangle$ defines a generalized distribution. Note that $D_Y$ is zero-dimensional wherever $Y$ vanishes and is one-dimensional otherwise, so $D_Y$ will not define a distribution in the usual sense unless $Y$ is non-vanishing. In this case, $\Delta_{D_Y}$ consists of smooth local vector fields which vanish at zeroes of $Y$ and which are otherwise parallel to $Y$ wherever $Y$ does not vanish.
\item Generalizing the preceding example, if $\lbrace Y_1, \ldots, Y_k \rbrace$ is a collection of smooth vector fields on $M$, then the distribution $D(x):= \langle Y_i(x) \rangle_{i=1}^k$ is a smooth distribution.
\end{enumerate}
\end{exmp*}
Naturally, we will want to define objects which integrate these generalized smooth distributions when they are appropriately integrable. To that end, following \cite{St74}, we shall define:
\begin{definition}
A \textbf{(smooth) $k$-leaf} of $M$ is a subset $L \subseteq M$ equipped with a differentiable structure $\sigma$ such that
\begin{enumerate} 
\item $(L,\sigma)$ is a connected $k$-dimensional immersed submanifold of $M$ and
\item for any continuous map $f: N \rightarrow M$ such that $f(N) \subseteq L$ and $N$ a locally connected topological space, we have that
\begin{align*}
f: N &\rightarrow (L, \sigma)
\end{align*}
is continuous.
\end{enumerate}
\end{definition}

\begin{definition}\label{Def: SS Foliation}
A \textbf{$(C^\infty)$-singular (Stefan-Sussmann) foliation} of $M$ is a partition $\mathcal{F}$ of $M$ into smooth leaves of $M$ such that for every $x \in M$, there exists a local smooth chart
\begin{align*}
\varphi: U  &\xrightarrow{\cong} \mathcal{O}(x) \subseteq M
\end{align*}
from $U \subseteq \R^n$ an open neighbourhood of $0 \in \R^n$, such that
\begin{enumerate}
\item $U = V \times W$ for $V$ an open neighbourhood of $0$ in $\R^k$ and $W$ an open neighbourhood of $0$ in $\R^{n-k}$, where $k$ is the dimension of the smooth leaf $L_x \in \mathcal{F}$ containing $x$.
\item $\varphi(0,0)=x$.
\item For any leaf $L \in \mathcal{F}$,
\begin{align*}
L \cap \varphi(U \times W) &= \varphi(U \times l),
\end{align*}
where $l:= \lbrace w \in W: \; \varphi(0,w) \in L \rbrace$.
\end{enumerate}
\end{definition}
\begin{exmp*}
\begin{enumerate}
\item If $\mathcal{F}$ is a foliation in the usual sense (see \cite{CC00}), then at any $x \in M$, any foliated chart for $\mathcal{F}$ based at $x$ provides a chart which satisfies the preceding properties.
\item If $M^n$ is a manifold of dimension $n$ and $Y \in \mathcal{X}(M)$ is a smooth vector field with isolated zeroes, then the partition of $M$ given by the orbits of the flow of $Y$ form a Stefan-Sussman foliation. Indeed, if $x \in M$ is a zero of $Y$, then the leaf through $0$ is simply a point and so any smooth chart from the atlas of $M$ centered at $x$ provides a smooth chart satisfying the above properties. If $Y(x) \neq 0$, then the leaf through $x$ will be one-dimensional, and there exists some small neighbourhood $\mathcal{O}(x)$ about $x$ containing only points where $Y$ is non-vanishing. Standard results from the theory of ODEs (see, for instance, Section 21 of \cite{AR67}) imply the existence of a `flow box', ie. a smooth diffeomorphism $\psi: U \times V \rightarrow \mathcal{O}(x)$, with $U \subset \R$, $V \subset \R^{n-1}$ both neighbourhoods of $0$ such that $\psi(0,0)=x$ and $\psi_* \partial_{x_1} = Y$ (here $x_1$ is the coordinate on $U$). Such a chart satisfies the properties enumerated above.
\end{enumerate}
\end{exmp*}

\begin{definition}
A smooth generalized distribution $D$ is said to be \textbf{integrable} if for every $x \in M$ there exists an immersed submanifold $L \subseteq M$, such that
\begin{enumerate}
\item $x \in L$, and
\item $T_y L \subseteq D(y)$ for all $y \in L$.
\end{enumerate}
Such an immersed submanifold is called an \textbf{integral submanifold of $D$}.
\end{definition}
\begin{exmp*}
\begin{enumerate}
\item The distribution $D(x):= \langle Y(x) \rangle$ generated by any smooth vector field $Y \in \mathcal{X}(M)$ is integrable with maximal integral submanifolds given by the orbits of $Y$.
\item The distribution $D(x):= \langle Y_1(x), \ldots, Y_k(x) \rangle$ generated by any collection of smooth point-wise linearly independent vector fields $Y_1, \ldots, Y_k \in \mathcal{X}(M)$ is integrable if and only if $D(x)$ is involutive (ie. $[Z_1,Z_2] \in \Delta_D$ for all $Z_1, Z_2 \in \Delta_D$) by Frobenius' integrability theorem.
\item (Constructing integrable generalized distributions via submersions) Let $M$ be a smooth manifold equipped with an integrable generalized distribution, and $f: M \rightarrow N$ a smooth submersion such that $D'(f(x)):= Df(D(x))$ is constant along fibers of $f$, ie. $Df(D(x))=Df(D(y))$ for all $y \in f^{-1}(x)$. Suppose that every maximal integral submanifold $L$ of $D$ admits an immersed submanifold $S=S_L \subseteq L$ on which which $f$ restricts to an immersion, and which satisfies $f(S)=f(L)$, then the distribution $D'$ on $N$ is integrable.
\end{enumerate}
\end{exmp*}

The main point for us is the following theorem (due to \cite{St74}).
\begin{theorem}
If $D$ is a smooth integrable generalized distribution and $\mathcal{F}_D$ is the partition of $M$ formed by taking the collection of maximal connected integral submanifolds of $D$, then $\mathcal{F}_D$ is a smooth singular Stefan-Sussmann foliation.
\end{theorem}
For a singular foliation $\mathcal{F}$, we let
\begin{align*}
d(-,\mathcal{F}): M &\rightarrow \Z_{\geq 0} \\
x &\mapsto \dim L_x
\end{align*}
denote the function which keeps track of the dimension of the leaf of $\mathcal{F}$ passing through $x \in M$. It's not hard to see that $d(-,\mathcal{F})$ is lower semi-continuous.
\begin{definition}
A smooth singular foliation $\mathcal{F}$ is said to be of \textbf{dimension $k$}, where
\begin{align*}
k &= \max_{x \in M} d(x,\mathcal{F}).
\end{align*}
A smooth singular foliation of an $n$-dimensional manifold will be said to be of \textbf{codimension $n-k$}.
\end{definition}
\begin{definition}
For a dimension $k$ smooth singular foliation, we define the \textbf{domain} of $\mathcal{F}$ to be
\begin{align*}
dom(\mathcal{F})&:= \lbrace x \in M: d(x, \mathcal{F}) = k \rbrace,
\end{align*}
while we define the \textbf{singular set} of $\mathcal{F}$ to be
\begin{align*}
sing(\mathcal{F})&:= M \setminus dom(\mathcal{F}).
\end{align*}
A leaf of $\mathcal{F}$ is said to be \textbf{regular} if it is of maximal dimension, otherwise it is said to be \textbf{singular}.
If $M$ is an oriented manifold, then $\mathcal{F}$ is said to be \textbf{oriented} if every regular leaf of $\mathcal{F}$ is in addition equipped with an orientation, and the local charts about points on the regular leaves may be taken to come from an atlas of oriented charts for $M$.
\end{definition}
\begin{exmp*}
\begin{enumerate}
\item If $\mathcal{F}$ is a $k$-dimensional foliation (in the usual sense) of $M$, then $\mathcal{F}$ is a smooth singular foliation of dimension $k$, with $dom(\mathcal{F})=M$ and $sing(\mathcal{F})=\emptyset$. If $M$ is oriented, then $\mathcal{F}$ may be equipped with the structure of an oriented singular foliation if and only if it can be equipped with the structure of an oriented foliation in the usual sense.
\item $\mathcal{F}$ is the singular foliation induced by the orbits of a smooth vector field $Y \in \mathcal{X}(M)$, then $\mathcal{F}$ is of dimension $1$. $\dom(\mathcal{F})$ consists of the points $x \in M$ where $Y(x) \neq 0$, while $sing(\mathcal{F})$ consists of the zero locus of $Y$. The regular leaves of $\mathcal{F}$ are the non-stationary orbits of $Y$. If $M$ is oriented, then $\mathcal{F}$ may obviously be taken to be oriented if we equip then non-stationary orbits with the orientation induced by then vector field $Y$.
\end{enumerate}
\end{exmp*}


\bibliographystyle{abbrv}
\bibliography{HamFloerTheoryBib}


\end{document}